\theoremstyle{plain}
\newtheorem{theorem}{Theorem}[section]
\newtheorem{lemma}[theorem]{Lemma}
\newtheorem{proposition}[theorem]{Proposition}
\newtheorem{corollary}[theorem]{Corollary}
\theoremstyle{definition}
\newtheorem{definition}[theorem]{Definition}
\newtheorem{remark}[theorem]{Remark}
\newtheorem{notation}[theorem]{Notation}
\newtheorem{claim}{\indent Claim}
\newtheorem{step}{\indent Step}
\newtheorem{convention}{\indent Convention}
\newcommand{\A}{\mathbb{A}}
\newcommand{\C}{\mathbb{C}}
\newcommand{\F}{\mathbb{F}}
\newcommand{\Q}{\mathbb{Q}}
\newcommand{\R}{\mathbb{R}}
\newcommand{\Z}{\mathbb{Z}}
\newcommand{\fraka}{\mathfrak{a}}
\newcommand{\frakp}{\mathfrak{p}}
\newcommand{\calB}{\mathcal{B}}
\newcommand{\calO}{\mathcal{O}}
\newcommand{\bfR}{\mathbf{R}}
\newcommand{\bfK}{\mathbf{K}}
\newcommand{\bfS}{\mathbf{S}}
\newcommand{\bfs}[1]{\mathbf{s}({#1})}
\newcommand{\bfi}[1]{\mathbf{i}({#1})}
\newcommand{\bfV}{\mathbf{V}}
\newcommand{\bfp}{\mathbf{p}}
\newcommand{\taumod}{\tau-\textbf{mod}}
\newcommand{\Fmod}{F-\textbf{mod}}
\newcommand{\Frob}{\mathrm{Frob}}
\newcommand{\Spec}{\mathrm{Spec}}
\DeclareMathOperator{\NP}{\mathrm{NP}}
\DeclareMathOperator{\pr}{pr}
\DeclareMathOperator{\Hom}{\mathrm{Hom}}
\DeclareMathOperator{\Div}{\mathrm{Div}}
\DeclareMathOperator{\Pic}{\mathrm{Pic}}
\newcommand{\Res}{\textup{Res}}
\newcommand{\tame}{\mathrm{t}}
\newcommand{\Addresses}{{
  \bigskip
  \footnotesize

  Joe Kramer-Miller, \textsc{Department of Mathematics, Lehigh University,
    Bethlehem, PA}\par\nopagebreak
  \textit{E-mail address}, Joe Kramer-Miller: \texttt{jjk221@lehigh.edu}

    \bigskip 
    
      James Upton, \textsc{Department of Mathematics, University of California, Santa Cruz,
    Santa Cruz, CA}\par\nopagebreak
  \textit{E-mail address}, James Upton: \texttt{jtupton@ucsc.edu}

}}
\title{Zeros of the Goss zeta function}
\author{Joe Kramer-Miller and James Upton}
\date{\today}
\begin{document}

\maketitle

\abstract{Let $X$ be a smooth proper curve over a finite field and let $\infty \in X$ be a closed point. Let $A$ be the ring of functions on $X - \infty$. The Goss zeta function $\zeta_A$ of $A$ is an equicharacteristic analogue of the Riemann zeta function. In this article we study the zeros of $\zeta_A$ under the generic condition that $X$ is ordinary. We prove an analogue of the Riemann hypothesis, which verifies a corrected version of a conjecture of Goss. We also show that the zeros of $\zeta_A$ at negative even integers are `simple' and that $\zeta_A$ is nonzero at negative odd integers. This answers questions posed by Goss and Thakur. Both of these results were previously only known under the restrictive hypothesis that $A$ has class number one. Finally, we prove versions of these results for $v$-adic interpolations of the Goss zeta function.  }
\tableofcontents

\section{Introduction}

	The Riemann zeta function $\zeta(s) = \sum n^{-s}$ is an object of fundamental importance in number theory. The question of the distribution of its zeros is the content of the famous Riemann hypothesis, which is arguably the most important question in classical number theory. Given the strong analogy between number fields and function fields, it is natural to seek analogues of $\zeta(s)$ and the Riemann hypothesis in positive characteristic.
	
	Let $X$ be a smooth, projective, geometrically connected curve over a finite field $\F_q$ of characteristic $p$ with genus $g$. Let $\infty$ be a closed point in $X$, which we view as the `infinite place', and let $d$ be the degree of $\infty$. We let $A$ denote the coordinate ring of the affine curve $X - \infty$. Let $K(X)$ denote the function field of $X$ and let $K_\infty$ denote the completion of $K(X)$ at the infinite place. We let $\C_\infty$ be the completion of an algebraic closure of $K_\infty$.  The analogy between function-field arithmetic and classical arithmetic can be summarized as follows:
	\begin{align*}
		A &\leftrightarrow \Z,	&	K(X) &\leftrightarrow	\Q,	&	K_\infty	&\leftrightarrow	\R,	&	\C_\infty	\leftrightarrow	\C.
	\end{align*}
    With this analogy in mind, David Goss defined a $\C_\infty$-valued zeta function that plays the role of $\zeta(s)$ (see \cite{Goss-book}*{\S 8} or \cite{Goss-L-series_edition}). The Goss zeta function shares many features with the Riemann zeta function, including the existence of ``trivial zeros'' (see below), transcendence of certain special values for $A=\F_q[\theta]$ (see \cite{Yu} and \cite{Chang-Yu-algebraic-relations-zetavalues}), and an analogue of the Herbrand-Ribet theorem \cite{Goss-Sinnott}. This paper concerns the distribution of its zeros, and in particular the question of a \emph{Riemann hypothesis} for the Goss zeta function.

	\subsection{Background on the zeta function}
	
		\subsubsection{Character groups}
	
			Let $\R^+$ denote the multiplicative group of positive real numbers. Every complex number $s \in \C$ defines a character of $\R^+$ by complex exponentiation $\alpha \mapsto \alpha^s$. This identifies the complex plane with the group $\Hom(\R^+,\C^\times)$ of continuous multiplicative characters of $\R^+$. We may regard the Riemann zeta function as a meromorphic function
			\begin{equation*}
				\zeta:	\Hom(\R^+,\C^\times) \to \C.
			\end{equation*}
		      Note that the absolute value on $\R$ provides a natural identification $\R^+ \cong \R^\times / \langle \pm 1\rangle$. This leads us to the following analogue in characteristic $p$:
			
			\begin{definition}
				The group of positive numbers of $K_\infty$ is the quotient $K_\infty^+ = K_\infty^\times/\mu_\infty$, where $\mu_\infty$ denotes the group of roots of unity in $K_\infty^\times$.
			\end{definition}
			
			The character theory of $K_\infty^+$ is complicated by the presence of the subgroup $U_\infty$ of $1$-units in $K_\infty^\times$. The character group of $U_\infty$ is enormous, since $U_\infty \cong \Z_p^\Z$. The Goss zeta function is defined on a subset of characters of $K_\infty^+$ having a particularly simple action on $U_\infty$: Fix a uniformizer $\pi \in K_\infty^+$. Then $\pi$ determines a factorization
			\begin{equation*}
				K_\infty^\times \cong \mu_\infty \times \pi^\Z \times U_\infty,
			\end{equation*}
			and we may identify $K_\infty^+$ with the subgroup $1 \times \pi^\Z \times U_\infty$. Accordingly, every $\alpha \in K_\infty^+$ factors uniquely
			\begin{equation}
				\alpha	=	\pi^{v_\infty(\alpha)} \cdot \langle \alpha \rangle_\pi,
			\end{equation}
			where $\langle \alpha \rangle_\pi \in U_\infty$. Let us write $\deg(\alpha) = \deg(\infty) \cdot v_\infty(\alpha)$.
			
			\begin{definition}
				The $\pi$-adic \emph{Goss plane} is the topological group $\bfS_\pi = \C_\infty^\times \times \Z_p$. For $s = (x,y) \in \bfS_\pi$, the action of $s$ on a positive number $\alpha \in K_\infty^+$ is given by
				\begin{equation*}
					\alpha^s	=	x^{\deg(\alpha)} \langle \alpha \rangle_\pi^y \in \C_\infty^\times.
				\end{equation*}
			\end{definition}
		
		\subsubsection{The zeta function}
		
			Let $\fraka \subseteq A$ be a non-zero ideal. If $\fraka$ is principal, then every generator of $\fraka$ determines the same positive number $\alpha \in K_\infty^+$ (analogously to the fact that every non-zero ideal $\fraka \subseteq \Z$ has a unique positive generator). Thus, we may regard $\bfS_\pi$ as a group of characters for the group of non-zero principal ideals of $A$. This action of $\bfS_\pi$ on principal ideals extends uniquely to an action on the group of \emph{all} non-zero fractional ideals of $A$ (see \S 2 below or \cite{Goss-book}[\S 8.2]).
			
			\begin{definition}
				The $\pi$-adic Goss zeta function of $A$ is the function on $\bfS_\pi$ defined by the equivalent formal expressions
				\begin{equation*}
					\zeta_{A,\pi}(s)	:=	\sum_\fraka \fraka^{-s}	=	\prod_\frakp	\frac{1}{1-\frakp^{-s}},
				\end{equation*}
				where the sum is taken over non-zero ideals $\fraka \subseteq A$, and the product is taken over non-zero prime ideals $\frakp \subset A$.
                This sum is known to be convergent for all $s \in \bfS_\pi$. Furthermore, if $y \in \Z_p$, then $\zeta_{A,\pi}(-,y)$ is analytic on $\C_\infty$.
			\end{definition}
			
			\begin{remark}
				In \S \ref{s:zeta} we present a ``coordinate-free'' definition of the zeta function denoted by $\zeta_A$. Its domain is a topological group, which we denote by $\bfS_A$. The choice of uniformizer $\pi$ provides an isomorphism $\bfS_A \cong \bfS_\pi$, which identifies $\zeta_A$ with $\zeta_{A,\pi}$.
			\end{remark}

		\subsubsection{A theory of the zeros}
		      The zeros of the Riemann zeta function $\zeta(s)$ can be put into two camps: the trivial zeros at negative integers and those that have (conjecturally) real part $\frac{1}{2}$. It is therefore natural to ask about the `real parts' of zeros of the Goss zeta function and to ask about vanishing at `special values'.
			\paragraph{The real part of zeros} 
            Recall that if $s \in \C$, then for any $\alpha \in \R^+$ we have $|\alpha^s| = |\alpha|^{\mathrm{Re}(s)}$. Similarly, if $s \in \bfS_\infty$ then there exists a unique real number $r(s) \in \R$ such that for any $\alpha \in K_\infty^+$, we have
			\begin{equation*}
				|\alpha^s| = |\alpha|^{r(s)}.
			\end{equation*}
			We regard the parameter $r(s)$ as the characteristic $p$ analogue of the real part of a complex number. In coordinates $s = (x,y)$, we have
			\begin{equation}\label{eq:real-part-to-valuation}
				r(s)	=	d v_\infty(x).
			\end{equation}
            Thus, for $y \in \Z_p$ it is natural to consider the multiset
		\begin{equation*}
			R_A(y)	=	\{r(s): \zeta_A(s) = 0 \text{ and }y(s) = y\},
		\end{equation*}
        where $y(s)$ denotes the $y$-coordinate of $s$. The set $R_A(y)$ 
		consists of ``real parts'' of the zeros of $\zeta_A$ with a given $y$-parameter counted with multiplicity. In light of \eqref{eq:real-part-to-valuation}, knowledge of $R_A(y)$ is equivalent to that of the \emph{Newton polygon} of the entire function $\zeta_{A,\pi}(-,y)$ on $\C_\infty^\times$. The absence of a functional equation for $\zeta_A$ makes it unclear what a characteristic $p$ Riemann hypothesis should look like.
		
		Progress in this direction began with Wan \cite{Wan-Riemann_hypothesis}, who computed the $R_A(y)$ for $A = \F_p[\theta]$, and showed that each ``real part'' occurs with multiplicity $1$. As a corollary to Wan's theorem, one sees that the zeros of $\zeta_{A,\pi}(-,y)$ are all simple and lie in $K_\infty$. In \cite{Goss-RH-2000} Goss conjectured that, barring finitely many exceptions, the elements of $R_A(y)$ should have multiplicity $1$. Evidence for this conjecture was given by work of Diaz-Vargas \cite{Diaz-Vargas-RH_for_prime_affine_line}, Poonen, and Sheats \cite{Sheats-Riemann_hypothesis}, who verify the conjecture for the affine line over \textit{any} finite field.
		
		Beyond the genus $0$ case, evidence is sparse:  B\"ockle \cite{Bockle} studies the $R_A(y)$ in the special case $A=\F_2[x,y]/(y^2 + y + x^3+x+1)$, which corresponds to a supersingular elliptic curve with class number one. He shows that this set agrees exactly with the affine line case, except that the smallest ``real part'' occurs with multiplicity $2$. Similar calculations were carried out in the thesis of Yujia Qui in the case where $A$ has class number one. Excluding the affine line, there are exactly eight such $A$ in all characteristics.

		\paragraph{Vanishing at special values} 
        We may also consider special values of $\zeta_{A,\pi}(s)$ at negative integers.
        The question of vanishing at odd integers was posed by Thakur in \cite{Thakur-characteristic_p_zeta-irregular_zeros}. In this article Thakur proved that the special values at odd negative integers are nonzero when $A$ has class number one. Beyond this, little is known. The story for
        even integers is more developed. David Goss discovered `trivial zeros' occurring at negative even integers. These trivial zeros arise from congruences between the zeta functions $\zeta_{A,\pi}(-,y)$ and certain $L$-series in characteristic $0$, which have an extra Euler factor at $\infty$. Thus, trivial zeros come from an extra factor at $\infty$, analogous to what happens with the Riemann zeta function. 
  
        Using a limiting argument with trivial zeros, Goss found that for a delicately constructed $y$, infinitely many elements of $R_A(y)$ occur with multiplicity $\geq d$ (the corresponding zeros of $\zeta_A$ are called \emph{near-trivial}). The existence of near-trivial zeros provides a counterexample to Goss's original conjecture on multiplicities occuring in $R_A(y)$. To complicate matters further, in \cite{Thakur-characteristic_p_zeta-irregular_zeros} Thakur observed that the order of vanishing at a trivial zero could exceed the order of vanishing coming from the ``missing'' Euler factor at $\infty$ (see \cite{Diaz-Vargas-irregular_zeros} for further examples of this phenomenon). Such trivial zeros are called \emph{irregular}. The existence of irregular trivial zeros opens the possibility of the near-trivial zeros having high orders of vanishing. In \cite{Goss-zeros_of_Lseries}, Goss states a conjecture that would prohibit this type of degenerate behavior.

	\subsection{Main results}
    The purpose of this article is to study the $R_A(y)$ under the generic assumption that $X$ is an ordinary curve. Under this assumption, we completely determine the $R_A(y)$ in terms of the genus $g$ of $X$, the degree of $\infty$, the cardinality $r$ of the residue field at $\infty$, and the $p$-adic digits of $y$. This result may be regarded as a corrected version of Goss's original conjecture. Our calculation immediately implies that the trivial zeros always have the `correct multiplicity'. That is, we verify Goss's conjecture on irregular trivial zeros under the generic assumption that $X$ is ordinary. We also deduce nonvanishing at negative odd integers. Finally, we prove partial analogues of these results for the $v$-adic interpolations of $\zeta_A$ at the finite places $v$ of $A$.
	
		\subsubsection{Distribution of the zeros}
		
		\begin{theorem}\label{t:Riemann-hypothesis}
			Let $r = q^{d}$. Fix $y \in \Z_p$. There exists a (possibly finite) increasing sequence of positive integers $\alpha_{r,1} < \alpha_{r,2} < \cdots$, depending only on $y$ and $r$, such that for \emph{any} ordinary curve $X$, we have
			\begin{equation}
				R_A(y)	=	\{	\underbrace{0,\dots,0}_{g-1+d}	\}	\sqcup	\bigsqcup_{i = 1}^\infty	\Bigl\{	\underbrace{\alpha_{r,i}(r-1),\dots,\alpha_{r,i}(r-1)}_{d \text{ times}}	\Bigr\}.
			\end{equation}
		\end{theorem}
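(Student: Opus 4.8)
The plan is to reduce the statement to a Newton-polygon computation for a family of power sums, and then to exploit ordinarity to evaluate that polygon exactly. By \eqref{eq:real-part-to-valuation}, $R_A(y)$ consists of the numbers $d\cdot v_\infty(x_0)$ as $x_0$ ranges (with multiplicity) over the zeros of $\zeta_{A,\pi}(\cdot,y)$ in $\C_\infty^\times$; equivalently $R_A(y)$ is determined by the Newton polygon of $\zeta_{A,\pi}(\cdot,y)$, viewed as a power series in $x^{-1}$ with coefficients measured by $v_\infty$. I would begin by decomposing along the class group: writing an ideal in a given class as $(\alpha)\mathfrak c^{-1}$ for a fixed integral representative $\mathfrak c$ and $\alpha\in\mathfrak c$, a routine bookkeeping gives
\begin{equation*}
	\zeta_{A,\pi}(x,y)\;=\;\frac{1}{q-1}\sum_{[\mathfrak c]\in\mathrm{Cl}(A)}x^{\deg\mathfrak c}\,\langle\mathfrak c\rangle_\pi^{y}\;g_{\mathfrak c}(x^{-d}),\qquad g_{\mathfrak c}(T)=\sum_{m\geq 0}P_{\mathfrak c,m}(y)\,T^{m},
\end{equation*}
where $\langle\mathfrak c\rangle_\pi$ lies in a fixed finite extension of $K_\infty$ when $p\mid|\mathrm{Cl}(A)|$, and $P_{\mathfrak c,m}(y)=\sum_\alpha\langle\alpha\rangle_\pi^{-y}$ is the partial power sum over the finitely many $\alpha\in\mathfrak c\setminus\{0\}$ with a pole of exact order $m$ at $\infty$ --- that is, over the pieces of exact pole order $m$ in the Riemann--Roch filtration of $\mathfrak c$. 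Since $x^{\deg\mathfrak c}\langle\mathfrak c\rangle_\pi^{y}$ is a monomial times a $v_\infty$-unit, everything reduces to the valuations $v_\infty(P_{\mathfrak c,m}(y))$.

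The analytic heart is a sharp estimate of $v_\infty(P_{\mathfrak c,m}(y))$. I would parametrize the successive pieces of the Riemann--Roch filtration of $\mathfrak c$, represent each $\alpha$ by its principal part at $\infty$, and expand $\langle\alpha\rangle_\pi^{-y}$ modulo a high power of $\pi$; this rewrites $P_{\mathfrak c,m}(y)$ as an exponential-sum type expression over $\F_r$-vector spaces whose order of vanishing is governed by the base-$p$ digit expansion of $y$ and by a combinatorial optimization over monomials in the coordinates of the principal part --- precisely the mechanism that Diaz-Vargas, Poonen, and Sheats \cites{Diaz-Vargas-RH_for_prime_affine_line,Sheats-Riemann_hypothesis} analyzed in the genus-zero case, here carried out over the residue field $\F_r$ at $\infty$ in place of $\F_q$. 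This yields a lower bound: $\NP(g_{\mathfrak c})$ lies on or above a ``Hodge polygon'' $\HP_{r,y}$ depending only on $r$ and $y$, whose positive slopes form an increasing (possibly finite) sequence $\alpha_{r,1}<\alpha_{r,2}<\cdots$ scaled by $r-1$. Ordinarity is what forces this bound to be sharp: it is equivalent to bijectivity of the Cartier (Hasse--Witt) operator on $H^1(X,\calO_X)$, and I would use this to rule out the ``digit-optimal'' monomials being annihilated by hidden $\F_r$-linear relations coming from the global geometry of $X$, obtaining $\NP(g_{\mathfrak c})=\HP_{r,y}$ for every $\mathfrak c$, with each positive break of horizontal length one.

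It then remains to reassemble $\zeta_{A,\pi}(\cdot,y)$ from its $h=|\mathrm{Cl}(A)|$ summands. The slope-zero (unit-root) part of each $g_{\mathfrak c}$ has length dictated by Riemann--Roch, and summing over the $|\mathrm{Jac}(\F_q)|$ classes of each fixed degree modulo $d$ one must show --- once more invoking that $\mathrm{Jac}(X)$ is ordinary --- that these contributions collapse, leaving a unit-root segment of total length exactly $g-1+d$ (the $g$ from the $p$-rank of $X$, the $d-1$ from the tame part at $\infty$), and forcing each positive ``real part'' $\alpha_{r,i}(r-1)$ to occur with multiplicity exactly $d$ rather than a larger multiple. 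Multiplying the resulting slopes by $d$ then gives $R_A(y)$ in the asserted form. I expect this reassembly, together with the sharpness step above, to be the main obstacle: both require converting the single geometric hypothesis that $\mathrm{Jac}(X)$ is ordinary into uniform control of the $\infty$-adic valuations coming from every class-group piece at once, and for all $y$ --- which is exactly the point at which ``ordinary'' cannot be relaxed to ``positive $p$-rank.''
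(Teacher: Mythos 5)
Your route is genuinely different from the paper's (which works through the Anderson--Monsky trace formula and a ``local twisting'' of Dwork operators rather than through power sums), but as written it has a gap at exactly the step that has blocked this classical approach beyond genus zero. The assertion that the Diaz-Vargas--Poonen--Sheats digit analysis ``carries over'' to give the sharp value of $v_\infty(P_{\mathfrak{c},m}(y))$ is not routine bookkeeping: for $A=\F_q[\theta]$ the elements of exact pole order $m$ are the monic polynomials of degree $m$ times $\F_q^\times$, written in a global coordinate, and the whole combinatorial optimization rests on that explicit monomial parametrization. For a general $\mathfrak{c}$ on a higher-genus curve, the filtration pieces $L(m\infty)\cap\mathfrak{c}\setminus L((m-1)\infty)\cap\mathfrak{c}$ have Weierstrass gaps for small $m$, and for all $m$ the principal parts at $\infty$ of these elements sweep out only an affine subspace of $\pi^{-m}\calO_\infty/\calO_\infty$ cut out by $\F_q$-linear conditions depending on the global geometry of $X$; the ``digit-optimal'' terms of the expansion of $\langle\alpha\rangle_\pi^{-y}$ can a priori cancel against one another under these relations, and no mechanism is given for why ordinarity prevents this for every $m$, every $\mathfrak{c}$, and every $y$. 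Saying that ordinarity is bijectivity of the Hasse--Witt operator identifies the hypothesis but does not convert it into the needed non-vanishing of infinitely many specific coefficients. The same issue recurs in your reassembly step: $\zeta_{A,\pi}(x,y)$ is a \emph{sum} of the $h$ class-group series $x^{\deg\mathfrak{c}}\langle\mathfrak{c}\rangle_\pi^{y}g_{\mathfrak{c}}(x^{-d})$, and the Newton polygon of a sum is not determined by the polygons of the summands; one must rule out cancellation among the minimal-valuation contributions of the different classes to each coefficient, which is again asserted rather than proved.

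For comparison, the paper's proof never confronts these power sums directly. It realizes $\zeta_{A,\pi}(-,y)$ as the Fredholm determinant of a nuclear operator, observes that the localization of the character $\rho_{A,\pi}^{\otimes y}$ at $\infty$ depends only on $r$ and $y$ (Corollary \ref{c: localization does not depend on curve}), and uses this to conjugate the operator for $A$ into the explicit affine-line operator acting on a twisted subspace $D^FM_{\mathscr{F}_{A_{\F_r}}}$. The resulting matrix $\Delta$ agrees with the affine-line matrix modulo controlled powers of $\pi$ (Lemma \ref{l: esitames of higher genus matrix}); the positive slopes then follow from the purely combinatorial uniqueness of the minimal permutation (Theorem \ref{theorem: unique minimal term}), with no input from ordinarity, while ordinarity is used only once, to show that the $(g+d-1)$-dimensional ``holomorphic'' block has unit determinant (Corollary \ref{c: ordinary determinant claim}) via the mod-$p$ Weil zeta function of $\Spec(A)$. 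If you want to salvage the power-sum route, the missing ingredient is precisely an analogue of that congruence: a proof that, after the Riemann--Roch contribution of length $g+d-1$, the valuations $v_\infty(P_{\mathfrak{c},m}(y))$ coincide with the genus-zero ones rather than merely bounding them below, together with a non-cancellation statement across ideal classes.
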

		
		In the special case $A = \F_q[\theta]$, Theorem \ref{t:Riemann-hypothesis} is due to Wan, Diaz-Vargas, Poonen, and Sheats. In this previous work it is shown that the numbers $\alpha_{r,i}$ are related to the $r$-adic digit expansion of $y$. One can deduce from their results that each ``real part'' is divisible by $r-1$, although it is not explicitly stated. We note that our proof of Theorem \ref{t:Riemann-hypothesis} is completely independent of this prior work. In particular, we give a new proof of the Riemann hypothesis for the affine line.

        \begin{remark}
            When $X$ is non-ordinary, we expect that a similar result holds. In particular, we expect that: 
            \begin{equation*}
				R_A(y)	=	\{	\underbrace{s_1,\dots,s_{g-1+d}}	\}	\sqcup	\bigsqcup_{i = 1}^\infty	\Bigl\{	\underbrace{\alpha_{r,i}(r-1),\dots,\alpha_{r,i}(r-1)}_{d \text{ times}}	\Bigr\},
			\end{equation*}
            where we have $0 \leq s_i\leq \alpha_{r,1}$.
            In fact, one can prove that the number of $s_i$'s that are equal to zero is
            $p(X) + d -1$, where $p(X)$ denotes the $p$-rank of $X$. Our expectation is
            in agreement with the calculations in \cite{Bockle}. 
            
        \end{remark}
		\subsubsection{Integral zeta values}
		      For $j \in \Z$ there is a natural $j$-th power character of $K_\infty^+$:
            it sends $\alpha \in K_\infty^+$ to $\alpha^j \in \C_\infty$. We wish to extend this character of $K_\infty^+$ to an element of our character group $\bfS_\pi$. Choose $\pi_*$ to be a $d$-th root of $\pi$. We then define
            $s_j=(\pi_*^{-j},j) \in \bfS_\pi$. Note that for every $\alpha \in K_\infty^+$
            we have $\alpha^{s_j}=\alpha^j$, so the character $s_j$ is really an extension of the $j$-th power map on $K_\infty^+$. We say $s_j$ is even if $r-1|j$ and odd otherwise. We are concerned with
            the vanishing of the special values $\zeta_{A,\pi}(s_j)$.\footnote{For each $j$, there are really $\frac{d}{p^{v_p(d)}}$ different special values to consider, each corresponding to the different $d$-th roots of $\pi$. We will see that they all exhibit the same vanishing behavior.} 

            Consider the function defined on $\C_\infty^\times$ by
			$z_{\pi,s_j}(x)	=	\zeta_{A,\pi}(x \cdot \pi_*^j,j)$.
			This is known to be a polynomial in $x^{-1}$ whose coefficients live in a finite extension of $K_\infty$. When $j$ is even and negative, Goss proved that
			\begin{equation}\label{eq: factor at infinity}
				(1-x^{-d})| z_{\pi,s_j}(x).
			\end{equation}
			This implies that $s_j$ (as well as other possible choices for $j$ using the other $d$-th roots of $\pi$) are zeros of $\zeta_{A,\pi}$. The zeros of $\zeta_{A,\pi}$ that arise in this way are called \emph{trivial zeros}. They are analogous to the trivial zeros of the Riemann zeta function. The order of vanishing of $s_j$ is at least equal $p^{v_p(d)}$. We will say that $s_j$ is \emph{regular} if these orders of vanishing agree, and \emph{irregular} otherwise. 
			\begin{corollary}
				Suppose that $X$ is ordinary. If $s_j$ is even then $\zeta_{A,\pi}(s_j)$ vanishes with order $p^{v_p(d)}$ (i.e. $s_j$ is a regular trivial zero). If $s_j$ is odd then $\zeta_{A,\pi}(s_j) \neq 0$. 
			\end{corollary}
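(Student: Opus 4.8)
The plan is to derive the Corollary directly from Theorem~\ref{t:Riemann-hypothesis} together with the facts about trivial zeros recalled above; all of the difficulty is already contained in the theorem, and what remains is a comparison of multiplicities. Throughout, fix a negative integer $j$ and set $y=j$. By definition, the zeros $s$ of $\zeta_A$ with $y(s)=j$, counted with multiplicity, have parameters $r(s)$ forming the multiset $R_A(j)$, and by \eqref{eq: factor at infinity} and \eqref{eq:real-part-to-valuation} the point $s_j$ is a zero of $\zeta_A$ when $j$ is even, with $r(s_j)=-j>0$. Hence the order of vanishing of $\zeta_{A,\pi}$ at $s_j$ is at most the multiplicity of the value $-j$ in $R_A(j)$. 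On the other hand, Theorem~\ref{t:Riemann-hypothesis} tells us that, since the $\alpha_{r,i}$ form a strictly increasing sequence of positive integers, every nonzero element of $R_A(j)$ is a positive multiple of $r-1$, and the multiplicity of a value $v>0$ in $R_A(j)$ equals $d$ if $v=\alpha_{r,i}(r-1)$ for a (necessarily unique) index $i$, and equals $0$ otherwise. In particular the multiplicity of $-j$ in $R_A(j)$ is either $0$ or $d$, and it is nonzero only if $(r-1)\mid j$.

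Suppose first that $s_j$ is odd, i.e.\ $(r-1)\nmid j$. Then $-j$ is not a positive multiple of $r-1$, so by the previous paragraph $-j$ does not occur in $R_A(j)$; hence $\zeta_A$ has no zero $s$ with $y(s)=j$ and $r(s)=-j$, and since $r(s_j)=-j$ we conclude $\zeta_{A,\pi}(s_j)\neq 0$.

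Suppose now that $s_j$ is even, i.e.\ $(r-1)\mid j$. Here I would bring in Goss's factorization \eqref{eq: factor at infinity}: $(1-x^{-d})$ divides the polynomial $z_{\pi,s_j}(x)$. Writing $d=d'p^{v_p(d)}$ with $p\nmid d'$, one has $1-x^{-d}=-x^{-d}(x^{d'}-1)^{p^{v_p(d)}}$, and $x^{d'}-1$ is separable over $\C_\infty$, so $z_{\pi,s_j}$ vanishes to order at least $p^{v_p(d)}$ at each of the $d'$ distinct $d$-th roots of unity. These correspond, via the translation defining $z_{\pi,s_j}$ (by an element of valuation $0$), to $d'$ distinct zeros of $\zeta_A$, all with $y$-parameter $j$ and $r$-parameter $-j$. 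Their combined multiplicity is therefore at least $d'p^{v_p(d)}=d$, so the multiplicity of $-j$ in $R_A(j)$ is at least $d$, and combined with the dichotomy above it equals exactly $d$. Consequently these $d'$ roots of unity exhaust the zeros of $z_{\pi,s_j}$ with $r$-parameter $-j$, and each has multiplicity exactly $p^{v_p(d)}$. Since $s_j$ is one of them (and the same argument applies verbatim to each of the $d/p^{v_p(d)}$ special values attached to the various $d$-th roots of $\pi$), we conclude that $\zeta_{A,\pi}(s_j)$ vanishes to order exactly $p^{v_p(d)}$, i.e.\ $s_j$ is a regular trivial zero.

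I do not expect a genuine obstacle: the corollary is soft once Theorem~\ref{t:Riemann-hypothesis} is in hand. The one point requiring care is that $R_A(j)$ a priori only records the slopes of the Newton polygon of $\zeta_{A,\pi}(-,j)$, not the order of vanishing at an individual point such as $s_j$. The argument works precisely because Theorem~\ref{t:Riemann-hypothesis} caps the total multiplicity at real part $-j$ at exactly $d$, which matches the lower bound $d$ forced by Goss's explicit factor $(1-x^{-d})$; the two estimates pinch, leaving each $d$-th root of unity, and hence $s_j$, with multiplicity exactly $p^{v_p(d)}$.
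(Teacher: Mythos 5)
Your proof is correct and follows essentially the same route as the paper: the odd case from the divisibility of all nonzero real parts by $r-1$, and the even case by pinching the multiplicity-$d$ cap from Theorem~\ref{t:Riemann-hypothesis} against the $d' p^{v_p(d)}=d$ zeros forced by the factor $(1-x^{-d})$. The extra care you take in distributing the $d$ zeros among the $d/p^{v_p(d)}$ distinct roots of unity is exactly the detail the paper's terse proof leaves implicit.
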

            \begin{proof}
                From Theorem \ref{t:Riemann-hypothesis} we know that each `real part' of the zeros occurs with multiplicity $d$. For $j$ even and negative 
                we see
                that these $d$ zeros with real part $j$ all come from \eqref{eq: factor at infinity}. Also, from Theorem \ref{t:Riemann-hypothesis} we know that the real parts are of the zeros are divisible by $r-1$. This implies $\zeta_{A,\pi}(s_j)\neq 0$ for $j$ odd. 
            \end{proof}

		\subsubsection{\texorpdfstring{$v$}{v}-adic interpolations}
		
			 Let $v$ be a closed point of $X-\infty$ of degree $d_v$. In \cite{Goss-vadic}, Goss constructs a $v$-adic interpolation $\zeta_{A,v}$ of the zeta function in analogy with the Kubota-Leopoldt $p$-adic zeta function. Let $K_v$ denote the completion of $K$ at $v$, and let $\C_v$ be the completion of an algebraic closure of $K_v$. The domain of the $v$-adic zeta function is a topological group with a canonical decomposition
			 \begin{equation}\label{eq:v-adic-Goss-plane}
			 	\bfS_{A,v}	=	\C_v^\times \times \Z/m\Z \times \Z_p,
			 \end{equation}
			 where $m$ is a positive integer. The elements of $\bfS_{A,v}$ act as a group of characters of the ideals of $A$ which are prime to $v$, see \cite{Goss-book}[\S 8.3].
			 
			Let $\bfS_{A,v}^0 = \C_v^\times \times \{1\} \times \Z_p$ denote the ``identity component'' of $\bfS_{A,v}$. Our main results concerning the $v$-adic zeta function pertain to its restriction to $\bfS_{A,v}^0$. Let $B$ denote the coordinate ring of the affine curve $X-v$. Let $\pi_v \in K_v$ be a uniformizer. The next result provides a formula for $\zeta_{A,v}$ in terms of the zeta function $\zeta_{B,\pi_v}$ of $B$:
			 
			 \begin{theorem}\label{t:v-adic-to-classical}
			 	Let $\frakp$ denote the prime ideal of $B$ corresponding to $\infty$. There exists a $1$-unit $\eta \in U_v$ with the following property: For any $s = (x,y) \in \bfS_{\pi_v}$ with $\pi_v$-adic coordinates $(x,y) \in \C_v^\times \times \Z_p$, we have
			 	\begin{equation}\label{eq:v-adic-to-classical}
			 		\zeta_{A,v}(x,1,y)	=	\zeta_{B,\pi_v}(\eta \cdot x,y) \cdot (1- \frakp^{-(\eta \cdot x,y)}).
			 	\end{equation}
			 \end{theorem}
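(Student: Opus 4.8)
The plan is to compare the two sides term by term as sums over ideals. Put $C = \Gamma(X - \{v,\infty\}, \calO_X)$; the nonzero ideals of $C$ are freely generated by the closed points of $X$ lying off $\{v,\infty\}$, and this monoid is canonically identified both with the monoid of nonzero ideals of $A$ prime to $v$ and with the monoid of nonzero ideals of $B$ prime to $\frakp$. Expanding the right-hand side of \eqref{eq:v-adic-to-classical} as an Euler product, the factor $(1-\frakp^{-(\eta x, y)})$ deletes exactly the local factor at $\frakp$, so
\begin{equation*}
	\zeta_{B,\pi_v}(\eta x, y)\cdot(1-\frakp^{-(\eta x, y)}) = \sum_{\fraka}\fraka^{-(\eta x, y)}_{B,\pi_v},
\end{equation*}
the sum over nonzero ideals $\fraka$ of $C$ regarded inside $B$. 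Recall likewise that $\zeta_{A,v}(x,1,y) = \sum_{\fraka}\fraka^{-(x,1,y)}_{A,v}$, the sum over the same ideals regarded inside $A$. It therefore suffices to prove, for every nonzero ideal $\fraka$ of $C$, the pointwise identity $\fraka^{(x,1,y)}_{A,v} = \fraka^{(\eta x, y)}_{B,\pi_v}$, where on the left we use the $v$-adic character attached to the $A$-structure — built from ideal generators regarded as positive numbers at $\infty$ — and on the right the $\pi_v$-adic character attached to the $B$-structure, built from generators regarded as positive numbers at $v$, the latter being the place at infinity for $B$.

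Both characters are multiplicative in $\fraka$, so it is enough to verify the identity on a generating set, and the cleanest choice is the principal ideals. Fix $N$ divisible by the class numbers of $A$ and $B$, so that $\fraka^N = (\alpha)$ in $A$ and $\fraka^N = (\beta)$ in $B$. Comparing the divisors of $\alpha$ and $\beta$ on $X$ shows that the divisor of $\alpha/\beta$ is supported on $\{v,\infty\}$, so $\alpha/\beta$ is a unit of $C$. By Dirichlet's $S$-unit theorem $C^\times$ is $\F_q^\times$ times the infinite cyclic group generated by a fundamental unit $u$ whose divisor on $X$ is supported on $\{v,\infty\}$, and the exponent of $u$ occurring in $\alpha/\beta$ is forced by the orders of $\alpha$ at $\infty$ and of $\beta$ at $v$, which are themselves determined by $\deg(\fraka)$. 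Substituting into the two character formulas, the ``degree'' contributions — the powers of $x$ — match automatically, because the degree of $\alpha$ at $\infty$ relative to $A$, the total degree of the divisor underlying $\fraka$, and the degree of $\beta$ at $v$ relative to $B$ all agree.

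Hence the discrepancy between the two characters is concentrated in the $1$-unit part at $v$; matching it against the twist $x \mapsto \eta x$ of the $\C_v^\times$-coordinate pins down $\eta$ as a specific $1$-unit in $U_v$ built from the local behavior at $v$ of the fundamental $S$-unit $u$ (normalized using $\gcd(\deg v, \deg\infty)$), so in particular $\eta$ depends only on $X$, $\infty$, $v$ and $\pi_v$ and not on $\fraka$. There remains the finite coordinate: the group $\bfS_{\pi_v} = \C_v^\times \times \Z_p$ has the Teichmüller data at $v$ already quotiented out — positive numbers for $B$ are taken modulo the roots of unity of $K_v$ — whereas the full $v$-adic character of $A$ records precisely this Teichmüller-at-$v$ datum in the $\Z/m\Z$-coordinate of $\bfS_{A,v}$. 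Restricting $\zeta_{A,v}$ to the identity component $\bfS_{A,v}^0$ trivializes that coordinate, so that the two characters become comparable; one checks that this identity component is exactly the locus on which the above comparison is valid.

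I expect the main obstacle to be the passage, in the second paragraph, from ``the degree parts match'' to ``the remaining discrepancy is \emph{precisely} an $\eta$-twist of the $\C_v^\times$-coordinate'' — that is, faithfully unwinding Goss's normalization of $v$-adic positive numbers. Two technical issues need care. First, when the class numbers of $A$ or $B$ are divisible by $p$, the character $\fraka \mapsto \fraka^s$ is not literally an $N$-th root of the character on $\fraka^N$, so one must ensure the implicit root extraction is carried out compatibly on both sides, allowing the identity on $\fraka^N$ to descend to $\fraka$. Second, one must track carefully how the positive-at-$\infty$ generator of $\fraka^N$ — and the associated $d$-th roots of the uniformizer at $\infty$ — are read off $v$-adically; this is exactly where $\eta$ acquires its value. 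Once these normalizations are pinned down, the rest of the argument is formal.
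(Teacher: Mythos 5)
Your proposal is correct in substance and reaches the right $\eta$, but by a genuinely more hands-on route than the paper. The paper's proof (Theorem \ref{t:v-adic-character-to-classical} and the corollary following it) is a short formal computation: writing $\fraka_\infty = \fraka_v + \deg(\fraka)\,R$ with $R = \tfrac{1}{d_v}[v]-\tfrac{1}{d}[\infty]$ and using that $\langle\cdot\rangle_v$ and $\langle\cdot\rangle_{\pi_v}$ agree on $\Div^0(X-v)$, it obtains $\langle\fraka_\infty\rangle_v = \langle\fraka_v\rangle_{\pi_v}\langle R\rangle_{\pi_v}^{\deg(\fraka)}$ and then compares Euler products exactly as in your first paragraph, with $\eta = \langle R\rangle_{\pi_v}$. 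All of the root extraction you flag as delicate was done once and for all in \S\ref{ss:character-groups}: the $1$-unit character extends \emph{uniquely} to $\Q\otimes\Div^0(X)$ because $\C_v^+$ is uniquely divisible and $\Pic^0(X)$ is finite (Lemma \ref{l:positive-map-on-divisors}), so no separate compatibility check is needed when descending from $\fraka^N$ to $\fraka$. Your argument --- principalize $\fraka^N$ in $A$ and in $B$, observe that $\alpha/\beta$ lies in the rank-one $\{v,\infty\}$-unit group, and extract roots of $\langle u\rangle_{\pi_v}$ --- is an explicit unwinding of that same lemma, and your $\eta$ coincides with $\langle R\rangle_{\pi_v}$ because $\divisor(u)$ is an integer multiple of $R$. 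Three small points: principal ideals are not a generating set of the ideal monoid in general (what you actually use, correctly, is multiplicativity to reduce to primes plus descent from $\fraka^N$); the closing discussion of the $\Z/m\Z$-coordinate is unnecessary here, since the paper defines $\zeta_{A,v}$ on $\bfS_{A,v}^0$ directly through $\langle\cdot\rangle_v$, so there is no Teichm\"uller datum left to trivialize; and --- a wrinkle shared with the paper's own formulation --- the correction $\langle R\rangle_{\pi_v}^{\deg\fraka}$ sits inside the $1$-unit part of the Euler factor and hence gets raised to the $y$-th power, so the twist your computation (and the paper's) actually produces is $x\mapsto \langle R\rangle_{\pi_v}^{y}\,x$; strictly, the $\eta$ of the theorem should be allowed to depend on $y$.
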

			 
			\begin{remark}
				The Euler product of $\zeta_{A,v}$ is taken over all primes of $A$ distinct from $v$. Meanwhile, the Euler product of $\zeta_{B,\pi_v}$ is taken over all primes of $B$. The extra factor on the right side of \eqref{eq:v-adic-to-classical} corrects for this one-factor discrepancy. 
			\end{remark}
			
			By combining Theorem \ref{t:v-adic-to-classical} with Theorem \ref{t:Riemann-hypothesis}, we immediately deduce a description of the $v$-adic distribution of the zeros for the restriction of $\zeta_{A,v}$ to $\bfS_{A,v}^0$:
			
			\begin{theorem}\label{t:v-adic-Riemann-Hypothesis}
				Let $r_v = q^{d_v}$. Fix $y \in \Z_p$. The $v$-adic Newton slopes of the entire function $\zeta_{A,v}(-,1,y)$ on $\C_v^\times$ are exactly
				
				\begin{equation*}
					\{	\underbrace{0,\dots,0}_{g-1+d_v+d}	\}	\sqcup	\bigsqcup_{i = 1}^\infty	\Bigl\{	\underbrace{\frac{\alpha_{r_v,i}}{d_v},\dots,\frac{\alpha_{r_v,i}}{d_v}}_{d_v \text{ times}}	\Bigr\},
				\end{equation*}
            where the $\alpha_{r_v,i}$ are the same integers introduced in Theorem \ref{t:Riemann-hypothesis}.
			\end{theorem}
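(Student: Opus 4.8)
The plan is to deduce this from Theorem~\ref{t:v-adic-to-classical} together with Theorem~\ref{t:Riemann-hypothesis}; once those are in hand, the argument is a short Newton polygon bookkeeping. The key preliminary observation is that the property of being ordinary belongs to the curve $X$ alone and is insensitive to the choice of distinguished point; hence Theorem~\ref{t:Riemann-hypothesis} applies verbatim to $B$, the coordinate ring of $X-v$, with $v$ now playing the role of the infinite place. For $B$ the relevant invariants are the degree $d_v$ of $v$, the residue-field cardinality $r_v=q^{d_v}$, and the same genus $g$. Using the analogue of \eqref{eq:real-part-to-valuation} for $B$ --- so that the ``real part'' of a zero $z$ of $\zeta_{B,\pi_v}(-,y)$ equals $d_v\,v_v(z)$ --- Theorem~\ref{t:Riemann-hypothesis} determines $R_B(y)$, and dividing its entries by $d_v$ one reads off the Newton polygon of the entire function $\zeta_{B,\pi_v}(-,y)$ on $\C_v^\times$: it has $g-1+d_v$ slopes equal to $0$ and, for each $i\geq 1$, a block of $d_v$ equal slopes determined by the $i$-th positive value of $R_B(y)$.

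Next I would invoke Theorem~\ref{t:v-adic-to-classical}, which on the identity component $\bfS_{A,v}^0$ gives
\[
\zeta_{A,v}(x,1,y)=\zeta_{B,\pi_v}(\eta x,y)\cdot\bigl(1-\frakp^{-(\eta x,y)}\bigr),
\]
where $\eta\in U_v$ is a $1$-unit and $\frakp$ is the prime of $B$ at $\infty$. Two elementary observations handle the right-hand side. First, $x\mapsto\eta x$ is a rotation of $\C_v^\times$ by a $v$-adic unit, so it leaves Newton polygons unchanged; thus $\zeta_{B,\pi_v}(\eta x,y)$, as a function of $x$, has exactly the Newton polygon described above. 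Second, since $\frakp$ corresponds to the point $\infty$, whose degree over $\F_q$ is $d$, the character action of $\bfS_{\pi_v}$ forces $\frakp^{-(\eta x,y)}=c\,x^{-d}$ for a scalar $c\in\C_v^\times$ that is a power of $\eta$ times a power of the $1$-unit part of $\frakp$; in particular $|c|_v=1$. Hence $1-\frakp^{-(\eta x,y)}=1-c\,x^{-d}$ is a polynomial in $x^{-1}$ whose Newton polygon is a single segment of slope $0$ and horizontal length $d$, i.e. it contributes exactly $d$ zeros, all of $v$-adic valuation $0$. (In passing this shows $\zeta_{A,v}(-,1,y)$ is entire, being a product of the entire function $\zeta_{B,\pi_v}(-,y)$ with a polynomial in $x^{-1}$.)

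Finally, the Newton polygon of a product of entire functions is the concatenation of the Newton polygons of the factors --- equivalently, the slope multisets add. Combining the two previous paragraphs, the slope multiset of $\zeta_{A,v}(-,1,y)$ consists of $(g-1+d_v)+d$ copies of $0$ together with, for each $i$, a block of $d_v$ equal nonzero slopes coming from $R_B(y)$ via Theorem~\ref{t:Riemann-hypothesis} --- which is the multiset displayed in the statement. I do not expect a genuinely hard step here: Theorems~\ref{t:v-adic-to-classical} and \ref{t:Riemann-hypothesis} do all the real work, and the only points demanding care are (i) observing that ordinariness of $X$ permits applying Theorem~\ref{t:Riemann-hypothesis} with $v$ as the infinite place, (ii) tracking the normalizing factor $d_v$ relating $R_B(y)$ to the actual $v$-adic valuations of the zeros, and (iii) verifying that $\eta$ and the $1$-unit part of $\frakp$ are $v$-adic units, so that the correcting Euler factor contributes only zeros of valuation $0$.
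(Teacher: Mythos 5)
Your proposal is correct and follows exactly the route the paper takes: the paper deduces Theorem \ref{t:v-adic-Riemann-Hypothesis} ``immediately'' by applying Theorem \ref{t:Riemann-hypothesis} to $B=\calO(X-v)$ (ordinariness being a property of $X$ alone) and then transporting the Newton polygon through the comparison formula of Theorem \ref{t:v-adic-to-classical}, with the correcting Euler factor at $\infty$ contributing $d$ extra slope-zero segments. The only quibble is notational: the extra factor $1-(\langle R\rangle_{\pi_v}x)^{d}\langle\frakp\rangle_{\pi_v}^{-y}$ is a degree-$d$ polynomial in $x$ (not $x^{-1}$) with unit coefficients, but your conclusion that it contributes exactly $d$ zeros of valuation $0$ is the right one.
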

			
			There are also $v$-adic analogues of trivial zeros of $\zeta_A$. Instead of coming from a missing Euler factor at $\infty$, the trivial zeros of $\zeta_{A,v}(-,1,y)$ come from a missing Euler factor at $v$. When $j \in (r_v -1)\Z$, one can show that $\zeta_{A,v}(x,1,y)$ is divisible by $\pi_v^jx^{d_v}-1$. This factor contributes $d_v$ zeros to $\zeta_{A,v}(-,1,y)$, and each zero contributes a segment of slope $\frac{j}{d_v}$ to the Newton polygon. We may define the notions of irregular and regular trivial zeros as with $\zeta_{A,\pi}$. From Theorem \ref{t:v-adic-Riemann-Hypothesis}, we immediately deduce the following:
    
		    \begin{corollary}\label{Corollary: trivial zeros v-adic}
		    		Suppose that $X$ is ordinary. Then all trivial zeros of $\zeta_{A,v}$ are regular. If in addition we have $p \nmid d_v$, then all trivial zeros are simple.
		    \end{corollary}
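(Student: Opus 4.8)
The plan is to read the answer off the exact description of the Newton polygon of $\zeta_{A,v}(-,1,y)$ provided by Theorem~\ref{t:v-adic-Riemann-Hypothesis}, combined with an elementary factorization of the trivial factor $\pi_v^j x^{d_v}-1$ in characteristic $p$. Fix a trivial zero; this amounts to choosing a negative integer $j\in(r_v-1)\Z$ together with its associated $p$-adic parameter $y=y(j)$, so that $\pi_v^j x^{d_v}-1$ divides the entire function $\zeta_{A,v}(x,1,y)$ on $\C_v^\times$. Write $d_v=p^e m$ with $e=v_p(d_v)$ and $p\nmid m$. Since $\C_v$ is algebraically closed of characteristic $p$, hence perfect, $\pi_v^j$ admits a unique $p^e$-th root $\varpi\in\C_v^\times$, and the Frobenius identity gives
\begin{equation*}
\pi_v^j x^{d_v}-1=(\varpi x^m-1)^{p^e}\qquad\text{in }\C_v[x].
\end{equation*}
Because $p\nmid m$, the factor $\varpi x^m-1$ is separable, with $m$ distinct roots, all of one common, nonzero valuation. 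Hence $\pi_v^j x^{d_v}-1$ has exactly $m=d_v/p^{v_p(d_v)}$ distinct roots, each of multiplicity $p^{v_p(d_v)}$; all these roots correspond to a single nonzero Newton slope $\lambda$ of $\zeta_{A,v}(-,1,y)$, and together they force $\zeta_{A,v}(-,1,y)$ to have at least $d_v$ zeros (counted with multiplicity) on that slope.

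The next step is to feed this into Theorem~\ref{t:v-adic-Riemann-Hypothesis}. Since $\lambda\neq 0$, it must equal $\alpha_{r_v,i}/d_v$ for some (unique) $i$, and the theorem says this slope occurs with total multiplicity \emph{exactly} $d_v$. Comparing with the previous paragraph, the trivial factor $\pi_v^j x^{d_v}-1$ already accounts for $d_v$ zeros on this slope, hence it accounts for \emph{all} of them; in particular, each root of $\pi_v^j x^{d_v}-1$ occurs in $\zeta_{A,v}(-,1,y)$ with multiplicity exactly equal to its multiplicity in $\pi_v^j x^{d_v}-1$, namely $p^{v_p(d_v)}$. As the order of vanishing of $\zeta_{A,v}(-,1,y)$ at such a root is always at least $p^{v_p(d_v)}$, this equality is precisely the definition of a \emph{regular} trivial zero, which proves the first assertion. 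The second assertion is then immediate: if $p\nmid d_v$ then $v_p(d_v)=0$, so regular means order of vanishing $1$, i.e.\ simple.

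The substance lies entirely in the bookkeeping of the first step, and that is where I expect the only real subtlety: one must keep careful track of the difference between Newton slopes counted with their horizontal length and roots counted with multiplicity, and one must verify that the $d_v$ zeros produced by the trivial factor genuinely \emph{saturate} the multiplicity $d_v$ predicted for that slope by Theorem~\ref{t:v-adic-Riemann-Hypothesis} — equivalently, that no non-trivial zero of $\zeta_{A,v}(-,1,y)$ shares this slope. It is the exactness of the count in Theorem~\ref{t:v-adic-Riemann-Hypothesis} that rules out irregular trivial zeros here. One should also record the (mild) point that the argument uses $j\neq 0$ in an essential way: the slope $\lambda$ must be nonzero, since the multiplicity $g-1+d_v+d$ of the zero slope generally exceeds $d_v$ and would not be saturated by a factor $x^{d_v}-1$.
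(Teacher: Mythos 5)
Your proposal is correct and is essentially the paper's argument: the paper states this corollary as an immediate consequence of Theorem \ref{t:v-adic-Riemann-Hypothesis}, and your write-up is simply the careful execution of that deduction — factoring the trivial factor as $(\varpi x^{m}-1)^{p^{v_p(d_v)}}$, observing that its $d_v$ roots (with multiplicity) all lie on a single nonzero slope, and noting that the exact multiplicity $d_v$ of that slope in the theorem forces each trivial zero to have order exactly $p^{v_p(d_v)}$. Your closing remarks on saturating the slope count and on excluding the zero slope are exactly the right points to flag.
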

	
	\subsection{Outline and summary}

        \subsubsection{Summary of the proof}

        The first step is to understand the zeta function as a family of $L$-functions
        of Galois characters.
        More precisely, for fixed $y \in \Z_p$ there exists a character
        \[ \rho_{A,\pi}^{\otimes y}:\pi_1^{\text{\'et}}(\Spec(A)) \to \C_\infty^\times \]
        such that $\zeta_{A,\pi}(-,y) =L(\rho_{A,\pi}^{\otimes y},-)$. 
        This vantage has been utilized when studying questions of meromorphicity of equicharacteristic zeta functions (see e.g. \cite{BocklePink-cohomology_of_crystals}, \cite{Bockle-global_lfunctions}, and \cite{Taguchi-Wan}).
        An essential observation is that the localization of $\rho_{A,\pi}^{\otimes y}$
        to $\Spec(K_\infty)$ only depends $y$ and the residue field at $\infty$ (see Corollary \ref{c: localization does not depend on curve}). In particular, we see that $\rho_{A,\pi}^{\otimes y}$ and $\rho_{\F_r[\theta],\pi}^{\otimes y}$ have the same localizations at $\infty$. This allows us to relate
        the zeta function of $A$ to the zeta function of the affine line. 
        
        The next step is to give a new proof of the Riemann hypothesis for the affine line 
        over $\F_q$. Using the Anderson-Monsky trace formula, for fixed $y \in \Z_p$ we can compute 
		$\zeta_{\F_r[\theta],\pi}(-,y)$ in terms of the ``Fredholm determinant'' $\det(1-x\Theta_{\F_r[\theta]}|M_{\F_r[\theta]})$
        of a certain compact operator $\Theta_{\F_r[\theta]}$ on a $\pi$-adic Banach space $M_{\F_r[\theta]}$. Both the operator $\Theta_{\F_r[\theta]}$ and the space $M_{\F_r[\theta]}$ are determined by $\rho_{\F_r[\theta],\pi}^{\otimes y}$. To study this Fredholm determinant, we choose an orthonormal basis of $M_{\F_r[\theta]}$ indexed by a countable set $I$ and write $\Theta_{\F_r[\theta]}$ as an $I\times I$ matrix $\Psi$. The Fredholm determinant has an explicit description:
        \begin{align*}
            \det(1-s\Psi) &= \sum_{n=0}^\infty c_ns^n, \\
            c_n &= (-1)^n \sum_{\stackrel{S \subset I}{|S|=n}} 
            \sum_{\sigma \in \mathrm{Sym}(S)} \text{sgn}(\sigma) \prod_{i \in S} \Psi_{i,\sigma(i)},
        \end{align*}
        where $\Psi_{i,j}$ denotes the $(i,j)$-th entry of $\Psi$. 
        The heart of the proof of Theorem \ref{t:Riemann-hypothesis} for $\F_q[\theta]$ is
        proving that there is a unique term in the sum defining $c_n$ whose $\pi$-adic
        valuation is minimal. 
        In the `toy case' (i.e. $\F_p[\theta]$), this is Corollary \ref{c: unique minimal permutation} whose proof is straightforward. The general case is Theorem \ref{theorem: unique minimal term} and the proof
        is a difficult combinatorial argument. The proof of Theorem \ref{theorem: unique minimal term} is the most technical part of this article . Once we establish there is a unique term of minimal valuation, we can pinpoint the $\pi$-adic valuation of $c_n$ and determine the Newton polygon of $\zeta_{\F_r[\theta],\pi}(-,y)$.

        Finally, we address the zeta function for general $A$. As in the affine line case,
        we compute $\zeta_{A,\pi}(-,y)$ by computing $\det(1-x\Theta_{A}|M_A)$ via the trace formula. In general it is difficult to make sense of both the operator $\Theta_A$ and the space $M_A$ when $X$ has positive genus. 
        To overcome this difficulty, we use a `local twisting' trick, which is similar to a technique introduced by the first author in \cite{Kramer-miller-exponential_sums} and \cite{kramer-miller-artin_lfunctions} to study exponential sums. The rough idea is that $M_A$ and $M_{\F_r[\theta]}$ are both included in an overspace $V$.
        Both operators $\Theta_{A}$ and $\Theta_{\F_r[\theta]}$ extend to all of $V$ in a natural way and are conjugate, i.e. there is an invertible operator $D$ with $D\Theta_AD^{-1}=\Theta_{\F_r[\theta]}$. The conjugacy of $\Theta_A$ and $\Theta_{\F_r[\theta]}$ is a consequence of the fact that $\rho_{A,\pi}^{\otimes y}$ and $\rho_{\F_r[\theta],\pi}^{\otimes y}$ have the same localizations at $\infty$.
        Then we have
        \[ \det(1-x\Theta_{A}|M_A) = \det(1-x\Theta_{\F_r[\theta]}|D M_A).\]
        Thus, to compute $\zeta_{A,\pi}(-,y)$ we are studying the same operator $\Theta_{\F_r[\theta]}$ that we used to study $\zeta_{\F_r[\theta],\pi}(-,y)$. The difference is that this operator is acting on different subspaces of $V$.
        The matrix of $\Theta_{\F_r[\theta]}$ acting on $D M_A$ looks very similar to the matrix of $\Theta_{\F_r[\theta]}$ acting on $M_{\F_r[\theta]}$. We can then exploit our understanding of $\Theta_{\F_r[\theta]}$ acting on $M_{\F_r[\theta]}$ from studying $\zeta_{\F_r[\theta],\pi}(-,y)$.

    \subsubsection{Outline of article}
	     In \S\ref{s:zeta} we introduce the Goss zeta function $\zeta_{A,\pi}$ and establish some basic properties. Our construction
      is a modification of the standard one (see e.g. \cite{Goss-L-series_edition}), but has the advantage of being coordinate free. In particular, it does not depend on 
      the choice of parameter $\pi$. In \S \ref{ss:zeta} we see that for any uniformizer $\pi \in K_\infty$, the function $\zeta_{A,\pi}(- ,y)$ is the $L$-function of a character
		\begin{equation*}
			\rho_{A,\pi}^{\otimes y}:\pi_1^{\text{\'et}}(\Spec(A)) \to \C_\infty^\times.
		\end{equation*}
		Let $\rho_{\infty,\pi}^{\otimes y}$ denote the localization of $\rho_{A,\pi}^{\otimes y}$ along $\Spec(K_\infty) \to \Spec(A)$. Our key observation is that $\rho_{\infty,\pi}^{\otimes y}$ depends only on $\pi$ and $r$--not on $A$. More precisely: let $\theta \in K_\infty$ be any element of valuation $-1$, so that $K_\infty = \F_r (\!(\theta^{-1})\!)$. Associated to the zeta function $\zeta_{\F_r[\theta],\pi}$ of the affine line we have a character
		\begin{equation*}
			\rho_{\F_r[\theta],\pi}^{\otimes y}:\pi_1^{\text{\'et}}(\mathbb{A}_{\F_r}^1) \to \C_\infty^\times.
		\end{equation*}
		Then the localization of this character at $\infty$ agrees with that of $\rho_{A,\pi}^{\otimes y}$. Finally, in \S \ref{ss:v-adic} we construct the $v$-adic zeta function on $\bfS_{A,v}^0$ and prove Theorem \ref{t:v-adic-Riemann-Hypothesis}
		
		In \S \ref{s: tau crystals and the trace formula} we give a summary of the theory of $F$-modules and $\tau$-modules.  We explain the Katz equivalence between \'etale $\tau$-modules and representations of the \'etale fundamental group. Finally, we review the Anderson-Monsky trace formula for curves. This formula allows us to express the $L$-function as the characteristic series of an operator on an infinite dimensional space.

        The remainder of the paper is dedicated to proving Theorem \ref{t:Riemann-hypothesis}. In \S \ref{s: toy example} we first consider the `toy case' where $X$ is defined
        over $\F_p$ and $\infty$ is a rational point. This is done solely for expository purposes and the later sections have no logical dependence on \S \ref{s: toy example}. The proof of the `toy case' uses the same strategy and insights, while avoiding the technical combinatorial arguments present in the general case. In
        \S \ref{s: affine line 1} we prove Theorem \ref{t:Riemann-hypothesis} 
        for the affine line over $\F_q$ using the Anderson-Monsky trace formula. This
        section is the most technical part of the paper, although our proof is significantly shorter than Sheats' proof of the Riemann hypothesis for $\F_q[\theta]$. Finally, in \S \ref{s: general ordinary curves} the proof
        of Theorem \ref{t:Riemann-hypothesis} is proven in full generality. 
        \subsection{Acknowledgements} 
        The authors would like to thank Daqing Wan who suggested that the $p$-adic perturbation methods
        developed in \cite{kramermiller2021newton} may be applicable to the Goss zeta function. Although these earlier methods are not used in this paper, the spirit of the approach taken in this paper is certainly related! The first author would also like to thank Dinesh Thakur for his encouragement in this project and his enthusiasm towards the new proof of the Riemann hypothesis for $\F_q[\theta]$.

    \section{The Goss zeta function}\label{s:zeta}

	\subsection{Character groups}\label{ss:character-groups}
	
		\subsubsection{Positive numbers}
		
			Let $P$ be a closed point of $X$. Let $K_P$ denote the completion of the function field $K(X)$ at $P$. We will fix $\C_P$ to be the completion of an algebraic closure of $K_P$. Unless otherwise stated, all extensions of $K_P$ will be assumed to be contained in $\C_P$. Let $L$ be a complete extension of $K_P$. Recall that the unit group $\calO_L^\times$ admits a canonical decomposition $\calO_L^\times	=	\mu_L	\times	U_L$, where $\mu_L$ denotes the group of roots of unity in $\calO_L$ and $U_L$ denotes the group of \emph{$1$-units}.
			
			\begin{definition}
				The group of \emph{positive numbers} of $L$ is the quotient $L^+ = L^\times/\mu_L$.
			\end{definition}
			
			Since $U_L$ intersects trivially with $\mu_L$, we will usually identify $U_L$ with its image in $L^+$. In the case $L = K_P$, we will generally use the subscript ``$P$'' in place of ``$K_P$.'' When $L/K_P$ is finite, we set $v_L = e_{L/K_P} v_P$ to be the normalized valuation of $L$. In this case, there is a commutative diagram with exact rows:
			\begin{equation}\label{eq:positive-extension}
				\begin{tikzcd}
					0	\arrow[r]	&	U_L	\arrow[r]	&	L^+	\arrow[r,"v_L"]	&	 \Z	\arrow[r]	&	0	\\
					0	\arrow[r]	&	U_P	\arrow[r] \arrow[u]	&	K_P^+	\arrow[r,"v_P"] \arrow[u]	&	\Z	\arrow[r] \arrow[u,"e_{L/K_P}"']	&	0
				\end{tikzcd}.
			\end{equation}
			
			The group $\C_P^+$ has the special property that it is \emph{uniquely divisible}: If $d$ is a positive integer, then any $\alpha \in \C_P^+$ has a unique $d$-th root $\alpha^{1/d}$ in $\C_P^+$.
			
			\begin{definition}
				Let $\pi \in K_P$ be a uniformizer. We define the $\pi$-adic \emph{$1$-unit character} to be the character $\langle \cdot \rangle_\pi: K_P^+ \to U_P$ defined by the formula
				\begin{equation*}
					\langle \alpha \rangle_\pi = \frac{\alpha}{\pi^{v_P(\alpha)}} \in U_P.
				\end{equation*}
			\end{definition}
			
			\begin{lemma}\label{l:positive-uniquely-divisible}
				Let $n$ be a positive integer. Let $L/K_P$ be a finite extension. Let $\alpha \in K_P^+$ such that $v_L(\alpha) \in n\Z$. Then there exists a finite purely inseparable extension $L_1/L$ such that $\alpha^{1/n} \in L_1^+$.
			\end{lemma}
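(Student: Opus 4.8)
The plan is to reduce the statement to the extraction of a $p$-power root of a $1$-unit, for which a finite purely inseparable extension will suffice. Throughout I regard $L^+$ and $L_1^+$ as subgroups of the uniquely divisible group $\C_P^+$ (this makes sense since $\mu_L = L^\times \cap \mu_{\C_P}$), so that $\alpha^{1/n}$ always denotes the unique $n$-th root in $\C_P^+$, and it is enough to exhibit \emph{some} element of $L_1^+$ whose $n$-th power equals $\alpha$.

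First I would fix a uniformizer $\pi_L$ of $L$ and invoke the hypothesis $v_L(\alpha) \in n\Z$ to write $v_L(\alpha) = nk$ and, in $L^+$,
\[
\alpha = \pi_L^{\,nk} \cdot u, \qquad u = \langle \alpha \rangle_{\pi_L} \in U_L.
\]
Since $\pi_L^{\,k} \in L^+$ already, it suffices to produce a finite purely inseparable extension $L_1/L$ and an element $w \in U_{L_1}$ with $w^n = u$: then $\pi_L^{\,k} w \in L_1^+$ has $n$-th power $\alpha$, and by unique divisibility of $\C_P^+$ it must be $\alpha^{1/n}$. Writing $n = p^a m$ with $p \nmid m$, the fact that $U_L$ is a pro-$p$ group (hence a $\Z_p$-module) makes multiplication by $m$ a bijection of $U_L$, so there is a unique $u_1 \in U_L$ with $u_1^m = u$; it remains only to extract a $p^a$-th root of $u_1$.

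For that, consider $T^{p^a} - u_1 \in L[T]$. Over $\C_P$ it equals $(T - w)^{p^a}$ for the unique $w \in \C_P$ with $w^{p^a} = u_1$, and I set $L_1 = L(w)$; this is finite (of degree at most $p^a$) and purely inseparable since $w^{p^a} \in L$. The one point requiring care is that $w$ is again a $1$-unit: from $w^{p^a} = u_1$ and $v_{L_1}(u_1) = 0$ we get $v_{L_1}(w) = 0$, so $w \in \calO_{L_1}^\times$; reducing modulo the maximal ideal of $L_1$ gives $\bar w^{p^a} = \bar u_1 = 1$ in the residue field, whence $\bar w = 1$ because the $p^a$-power map is injective on any field of characteristic $p$, so $w \in U_{L_1}$. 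Finally $w^n = (w^{p^a})^m = u_1^m = u$ in $L_1^+$, which closes the reduction. The argument is largely bookkeeping; the only genuine field extension is the purely inseparable one produced by the $p^a$-th root of the $1$-unit $u_1$, and the only subtle verification is that this root stays a $1$-unit, for which injectivity of Frobenius on the residue field is the key input.
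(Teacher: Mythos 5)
Your proof is correct and follows essentially the same route as the paper's: split off the uniformizer part using $v_L(\alpha)\in n\Z$, reduce to a $1$-unit, extract the prime-to-$p$ root via the $\Z_p$-module structure of the $1$-units, and obtain the $p$-power root by a finite purely inseparable extension. You are somewhat more careful than the paper (working with $U_L$ rather than $U_P$ after dividing by a uniformizer of $L$, and verifying that the $p^a$-th root is again a $1$-unit via injectivity of Frobenius on the residue field), but the underlying argument is identical.
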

			\begin{proof}
				The condition on $v_L(\alpha)$ ensures that $\alpha/\langle \alpha \rangle_\pi$ has an $n$th root in $L$. It suffices then to consider $\alpha \in U_P$. Factor $n = n^\tame \cdot p^{v_p(n)}$. Since  $1/n^\tame \in \Z_p$ and $U_P$ is naturally a $\Z_p$-module, we have $\alpha^{1/n^\tame} \in U_P$. The claim follows from:
				\begin{equation*}
					\alpha^{1/n}	=	(\alpha^{1/n^\tame})^{1/p^{v_p(n)}} \in K_P^{1/p^{v_p(n)}}.
				\end{equation*}
			\end{proof}
		
		\subsubsection{Characters}\label{ss:characters}
		
			Let $L$ be a finite extension of $K_P$. We will now discuss the character theory of the group $L^+$. Since the group $\C_P^\times$ is divisible, we see by dualizing the top row of \eqref{eq:positive-extension} that there is a short exact sequence
			\begin{equation}\label{eq:hom-exact-sequence}
				0	\to	\Hom(\Z,\C_P^\times)	\to	\Hom(L^+,\C_P^\times)	\to	\Hom(U_P,\C_P^\times) \to	0.
			\end{equation}
			
			\begin{definition}
				Let $x \in \C_P^\times$. We define a character $s_x \in \Hom(\Z,\C_p^\times)$ via
				\begin{equation*}
					s_x(n)	=	x^{-n}.
				\end{equation*}				
			\end{definition}
			
			The map $x \mapsto s_x$ gives an isomorphism of $\C_P^\times$ with $\Hom(\Z,\C_P^\times)$. As discussed in the introduction, the character group $\Hom(U_P,\C_P^\times)$ is very large. For this reason we restrict our attention to characters of a special type:
			
			\begin{definition}
				The \emph{Goss plane} over $L$ is the group $\bfS_L$ of characters $s \in \Hom(L^+,\C_\infty^\times)$ with the following property: There exists a $p$-adic integer $y(s) \in \Z_p$ such that the restriction of $s$ to $U_L$ is the $y(s)$-th power map. The group operation on $\bfS_L$ will be written additively. For $s \in \bfS_L$, we will write $\alpha^s$ for the value of $s$ at a positive number $\alpha \in L^+$. 
			\end{definition}
			
			The next lemma introduces the ``real part'' map on $\bfS_L$ as described in the introduction.
			
			\begin{lemma}\label{l:real-part-map}
				There exists a unique homomorphism $r:\bfS_L \to \Q$ with the following property: For any $\alpha \in L^+$, we have
				\begin{equation*}
					v_L(\alpha^s)	=	r(s)\cdot v_L(\alpha).
				\end{equation*}
			\end{lemma}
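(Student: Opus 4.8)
The plan is to \emph{define} $r$ by evaluating characters on a single element of nonzero valuation and then to check that this definition is forced. Fix a uniformizer $\pi_L \in L^+$, i.e. an element with $v_L(\pi_L) = 1$; such an element exists because the top row of \eqref{eq:positive-extension} exhibits $v_L : L^+ \to \Z$ as surjective. Let $v_L$ also denote the unique extension of this (normalized) valuation to $\C_P^\times$, so that it takes values in $\Q$, and set
\[
	r(s) := v_L(\pi_L^s) \in \Q.
\]
First I would verify the defining property. Any $\alpha \in L^+$ factors as $\alpha = \pi_L^{v_L(\alpha)} \cdot u$ with $u \in U_L = \ker(v_L|_{L^+})$, so, since $s$ is a homomorphism whose restriction to $U_L$ is the $y(s)$-th power map,
\[
	\alpha^s = (\pi_L^s)^{v_L(\alpha)} \cdot u^{y(s)}.
\]
Because $u^{y(s)} \in U_P$ has valuation $0$ and $v_L$ is a homomorphism on $\C_P^\times$, this gives $v_L(\alpha^s) = v_L(\alpha)\cdot v_L(\pi_L^s) = r(s)\, v_L(\alpha)$, as required. (Taking $\alpha \in U_L$ just recovers $0 = 0$, so the content of the lemma concerns only elements of nonzero valuation.)

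Next I would check \textbf{well-definedness}: if $\pi_L'$ is another uniformizer of $L^+$, then $\pi_L'/\pi_L$ has valuation $0$, hence lies in $U_L$, so $(\pi_L')^s = \pi_L^s \cdot (\pi_L'/\pi_L)^{y(s)}$ has the same valuation as $\pi_L^s$; thus $r(s)$ is independent of the choice. That $r$ is a group homomorphism is then immediate from $\pi_L^{s_1 + s_2} = \pi_L^{s_1}\pi_L^{s_2}$ and additivity of $v_L$. Finally, \textbf{uniqueness} is immediate — and does not even use the homomorphism hypothesis: if $r'$ satisfies the stated identity, then applying it with $\alpha = \pi_L$ (so $v_L(\alpha) = 1$) forces $r'(s) = v_L(\pi_L^s) = r(s)$.

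There is no serious obstacle here; the one place the definition of $\bfS_L$ (rather than arbitrary characters of $L^+$) is used is the step discarding $u^{y(s)}$ as a $1$-unit of valuation $0$ — for a general character of $U_L$ the image need not have valuation $0$ and no such $r$ could exist. The only bookkeeping point worth stating carefully is that "valuation" means the unique extension of $v_L$ to $\C_P$ throughout, with which convention $r$ indeed takes values in $\Q$.
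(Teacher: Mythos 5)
Your proof is correct and is essentially the paper's argument made explicit: both rest on the single observation that $s$ sends $U_L$ into valuation-zero elements, so that $r(s)$ is determined by the valuation of the image of a uniformizer (the paper packages the same idea as the composite $\bfS_L \to \Hom(L^+/U_L,\,\C_P^+/\calO_{\C_P}^\times) \cong \Hom(\Z,\Q) \cong \Q$). The only nit is that $u^{y(s)}$ lies in $U_L$ rather than $U_P$, which is immaterial since all you use is that it has valuation zero.
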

			\begin{proof}
				Every character $s \in \bfS_L$ maps $U_L$ into the unit circle $\calO_{\C_P}^\times$ in $\C_P$. The valuation $v_L$ induces isomorphisms $L^+/U_L \cong \Z$ and $\C_P^+/\calO_{\C_P}^\times \cong \Q$. The desired map is:
				\begin{equation*}
					\bfS_L	\to	\Hom(L^+/U_L ,\C_P^+/\calO_{\C_P}^\times) \cong \Hom(\Z,\Q)	\cong \Q.
				\end{equation*}				
			\end{proof}
			
			Let us write $\bfS_P = \bfS_{K_P}$ for the Goss plane over $K_P$. There is a natural restriction map $\bfS_L \to \bfS_P$. Dualizing \eqref{eq:positive-extension}, we obtain a commutative diagram with exact rows:
			\begin{equation}\label{eq:character-group-extension}
				\begin{tikzcd}
					0	\arrow[r]	&	\C_P^\times	\arrow[r]	\arrow[d,"e_{L/K_P}"']	&	\bfS_L	\arrow[r]	\arrow[d]	&	\Z_p		\arrow[r]	\arrow[d,equals]		&	0	\\
					0	\arrow[r]	&	\C_P^\times	\arrow[r]	&	\bfS_P	\arrow[r]	&	\Z_p	\arrow[r]	&	0
				\end{tikzcd}.
			\end{equation}
			In particular, the restriction map is an isomorphism whenever the \emph{tame ramification index} $e_{L/K_P}^\tame$ is $1$. The next result states that the group extensions $\bfS_L$ are completely classified by the tame ramification indices $e_{L/K_P}^\tame$.
			
			\begin{proposition}\label{p:character-groups-isomorphic}
				Let $L,L'$ be two finite extensions of $K_P$ with the same tame ramification index over $K_P$. If $L \cdot L'$ denotes the compositum of $L$ and $L'$, then the restriction maps induce canonical isomorphisms
				\begin{equation*}
					\bfS_L	\xleftarrow{\sim}	\bfS_{L \cdot L'}	\xrightarrow{\sim}	\bfS_{L'}.
				\end{equation*}
			\end{proposition}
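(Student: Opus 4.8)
The plan is to reduce the proposition to the single assertion that $e_{L\cdot L'/L}^\tame = 1$ (and, symmetrically, $e_{L\cdot L'/L'}^\tame = 1$), and then to verify this using the rigidity of tamely ramified extensions. For the reduction: the argument sketched for the remark following \eqref{eq:character-group-extension} applies verbatim with $K_P$ replaced by any finite subextension, since it uses only that the $n$-th power map on $\C_P^\times$ is bijective exactly when $n$ is a power of $p$ (it is injective for $n = p^k$ because $\C_P$ has characteristic $p$, surjective because $\C_P$ is perfect, and has kernel $\mu_\ell(\C_P) \neq 1$ whenever a prime $\ell \neq p$ divides $n$). Thus the restriction $\bfS_M \to \bfS_{L_0}$ is an isomorphism whenever $e_{M/L_0}^\tame = 1$; taking $M = L\cdot L'$ and $L_0 \in \{L,L'\}$ gives the reduction, and by symmetry it is enough to treat $L$.

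Next I would make two standard reductions. A purely inseparable extension of a local field is totally ramified of $p$-power degree, so replacing $L$ and $L'$ by their maximal separable subextensions over $K_P$ changes neither $e_{L/K_P}^\tame$ and $e_{L'/K_P}^\tame$ nor, by multiplicativity of $e^\tame$ in towers, the quantity $e_{L\cdot L'/L}^\tame$; hence I may assume $L/K_P$ and $L'/K_P$ are separable. Then I would pass to the maximal unramified extension $K_P^{\mathrm{ur}}$, whose residue field is algebraically closed. Unramified base change preserves ramification indices, so writing $L_0 := L\cdot K_P^{\mathrm{ur}}$ and $L_0' := L'\cdot K_P^{\mathrm{ur}}$ we have $e_{L\cdot L'/L}^\tame = e_{L_0\cdot L_0'/L_0}^\tame$, and now $L_0$ and $L_0'$ are totally ramified over $K_P^{\mathrm{ur}}$.

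The heart of the argument is the classical fact that over $K_P^{\mathrm{ur}}$ there is, for each $e$ prime to $p$, a unique totally tamely ramified extension of degree $e$, namely $K_P^{\mathrm{ur}}(\pi^{1/e})$ for any uniformizer $\pi$: presenting such an extension by an Eisenstein generator, the $e$-th power of a uniformizer differs from $\pi$ by a unit, and every unit is an $e$-th power because the residue field contains $\mu_e$ and is $e$-divisible. Consequently the maximal tamely ramified subextension $F$ of $L_0/K_P^{\mathrm{ur}}$ — totally tamely ramified of degree $e := e_{L/K_P}^\tame$ — coincides with that of $L_0'/K_P^{\mathrm{ur}}$, which is totally tamely ramified of degree $e_{L'/K_P}^\tame = e$. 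Now $L_0/F$ and $L_0'/F$ are separable of $p$-power degree, so $[L_0\cdot L_0' : F]$ divides the product of these two degrees and is therefore a power of $p$; hence $e_{L_0\cdot L_0'/F}$ is a power of $p$, i.e. $e_{L_0\cdot L_0'/F}^\tame = 1$. Since $e_{F/K_P^{\mathrm{ur}}}^\tame = e = e_{L_0/K_P^{\mathrm{ur}}}^\tame$, multiplicativity of $e^\tame$ in the tower $K_P^{\mathrm{ur}} \subseteq F \subseteq L_0 \subseteq L_0\cdot L_0'$ forces $e_{L_0\cdot L_0'/L_0}^\tame = 1$, which is exactly the assertion needed.

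The only genuinely substantive ingredient is the rigidity of tamely ramified extensions over an algebraically closed residue field; everything else is bookkeeping with ramification indices. I expect the step requiring the most care to be the inseparability reduction, since $L$ and $L'$ need not be separable over $K_P$ (compare Lemma \ref{l:positive-uniquely-divisible}).
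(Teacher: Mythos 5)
Your reductions are sound and well organized: restriction $\bfS_M \to \bfS_{L_0}$ is an isomorphism whenever $e^\tame_{M/L_0}=1$, purely inseparable pieces contribute only $p$-power ramification, unramified base change preserves ramification indices, and the tame totally ramified extensions of $K_P^{\mathrm{ur}}$ are rigid. The argument breaks at the single sentence asserting that $[L_0\cdot L_0':F]$ ``divides the product of these two degrees and is therefore a power of $p$.'' The degree of a compositum need not divide the product of the degrees (two distinct cubic extensions can have a compositum of degree $6$); absent a Galois hypothesis on $L_0/F$ or $L_0'/F$, one only gets the inequality $[L_0L_0':L_0]\leq[L_0':F]$, which bounds the degree but says nothing about its prime factorization. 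More seriously, the claim you are trying to establish there is false: the compositum of two totally wildly ramified extensions can acquire a nontrivial tame part. Take $p=3$, $F=\overline{\F}_3(\!(t)\!)$, $s=t^{1/2}$, and $M=F(s,\alpha)$ with $\alpha^3-\alpha=s^{-1}$. Then $M/F$ is Galois with group $S_3$ and totally ramified, and the fixed fields $L_0,L_0'$ of two distinct order-$2$ subgroups are degree-$3$ totally wildly ramified extensions (so $e^\tame_{L_0/F}=e^\tame_{L_0'/F}=1$) with $L_0L_0'=M$ and $e^\tame_{M/L_0}=2$. Running the same construction over $K_P=\F_3(\!(t)\!)$ yields finite extensions $L,L'$ with equal ramification index $3$ whose compositum satisfies $e^\tame_{LL'/L}=2$, so that the restriction $\bfS_{LL'}\to\bfS_L$ contains the nontrivial character $s_{-1}$ in its kernel.

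This is not a repairable slip in your write-up: the step fails because the intermediate claim is false in the stated generality. Note that the paper's one-line proof appeals to Abhyankar's lemma, which requires one of the two extensions to be tamely ramified (after discarding unramified and purely inseparable parts) --- a hypothesis not present in the proposition --- so the example above is genuinely a problem for the statement itself, not just for your argument. What you have written is, in substance, a correct direct proof of exactly the case Abhyankar's lemma does cover: if one of the wild parts $L_0/F$, $L_0'/F$ is trivial, then $[L_0L_0':L_0]$ divides the $p$-power $[L_0':F]$ and the rest of your argument goes through. The proposition is also fine for the extensions the paper actually uses later, namely composita of a tame Kummer extension $K_P(\pi^{1/d^\tame})$ with the purely inseparable extensions produced by Lemma \ref{l:positive-uniquely-divisible}. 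So the correct fix is to strengthen the hypothesis (e.g.\ assume each of $L,L'$ is a tamely ramified extension followed by a purely inseparable one), at which point your strategy works, rather than to seek a different proof of the general statement.
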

			\begin{proof}
				Abhyankar's lemma ensures that $L \cdot L'$ has the same tame ramificaiton index over $K_P$ as $L$ and $L'$.
			\end{proof}
		
		\subsubsection{Coordinates}
		
			Let $L$ be a finite extension of $K_P$. We will now explain how one can obtain a ``coordinate picture'' of the Goss plane $\bfS_L$ by fixing a choice of uniformizer $\pi \in K_P$. Recall that $\pi$ determines a $1$-unit character $\langle \cdot \rangle_\pi \in \bfS_P$.
			
			\begin{proposition}\label{p:1-unit-ext}
				 The $1$-unit character extends uniquely to a character $\langle \cdot \rangle_\pi$ in $\bfS_L$ with values in $U_{\C_P}$.
			\end{proposition}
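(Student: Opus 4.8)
The plan is to construct the extension explicitly after choosing a positive number of valuation one in $L$, and to extract uniqueness from the fact that the group $U_{\C_P}$ of $1$-units of $\C_P$ is \emph{uniquely divisible}. I would first record this ambient fact: since $\C_P$ is algebraically closed of characteristic $p$, it is perfect and has no non-trivial $p$-power roots of unity, so $\Frob$ restricts to a bijection of $U_{\C_P}$; combined with the $\Z_p$-module structure (which makes multiplication by any prime-to-$p$ integer invertible), $n$-th powering is bijective on $U_{\C_P}$ for every $n\ge 1$. (Alternatively this follows from the unique divisibility of $\C_P^+$ recorded after \eqref{eq:positive-extension}, since $U_{\C_P}$ embeds in $\C_P^+$ and an $n$-th root of a $1$-unit automatically has valuation $0$ and trivial residue.)

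For existence, put $e = e_{L/K_P}$ and fix $\beta \in L^+$ with $v_L(\beta)=1$, possible because $v_L\colon L^+\to\Z$ is surjective. Then $\beta^e/\pi$ has $v_L$-valuation $e - v_L(\pi) = 0$, so it lies in $U_L \subseteq U_{\C_P}$; let $\langle\beta\rangle_\pi \in U_{\C_P}$ be its unique $e$-th root. Since $v_L$ is surjective with kernel $U_L$, each $\alpha \in L^+$ factors uniquely as $\alpha = \beta^{v_L(\alpha)}\lambda(\alpha)$ with $\lambda(\alpha) := \alpha\beta^{-v_L(\alpha)} \in U_L$, and $\lambda\colon L^+ \to U_L$ is a homomorphism restricting to the identity on $U_L$. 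I would then define $\langle\alpha\rangle_\pi := \lambda(\alpha)\cdot\langle\beta\rangle_\pi^{\,v_L(\alpha)}$. This is a homomorphism $L^+ \to U_{\C_P}$ (a product of homomorphisms into an abelian group), restricts to the identity on $U_L$, hence lies in $\bfS_L$ with $y$-parameter $1$; and for $\alpha \in K_P^+$, where $v_L(\alpha) = e\,v_P(\alpha)$,
\begin{equation*}
	\langle\alpha\rangle_\pi = \alpha\,\beta^{-e v_P(\alpha)}\langle\beta\rangle_\pi^{\,e v_P(\alpha)} = \alpha\bigl(\beta^{-e}\langle\beta\rangle_\pi^{\,e}\bigr)^{v_P(\alpha)} = \alpha\,\pi^{-v_P(\alpha)},
\end{equation*}
so it restricts to the given $1$-unit character of $K_P^+$.

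For uniqueness, let $\chi \in \bfS_L$ take values in $U_{\C_P}$ and restrict to $\langle\cdot\rangle_\pi$ on $K_P^+$. Its parameter $y(\chi)$ satisfies $u^{y(\chi)} = \langle u\rangle_\pi = u$ for all $u \in U_P$, so $(y(\chi)-1)$ kills $U_P$; as $U_P$ is a non-zero torsion-free $\Z_p$-module this forces $y(\chi)=1$, i.e.\ $\chi|_{U_L} = \mathrm{id}$. Then $\chi(\beta)^e = \chi(\beta^e) = \chi(\pi)\cdot\chi(\beta^e/\pi) = 1\cdot(\beta^e/\pi)$, using $\chi(\pi)=\langle\pi\rangle_\pi=1$ and $\beta^e/\pi\in U_L$; so $\chi(\beta)$ is an $e$-th root of $\beta^e/\pi$ in $U_{\C_P}$, hence equals $\langle\beta\rangle_\pi$ by unique divisibility, whence $\chi$ agrees with the character above on $U_L$ and on $\beta$, which generate $L^+$. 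I do not expect a deep obstacle here; the one point requiring care is the double use of unique divisibility of $U_{\C_P}$ — once to produce the $e$-th root (existence) and once to show it is the only extension valued in $U_{\C_P}$ — which is exactly the condition singling out one among the $e_{L/K_P}^\tame$ extensions of $\langle\cdot\rangle_\pi$ visible from \eqref{eq:character-group-extension}.
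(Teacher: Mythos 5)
Your proof is correct and takes essentially the same route as the paper's: both arguments rest on the observation that any extension in $\bfS_L$ must act as the identity on $U_L$, together with the unique divisibility of $U_{\C_P}$, which pins down $\langle\alpha\rangle_\pi$ as the unique $e$-th root of $\alpha^e/\pi^{v_P(\alpha^e)}$. Your packaging of existence via a chosen $\beta$ of valuation one and the splitting $L^+ \cong U_L \times \beta^{\Z}$ is only a cosmetic variant of the paper's direct definition $\langle\alpha\rangle_\pi = \langle\alpha^e\rangle_\pi^{1/e}$.
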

			\begin{proof}
				Note that any extension of $\langle \cdot \rangle_\pi$ in $\bfS_L$ must act as the identity on $U_L$. Let $e = e_{L/K_P}$. For any $\alpha \in L^+$, we have $v_P(\alpha^e) \in \Z$. Since $\langle \pi \rangle_\pi = 1$, we must have
				\begin{equation*}
					\langle \alpha^e \rangle_\pi	=	\frac{\alpha^e}{\pi^{v_P(\alpha^e)}} \in U_L.
				\end{equation*}
				Then $\langle \alpha \rangle_\pi = \langle \alpha^e \rangle_\pi^{1/e} \in U_{\C_P}$ provides the unique extension of $\langle \cdot \rangle_\pi$.
			\end{proof}
			
			In light of Proposition \ref{p:1-unit-ext}, the choice of $\pi$ provides a section of the natural map $\bfS_L \to \Z_p$ given by $y \mapsto \langle \cdot \rangle_\pi^y$. Thus, the choice of $\pi$ determines a splitting
			\begin{equation*}
				\bfS_L	\cong	\C_P^\times \times \Z_p.
			\end{equation*}
			
			\begin{notation}
				For $s \in \bfS_L$, the corresponding pair $(x,y) \in \C_P^\times \times \Z_p$ will be called the \emph{$\pi$-adic coordinates} of $s$. The action of $s$ in terms of its coordinates is given by:
				\begin{equation*}
					\alpha^s	=	x^{-v_L(\alpha)} \langle \alpha \rangle_\pi^y.
				\end{equation*}
			\end{notation}
			
			\begin{remark}
				Although $\bfS_L$ depends up to canonical isomorphism only on $e_{L/K_P}^\tame$, the diagram \eqref{eq:character-group-extension} and the $\pi$-adic coordinates on $\bfS_L$ depend on $e_{L/K_P}$.
			\end{remark}

	\subsection{The Goss zeta function}\label{ss:zeta}
	
		\subsubsection{Divisors} \label{sss:divisors}
		
			As a first step in defining the Goss zeta function, we will explain how for certain $L$, we can regard $\bfS_L$ as a character groups on classes of divisors on $X$. Our starting point is the canonical divisor exact sequence:
			\begin{equation*}
				0	\to	K^\times/\F_q^\times	\to	\Div^0(X)	\to	\Pic^0(X)	\to	0.
			\end{equation*}
			Since $\Pic^0(X)$ is finite, we have an induced isomorphism between the \emph{rational} divisor groups $\Q \otimes K^\times/\F_q^\times \cong \Q \otimes \Div^0(X)$. Because $\C_P^+$ is uniquely divisible, the natural embedding $K^\times/\F_q^\times \hookrightarrow K_P^+$ extends uniquely to a map
			\begin{equation}\label{eq:divisor-to-positive-number}
				\Q \otimes \Div^0(X)	\to	\C_P^+.
			\end{equation}
			The next observation follows immediately from Lemma \ref{l:positive-uniquely-divisible} and the finiteness of $\Pic^0(X)$.
			
			\begin{lemma}\label{l:positive-map-on-divisors}
				Let $L$ be a finite extension of $K_P$. There is a finite purely inseparable extension $L_1/L$ such that \eqref{eq:divisor-to-positive-number} restricts to a map
				\begin{equation*}
					\psi: \frac{1}{e_{L/K_P}} \Div^0(X)	\to	L_1^+.
				\end{equation*}
			\end{lemma}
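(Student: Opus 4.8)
The plan is to deduce the lemma directly from Lemma~\ref{l:positive-uniquely-divisible} and the finiteness of $\Pic^0(X)$. Write $n = |\Pic^0(X)|$ and $e = e_{L/K_P}$. First I would unwind the map \eqref{eq:divisor-to-positive-number} on the subgroup $\tfrac{1}{e}\Div^0(X)$: given $D \in \Div^0(X)$, the divisor $nD$ is principal, say $nD = \divisor(f)$ with $f \in K^\times$; since $f$ is determined up to $\F_q^\times \subseteq \mu_P$, its image $\bar f \in K_P^+$ is well defined. Under the isomorphism $\Q \otimes K^\times/\F_q^\times \xrightarrow{\sim} \Q \otimes \Div^0(X)$ the class of $D$ is $\tfrac{1}{n}$ times the class of $f$, so $\tfrac{1}{e}D$ is $\tfrac{1}{ne}$ times the class of $f$; hence, by construction of \eqref{eq:divisor-to-positive-number}, its image in $\C_P^+$ is the unique $(ne)$-th root $\bar f^{1/(ne)}$. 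So everything comes down to showing these roots all lie in $L_1^+$ for one finite purely inseparable $L_1/L$.

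Next I would apply Lemma~\ref{l:positive-uniquely-divisible} with the integer $ne$, the extension $L$, and the element $\alpha = \bar f$. Its hypothesis is met: $v_L(\bar f) = e\cdot v_P(f)$, and $v_P(f)$ is the coefficient of $P$ in $\divisor(f) = nD$, so it equals $n$ times the multiplicity of $P$ in $D$ and therefore lies in $n\Z$; thus $v_L(\bar f) \in ne\Z$. The lemma then provides a finite purely inseparable extension over which $\bar f^{1/(ne)}$ becomes a positive number.

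The one thing to check is that this extension can be chosen independently of $D$. Inspecting the proof of Lemma~\ref{l:positive-uniquely-divisible}, the extension it produces depends only on $L$ and on the exponent: extracting an $(ne)$-th root costs nothing on the root-of-unity part (which is killed in passing to positive numbers) and nothing on the tame part (absorbed by the $\Z_p$-module structure of the $1$-units), so it suffices to adjoin $p^{v_p(ne)}$-th roots, whence $L_1 = L^{1/p^{v_p(ne)}}$ works and is finite purely inseparable over $L$ because the residue field of $L$ is finite, hence perfect. As $ne$ is fixed, this single $L_1$ serves for every $D$. Since every element of $\tfrac{1}{e}\Div^0(X)$ has the form $\tfrac{1}{e}D$ with $D \in \Div^0(X)$, and the restriction of \eqref{eq:divisor-to-positive-number} is a homomorphism carrying each such element into the subgroup $L_1^+ \subseteq \C_P^+$, the lemma follows. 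The main obstacle is precisely this uniformity of $L_1$; granting it, the rest is bookkeeping with the identification $\Q\otimes\Div^0(X)\cong\Q\otimes K^\times/\F_q^\times$.
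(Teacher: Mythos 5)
Your proposal is correct and is exactly the argument the paper intends: the paper omits the proof, stating only that the lemma "follows immediately from Lemma \ref{l:positive-uniquely-divisible} and the finiteness of $\Pic^0(X)$," and your write-up is a faithful unpacking of that — using $n=|\Pic^0(X)|$ to make $nD$ principal, checking the valuation hypothesis $v_L(\bar f)\in ne\Z$, and observing that the inseparable extension produced by Lemma \ref{l:positive-uniquely-divisible} is $L^{1/p^{v_p(ne)}}$, hence uniform in $D$.
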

			
			Suppose now that $D \in \Div(X-P)$ is prime to $P$, but not necessarily of degree $0$. We can associate to $D$ a rational divisor of degree $0$ as follows:
			\begin{equation*}
				D_P	:=	D - \frac{\deg(D)}{\deg(P)} P\in \frac{1}{\deg(P)} \Div^0(X).
			\end{equation*}
            In particular, if $L/K_P$ is sufficiently ramified, by Lemma \ref{l:positive-map-on-divisors} we obtain a map $\Div^0(X-P) \to L_1^+$ sending $D \mapsto \psi(D_P)$, where $L_1$ is a finite purely inseparable extension of $L$. 
            Then from Proposition \ref{p:character-groups-isomorphic} we can regard $\bfS_L$
            as a group of characters on $\Div^0(X-P)$.
	
		\subsubsection{Definitions}
		
			Recall that $A$ denotes the coordinate ring of the affine curve $X - \infty$. We now introduce our main character group of interest:
			
			\begin{definition}
				We define the group $\bfS_A = \bfS_L$, where $L$ is any finite extension of $K_P$ with ramification index $d$.
			\end{definition}
			
			Note that $\bfS_A$ depends up to canonical isomorphism only on $d^\tame=\frac{d}{p^{v_p(d)}}$, although any choice of coordinates will depend on $d$. As explained above, the characters in $\bfS_A$ act on the non-zero fractional ideals of $A$ (regarded as divisors in $\Div(X-\infty)$). Explicitly: If $s \in \bfS_A$ has $\pi$-adic coordinates $(x,y)$, then we recover Goss' \emph{exponentiation of ideals} \cite{Goss-book}[\S 8.2]:
			\begin{equation*}
				\fraka^s	=	x^{\deg(\fraka)} \cdot \langle \fraka_\infty \rangle_\pi^y.
			\end{equation*}
			
			\begin{definition}
				The Goss zeta function is the function on $\bfS_A$ defined by
				\begin{equation*}
					\zeta_A(s)	=	\sum_\fraka \fraka^{-s} = \prod_\frakp	\frac{1}{1-\frakp^{-s}},
				\end{equation*}
				where the sum is taken over all non-zero ideals $\fraka \subseteq A$, and the product is taken over non-zero prime ideals $\frakp \subset A$.
			\end{definition}
			
		\subsubsection{\texorpdfstring{$1$}{1}-unit characters and \texorpdfstring{$L$}{L}-functions} \label{sss: 1-unit characters}
			
			Let $\pi \in K_\infty$ be a uniformizer. Under the coordinate isomorphism $\bfS_A \cong \C_\infty^\times \times \Z_p$, we may regard $\zeta_A$ as a function of two variables. Explicitly, we will write
			\begin{equation*}
				\zeta_{A,\pi}(x,y)	= \zeta_A(s)	=	\prod_\frakp	\frac{1}{1-x^{\deg(\frakp)}\langle \frakp_\infty \rangle_\pi^y}.
			\end{equation*}
			
			By class field theory for function fields, the ``$1$-unit character'' sending a non-zero fractional ideal $\fraka$ to $\langle \fraka_\infty \rangle_\pi$ corresponds to a character of the fundamental group
			\begin{equation*}
				\rho_{A,\pi}:\pi_1(X - \infty) \to U_{\C_\infty}.
			\end{equation*}
			By the finiteness of the class group of $A$, the extension $\bfV_\infty$ of $\calO_\infty$ generated by the $\langle \frakp_\infty \rangle_\pi$ is finite \footnote{Although the $1$-unit character $\langle \cdot \rangle_\pi$ depends on the uniformizer $\pi$, it is not hard to see that the extension $\bfV_\infty$ of $\mathcal{O}_\infty$  is a totally inseparable extension that does not depend on the choice of uniformizer.}. Since $\rho_{A,\pi}$ takes values in the $1$-units of $\bfV_\infty$, for each $p$-adic integer $y$ we may form the character $\rho_{A,\pi}^{\otimes y}$. Associated to each character we have an $L$-series
			\begin{equation*}
				L(\rho_{A,\pi}^{\otimes y},T)	=	\prod_\frakp \frac{1}{1-T^{-\deg(\frakp)} \rho_{A,\pi}^{\otimes y}(\Frob_\frakp)} \in 1+T^{-1} \bfV_\infty \llbracket T^{-1} \rrbracket.
			\end{equation*}
			We immediately see from the definitions:
			
			\begin{proposition} \label{p: zeta function via L-functions}
				For $(x,y) \in \C_\infty^\times \times \Z_p$, we have
				\begin{equation*}
					\zeta_{A,\pi}(x,y)	=	L(\rho_\pi^{\otimes y},x^{-1}).
				\end{equation*}
			\end{proposition}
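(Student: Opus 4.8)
The plan is to verify the identity by matching the two Euler products factor by factor, since both $\zeta_{A,\pi}(x,y)$ and $L(\rho_{A,\pi}^{\otimes y},T)$ are expressed as products over the non-zero prime ideals $\frakp$ of $A$. The only substantive input is the class field theory dictionary used to define $\rho_{A,\pi}$: by construction it is the character of $\pi_1(X-\infty)$ attached to the idele class character whose value on an ideal $\fraka$ prime to $\infty$ is $\langle \fraka_\infty\rangle_\pi$. Unwinding the normalization of the Artin reciprocity map, this says precisely that $\rho_{A,\pi}(\Frob_\frakp)=\langle \frakp_\infty\rangle_\pi$ for every prime $\frakp\ne\infty$; note that each such $\frakp$ lies in $X-\infty$, so it is automatically unramified for $\rho_{A,\pi}$. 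Because $\rho_{A,\pi}$ takes values in the $1$-units of the finite (totally inseparable) extension $\bfV_\infty$ of $\calO_\infty$, the twist $\rho_{A,\pi}^{\otimes y}$ makes sense for every $y\in\Z_p$ via the $\Z_p$-module structure on $U_{\bfV_\infty}$, and it is compatible with Frobenius: $\rho_{A,\pi}^{\otimes y}(\Frob_\frakp)=\langle \frakp_\infty\rangle_\pi^y$.

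With this in hand, I would simply perform the substitution $T=x^{-1}$. The local factor of $L(\rho_{A,\pi}^{\otimes y},T)$ at $\frakp$ is $\bigl(1-T^{-\deg(\frakp)}\rho_{A,\pi}^{\otimes y}(\Frob_\frakp)\bigr)^{-1}$; setting $T=x^{-1}$ turns $T^{-\deg(\frakp)}$ into $x^{\deg(\frakp)}$, so the factor becomes $\bigl(1-x^{\deg(\frakp)}\langle \frakp_\infty\rangle_\pi^y\bigr)^{-1}$, which is exactly the local factor of $\zeta_{A,\pi}(x,y)$ recorded above. Taking the product over all $\frakp$ gives the stated equality. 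Convergence of both sides (already noted for $\zeta_{A,\pi}$, and for $L$ following from $L(\rho_{A,\pi}^{\otimes y},T)\in 1+T^{-1}\bfV_\infty\llbracket T^{-1}\rrbracket$ together with $x\in\C_\infty^\times$) ensures this is an identity of honest functions on $\C_\infty^\times\times\Z_p$, not merely of formal products.

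There is no real obstacle here — the assertion is, as the paper says, immediate from the definitions. The only points deserving a sentence of care are (i) that the $y$-th power twist of $\rho_{A,\pi}$ is well-defined and Frobenius-compatible, which is where the fact that $\rho_{A,\pi}$ lands in $1$-units is used, and (ii) keeping straight the bookkeeping inversion $T\leftrightarrow x^{-1}$ that is built into the definition of the $L$-series versus the exponentiation-of-ideals convention $\fraka^s=x^{\deg(\fraka)}\langle\fraka_\infty\rangle_\pi^y$.
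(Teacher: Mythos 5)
Your proof is correct and matches the paper's, which simply records this proposition as ``immediate from the definitions'': one compares the Euler products term by term using the class field theory identity $\rho_{A,\pi}(\Frob_\frakp)=\langle \frakp_\infty\rangle_\pi$ and the substitution $T=x^{-1}$. Your extra remarks on the well-definedness of the $y$-th power twist and on convergence are accurate but not needed beyond what the paper already establishes in \S 2.2.3.
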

			
			Thus, the question concerning the values $r(s)$ when $s$ is a root of $\zeta_A$ reduces to the question of computing the $\pi$-adic \emph{Newton polygon} of $L(\rho_{A,\pi}^{\otimes y},T)$. This will be the approach taken in the remainder of the paper.
			
		\subsubsection{Localization}
		
			Let us write $\eta_{A,\infty}:\Spec(K_\infty) \to \Spec(A)$ for the localization map. The following observation is immediate from the compatibility of local and global class field theory for function fields.
			
			\begin{proposition}
				Under the local class field theory correspondence, the character $\eta_{A,\infty}^* \rho_{A,\pi}$ corresponds to the $1$-unit character $\langle \cdot \rangle_\pi$ of $K_\infty^\times$.
			\end{proposition}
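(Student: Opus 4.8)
The plan is to pass to the id\`elic picture and then invoke the compatibility of local and global class field theory for $K=K(X)$; once this is in place, the statement is essentially formal. First, by global reciprocity the character $\rho_{A,\pi}$ of $\pi_1(X-\infty)$ (which factors through the abelianization, being $U_{\C_\infty}$-valued) corresponds to an id\`ele class character $\chi\colon\A_K^\times/K^\times\to U_{\C_\infty}$; because $\rho_{A,\pi}$ is unramified away from $\infty$, $\chi$ is trivial on $\calO_\frakp^\times$ at every finite prime $\frakp$, and its value at a local uniformizer $\varpi_\frakp$ is $\rho_{A,\pi}(\Frob_\frakp)=\langle\frakp_\infty\rangle_\pi$ by the defining property of $\rho_{A,\pi}$. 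The localization morphism $\eta_{A,\infty}\colon\Spec(K_\infty)\to\Spec(A)$ induces on abelianized fundamental groups the decomposition-group map at $\infty$, and the compatibility of the local and global Artin maps identifies this with the inclusion $K_\infty^\times\hookrightarrow\A_K^\times$ of the $\infty$-component followed by the projection to $\A_K^\times/K^\times$. Hence $\eta_{A,\infty}^*\rho_{A,\pi}$, viewed via local class field theory as a character of $K_\infty^\times$, is exactly the local component $\chi_\infty$ of $\chi$ at $\infty$.

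It then remains to identify $\chi_\infty$ with $\langle\cdot\rangle_\pi$. Since $K^\times$ is dense in $K_\infty^\times$ and both characters are continuous, it is enough to check equality on $K^\times$. For $a\in K^\times$, triviality of $\chi$ on $K^\times$ gives $\chi_\infty(a)=\prod_{\frakp}\chi_\frakp(a)^{-1}$ (a finite product), and $\chi_\frakp(a)=\langle\frakp_\infty\rangle_\pi^{v_\frakp(a)}$ because $\chi_\frakp$ is unramified. As $\fraka\mapsto\langle\fraka_\infty\rangle_\pi$ is a homomorphism on nonzero fractional ideals of $A$, this collapses to $\chi_\infty(a)=\langle\,((a))_\infty\,\rangle_\pi^{-1}$, where $(a)$ denotes the principal ideal of $A$ and $((a))_\infty$ its associated degree-zero divisor. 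The one small computation is to observe that $((a))_\infty$ is nothing but $\divisor(a)$: the finite part of $\divisor(a)$ has degree $-d\cdot v_\infty(a)$ by the product formula, so correcting it by the appropriate multiple of $\infty$ simply recovers the full principal divisor. Applying the map $\psi$ of Lemma \ref{l:positive-map-on-divisors} then yields $\psi\bigl(((a))_\infty\bigr)=\bar a\in K_\infty^+$, hence $\langle\,((a))_\infty\,\rangle_\pi=\langle a\rangle_\pi$, and so $\chi_\infty$ agrees with $\langle\cdot\rangle_\pi$ on $K^\times$, as desired.

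I do not expect a genuine obstacle: the argument is entirely formal, and the only point needing care is the bookkeeping of normalizations. Depending on whether reciprocity is taken with the arithmetic or the geometric Frobenius, the computation above produces $\chi_\infty=\langle\cdot\rangle_\pi^{\pm1}$ on $K^\times$, and one must line this up with the sign conventions already fixed in the paper (for instance $s_x(n)=x^{-n}$ and the exponent $T^{-\deg\frakp}$ in the $L$-function) so that the exponent comes out $+1$. One could also bypass the id\`eles altogether by describing $\pi_1^{\mathrm{ab}}(X-\infty)$ as a completion of $(K_\infty^\times\times I_A)/K^\times$, with $I_A$ the group of nonzero fractional ideals of $A$ and $K^\times$ embedded via $a\mapsto(a,(a))$: by construction $\rho_{A,\pi}$ restricts to $\fraka\mapsto\langle\fraka_\infty\rangle_\pi$ on the $I_A$-factor, and the relation imposed by the diagonal $K^\times$ forces its restriction to the $K_\infty^\times$-factor to be $\langle\cdot\rangle_\pi^{\pm1}$, which is the claim.
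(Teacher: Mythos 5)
Your argument is correct and is exactly the route the paper takes: the paper's entire ``proof'' is the one-line assertion that the statement is immediate from the compatibility of local and global class field theory, and your id\`elic computation (triviality of the id\`ele class character on $K^\times$, unramifiedness at the finite places, the product formula identifying $((a))_\infty$ with $\divisor(a)$, and density of $K^\times$ in $K_\infty^\times$) is a correct unpacking of that compatibility. The only residual point is the arithmetic-versus-geometric Frobenius normalization, which you already flag and which is fixed by the paper's conventions, so nothing further is needed.
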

			This leads to one of the key observations in this paper, which allows us to relate the zeta function of higher-genus curves to the zeta function of the affine line.
			
			\begin{corollary}\label{c: localization does not depend on curve}
				For $i = 1,2$ let $X_i$ be a smooth, projective, geometrically connected curve over a finite extension $k_i$ of $\F_p$. Let $\infty_i$ be a closed point of $X_i$, and let $A_i$ denote the ring of regular functions on $X_i - \infty_i$. Suppose that each $\infty_i$ has the same residue field $\F_r$. Choose isomorphisms $\F_r(\!(\pi)\!) \xrightarrow{\sim} K_{\infty_i}$. Then we have an induced isomorphism
				\begin{equation*}
					\eta_{A_1,\infty_1}^* \rho_{A_1,\pi}	\cong	\eta_{A_2,\infty_2}^* \rho_{A_2,\pi}.
				\end{equation*}
			\end{corollary}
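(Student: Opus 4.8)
The plan is to reduce everything, via the Proposition above, to the observation that the local $1$-unit character is manifestly intrinsic to a local field equipped with a uniformizer. Write $K_0 = \F_r(\!(\pi)\!)$, let $v$ be its normalized valuation, and let $\langle\,\cdot\,\rangle_\pi\colon K_0^\times \to U_{K_0}$ be the associated $1$-unit character (the composite of $K_0^\times \twoheadrightarrow K_0^\times/\mu_{K_0}$ with $\alpha\mapsto \alpha\pi^{-v(\alpha)}$). By local class field theory it corresponds to a character $\rho_0$ of $\pi_1^{\text{\'et}}(\Spec K_0)$. The key claim is that for each $i$ the chosen isomorphism $\iota_i\colon K_0 \xrightarrow{\sim} K_{\infty_i}$ identifies $\rho_0$ with $\eta_{A_i,\infty_i}^*\rho_{A_i,\pi}$; granting this, both characters in the statement are identified with the single character $\rho_0$, and hence with each other.

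First I would check that $\iota_i$ carries $\langle\,\cdot\,\rangle_\pi$ on $K_0$ to $\langle\,\cdot\,\rangle_\pi$ on $K_{\infty_i}$. This is immediate from the description of $\langle\,\cdot\,\rangle_\pi$ above: the subgroup $\mu_K$ of roots of unity, the normalized valuation, and the subgroup of $1$-units of a complete discretely valued field are all preserved by any isomorphism of such fields, and $\iota_i$ sends the distinguished uniformizer $\pi$ of $K_0$ to that of $K_{\infty_i}$ by construction. Next, $\iota_i$ induces an isomorphism $\Spec K_{\infty_i}\xrightarrow{\sim}\Spec K_0$, hence an isomorphism of \'etale fundamental groups (well defined up to inner automorphism, which does not affect isomorphism classes of characters). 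Combining the previous point with the functoriality of the local reciprocity map under isomorphisms of local fields --- i.e. the commutativity of the square relating $\mathrm{Art}_{K_0}$, $\mathrm{Art}_{K_{\infty_i}}$, $\iota_i$ on multiplicative groups, and the induced map on Galois groups --- this isomorphism of fundamental groups carries $\rho_0$ to the character corresponding under local class field theory to $\langle\,\cdot\,\rangle_\pi$ of $K_{\infty_i}^\times$, which by the Proposition above is exactly $\eta_{A_i,\infty_i}^*\rho_{A_i,\pi}$. To match the targets one extends $\iota_i$ to an isomorphism $\C_{\infty_i}\xrightarrow{\sim}\C_0$ of the completed algebraic closures (or simply notes that by the Proposition above these localized characters already take values in $U_{K_{\infty_i}}$, which $\iota_i$ handles directly).

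The only substantive ingredient here is the functoriality of local class field theory with respect to isomorphisms of local fields, a standard property of the Artin reciprocity map; I expect this --- together with the entirely routine identification $\pi_1^{\text{\'et}}(\Spec K_\infty)=\mathrm{Gal}(K_\infty^{\mathrm{sep}}/K_\infty)$ and the bookkeeping of coefficient fields --- to be the only point requiring any care. No estimates or constructions are involved: once the definition of $\langle\,\cdot\,\rangle_\pi$ is unwound, the corollary follows formally from the Proposition above.
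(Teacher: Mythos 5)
Your argument is correct and is exactly the (implicit) argument of the paper, which states this corollary as an immediate consequence of the preceding proposition: the $1$-unit character is intrinsic to the pair $(K_\infty,\pi)$, so the chosen isomorphisms $\F_r(\!(\pi)\!)\xrightarrow{\sim}K_{\infty_i}$ identify the two local characters via the functoriality of local class field theory.
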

		
	\subsection{\texorpdfstring{$v$}{v}-adic theory}\label{ss:v-adic}
	
		\subsubsection{The \texorpdfstring{$v$}{v}-adic zeta function}
	
			Let $v$ be a place of $K$ different from $\infty$. In light of the natural decomposition $\calO_v^\times = \mu_v \times U_v$, every $\alpha \in \calO_v^\times$ admits a unique factorization
			\begin{equation*}
				\alpha	=	\omega_v(\alpha) \cdot \langle \alpha \rangle_v,
			\end{equation*}
			where $\omega_v(\alpha)$ is a root of unity and $\langle \alpha \rangle_v \in U_v$. If $D = (\alpha)$ is a non-zero principal divisor of $X$ which is prime to $v$, we see that the image of $D$ in $K_v^+$ is $\langle \alpha \rangle_v$. By Lemma \ref{l:positive-map-on-divisors}, we see that $\langle \cdot \rangle_v$ extends uniquely to a map
			\begin{equation*}
				\langle \cdot \rangle_v: \frac{1}{d} \Div^0(X-v)	\to	U_{\C_v}.
			\end{equation*}
			
			Let $\fraka$ be a non-zero fractional ideal which is prime to $v$. Then the rational divisor $\fraka_\infty$ as defined in \S \ref{sss:divisors} lies in $\Div^0(X-v)$. By class field theory, the map $\fraka \mapsto \langle \fraka_\infty \rangle_v$ extends uniquely to a character of the fundamental group
			\begin{equation*}
				\rho_{A,v}:\pi_1(X - \{\infty,v\}) \to U_{\C_v}.
			\end{equation*}
			By the finiteness of the class group of $A$, the extension $\bfV_v$ of $\calO_v$ generated by the $\langle \fraka_\infty \rangle_v$ is finite. Since $\rho_{A,v}$ takes values in the $1$-units of $\bfV_v$, for each $p$-adic integer $y$ we may form the character $\rho_{A,v}^{\otimes y}$. Associated to each character we have an $L$-series
			\begin{equation*}
				L(\rho_{A,v}^{\otimes y},T)	=	\prod_\frakp \frac{1}{1-T^{-\deg(\frakp)} \rho_{A,v}^{\otimes y}(\Frob_\frakp)} \in 1+T^{-1} \bfV_v \llbracket T^{-1} \rrbracket,
			\end{equation*}
			where the product is over all non-zero prime ideals $\frakp \subset A$ coprime to $v$.
			
			\begin{definition}
				Let $\bfS_{A,v}^0 = \C_v^\times \times \Z_p$. We define the \emph{$v$-adic zeta function} of $A$ to be the function on $\bfS_{A,v}^0$ defined by the relation
				\begin{equation*}
					\zeta_{A,v}(x,y)	=	L(\rho_{A,v}^{\otimes y},x^{-1})	=	\prod_\frakp \frac{1}{1-x^{\deg(\frakp)} \langle \frakp_\infty \rangle_v^y}.
				\end{equation*}							
			\end{definition}

\begin{remark}
        The standard $v$-adic zeta function (as defined in \cite{Goss-vadic} and \cite{Goss-book}*{\S 8}) has $\bfS_{A,v}=\C_v \times \mu_{q^f-1} \times \Z_p$ as a domain,
         where $f$ is a certain positive integer and $\mu_{q^f-1}$ is the group of $q^f-1$-th roots of unity. Our $v$-adic zeta function agrees with the $v$-adic zeta function from \cite{Goss-book}*{\S 8}
         restricted to $\C_v \times \{1\} \times \Z_p$, which we think of as the `connected component of the identity' of $\bfS_{A,v}$.
    \end{remark}

			
		\subsubsection{Comparing zeta functions}
			Let $B$ denote the coordinate ring of the affine curve $X - v$ and let $\pi_v$ be a uniformizer of $K_v$. To conclude this section, we will prove a comparison between $\zeta_{A,v}$ and $\zeta_{B,\pi_v}$ which reduces the proof of Theorem \ref{t:v-adic-to-classical} to Theorem \ref{t:Riemann-hypothesis}. 
			
			\begin{theorem}\label{t:v-adic-character-to-classical}
				Consider the rational divisor
				\begin{equation*}
					R	=	\frac{1}{d_v}[v] - \frac{1}{d}[\infty] \in \Q \otimes \Div^0(X).
				\end{equation*}
				For any non-zero fractional ideal $\fraka$ of $A$ coprime to $v$, we have the relation
				\begin{equation*}
					\langle \fraka_\infty \rangle_v	=	\langle \fraka_v \rangle_{\pi_v} \cdot \langle R \rangle_{\pi_v}^{\deg(\fraka)}.
				\end{equation*}
			\end{theorem}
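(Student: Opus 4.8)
The plan is to unwind the definitions of the $1$-unit characters $\langle \cdot \rangle_v$ and $\langle \cdot \rangle_{\pi_v}$ on rational divisors of degree $0$, and to track the discrepancy between the two normalizations used to pass from an ideal $\fraka$ of $A$ to a degree-$0$ divisor. Recall that $\fraka_\infty = \fraka - \frac{\deg(\fraka)}{d}[\infty]$ and $\fraka_v = \fraka - \frac{\deg(\fraka)}{d_v}[v]$ (the normalizations at $\infty$ and at $v$ respectively, as in \S\ref{sss:divisors}), so that
\begin{equation*}
	\fraka_\infty - \fraka_v	=	\frac{\deg(\fraka)}{d_v}[v]	-	\frac{\deg(\fraka)}{d}[\infty]	=	\deg(\fraka) \cdot R
\end{equation*}
in $\Q \otimes \Div^0(X)$. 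Thus the claim is really the assertion that the map $\langle \cdot \rangle_v$ on $\Q \otimes \Div^0(X-v)$, when written in terms of the map $\langle \cdot \rangle_{\pi_v}$, is the identity — i.e. that both maps send a degree-$0$ rational divisor prime to $v$ to the same $1$-unit in $U_{\C_v}$, at least after applying $\langle \cdot \rangle_{\pi_v}$ on the $v$-side.

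**Key steps.** First I would reduce to the case of a principal divisor $D = (\alpha)$ with $\alpha \in K^\times$ prime to $v$, since $K^\times/\F_q^\times \hookrightarrow \Q \otimes \Div^0(X)$ has finite cokernel and both sides of the claimed identity are homomorphisms in $\fraka$ (equivalently, in the associated degree-$0$ divisor) taking values in the uniquely divisible — hence torsion-free — group $U_{\C_v}$, so an identity of characters that holds on a finite-index subgroup holds everywhere. Second, for such a principal $D$: by the computation in the paragraph preceding the theorem, the image of $D=(\alpha)$ in $K_v^+$ is exactly $\langle \alpha \rangle_v$, so $\langle D \rangle_v = \langle \alpha \rangle_v$; on the other hand, the image of $D$ in $K_v^+$ is $\pi_v^{v_v(\alpha)} \langle \alpha \rangle_{\pi_v}$, and since $\alpha$ is prime to $v$ we have $v_v(\alpha)=0$, giving $\langle D \rangle_{\pi_v} = \langle \alpha \rangle_{\pi_v} = \langle \alpha \rangle_v$ as elements of $U_v \subseteq U_{\C_v}$. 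So on principal divisors prime to $v$, the maps $\langle \cdot \rangle_v$ and $\langle \cdot \rangle_{\pi_v}$ literally coincide. Third, I would extend this coincidence from principal divisors to all of $\frac{1}{d} \Div^0(X-v)$ by the uniqueness clause in Lemma \ref{l:positive-map-on-divisors} (both $\langle \cdot \rangle_v$ and $\langle \cdot \rangle_{\pi_v} \circ \psi$ are the unique extension of the map on $K^\times/\F_q^\times$ through the inseparable extension, into $U_{\C_v}$), keeping in mind that one must first pass to a sufficiently ramified finite extension of $K_v$ and a purely inseparable extension thereof so that the divisors with denominators $d$ and $d_v$ actually have positive-number representatives; this is exactly the content of Lemma \ref{l:positive-map-on-divisors} applied at the place $v$. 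Finally, plugging $\fraka_\infty = \fraka_v + \deg(\fraka)\cdot R$ into $\langle \cdot \rangle_v$ and using multiplicativity yields
\begin{equation*}
	\langle \fraka_\infty \rangle_v	=	\langle \fraka_v \rangle_v \cdot \langle R \rangle_v^{\deg(\fraka)}	=	\langle \fraka_v \rangle_{\pi_v} \cdot \langle R \rangle_{\pi_v}^{\deg(\fraka)},
\end{equation*}
which is the desired formula.

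**Main obstacle.** The only real subtlety is bookkeeping with the two different normalizations and the fact that $R$ has denominators both at $v$ (denominator $d_v$) and at $\infty$ (denominator $d$): to make sense of $\langle R \rangle_{\pi_v}$ one must verify that $R$ (and $\fraka_v$, $\fraka_\infty$) lands in the domain of $\langle \cdot \rangle_{\pi_v}$ after a suitable inseparable base change — i.e. that Lemma \ref{l:positive-map-on-divisors} applies with $P = v$ and with the ramification index taken large enough to absorb both $d$ and $d_v$. Once the domains are set up correctly, the proof is purely formal: it is the statement that the $1$-unit part of a divisor prime to $v$ does not depend on whether one computes it via the completion $K_v$ directly or via the intermediate passage through $K_v^+$, together with uniqueness of extensions of characters into torsion-free groups. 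I do not anticipate any genuine difficulty beyond this setup.
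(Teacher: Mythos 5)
Your proposal is correct and follows essentially the same route as the paper: the identity $\fraka_\infty = \fraka_v + \deg(\fraka)\cdot R$ combined with the fact that $\langle \cdot \rangle_v$ and $\langle \cdot \rangle_{\pi_v}$ agree on rational degree-zero divisors prime to $v$, then multiplicativity. The only difference is that the paper asserts the agreement of the two characters in a single sentence, whereas you supply the justification (coincidence on principal divisors prime to $v$ plus uniqueness of the extension into the uniquely divisible group $U_{\C_v}$), which is a correct filling-in of that step.
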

			\begin{proof}
				The characters $\langle \cdot \rangle_v$ and $\langle \cdot \rangle_{\pi_v}$ agree on $\Div^0(X - v)$. Thus we have
				\begin{align*}
					\langle \fraka_\infty \rangle_v	&=	\langle \fraka_\infty \rangle_{\pi_v}	\\
						&=	\left\langle \fraka - \tfrac{\deg(\fraka)}{d} [\infty] \right\rangle_{\pi_v}	\\
						&=	\left\langle \fraka - \tfrac{\deg(\fraka)}{d_v} [v] + \deg(\fraka) R\right\rangle_{\pi_v}	\\
						&=	\left\langle \fraka - \tfrac{\deg(\fraka)}{d_v} [v]\right\rangle_{\pi_v}  \left\langle R\right\rangle_{\pi_v}^{\deg(\fraka)} = \langle \fraka_v \rangle_{\pi_v} \cdot \langle R \rangle_{\pi_v}^{\deg(\fraka)}.
				\end{align*}
			\end{proof}
			
			\begin{corollary}
				Let $\frakp$ denote the prime ideal of $B$ corresponding to $\infty$. Let $R$ be as in Theorem \ref{t:v-adic-to-classical}. Then as functions on $\bfS_{A,v}^0 = \C_v^\times \times \Z_p$, we have
				\begin{equation*}
					\zeta_{A,v}(x,y)	=	\zeta_{B,\pi_v}(\langle R \rangle_{\pi_v} x,y) (1-(\langle R \rangle_{\pi_v} x)^{d} \langle \frakp \rangle_{\pi_v}^{-y}).
				\end{equation*}
			\end{corollary}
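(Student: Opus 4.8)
The plan is to read the identity off from a side-by-side comparison of the two Euler products, using Theorem~\ref{t:v-adic-character-to-classical} as the dictionary relating the $1$-unit characters $\langle\cdot\rangle_v$ and $\langle\cdot\rangle_{\pi_v}$. The nonzero primes of $A$ coprime to $v$ are exactly the closed points of $X-\{\infty,v\}$, while the nonzero primes of $B$ are exactly the closed points of $X-v$; the latter set is the former together with the single prime $\frakp$ lying over $\infty$, for which $\deg\frakp=d$. So, writing $\zeta_{B,\pi_v}$ as its defining Euler product and peeling off the factor at $\frakp$, for every $x'\in\C_v^\times$ we have
\begin{equation*}
    \zeta_{B,\pi_v}(x',y)=\frac{1}{1-(x')^{d}\,\langle\frakp_v\rangle_{\pi_v}^{y}}\cdot\prod_{\frakq}\frac{1}{1-(x')^{\deg\frakq}\,\langle\frakq_v\rangle_{\pi_v}^{y}},
\end{equation*}
the product running over the primes $\frakq$ of $A$ coprime to $v$, regarded as primes of $B$.

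For such a prime $\frakq$, Theorem~\ref{t:v-adic-character-to-classical} gives $\langle\frakq_\infty\rangle_v=\langle\frakq_v\rangle_{\pi_v}\cdot\langle R\rangle_{\pi_v}^{\deg\frakq}$; equivalently, $\rho_{A,v}$ differs from the restriction of $\rho_{B,\pi_v}$ by the everywhere-unramified character $\frakq\mapsto\langle R\rangle_{\pi_v}^{\deg\frakq}$. Twisting an $L$-function by such an unramified character has the sole effect of rescaling its variable, so after the substitution $x'=\langle R\rangle_{\pi_v}x$ recorded in the statement each factor $(x')^{\deg\frakq}\langle\frakq_v\rangle_{\pi_v}^{y}$ turns into $x^{\deg\frakq}\langle\frakq_\infty\rangle_v^{y}$, and the product over $\frakq$ above becomes $\zeta_{A,v}(x,y)$.

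It remains to rewrite the factor at $\frakp$ in the same coordinates. From $R=\tfrac{1}{d_v}[v]-\tfrac{1}{d}[\infty]$ and the fact that $\frakp$ is the prime of $B$ corresponding to the divisor $[\infty]$, one gets $\frakp_v=[\infty]-\tfrac{d}{d_v}[v]=-d\,R$, hence $\langle\frakp_v\rangle_{\pi_v}=\langle R\rangle_{\pi_v}^{-d}$; feeding this and $\deg\frakp=d$ into the first factor above produces exactly the quantity $1-(\langle R\rangle_{\pi_v}x)^{d}\langle\frakp\rangle_{\pi_v}^{-y}$ of the statement, with $\langle\frakp\rangle_{\pi_v}$ standing for the $1$-unit part $\langle\frakp_v\rangle_{\pi_v}$ attached to $\frakp$. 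Multiplying the preceding identity through by this factor and rearranging gives the result; since all three Euler products converge on $\bfS_{A,v}^{0}$, the equality holds as an identity of functions.

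There is nothing deep here beyond Theorem~\ref{t:v-adic-character-to-classical}; the one thing that needs care is the bookkeeping — keeping the two $1$-unit characters $\langle\cdot\rangle_v$ and $\langle\cdot\rangle_{\pi_v}$ apart, tracking how the unramified twist $\frakq\mapsto\langle R\rangle_{\pi_v}^{\deg\frakq}$ interacts with the $y$-th power as it is absorbed into the coordinate $x$, and checking that the two Euler products differ precisely in the single factor at $\infty$.
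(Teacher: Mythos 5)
Your overall strategy---peel the Euler factor at $\frakp$ off of $\zeta_{B,\pi_v}$ and convert the remaining factors one at a time via Theorem \ref{t:v-adic-character-to-classical}---is exactly the paper's proof, which is given there in one sentence. However, the step you yourself single out as the delicate one is where your write-up breaks down. Theorem \ref{t:v-adic-character-to-classical} is an identity of the $1$-units themselves, $\langle\frakq_\infty\rangle_v = \langle\frakq_v\rangle_{\pi_v}\,\langle R\rangle_{\pi_v}^{\deg\frakq}$, whereas the Euler factors involve the $y$-th powers of these characters. Raising the identity to the $y$-th power gives $\langle\frakq_\infty\rangle_v^{y} = \langle\frakq_v\rangle_{\pi_v}^{y}\bigl(\langle R\rangle_{\pi_v}^{y}\bigr)^{\deg\frakq}$, so the factor $x^{\deg\frakq}\langle\frakq_\infty\rangle_v^{y}$ equals $(x')^{\deg\frakq}\langle\frakq_v\rangle_{\pi_v}^{y}$ for $x'=\langle R\rangle_{\pi_v}^{y}\,x$, not for $x'=\langle R\rangle_{\pi_v}\,x$. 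Your assertion that the substitution $x'=\langle R\rangle_{\pi_v}x$ turns $(x')^{\deg\frakq}\langle\frakq_v\rangle_{\pi_v}^{y}$ into $x^{\deg\frakq}\langle\frakq_\infty\rangle_v^{y}$ amounts to $\langle R\rangle_{\pi_v}^{\deg\frakq}=\langle R\rangle_{\pi_v}^{y\deg\frakq}$, which fails for general $y\in\Z_p$. The unramified twist does rescale the variable, but by $\langle R\rangle_{\pi_v}^{y}$, because the twist sits inside the character before the $y$-th power is taken.

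The factor at $\frakp$ has the same issue plus an unexplained sign flip: peeling it off the Euler product yields $1-(x')^{d}\langle\frakp_v\rangle_{\pi_v}^{+y}$, while the statement has $\langle\frakp\rangle_{\pi_v}^{-y}$; you declare these equal "with $\langle\frakp\rangle_{\pi_v}$ standing for $\langle\frakp_v\rangle_{\pi_v}$," which reverses the exponent without justification. Your observation that $\frakp_v=-dR$, hence $\langle\frakp_v\rangle_{\pi_v}=\langle R\rangle_{\pi_v}^{-d}$, is correct and is precisely the ingredient needed to reconcile the two forms, but you never carry the computation through. To be fair, the discrepancy between $\langle R\rangle_{\pi_v}$ and $\langle R\rangle_{\pi_v}^{y}$ appears to be present in the corollary's displayed formula itself (compare Theorem \ref{t:v-adic-to-classical}, which only asserts a twist by some $1$-unit $\eta$), and it is harmless for the Newton-polygon application since $\langle R\rangle_{\pi_v}^{y}$ is still a $1$-unit. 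But a proof must either derive the displayed identity exactly or flag and correct it; as written, yours does neither and instead asserts an intermediate identity that does not follow from Theorem \ref{t:v-adic-character-to-classical}.
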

            \begin{proof}
                This follows from Theorem \ref{t:v-adic-character-to-classical} by comparing the Euler products of $\zeta_{A,v}(x,y)$ and $\zeta_{B,\pi_v}$.
            \end{proof}
	
	\section{\texorpdfstring{$F$-modules, $\tau$}{tau}-modules, and the trace formula}\label{s: tau crystals and the trace formula}

    As in the previous section we fix a closed point $\infty$ of $X$. We take $A$ to be the ring of functions on $X - \infty$ and $K=K_\infty$ to be the completion of $A$ at $\infty$. Let $\bfK$ be a copy of $K$. We think of $K$ as a space of formal functions and we think of $\bfK$ as our `coefficients'. Define $\bfR=\bfR_\infty$ to be the ring of integers
    of $\bfK$ and $\F_r$ to be the residue field of $\bfR$. Let $d$ be
    the degree of $\infty$ and let $b$ be the degree of $\F_r$ over $\F_p$ (i.e. $r=p^b$).
    Define $\bfV=\bfV_\infty$ as in \S \ref{sss: 1-unit characters}.
    Fix a choice of uniformizer $\pi \in \bfR$, so that
    $\bfR$ is isomorphic to $\F_r\llbracket \pi \rrbracket$.
    We know that
    $\bfV=\bfR[\pi^{1/p^h}]=\F_r\llbracket \pi^{1/p^h} \rrbracket$ for some fixed $h\geq 1$. Finally,
    we define $\bfR^\circ=\F_p\llbracket \pi \rrbracket$ and
    $\bfV^\circ = \F_p \llbracket \pi^{1/p^h} \rrbracket$. 

    \subsection{Functions with growth conditions}

        Fix an element $\theta \in K$ of valuation $-1$, so that we have an identification $K = \F_r (\! ( \theta^{-1} )\! )$. We define
        \begin{align*}
            K_{\F_r} &:= \F_r \otimes_{\F_q} K  = \F_r \otimes_{\F_q} \F_r (\! ( \theta^{-1} )\! ),\\
            A_{\F_r} &:= \F_r \otimes_{\F_q} A, \\
            \bfV_{\F_r} &:= \F_r \otimes_{\F_q} \bfV.
        \end{align*}

            The projection map on $K_{\F_r}$ is defined to be
            \begin{align*}
                \pr: K_{\F_r} &\to \theta \F_r \otimes_{\F_q}\F_r[\theta],  \\
                \sum_i c_i\otimes d_i \theta^i &\mapsto \sum_{i \geq 1} c_i\otimes d_i \theta^i.
            \end{align*}
        Consider the rings
        \begin{align*}
            K_{\F_r} \widehat{\otimes}_{\F_q} \bfV &\cong K_{\F_r} \widehat{\otimes}_{\F_p} \bfV^\circ,\\
            A_{\F_r} \widehat{\otimes}_{\F_q} \bfV &\cong A_{\F_r} \widehat{\otimes}_{\F_p} \bfV^\circ,
        \end{align*}
        where the completion is taken using the $\pi$-adic topology.
        More explicitly, we can describe $K_{\F_r} \widehat{\otimes}_{\F_q} \bfV^\circ$ as:

        \begin{align*}
            K_{\F_r} \widehat{\otimes}_{\F_p} \bfV^\circ  = \Bigg\{ \sum_{-\infty}^\infty c_i \theta^i ~:~ c_i \in \bfV_{\F_r} \text{ and } \lim_{i \to \infty} c_i=0  \Bigg \}.
        \end{align*}
        We extend the projection map $\pr$ to $K_{\F_r} \widehat{\otimes}_{\F_p} \bfV^\circ$ by $\bfV$-linearity:
        \begin{align*}
            \pr: K_{\F_r} \widehat{\otimes}_{\F_p} \bfV^\circ &\to \theta \bfV_{\F_r}\langle \theta \rangle, \\
                \sum_i c_i\theta^i &\mapsto \sum_{i \geq  1} c_i \theta^i,
        \end{align*}
        where $\bfV_{\F_r}\langle \theta \rangle$ denotes the
        one dimensional Tate algebra over $\bfV_{\F_r}$ using the $\pi$-adic valuation. 

        \begin{definition}
            Let $m > 0$ be a rational number. We define the following $\bfV_{\F_r}$-modules of power series with linear growth conditions:
            \begin{equation*}
                L^m := \left\{ \sum_{i = 1}^\infty r_i \theta^i \in \theta \bfV_{\F_r}\langle \theta \rangle~\Big |~  \begin{array}{c} v(r_i) \geq i/m \text{ for all }i\text{, and}\\ \displaystyle\lim_{i \to \infty} v(r_i) - i/m=\infty\end{array} \right\}.
            \end{equation*}
            In addition, we define a module of \emph{overconvergent} power series:
            \begin{equation*}
                L^\dagger := \bigcup_{m > 0} L^m.
            \end{equation*}
        \end{definition}

        Using the projection map and the spaces $L^m$ we introduce some additional
        subrings of $K_{\F_r} \widehat{\otimes}_{\F_r} \bfV$ and
        $A_{\F_r} \widehat{\otimes}_{\F_r} \bfV$ with growth conditions.
        For $*=m$ or $\dagger$ we set

        \begin{align*}
            K_{\F_r} \otimes^*_{\F_r} \bfV &:= \{ f \in K_{\F_r} \widehat{\otimes}_{\F_r} \bfV ~:~ \pr(f) \in L^* \},\\
            A_{\F_r} \otimes^*_{\F_r} \bfV &:= \{ f \in A_{\F_r} \widehat{\otimes}_{\F_r} \bfV ~:~ \pr(f) \in L^* \}.
        \end{align*}

	\subsection{\texorpdfstring{$F$}{F}-modules and \texorpdfstring{$\tau$}{tau}-modules}
        \subsubsection{Basic definitions}\label{sss: F and tau basic defs}
        \begin{notation}
            Let $B$ be an algebra over $\F_r$ and let $\mathcal{B}$ be $B \widehat{\otimes}_{\F_p} \bfV^\circ$. We
            denote by $F$ the $p$-Frobenius endomorphism of
            $\mathcal{B}$ relative to $\bfV^\circ$ (i.e. $F(a \otimes r)=a^p \otimes r$). We set $\tau$ to be $F^{b}$, the $b$-fold iteration of $F$. We remark that $\tau$ is $\bfV$-linear.
            We denote by $W$ the $p$-Frobenius endomorphism of $\calB$ relative to $B$ (i.e. $W(a \otimes r)=a \otimes r^p$).
        \end{notation}
	    \begin{definition}
	        An \emph{$F$-module} over $\mathcal{B}$ is a projective $\mathcal{B}$-module $\mathscr{F}$ together with a $\bfV^\circ$-linear map $\phi_\mathscr{F}:\mathscr{F} \to \mathscr{F}$ that is $F$-semilinear in the sense that
	        \begin{equation*}
	            \phi_\mathscr{F}(am) = F(a) \phi_\mathscr{F}(m)
	        \end{equation*}
	        for all $a \in \mathcal{A}$ and $m \in \mathscr{F}$. The category of $F$-modules over $\mathcal{B}$ is denoted by $\Fmod(\mathcal{B})$. A \emph{$\tau$-module} over
            $\mathcal{B}$ a finite projective $\mathcal{B}$-module $\mathscr{G}$ together with a $\bfV$-linear map $\varphi_\mathscr{G}:\mathscr{G} \to \mathscr{G}$ which is $\tau$-semilinear. The category of $\tau$-modules over $\mathcal{B}$ is denoted by $\taumod(\mathcal{B})$. 
	    \end{definition}

        \begin{definition}
            Let $\mathscr{F}$ be an $F$-module.
            Assume that $\mathscr{F}$ is free with basis $\mathbf{e}=[e_1,\dots,e_n]^T$. 
            Then there exists a matrix $E_\mathscr{F} \in M_{n \times n}(\calB)$ such that $\phi(\mathbf{e})=E_\mathscr{F} \mathbf{e}$. We 
            refer to $E_\mathscr{F}$ as a Frobenius matrix of $\mathscr{F}$. We define
            a Frobenius matrix of a $\tau$-module
            in the analogous fashion. 
        \end{definition}

        \begin{definition}
            Let $E,E' \in M_{n \times n}(\calB)$. We say that $E$ and $E'$ are \emph{$F$-equivalent over $\calB$} if
            there exists $B \in GL_n(\calB)$ such that $B^{-1}CB^F= E'$. Note that
            $E$ and $E'$ are Frobenius structures for the same $F$-module if and only if they are $F$-equivalent over $\calB$. We define $\tau$-equivalence over $\calB$ in the analogous fashion. 
        \end{definition}
        \subsubsection{Restriction of scalars}
        We define a
        restriction functor
        \[\Res: \taumod(\mathcal{B}) \to \Fmod(\mathcal{B}),\]
        in the following way: Let $\mathscr{G}$ be a
        $\tau$-module. For simplicity, assume $\mathscr{G}$ is free (for the general case, write $\mathscr{G}$ as the summand of a free object and proceed accordingly). Let $E_\mathscr{G}$ be a Frobenius structure of $\mathscr{G}$. Then $\Res(\mathscr{G})$ is
        the $F$-module whose underlying module is $\bigoplus_{i=1}^b \mathscr{G}$ and whose Frobenius structure is given by the cyclic block matrix:
        \begin{equation}\label{eq: matrix for restriction}
            \begin{bmatrix} 0   & \dots & 0 &  E_\mathscr{G} \\ 
	1  & \dots & 0 & 0 \\
	0  & \ddots & \vdots & \vdots \\
	0 & \dots  &1 & 0
	\end{bmatrix}.
        \end{equation}
    The following proposition computes the restriction functor for a specific situation that is fundamental in \S \ref{s: affine line 1} and \S \ref{s: general ordinary curves}.

                \begin{proposition}
            \label{prop: shape of restriction of scalars Frobenius structure}
            Let $\beta_1,\dots,\beta_{b}  \in 1 + \pi\calB$.
            Let $\mathscr{G}$ be the $\tau$-module over $\calB$ whose underlying module is $\calB$
            and with Frobenius matrix $\beta_1\beta_2^F\dots \beta_{b}^{F^{b-1}}\in 1 + \pi\calB$.
            Then $\Res(\mathscr{G})$ has 
            \begin{equation}\label{eq: matrix for restriction specific}
                \begin{bmatrix} 0   & \dots & 0 &  \beta_{1} \\ 
	\beta_2  & \dots & 0 & 0 \\
	0  & \ddots & \vdots & \vdots \\
	0 & \dots  &\beta_{b} & 0
	\end{bmatrix}
            \end{equation}
            as a Frobenius matrix.
        \end{proposition}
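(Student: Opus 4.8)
The plan is to apply the definition of $\Res$ directly to the $\tau$-module $\mathscr{G}$ and then recognize the resulting $b \times b$ cyclic block matrix as being $F$-equivalent to the claimed matrix \eqref{eq: matrix for restriction specific}. By definition, $\mathscr{G}$ is free of rank $1$ over $\calB$ with Frobenius structure given by the single element $E_\mathscr{G} = \beta_1 \beta_2^F \cdots \beta_b^{F^{b-1}} \in 1 + \pi\calB$, so $\Res(\mathscr{G})$ has underlying module $\calB^{\oplus b}$ and Frobenius structure given by the cyclic matrix \eqref{eq: matrix for restriction}, which in this rank-one case reads
\begin{equation*}
    C := \begin{bmatrix} 0 & \dots & 0 & \beta_1 \beta_2^F \cdots \beta_b^{F^{b-1}} \\ 1 & \dots & 0 & 0 \\ 0 & \ddots & \vdots & \vdots \\ 0 & \dots & 1 & 0 \end{bmatrix}.
\end{equation*}
The goal is then to exhibit an explicit $B \in GL_b(\calB)$ such that $B^{-1} C B^F$ equals the matrix in \eqref{eq: matrix for restriction specific}; this amounts to a change of basis that redistributes the product $\beta_1\beta_2^F\cdots\beta_b^{F^{b-1}}$ along the subdiagonal.

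First I would set up the change of basis explicitly. Writing the standard basis of $\calB^{\oplus b}$ as $e_1, \dots, e_b$, with Frobenius $\phi$ sending $e_1 \mapsto e_2$, $e_2 \mapsto e_3$, \dots, $e_{b-1} \mapsto e_b$, and $e_b \mapsto (\beta_1 \beta_2^F \cdots \beta_b^{F^{b-1}}) e_1$, I would rescale to new basis vectors $e_i' = \gamma_i e_i$ where the $\gamma_i \in 1 + \pi\calB$ are chosen as partial products of the $\beta_j$ and their Frobenius twists so that $\phi(e_i') = \beta_{i+1} e_{i+1}'$ for $i = 1, \dots, b-1$ and $\phi(e_b') = \beta_1 e_1'$ (indices of $\beta$ read cyclically). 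Concretely, taking $\gamma_1 = 1$ and $\gamma_{i+1} = \gamma_i^F / \beta_{i+1}$ for $i = 1, \dots, b-1$ forces the first $b-1$ relations; one then checks the single remaining relation $\phi(e_b') = \gamma_b^F (\beta_1 \cdots \beta_b^{F^{b-1}}) / \gamma_b \cdot \gamma_1 e_1' \cdot (\dots)$ collapses to $\beta_1 e_1'$ precisely because $\gamma_b$ telescopes against the product $\beta_2 \beta_3^F \cdots \beta_b^{F^{b-2}}$. Since each $\beta_j \in 1 + \pi\calB$ is a unit and the $\pi$-adic completeness guarantees $\gamma_i \in 1 + \pi\calB$, the diagonal matrix $B = \mathrm{diag}(\gamma_1, \dots, \gamma_b)$ lies in $GL_b(\calB)$, and in this new basis the Frobenius matrix is exactly \eqref{eq: matrix for restriction specific}.

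The only genuine care needed is bookkeeping of the Frobenius twists: $F$ is $F$-semilinear rather than $\calB$-linear, so when one rescales $e_i \mapsto \gamma_i e_i$ the element appearing on the right of $\phi(e_i')$ picks up $\gamma_i$ applied through $F$ on the source side and $\gamma_{i+1}$ on the target side, i.e. the matrix transforms as $C \mapsto B^{-1} C B^F$ exactly as in the definition of $F$-equivalence. I expect the verification of the single ``wrap-around'' relation for $\phi(e_b')$ to be the main (though still routine) obstacle, since it is where all the accumulated twisted partial products must cancel; writing $\gamma_i = \prod_{j=2}^{i} \beta_j^{-F^{i-j}}$ makes the telescoping transparent. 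With $B$ in hand, the identity $B^{-1} C B^F = $ \eqref{eq: matrix for restriction specific} shows $C$ and the claimed matrix are $F$-equivalent over $\calB$, hence both are Frobenius matrices of $\Res(\mathscr{G})$, which is the assertion of the proposition.
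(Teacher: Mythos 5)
Your overall strategy --- unwinding the definition of $\Res$ to get the cyclic matrix $C$ with the full product $E_\mathscr{G}=\beta_1\beta_2^F\cdots\beta_b^{F^{b-1}}$ in one corner, and then exhibiting an explicit diagonal matrix realizing the $F$-equivalence with \eqref{eq: matrix for restriction specific} --- is exactly the paper's (the paper records no details beyond this). However, your explicit change of basis fails at the wrap-around relation, which you yourself single out as the crux. With the normalization $\gamma_1=1$ and the recursion $\gamma_{i+1}=\gamma_i^F/\beta_{i+1}$ you get $\gamma_b=\prod_{j=2}^{b}\beta_j^{-F^{\,b-j}}$, so the corner entry transforms to
\begin{equation*}
\gamma_b^F\,E_\mathscr{G}\;=\;\Bigl(\textstyle\prod_{j=2}^{b}\beta_j^{-F^{\,b-j+1}}\Bigr)\cdot\beta_1\textstyle\prod_{j=2}^{b}\beta_j^{F^{\,j-1}}\;=\;\beta_1\textstyle\prod_{j=2}^{b}\beta_j^{F^{\,j-1}-F^{\,b-j+1}},
\end{equation*}
which is not $\beta_1$ once $b\ge 3$: your $\gamma_b^{-1}$ equals $\beta_b\beta_{b-1}^F\cdots\beta_2^{F^{b-2}}$, i.e.\ the $\beta_j$ appear with their Frobenius twists in the \emph{reverse} order of the tail of $E_\mathscr{G}$, so the telescoping you invoke does not occur. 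The source of the error is an orientation slip: with the paper's convention $\phi(\mathbf{e})=E\mathbf{e}$, the subdiagonal of \eqref{eq: matrix for restriction} says $\phi(e_{i+1})=e_i$ (Frobenius moves indices down), not $e_i\mapsto e_{i+1}$, and consequently your recursion is anchored at the wrong end of the cycle.

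The fix is to anchor at the other end: set $\gamma_b=1$ and $\gamma_i=\gamma_{i+1}^F/\beta_{i+1}$, i.e.\ $\gamma_i=\prod_{j=i+1}^{b}\beta_j^{-F^{\,j-i-1}}$. Then $\phi(e_{i+1}')=\gamma_{i+1}^F\gamma_i^{-1}e_i'=\beta_{i+1}e_i'$ for $1\le i\le b-1$, while the wrap-around gives $\phi(e_1')=\gamma_1^FE_\mathscr{G}\gamma_b^{-1}e_b'$ with $\gamma_1^F=(\beta_2^F\beta_3^{F^2}\cdots\beta_b^{F^{b-1}})^{-1}$, which now cancels the tail of $E_\mathscr{G}$ exactly and leaves $\beta_1e_b'$. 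With this corrected $B=\mathrm{diag}(\gamma_1,\dots,\gamma_b)$ (invertible since each $\beta_j\in 1+\pi\calB$ is a unit), the verification goes through and both matrices are Frobenius matrices of $\Res(\mathscr{G})$, as claimed.
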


        \begin{proof}
            This follows from the explicit description of the restriction functor (i.e. matrix \eqref{eq: matrix for restriction}) and then showing that 
            the matrix \eqref{eq: matrix for restriction specific} is $F$-equivalent. 
        \end{proof}

        \subsubsection{Unit-root \texorpdfstring{$F$}{F}-modules and \texorpdfstring{$\tau$}{tau}-modules.}

    \begin{definition}
        We say that an $F$-module $\mathscr{F}$ is
        unit-root if the linearization of $\phi_\mathscr{F}$ (i.e. the map $\phi_\mathscr{F}\otimes_F 1: F^* \mathscr{F} \to \mathscr{F}$) is
        an isomorphism. Similarly, we say that a $\tau$-module $\mathscr{G}$ is unit-root
        if the linearization of $\varphi_\mathscr{G}$ is an isomorphism. 
    \end{definition}

	    \begin{proposition}\label{p:Katz-correspondence}
	        Assume that $B$ is flat over $\F_r$. There is a rank-preserving equivalence between the category of unit-root $F$-modules (resp. $\tau$-modules) over $\mathcal{B}$ and the category of continuous $\bfV^\circ$-valued (resp. $\bfV$-valued) representations of $\pi_1(\Spec(B))$. 
            Furthermore, the restriction of scalar functor from $\tau$-modules to $F$-modules is compatible with the corresponding functor on the representation side of this correspondence. 
	    \end{proposition}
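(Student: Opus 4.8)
The plan is to establish Proposition \ref{p:Katz-correspondence} by invoking the classical Katz correspondence between unit-root $F$-modules (or $F$-crystals) and $p$-adic representations of the étale fundamental group, and then to bootstrap from the $F$-module statement to the $\tau$-module statement using the restriction-of-scalars functor defined just above.

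\textbf{Step 1: The $F$-module case.} First I would recall the Katz equivalence in its original form: for a scheme $B$ which is flat (here, smooth or at least flat over $\F_r = \F_{p^b}$ suffices since $\bfV^\circ = \F_p\llbracket\pi^{1/p^h}\rrbracket$ is a coefficient ring of characteristic $p$), the category of unit-root $F$-modules over $\mathcal{B} = B\,\widehat{\otimes}_{\F_p}\bfV^\circ$ is equivalent to the category of continuous $\bfV^\circ$-linear representations of $\pi_1^{\text{ét}}(\Spec B)$, with the equivalence given in one direction by passing to Frobenius-fixed sections over a pro-étale cover (i.e. $\mathscr{F}\mapsto (\mathscr{F}\otimes_{\mathcal{B}}\mathcal{B}^{\mathrm{perf}})^{\phi=1}$, or equivalently by Artin--Schreier descent / the Lang isogeny applied to the $GL_n$-torsor of Frobenius-invariant bases), and in the other by $M\mapsto (M\otimes_{\bfV^\circ}\mathcal{B})$ with the Frobenius $1\otimes F$. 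Rank preservation is immediate because the fiber dimension of the torsor equals the rank. The only subtlety to check is that the coefficient ring $\bfV^\circ$ is $F$-adically (here $\pi$-adically) complete with residue field $\F_p$, so the usual formulation applies verbatim; continuity of representations matches the $\pi$-adic topology on $\bfV^\circ$.

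\textbf{Step 2: The $\tau$-module case.} A $\tau$-module is by definition an $F^b$-module with $\bfV$-linear (rather than $\bfV^\circ$-linear) structure, and $\bfV = \F_r\llbracket\pi^{1/p^h}\rrbracket$ has residue field $\F_r = \F_{p^b}$. The same Katz-type argument applies directly with $F$ replaced by $\tau = F^b$ and $\bfV^\circ$ replaced by $\bfV$: the category of unit-root $\tau$-modules over $\mathcal{B}$ is equivalent to continuous $\bfV$-valued representations of $\pi_1^{\text{ét}}(\Spec B)$. The key point making this work is that $\tau$ acts on $\mathcal{B}$ as the $\F_r$-linear (i.e. $r$-power) Frobenius, so Frobenius-invariant sections are again governed by a Lang isogeny over the relevant perfection, now with $\F_r$-coefficients. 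Alternatively, one can deduce this from Step 1 via the ``restriction of scalars'' on the coefficient side: a $\bfV$-representation is the same as a $\bfV^\circ$-representation equipped with a $\bfV$-module structure, and this matches $\tau$-modules inside $F^b$-modules.

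\textbf{Step 3: Compatibility with $\Res$.} Finally I would verify the asserted compatibility: the functor $\Res\colon \taumod(\mathcal{B})\to \Fmod(\mathcal{B})$ corresponds under the equivalences to the functor on representations that views a $\bfV$-valued representation of $\pi_1$ as a $\bfV^\circ$-valued representation of rank multiplied by $b$. Concretely, given a $\tau$-module $\mathscr{G}$ with Frobenius matrix $E_\mathscr{G}$, the cyclic block matrix \eqref{eq: matrix for restriction} for $\Res(\mathscr{G})$ is built so that its $b$-th Frobenius iterate is block-diagonal with blocks $E_\mathscr{G}, E_\mathscr{G}^F, \dots, E_\mathscr{G}^{F^{b-1}}$; taking Frobenius-invariant sections of this $F$-module recovers exactly the underlying $\bfV$-space of $\mathscr{G}$'s associated representation, now regarded over $\bfV^\circ$ via the natural inclusion $\bfV^\circ\hookrightarrow\bfV$, with the $\pi_1$-action unchanged. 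This is a direct computation with the explicit torsor of bases, so I would state it and indicate the verification rather than belabor it.

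\textbf{Main obstacle.} The substantive point — rather than any hard new argument — is checking that the ambient rings $\mathcal{B} = B\,\widehat{\otimes}_{\F_p}\bfV^\circ$ (and their growth-condition subrings are \emph{not} needed here, only the full completed tensor product) genuinely satisfy the hypotheses of the Katz correspondence: flatness of $B$ over the residue field is assumed, the coefficient ring is $\pi$-adically complete with perfect (indeed finite) residue field, and the relevant Frobenius lifts. With these in hand the proof is essentially a citation plus the block-matrix bookkeeping of Step 3; the genuinely novel combinatorial work of the paper lies elsewhere.
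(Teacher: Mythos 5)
Your proposal is correct and follows essentially the same route as the paper, which simply cites Katz's equivalence (Chapter 3 of his paper on $p$-adic properties of modular schemes, see also Crew) and notes that the same argument carries over to $\bfV$-valued coefficients; your Steps 1--2 spell out exactly that adaptation, and your Step 3 supplies the block-matrix verification of compatibility with $\Res$ that the paper asserts without detail.
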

        \begin{proof}
            The analogous statement for $p$-adic representations is \cite{Katz-padic_properties_mod_forms_schemes}*{Chapter 3} (see also \cite{Crew-F_isocrystals_padic_reps}). The same proof works for $\bfV$-valued representations.
        \end{proof}

	    \subsubsection{Overconvergence}
            We now introduce the notion of overconvergence for $F$- and $\tau$-modules. 

            \begin{definition}\label{d:oc-local-tau-module}
                Let $\mathscr{F}$ be an $F$-module over $K_{\F_r} \widehat{\otimes}_{\F_p}\bfV^\circ$. Note that $\mathscr{F}$ is necessarily free. A Frobenius matrix $E_\mathscr{F}$ is called \emph{overconvergent} if the entries of $E_\mathscr{F}$ lie in $K_{\F_r} \otimes^\dagger_{\F_p}\bfV^\circ$.
                If $\mathscr{F}$ admits an overconvergent Frobenius matrix, then we say that $\mathscr{F}$ is an \emph{overconvergent} $F$-module. We make the analogous definition for $\tau$-modules.
            \end{definition}

            \begin{definition}\label{d: localization of F-module}
                Let $\mathscr{F}$ be an $F$-module over $A_{\F_r} \widehat{\otimes}_{\F_p}\bfV^\circ$. The \emph{localization} of $\mathscr{F}$ at $\infty$ is the extension of scalars 
                \begin{equation*}
                    \mathscr{F}_\infty := \mathscr{F} \otimes_{A_{\F_r} \widehat{\otimes}_{\F_p}\bfV^\circ} K_{\F_r} \widehat{\otimes}_{\F_p}\bfV^\circ.
                \end{equation*}We make the analogous definition for $\tau$-modules.
            \end{definition}

            \begin{definition}
                Let $\mathscr{F}$ be an $F$-module over $A_{\F_r} \widehat{\otimes}_{\F_p}\bfV^\circ$. We say that $\mathscr{F}$ is \emph{overconvergent} if the localization $\mathscr{F}_\infty$ is overconvergent in the sense of Definition \ref{d:oc-local-tau-module}.
            \end{definition}

            \begin{proposition}\label{p:locally-oc-implies-globally-oc}
                Let $\mathscr{F}$ be an $F$-module over $A_{\F_r} \widehat{\otimes}_{\F_p}\bfV^\circ$ whose underlying module is free of rank one. If $\mathscr{F}$ is overconvergent, then $\mathscr{F}$ admits a Frobenius matrix $E_\mathscr{F}$ satisfying the growth condition
                \begin{equation*}
                    E_\mathscr{F} \in A_{\F_r} \otimes_{\F_p}^\dagger \bfV^\circ.
                \end{equation*}
            \end{proposition}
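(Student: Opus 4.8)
The plan is to convert the statement into a change-of-basis problem and then to globalize an overconvergent Frobenius structure from the formal boundary at $\infty$ out over all of $\Spec(A_{\F_r})$. Write $\calA := A_{\F_r}\widehat{\otimes}_{\F_p}\bfV^\circ$, $\calK := K_{\F_r}\widehat{\otimes}_{\F_p}\bfV^\circ$, $\calA^\dagger := A_{\F_r}\otimes^\dagger_{\F_p}\bfV^\circ$, $\calK^\dagger := K_{\F_r}\otimes^\dagger_{\F_p}\bfV^\circ$. Fixing a basis vector $e$ of $\mathscr{F}$ (which is free of rank one), we have $\phi_\mathscr{F}(e)=Ee$ for a unique $E\in\calA$, and replacing $e$ by $ue$ with $u\in\calA^\times$ replaces $E$ by $u^{-1}F(u)E$; so $\mathscr{F}$ admits a Frobenius matrix in $\calA^\dagger$ iff one such scalar lies in $\calA^\dagger$. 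By definition of overconvergence the localization $\mathscr{F}_\infty = \mathscr{F}\otimes_\calA\calK$ has an overconvergent Frobenius matrix, i.e.\ there is $v\in\calK^\times$ with $g := v^{-1}F(v)E\in\calK^\dagger$. Since the projection $\pr$ cutting out the growth conditions is defined the same way on $\calA$ as on $\calK$, one has $\calA^\dagger = \calA\cap\calK^\dagger$ inside $\calK$, and $u^{-1}F(u)E\in\calA$ automatically. Hence it suffices to produce $u\in\calA^\times$ with $u^{-1}F(u)E\in\calK^\dagger$.

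First I would record the routine facts that $L^\dagger$ (hence $\calK^\dagger$ and $\calA^\dagger$) is a ring and that $F$ preserves $\calK^\dagger$ — applying $F$ dilates $\theta$-degrees by $p$, which only strengthens the linear-growth estimate defining the $L^m$. Given these, it suffices to find a factorization $v = u\cdot w$ with $u\in\calA^\times$ and $w\in\calK^\times$ such that $w\,F(w)^{-1}\in\calK^\dagger$: then $u^{-1}F(u)E = g\cdot w\,F(w)^{-1}$ lies in the ring $\calK^\dagger$ (being a product of two of its elements) and also in $\calA$, hence in $\calA^\dagger$, and we are done. To build the factorization I would argue by successive approximation modulo powers of $\pi$. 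Modulo $\pi$, $\bar v$ is a unit of $K_{\F_r}$, and one splits off an element of $A_{\F_r}$ accounting for its pole at $\infty$, using the description of $A_{\F_r}$ as the ring of functions on $X-\infty$ (Riemann--Roch in general; the polynomial algebra for the affine line) and lifting to $\calA^\times$ by $\pi$-adic completeness. At each higher stage the residual discrepancy is a $1$-unit, and one checks its Frobenius ratio is overconvergent by a direct estimate on the $\theta$-expansion, again using that the powers of $\theta$ produced by $F$ grow only geometrically.

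I expect the crux — and the only genuinely delicate point — to be precisely this globalization of $v$. The difficulty is that $\calA$ is not dense in $\calK$ and $\calA^\times$ is much smaller than $\calA\cap\calK^\times$, so $v$ cannot be approximated naively; one must exploit that $E$ is already globally defined on $X-\infty$ together with the fact that the condition cutting out $\calK^\dagger$ inside $\calK$ is a condition purely at $\infty$ (only on the image of $\pr$), so that the behavior of the change of basis at the finite places of $X-\infty$ is irrelevant and can be absorbed into the $\calA^\times$-factor. The affineness of $X-\infty$ is what makes the successive approximation close up: at each stage the obstruction to correcting the current approximant is a class that vanishes because $X-\infty$ is affine and $\mathscr{F}$ has rank one, so the trivialization available near $\infty$ spreads out. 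I would set up the induction so that the linear-growth bound defining $L^\dagger$ is preserved at every step, which is the one place the precise shape of the spaces $L^m$ gets used.
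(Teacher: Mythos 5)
The paper itself gives no argument for this proposition --- its proof is a one-line citation to an external result --- so I can only assess your sketch on its own terms. Your opening reduction is the right frame: since $\calA^\dagger:=A_{\F_r}\otimes^\dagger_{\F_p}\bfV^\circ$ equals $\calA\cap\calK^\dagger$ (with $\calA:=A_{\F_r}\widehat{\otimes}_{\F_p}\bfV^\circ$, $\calK:=K_{\F_r}\widehat{\otimes}_{\F_p}\bfV^\circ$) and $\calK^\dagger$ is a ring stable under $F$, it suffices to factor the local gauge $v$ as $v=uw$ with $u\in\calA^\times$ and $wF(w)^{-1}\in\calK^\dagger$. But the mechanism you propose for producing this factorization has two real gaps. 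First, the obstruction at each stage of your successive approximation is not a coherent-cohomology class. Writing $u=1+\pi^{k}\delta+\cdots$ (note $F$ fixes $\pi$), the equation to be solved at stage $k$ is the twisted additive equation $F(\delta)-\delta=c$ over $A_{\F_r}$, and affineness of $X-\infty$ does not make $F-1$ surjective there: already for $A=\F_p[\theta]$ the element $\theta$ is not of the form $\delta^p-\delta$ with $\delta\in\F_p[\theta]$. Affineness is what makes the underlying module free (that is the content of Proposition \ref{proposition: Goss crystal is free, OC and 1 mod pi}); it is not what closes up this induction. What actually has to happen is that one solves the equation only up to an error already lying in $\calK^\dagger$: Riemann--Roch realizes the polar part at $\infty$ of $c$ by a global function up to a finite-dimensional discrepancy, and one then checks directly that the remaining, integral-at-$\infty$ part of the gauge has overconvergent Frobenius ratio. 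You invoke Riemann--Roch, but you attribute the convergence of the induction to the wrong vanishing statement.

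Second, your zeroth step cannot be performed as described: a unit of $\calA$ reduces to a unit of $A_{\F_r}$, and a function on $X$ regular and nonvanishing away from $\infty$ is constant, so $\bar u$ is forced to be constant and you cannot ``split off an element of $A_{\F_r}$ accounting for the pole of $\bar v$ at $\infty$.'' Consequently every Frobenius matrix $u^{-1}F(u)E$ of $\mathscr{F}$ reduces to a constant multiple of $\bar E$, while membership in $\calA^\dagger$ forces that reduction to have no positive powers of $\theta$, hence to be constant. So both your argument and the statement itself require $\bar E$ to be constant, which holds precisely in the unit-root case (where $E\in\calA^\times$). Without that hypothesis the claim fails: for $A=\F_p[\theta]$ and $E=\theta^{p-1}$, the localization at $\infty$ is overconvergent (gauging by $\theta^{-1}$ makes the matrix $1$), yet no Frobenius matrix over $\calA$ lies in $\calA^\dagger$. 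The proposition is only ever applied to unit-root modules with trivial residual representation; that hypothesis should be made explicit and used, rather than treating the pole of $\bar v$ at $\infty$ as something absorbable into $\calA^\times$.
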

            \begin{proof}
                This is a special case of \cite{kramermiller2021newton}*{Proposition 5.11}. 
            \end{proof}

    \subsection{Dwork operators and Fredholm determinants}

    \subsubsection{Full subspaces}
    Let $V$ be a free module over $K_{\F_r} \otimes_{\F_p}\bfV^\circ$ with basis $e_1,\dots,e_d$.
    By abuse of notation, we let $\pr$ denote the map 
    \begin{align*}
        \pr: V &\to \bigoplus_{i=1}^d \theta\bfV_{\F_r}          \langle \theta \rangle e_i \\
        \sum f_ie_i &\mapsto \sum \pr(f_i)e_i. 
    \end{align*}
    \begin{definition}
        Let $M$ be a sub-$\bfV$-module of $V$. We say
        that $M$ is a \emph{full subspace} of $V$ if the following conditions hold:
        \begin{enumerate}
            \item The projection map restricted to $M$ is surjective.
            \item The kernel $\ker(\pr|_M)$ has finite rank over $\bfV$. 
        \end{enumerate}
        
    \end{definition}

    Let $M$ be a full subspace of $V$. For any rational number $m>0$ we define 
    \begin{align*}
        M^m := M \cap \bigoplus_{i=1}^d K_{\F_r} \otimes^m_{\F_p}\bfV^\circ e_i, \\
        M^\dagger := \bigcup_{m>0} M^m = M \cap \bigoplus_{i=1}^d K_{\F_r} \otimes^\dagger_{\F_p}\bfV^\circ e_i.
    \end{align*}

    \begin{definition}\label{def: pole order basis}
        Let $M$ be a full subspace of $V$. Let $B_0$ be a basis of $\ker(\pr(M))$ over $\bfV$. For $k>1$ and
        $1 \leq i \leq d$, there exists $g_{i,k} \in M$ such that $pr(g_{i,k})=\theta^ke_i$. Then
        \begin{align*}
            B:= B_0 \sqcup \Big\{ g_{i,k}\Big\}_{\substack{k>1 \\ 1\leq i \leq d}}
        \end{align*}
        is an orthonormal basis of $M$ over $\bfV$\footnote{We say $B$ is an orthonormal basis of $M$ if every element can be written uniquely as $\sum_{g\in B} c_g g$ with $\lim_{g}c_g = 0$ and if this basis gives a norm on $M$. We are essentially ignoring the norm on $M$. See \cite{Serre-p-adicbanach} for more details.}. We refer to any basis of $M$ arising in this way as a \emph{pole order basis}. 
        
    \end{definition}
    \begin{definition}
        Let $B$ be a pole order basis defined using the notation in Definition \ref{def: pole order basis}. For any rational number $m>0$ we define the set
        \begin{align*}
            B^m:= B_0 \sqcup \Big\{ \pi^{\lceil k/m \rceil}g_{i,k}\Big\}_{\substack{k>1 \\ 1\leq i \leq d}}.
        \end{align*}
        Note that $B^m$ is an orthonormal basis of $M^m$ over $\bfV$. We refer to $B^m$ as a \emph{pole order basis with $m$-growth}.
    \end{definition}

    \begin{definition}
        Let $M$ be a full subspace of $V$. Let $B$ be
        a pole order basis of $M$ and let $B^m$ be the corresponding pole order basis with $m$-growth. Let $\zeta_1,\dots,\zeta_b$ be
        a basis of $\F_r$ over $\F_p$. Then we define
        \begin{align*}
            B^\circ := \Big\{ \zeta_i g \Big \}_{\substack{g \in B \\ 1\leq i \leq b}}, \\
            B^{\circ,m} := \Big\{ \zeta_i g \Big \}_{\substack{g \in B^m \\ 1\leq i \leq b}} .
        \end{align*}
        Note that $B^\circ$ is an orthonormal basis of $M$ over $\bfV^\circ$. We refer to any basis of $M$ over $\bfV^\circ$ that arises in this way as a \emph{pole order basis}. 
    \end{definition}

    \subsubsection{Dwork operators} \label{sss: dwork operators}

    We define the $U_p$-operator on $K_{\F_r}$ as follows:
    \begin{align*}
        U_p(c\theta^i) &= \begin{cases}
            c^{1/p}\theta^{i/p} & p \mid i \\
            0 & p \nmid i
        \end{cases}.
    \end{align*}
    We extend $U_p$ to a $\bfV^\circ$-linear map on $K_{\F_r} \otimes^\dagger_{\F_p}\bfV^\circ$. By abuse of notation
    we define
    \begin{align*}
        U_p: V &\to V \\
        \sum f_i e_i &\mapsto \sum U_p(f_i)e_i.
    \end{align*}
    Note that this map depends on the basis $\{e_1,\dots,e_d\}$.

    \begin{definition}
        Let $M \subset V$ be a full subspace. 
        We say that an $\bfV^\circ$-linear (resp. $\bfV$-linear) map $\Theta: M \to M$ is a $p$-Dwork
        operator (resp. $r$-Dwork operator) if it can be written as $U_p \circ E$ (resp. $U_p^b \circ E)$, where
        $E$ is a $d \times d$-matrix with entries in $K_{\F_r} \otimes^\dagger_{\F_p}\bfV^\circ$.
    \end{definition}

    \begin{proposition}
        \label{prop: Dwork operators are completely continuous for small radius}
        Let $M \subset V$ be a full subspace and let $\Theta$ be a $p$-Dwork operator (resp. $r$-Dwork operator) on $M$. Then $\Theta$ restricts
        to a completely continuous operator on $M^m$ for $m$ sufficiently large (i.e. $\Theta|_{M^m}$ is
        the limit of operators with finite rank image).
    \end{proposition}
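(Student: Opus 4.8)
The plan is as follows. Using overconvergence, fix a rational number $m_0 > 0$ such that every entry of the matrix $E$ lies in $K_{\F_r} \otimes^{m_0}_{\F_p} \bfV^\circ$; we will show that every $m > m_0$ works. Two things must be checked: that $\Theta = U_p \circ E$ (resp.\ $U_p^b \circ E$) carries $M^m$ into itself and is bounded there, and that this restriction is a limit of finite-rank operators. For the latter we use the standard criterion (cf.\ \cite{Serre-p-adicbanach}): if $(e_n)_{n \in \N}$ is an orthonormal basis of a $\bfV$- or $\bfV^\circ$-Banach module, an operator $T$ is completely continuous as soon as $\|T e_n\| \to 0$, since then $T$ is the operator-norm limit of its truncations to the spans of $e_1, \dots, e_N$.

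The first point rests on the elementary observation that, for $m > m_0$, multiplication by any element of $K_{\F_r} \otimes^{m_0}_{\F_p} \bfV^\circ$ preserves the growth module $L^m$: if $v(c_j) \ge j/m_0$ and $\sum_i r_i \theta^i \in L^m$, then the coefficient of $\theta^{i+j}$ has valuation at least $i/m + j/m_0 \ge (i+j)/m$ since $1/m_0 \ge 1/m$, and the condition $\lim\bigl(v(\cdot) - \deg(\cdot)/m\bigr) = \infty$ passes to products. The operator $U_p$ (resp.\ $U_p^b$) sends $\theta^{pk}$ (resp.\ $\theta^{rk}$) to $\theta^k$ and divides valuations by $p$ (resp.\ by $r$), hence also preserves $L^m$. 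Thus both factors of $\Theta$ respect the $m$-growth condition; since $\Theta$ maps $M$ to $M$ by hypothesis, it maps $M^m = M \cap \bigoplus_i K_{\F_r} \otimes^m_{\F_p} \bfV^\circ e_i$ to itself, with norm bounded by the same estimate.

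For complete continuity we apply the criterion to a pole-order basis $B^m = B_0 \sqcup \{\pi^{\lceil k/m \rceil} g_{i,k}\}_{k>1,\,1 \le i \le d}$ (or $B^{\circ,m}$ in the $p$-Dwork case). As $B_0$ is finite it contributes nothing, so it suffices to show $\|\Theta(\pi^{\lceil k/m \rceil} g_{i,k})\| \to 0$ as $k \to \infty$, uniformly in $i$. Write $g_{i,k} = \theta^k e_i + h_{i,k}$, where $h_{i,k}$ has non-positive pole order and $\|h_{i,k}\|$ is bounded uniformly in $k$; then $\pi^{\lceil k/m \rceil} U_p(E h_{i,k})$ has norm at most $|\pi|^{\lceil k/m \rceil}$ times a constant and so tends to $0$, and likewise the non-positive-pole part of $\Theta(\pi^{\lceil k/m \rceil} g_{i,k})$, which feeds into the $B_0$-coordinates, is $|\pi|^{\lceil k/m \rceil}$ times something bounded. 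It remains to estimate $\pi^{\lceil k/m \rceil} U_p(E(\theta^k e_i))$. The coefficient of $\theta^\ell e_{i'}$ in $U_p(E(\theta^k e_i))$ is $\bigl((E_{i'i})_{p\ell - k}\bigr)^{1/p}$, the $(p\ell-k)$-th coefficient of the entry $E_{i'i}$ raised to the $1/p$ power (with $r\ell - k$ and $1/r$ in the $r$-Dwork case). Hence, expanding in $B^m$, the coordinate at $\pi^{\lceil \ell/m \rceil} g_{i',\ell}$ has valuation at least $\lceil k/m \rceil - \lceil \ell/m \rceil + \tfrac1p v\bigl((E_{i'i})_{p\ell - k}\bigr)$. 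If $p\ell - k \ge 1$, then $\ell > k/p$ and $v\bigl((E_{i'i})_{p\ell-k}\bigr) \ge (p\ell - k)/m_0$, and a short substitution gives a lower bound $\ge \tfrac{(p-1)k}{pm} - 1$; if $p\ell - k \le 0$, then $\ell \le k/p$, so $\lceil k/m \rceil - \lceil \ell/m \rceil \ge \tfrac{(p-1)k}{pm} - 1$ while $v\bigl((E_{i'i})_{p\ell-k}\bigr)$ is bounded below (indeed it tends to $\infty$ as $p\ell - k \to -\infty$). In both cases the valuation is at least $\tfrac{(p-1)k}{pm} - O(1)$, uniformly in $\ell$ and $i'$, so $\|\Theta(\pi^{\lceil k/m \rceil} g_{i,k})\| \to 0$ and complete continuity follows.

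The heart of the argument is the uniform valuation estimate in the previous paragraph: it uses both features of the overconvergence of $E$ — the linear bound $v \ge j/m_0$ on high-degree coefficients and the decay of the low-degree and negative-degree coefficients — and balances them against the pole-order contraction of $U_p$ and the normalizing powers $\pi^{\lceil \cdot / m \rceil}$ of the pole-order basis. One must also be careful to approximate $\Theta$ by truncating its \emph{input} pole order rather than its output pole order. The remaining verifications are routine $\pi$-adic bookkeeping, and the hypothesis ``$m$ sufficiently large'' is used only through the inequality $m > m_0$.
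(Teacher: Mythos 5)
Your argument is correct, but it takes a genuinely different route from the paper's. The paper's proof is a two-line factorization: because $U_p$ is extended $\bfV^\circ$-linearly (it takes $p$-th roots only of the residue-field part of the coefficients and leaves $\pi$-adic valuations untouched), it \emph{improves} the growth condition, $U_p(K_{\F_r} \otimes^m_{\F_p}\bfV^\circ) \subseteq K_{\F_r} \otimes^{m/p}_{\F_p}\bfV^\circ$; hence for $m$ large enough $\Theta$ factors as a bounded map $M^m \to M^{m/p}$ followed by the inclusion $M^{m/p} \hookrightarrow M^m$, which is completely continuous since the pole-order basis element $\pi^{\lceil pk/m\rceil}g_{i,k}$ of $M^{m/p}$ is $\pi^{\lceil pk/m\rceil - \lceil k/m\rceil}$ times the corresponding element of $B^m$ and these exponents tend to infinity. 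You instead verify compactness directly on the matrix of $\Theta$ in a pole-order basis, extracting the uniform decay $\geq \tfrac{(p-1)k}{pm} - O(1)$ from the interplay of the normalizing powers $\pi^{\lceil k/m\rceil}$, the overconvergence bound on the coefficients of $E$, and the pole-order contraction of $U_p$. Both are sound; the paper's is shorter, while yours makes the rate of compactness explicit. Two remarks. First, you assert that $U_p$ ``divides valuations by $p$''; it does not, by the $\bfV^\circ$-linearity just noted. This is harmless for you --- since $\tfrac1p v \leq v$, your estimates remain valid lower bounds, and in fact your proof would survive even if $U_p$ did root the $\pi$-coefficients --- but it is precisely the fact that powers the paper's one-line version, so it is worth getting right. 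Second, your compactness criterion ($\|\Theta e_n\| \to 0$ over an orthonormal basis, i.e.\ decay of \emph{columns}, giving approximation by $\Theta \circ P_N$ with $P_N$ the input truncation) is a sufficient condition rather than Serre's necessary-and-sufficient row condition; as you only need sufficiency, this is fine.
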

    \begin{proof}
        This can be traced back to work of Monsky (see e.g. \cite{Monsky-formal_cohomologyIII}). Write $\Theta=U_p \circ E$ and take $m$ large enough so the entries of $E$ are in $K_{\F_r} \otimes^m_{\F_p}\bfV^\circ$. As $U_p$ takes $K_{\F_r} \otimes^m_{\F_p}\bfV^\circ$ to $K_{\F_r} \otimes^{p/m}_{\F_p}\bfV^\circ$ we see that 
        $\Theta(M^m) \subset M^{p/m}$. Since the inclusion $M^{p/m} \to M^m$ is completely continuous (compare $B^m$ and $B^{m/p}$ for a pole order basis $B$), we see that $\Theta$ is completely continuous on $M^m$. 
    \end{proof}

    \subsubsection{Fredholm determinants and characteristic series}

    \begin{definition}
        Let $I$ be a countable set and let $\Phi$ be an $I\times I$ matrix with entries in $\bfV$ or
        $\bfV^\circ$. We define the Fredholm determinant (when it exists) 
        to be the formal power series
            \begin{equation}\label{eq: Fredholm determinant 1}
                \det(I-x\Phi) = 1 + c_1 x + c_2 x^2 + \cdots\in 1+x\bfV\llbracket x \rrbracket,
            \end{equation}
            where the coefficient $c_n$ is given by the formula
            \begin{equation} \label{eq: Fredholm determinant 2}
                c_n = (-1)^n \sum_{\substack{S \subseteq I\\|S| = n}} \sum_{\sigma \in \mathrm{Sym}(S)} \mathrm{sgn}(\sigma) \prod_{i \in S} \Phi_{i,\sigma(i)}.
            \end{equation}
        Here $\Phi_{i,j}$ denotes $(i,j)$-th entry of $\Phi$.
    \end{definition}

    \begin{proposition}
        \label{prop: Fredholm exists for completely continuous} (Serre, \cite{Serre}) Let $Z$ be a Banach space over $\bfV$ (resp. $\bfV^\circ$)
        and let $B$ be an orthonormal basis of $Z$. Let $L:Z \to Z$ be a completely continuous $\bfV$-linear (resp. $\bfV^\circ$-linear) operator and let $\Phi$ be the matrix of $L$ with respect to the basis $B$. Then
        $\det(I-x\Phi)$ exists and is an entire function. Furthermore, $\det(I-x\Phi)$ is independent
        of the choice of basis. 
    \end{proposition}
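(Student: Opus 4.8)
The plan is to reproduce Serre's argument \cite{Serre}, keeping an eye on the refinements that will be needed in \S\ref{s: affine line 1}. We treat the $\bfV$-coefficient case; the $\bfV^\circ$-case is identical. Write $B = \{e_j\}_{j \in I}$ and $L(e_j) = \sum_{i \in I}\Phi_{i,j}e_i$. The first step is to recall the characterization of complete continuity in terms of the matrix: $L$ is completely continuous if and only if the columns of $\Phi$ tend to zero uniformly, i.e.\ setting $\alpha_j := \sup_i |\Phi_{i,j}|$ we have $\alpha_j \to 0$. Let $\beta_1 \geq \beta_2 \geq \cdots$ be the decreasing rearrangement of the $\alpha_j$, so $\beta_k \to 0$, equivalently $v(\beta_k) \to \infty$ for the $\pi$-adic valuation $v$.

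For existence and entireness, I would observe that the inner sum over $\mathrm{Sym}(S)$ in \eqref{eq: Fredholm determinant 2} is exactly the determinant of the submatrix $\Phi_S := (\Phi_{i,j})_{i,j \in S}$. The ultrametric inequality gives $|\det \Phi_S| \leq \prod_{j \in S}\alpha_j$: for each $\sigma \in \mathrm{Sym}(S)$ one has $\prod_{i \in S}|\Phi_{i,\sigma(i)}| = \prod_{j \in S}|\Phi_{\sigma^{-1}(j),j}| \leq \prod_{j \in S}\alpha_j$. Taking the supremum over $|S| = n$ yields
\begin{equation*}
	|c_n| \;\leq\; \max_{\substack{S \subseteq I \\ |S| = n}}\ \prod_{j \in S}\alpha_j \;=\; \beta_1\beta_2\cdots\beta_n ,
\end{equation*}
so each defining series converges in $\bfV$ and $v(c_n) \geq \sum_{k=1}^n v(\beta_k)$. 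Since the Cesàro averages of a sequence tending to $+\infty$ again tend to $+\infty$, we get $v(c_n)/n \to \infty$, which is precisely the condition for $\sum_n c_n x^n$ to define an entire function on $\C_\infty$.

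For independence of the basis, the plan is to approximate $L$ by finite-rank operators and exploit the fact that the Fredholm determinant of a finite-rank operator is manifestly intrinsic. Concretely, let $L_N$ be the operator whose matrix in $B$ is obtained from $\Phi$ by keeping only $N$ suitably chosen columns (those realizing the $N$ largest $\alpha_j$) and zeroing out the rest; then $L_N \to L$ in operator norm. Splitting the sum \eqref{eq: Fredholm determinant 2} according to whether $S$ is contained in the chosen columns, the estimate above shows $c_n(L_N) \to c_n(L)$ for each fixed $n$, so $\det(I - xL_N) \to \det(I - x\Phi)$ coefficientwise. On the other hand, the image of $L_N$ spans a finite-dimensional $L_N$-stable subspace $W_N \subseteq Z$, and a routine finite-dimensional computation identifies $\det(I - xL_N)$ with the ordinary characteristic polynomial $\det(\mathrm{id}_{W_N} - xL_N|_{W_N})$, which makes no reference to the orthonormal basis. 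Hence $\det(I - x\Phi)$, being the coefficientwise limit of basis-free polynomials, is itself independent of $B$.

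The main obstacle is exactly this last intrinsicness step: one must either carefully justify the finite-rank computation (and argue that the approximating sequences attached to two different orthonormal bases have the same limit), or, more conceptually, construct the completed exterior powers $\widehat{\bigwedge}^n Z$, show that the induced operator $\widehat{\bigwedge}^n L$ is nuclear, and identify $c_n = (-1)^n \Tr\bigl(\widehat{\bigwedge}^n L\bigr)$ with the (basis-free) trace. Either route carries the real content of Serre's theorem, and in the write-up I would simply cite \cite{Serre} for the details, reserving space for the quantitative refinement of the bound $|c_n| \leq \beta_1\cdots\beta_n$ that is actually needed later.
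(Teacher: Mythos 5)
The paper gives no proof of this proposition beyond the citation to Serre, so the only question is whether your reconstruction of Serre's argument is sound. It is not quite: you have the matrix characterization of complete continuity backwards. With your convention $L(e_j)=\sum_i\Phi_{i,j}e_i$, the quantity $\alpha_j=\sup_i|\Phi_{i,j}|$ is the norm of the $j$-th \emph{column}, i.e.\ $\|L(e_j)\|$, and the condition $\alpha_j\to 0$ is sufficient but not necessary for complete continuity. The correct characterization (Serre's Proposition 3) is that the \emph{rows} tend to zero, i.e.\ $r_i:=\sup_j|\Phi_{i,j}|\to 0$, because the natural finite-rank approximations are $p_F\circ L$ with $p_F$ the projection onto the span of a finite subset $F$ of the basis, and $\|L-p_F\circ L\|=\sup_j\sup_{i\notin F}|\Phi_{i,j}|$. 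The rank-one operator sending every $e_j$ to $e_1$ is completely continuous yet has $\alpha_j=1$ for all $j$; for it your bound $|c_n|\leq\beta_1\cdots\beta_n$ degenerates to $|c_n|\leq 1$ and proves nothing about entireness, and your approximants $L_N$ (zeroing out all but $N$ columns) remain at distance $1$ from $L$, so $L_N\not\to L$ in operator norm. Both halves of your argument therefore fail as written for a general completely continuous operator.

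Both defects are repaired by the same one-line change: estimate each permutation term by row sups, $\bigl|\prod_{i\in S}\Phi_{i,\sigma(i)}\bigr|\leq\prod_{i\in S}r_i$, so that $|c_n|\leq\rho_1\cdots\rho_n$ where $\rho_k$ is the decreasing rearrangement of the $r_i$, which genuinely tends to zero; and approximate $L$ by $p_F\circ L$ (truncating rows, not columns), which is finite rank and converges to $L$. With that correction the remainder of your outline --- entireness from $v(c_n)/n\to\infty$, and basis independence via the intrinsic determinant of finite-rank operators together with a coefficientwise continuity estimate (or via completed exterior powers and traces) --- is exactly Serre's argument, and deferring those details to \cite{Serre} is consistent with what the paper itself does.
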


    \begin{definition}
        Adopt the notation of Proposition \ref{prop: Fredholm exists for completely continuous}.
        We define the \emph{charactersitic series} $\det_{\bfV}(I-xL|Z)$ (resp. $\det_{\bfV^\circ}(1-xL|Z)$) to be the power series $\det(I-x\Phi)$. Note that that the characteristic series does not depend on our choice of basis by Proposition \ref{prop: Fredholm exists for completely continuous}.
    \end{definition}
    \begin{definition}
        Let $\Theta$ be a $p$-Dwork operator on a full subspace $M \subset V$. From Proposition \ref{prop: Dwork operators are completely continuous for small radius} and Proposition \ref{prop: Fredholm exists for completely continuous} we know the characteristic series $\det_{\bfV^\circ}(1-x\Theta|M^m)$ of $\Theta$ exists for $m$ sufficiently large. Furthermore, this series is independent of $m$. We define the characteristic series of $\Theta$ on $M$ to be
        \begin{align*}
            \det_{\bfV^\circ}(1-x\Theta|M)&:=\det_{\bfV^\circ}(1-x\Theta|M^m),
        \end{align*}
        where $m$ is taken to be sufficiently large. We make the analogous definition for $r$-Dwork operators.
    \end{definition}

    The next proposition says we can compute the characteristic series of a Dwork operator directly with a pole-ordered basis.
    \begin{proposition}
        \label{prop: Fredholm can be computed using pole-ordered bases}
        Let $\Theta$ be a $p$-Dwork operator on a full subspace $M \subset V$. Let
        $B^\circ$ be a pole-ordered basis of $M$ over $\bfV^\circ$ and let $\Phi$ be the matrix of $\Theta$ in terms of $B^\circ$.
        Then $\det(1-x\Phi)$ exists and
        \begin{align*}
            \det(1-x\Phi)= \det_{\bfV^\circ}(1-x\Theta|M).
        \end{align*}
    \end{proposition}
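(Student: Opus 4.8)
The plan is to reduce to the completely continuous situation already handled by Serre's theory by comparing the pole-ordered basis $B^\circ$ of $M$ with the pole-ordered basis $B^{\circ,m}$ of $M^m$, for $m$ large enough that $\Theta$ restricts to a completely continuous operator on $M^m$ (such $m$ exists by Proposition \ref{prop: Dwork operators are completely continuous for small radius}). First I would record the precise relationship between these two bases: writing $B^\circ = \{b_j\}_{j \in I}$, the basis $B^{\circ,m}$ is exactly $\{\pi^{a_j} b_j\}_{j \in I}$, where $a_j = \lceil k/m \rceil$ when $b_j$ is (a multiple by some $\zeta_i$ of) a vector $g_{i,k}$ with $\pr(g_{i,k}) = \theta^k e_i$, and $a_j = 0$ when $b_j$ lies over the finite-rank piece $B_0$. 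The scalars $\pi^{a_j}$ are central in $\bfV^\circ$, so this is a genuine diagonal rescaling of the basis.

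Next I would compute matrices. Since $\Theta$ maps $M$ into $M$ and $B^\circ$ is an orthonormal $\bfV^\circ$-basis of $M$, the expansion $\Theta(b_j) = \sum_i \Phi_{i,j} b_i$ defines an $I \times I$ matrix $\Phi$ over $\bfV^\circ$ whose columns tend to $0$. From $\Theta(\pi^{a_j} b_j) = \pi^{a_j}\sum_i \Phi_{i,j}\pi^{-a_i}\cdot(\pi^{a_i}b_i)$ one reads off that the matrix $\Phi^m$ of $\Theta$ in the basis $B^{\circ,m}$ satisfies $(\Phi^m)_{i,j} = \pi^{a_j - a_i}\Phi_{i,j}$; that is, $\Phi^m$ is the conjugate of $\Phi$ by $\mathrm{diag}(\pi^{a_j})_{j \in I}$. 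The key observation is that this conjugation is invisible to the Fredholm formula: for any finite $S \subseteq I$ and any $\sigma \in \mathrm{Sym}(S)$,
\begin{equation*}
\prod_{i \in S} (\Phi^m)_{i,\sigma(i)} = \pi^{\sum_{i \in S}(a_{\sigma(i)} - a_i)}\prod_{i \in S}\Phi_{i,\sigma(i)} = \prod_{i \in S}\Phi_{i,\sigma(i)},
\end{equation*}
because $\sigma$ permutes $S$, so $\sum_{i \in S} a_{\sigma(i)} = \sum_{i \in S} a_i$. Hence, term by term, the $n$-th coefficient in the Fredholm expansion of $\det(1-x\Phi^m)$ equals that of $\det(1-x\Phi)$.

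To finish, I would invoke the completely continuous case: since $B^{\circ,m}$ is an orthonormal basis of $M^m$ over $\bfV^\circ$ and $\Theta|_{M^m}$ is completely continuous, Proposition \ref{prop: Fredholm exists for completely continuous} guarantees that $\det(1-x\Phi^m)$ exists (each coefficient $c_n$ is a convergent series in $\bfV^\circ$), is entire, and is independent of the chosen orthonormal basis, so it equals $\det_{\bfV^\circ}(1-x\Theta \mid M^m) = \det_{\bfV^\circ}(1-x\Theta \mid M)$ by definition of the latter. By the term-by-term identity above, $\det(1-x\Phi)$ then also exists and equals $\det(1-x\Phi^m)$, which is the claim. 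The only point that needs genuine care is the first step — verifying that $B^\circ$ and $B^{\circ,m}$ differ by exactly the stated diagonal rescaling by central scalars; everything after that is formal. The real content, conceptually, is that although $\Theta$ need not be completely continuous on $M$ itself (only on the growth-restricted $M^m$), the Fredholm series does not see the rescaling, so convergence and entireness can be imported from $M^m$ for free.
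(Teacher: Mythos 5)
Your proof is correct. The paper states this proposition without giving a proof; your argument --- that $B^{\circ,m}$ differs from $B^\circ$ by a diagonal rescaling by powers of $\pi$, that each permutation term $\prod_{i\in S}\Phi_{i,\sigma(i)}$ in the Fredholm expansion is invariant under such a rescaling since $\sum_{i\in S}a_{\sigma(i)}=\sum_{i\in S}a_i$, and that existence, entireness, and the value of the determinant therefore transfer term-by-term from the completely continuous case on $M^m$ handled by Proposition \ref{prop: Fredholm exists for completely continuous} --- is exactly the intended argument and is complete.
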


    \subsection{The trace formula} \label{ss: the trace formula}
    We now introduce the trace formula for computing $L$-functions. Let $\mathscr{F}$ be a unit-root $F$-module over $A_{\F_r} \widehat{\otimes}_{\F_p}\bfV^\circ$. We
    assume that $\mathscr{F}$ is free with basis $f_1,\dots,f_d$ and let $E_\mathscr{F}$ be the corresponding
    Frobenius matrix. Let $\Omega_{K_{\F_r} \widehat{\otimes}_{\F_p}\bfV^\circ}$ (resp. $\Omega_{A_{\F_r} \widehat{\otimes}_{\F_p}\bfV^\circ}$) denote the Kahler differentials of $K_{\F_r} \widehat{\otimes}_{\F_p}\bfV^\circ$ (resp. $A_{\F_r} \widehat{\otimes}_{\F_p}\bfV^\circ$) relative to $\bfV$. We define
    \begin{align*}
        V_\mathscr{F}&:= \mathscr{F} \otimes_{A_{\F_r} \widehat{\otimes}_{\F_p}\bfV^\circ} \Omega_{K_{\F_r} \widehat{\otimes}_{\F_p}\bfV^\circ} \\
        M_\mathscr{F}&:=\mathscr{F} \otimes_{A_{\F_r} \widehat{\otimes}_{\F_p}\bfV^\circ} \Omega_{A_{\F_r} \widehat{\otimes}_{\F_p}\bfV^\circ} \\
        e_i &:=f_i \otimes \frac{d\theta^{-1}}{\theta^{-1}}. 
    \end{align*}
    Note that $M_\mathscr{F} \subset V_\mathscr{F}$ and that $V_\mathscr{F}$ is a free $K_{\F_r} \widehat{\otimes}_{\F_p}\bfV^\circ$-module
    with basis $\{e_1,\dots,e_d\}$. In particular, we define the $U_p$ operator on $V_\mathscr{F}$ as in \S \ref{sss: dwork operators}. Finally, we define an operator on $V_\mathscr{F}$:
    \begin{align*}
        \Theta_\mathscr{F} := U_p \circ E_\mathscr{F}.
    \end{align*}

    \begin{remark}
        Although our definition of $U_p$ on $V_\mathscr{F}$ depends on the choice of basis $\{f_1,\dots,f_d\}$, one can readily verify that $\Theta_{\mathscr{F}}$ is independent of this choice. 
    \end{remark}

    \begin{lemma}
        \label{lemma: M is overconvergent-newname}
        The subset $M_\mathscr{F}$ of $V_\mathscr{F}$ is full and $\theta_\mathscr{F}$ is a $p$-Dwork operator on $M_\mathscr{F}$.
    \end{lemma}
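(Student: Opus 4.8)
The plan is to verify the two defining conditions of a full subspace for $M_\mathscr{F} \subseteq V_\mathscr{F}$, and then to exhibit $\Theta_\mathscr{F}$ in the form $U_p \circ E$ with $E$ having overconvergent entries. For the first part, I would work out the $K_{\F_r} \widehat{\otimes}_{\F_p}\bfV^\circ$-module structure of $\Omega_{A_{\F_r} \widehat{\otimes}_{\F_p}\bfV^\circ}$ inside $\Omega_{K_{\F_r} \widehat{\otimes}_{\F_p}\bfV^\circ}$. Writing $t = \theta^{-1}$ so that $K = \F_r(\!(t)\!)$, the differential $\tfrac{dt}{t}$ is a generator of the localized differentials, and the image of $\Omega_{A_{\F_r} \widehat{\otimes}_{\F_p}\bfV^\circ}$ is spanned over $A_{\F_r} \widehat{\otimes}_{\F_p}\bfV^\circ$ by the differentials of regular functions on $X - \infty$. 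Since $A$ is the ring of functions regular away from $\infty$, an element of $A$ has a pole only at $\infty$; computing $\pr$ of the corresponding expansion in $\theta$ shows that differentials of elements of $A$ contribute exactly the positive-power (in $\theta$) part, so $\pr$ restricted to $M_\mathscr{F}$ surjects onto $\bigoplus_i \theta \bfV_{\F_r}\langle\theta\rangle e_i$. This gives condition (1). For condition (2), the kernel of $\pr|_{M_\mathscr{F}}$ consists of elements of $M_\mathscr{F}$ with no strictly positive $\theta$-part; these correspond to the ``constant part'' plus the part supported in negative $\theta$-powers, which is cut out by the holomorphic differentials on $X$ together with a bounded correction depending on the genus $g$ and the degree $d$. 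Concretely, $\ker(\pr|_{M_\mathscr{F}})$ has $\bfV$-rank on the order of $d(g - 1 + d)$ — finite — so condition (2) holds.

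For the second part, the operator $\Theta_\mathscr{F} = U_p \circ E_\mathscr{F}$ is already given in Dwork form, so the only issue is to upgrade $E_\mathscr{F}$ to an overconvergent Frobenius matrix. Here I would invoke Proposition \ref{p:locally-oc-implies-globally-oc}: the module $\mathscr{F}$ arising from the $1$-unit character $\rho_{A,\pi}^{\otimes y}$ is a rank-one $F$-module whose localization at $\infty$ is overconvergent (this follows from Corollary \ref{c: localization does not depend on curve}, since the localization agrees with the affine-line case, where overconvergence is visible directly from the explicit Frobenius). Hence $\mathscr{F}$ admits a Frobenius matrix with entries in $A_{\F_r} \otimes^\dagger_{\F_p} \bfV^\circ$. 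Tensoring with $\Omega_{A_{\F_r} \widehat{\otimes}_{\F_p}\bfV^\circ}$ and passing to the basis $e_i = f_i \otimes \tfrac{d\theta^{-1}}{\theta^{-1}}$, the matrix of multiplication by $E_\mathscr{F}$ on $V_\mathscr{F}$ still has entries in $K_{\F_r} \otimes^\dagger_{\F_p}\bfV^\circ$, and one checks that it preserves the growth filtration $M_\mathscr{F}^m$ for $m$ large (using that $E_\mathscr{F}$ is a $1$-unit, so introduces no new poles). Therefore $\Theta_\mathscr{F} = U_p \circ E_\mathscr{F}$ is a $p$-Dwork operator on $M_\mathscr{F}$ in the sense of the definition in \S\ref{sss: dwork operators}.

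I expect the main obstacle to be the bookkeeping in condition (1): showing precisely that $\pr$ maps $M_\mathscr{F}$ onto \emph{all} of $\bigoplus_i \theta\bfV_{\F_r}\langle\theta\rangle e_i$, rather than a finite-codimension or finite-index sublattice. This requires knowing that every sufficiently high power $\theta^k$ (for $k > 1$) is hit by the $\theta$-expansion of some regular function on $X - \infty$ times the chosen Frobenius twist, which in turn rests on the Riemann–Roch-type fact that $H^0(X, \calO(n\infty))$ grows linearly in $n$ and eventually surjects onto all of the positive-order part. One must be careful that the tensor factor $\frac{d\theta^{-1}}{\theta^{-1}}$ and the unit $E_\mathscr{F}$ do not obstruct this; since $E_\mathscr{F} \in 1 + \pi(\cdots)$ is invertible with the same pole behavior, it does not, and the argument reduces to the statement that $\pr$ applied to $A_{\F_r}\widehat{\otimes}_{\F_p}\bfV^\circ \cdot \frac{d\theta^{-1}}{\theta^{-1}}$ already fills up the target. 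This is essentially a restatement of the classical fact underlying the Anderson–Monsky trace formula for curves, so I would either cite the relevant reference (e.g. \cite{Monsky-formal_cohomologyIII}) or give the short Riemann–Roch argument directly.
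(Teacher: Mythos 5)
Your treatment of fullness is fine and matches the paper's (one-line) appeal to Riemann--Roch: surjectivity of $\pr$ comes from $H^0(X,\calO(n\infty))$ growing, and the kernel is finite rank, essentially the holomorphic differentials. But the second half of your argument has a genuine gap. Being a $p$-Dwork operator on $M_\mathscr{F}$ requires two things: that $\Theta_\mathscr{F}$ has the form $U_p\circ E$ with $E$ overconvergent, \emph{and} that $\Theta_\mathscr{F}$ actually maps $M_\mathscr{F}$ into $M_\mathscr{F}$. You spend most of your effort on the first point and dispose of the second with ``one checks that it preserves the growth filtration $M_\mathscr{F}^m$ \ldots using that $E_\mathscr{F}$ is a $1$-unit, so introduces no new poles.'' That remark only explains why \emph{multiplication by} $E_\mathscr{F}$ preserves $M_\mathscr{F}$ (its entries lie in $A_{\F_r}\widehat{\otimes}_{\F_p}\bfV^\circ$, so this is clear). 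It does not address the real issue: why does $U_p$ preserve $M_\mathscr{F}$? The operator $U_p$ is defined purely in terms of the local expansion at $\infty$ (keep coefficients of $\theta^{pi}$, take $p$-th roots), whereas $M_\mathscr{F}=\mathscr{F}\otimes\Omega_{A_{\F_r}\widehat{\otimes}_{\F_p}\bfV^\circ}$ is a global subspace of $V_\mathscr{F}$; there is no a priori reason a locally defined coefficient-extraction operator should send global differentials to global differentials. This is precisely where the paper's proof does its work: the restriction of $U_p$ to $\Omega_{A_{\F_r}\widehat{\otimes}_{\F_p}\bfV^\circ}f_i$ coincides with the Cartier operator relative to $\bfV^\circ$, which is globally defined on K\"ahler differentials and therefore preserves $\Omega_{A_{\F_r}\widehat{\otimes}_{\F_p}\bfV^\circ}$. (This is also why the trace formula is set up on differentials rather than functions; the analogous statement for functions is false.) Without this step your proof does not establish that $\Theta_\mathscr{F}$ is an endomorphism of $M_\mathscr{F}$.

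A secondary point: you identify the surjectivity of $\pr$ as ``the main obstacle,'' but that is the routine Riemann--Roch part; the stability of $M_\mathscr{F}$ under $U_p$ is the part that actually needs an idea. Your discussion of upgrading $E_\mathscr{F}$ to an overconvergent matrix via Proposition \ref{p:locally-oc-implies-globally-oc} and Corollary \ref{c: localization does not depend on curve} is reasonable but tangential here, since in the paper's setup the overconvergence of $\mathscr{F}$ is a standing hypothesis of the trace-formula section rather than something this lemma must establish.
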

    \begin{proof}
        The Riemann Roch theorem implies that $M_\mathscr{F}$ is full. For the second statement, we need to show that $U_p \circ E_\mathscr{F}(M_\mathscr{F}) \subset M_\mathscr{F}$. It is clear that $E_\mathscr{F}(M_\mathscr{F}) \subset M_\mathscr{F}$. Next, consider $\Omega_{A_{\F_r} \widehat{\otimes}_{\F_p}\bfV^\circ} f_i$, which is a subset of  $M_\mathscr{F}$. The restriction of $U_p$ to $\Omega_{A_{\F_r} \widehat{\otimes}_{\F_p}\bfV^\circ} f_i$ is simply the Cartier operator relative to $\bfV^\circ$ on $\Omega_{A_{\F_r} \widehat{\otimes}_{\F_p}\bfV^\circ}$.
        Thus, $U_p(\Omega_{A_{\F_r} \widehat{\otimes}_{\F_p}\bfV^\circ} f_i) \subset \Omega_{A_{\F_r} \widehat{\otimes}_{\F_p}\bfV^\circ} f_i$. It follows that $U_p(M_\mathscr{F}) \subset M_\mathscr{F}$, which gives the lemma.
    \end{proof}

    \begin{theorem}
        \label{theorem: trace formula}
        Let $\rho$ be the representation over $\bfV^\circ$ corresponding to $\mathscr{F}$ (given by Theorem \ref{p:Katz-correspondence}). If $\mathscr{F}$ is overconvergent then 
        \begin{align*}
            L(\rho,s) &= \det_{\bfV}(1-x\Theta_\mathscr{F}^b |M_\mathscr{F}).
        \end{align*}
    \end{theorem}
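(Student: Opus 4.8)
This is the Anderson--Monsky trace formula, and the plan is to reduce it to the $p$-adic (Dwork--Monsky--Reich) trace formula as developed for function-field crystals by Anderson, B\"ockle--Pink, and Taguchi--Wan. First I would repackage the right-hand side at the level of $\tau$-modules. Using the elementary operator identity $E_\mathscr{F}\circ U_p = U_p\circ F(E_\mathscr{F})$ (valid because $U_p$ inverts the $p$-power map on both exponents and coefficients), an induction gives
\[
\Theta_\mathscr{F}^{b}\;=\;U_p^{b}\circ\bigl(F^{b-1}(E_\mathscr{F})\cdots F(E_\mathscr{F})\,E_\mathscr{F}\bigr),
\]
and the matrix on the right is a Frobenius matrix of the $\tau$-module $\mathscr{G}=(\mathscr{F},\phi_\mathscr{F}^{b})$ obtained by passing to the $b$-th power of Frobenius. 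Thus $\Theta_\mathscr{F}^{b}$ is a $\bfV$-linear $r$-Dwork operator $\Theta_\mathscr{G}$ on $M_\mathscr{G}=M_\mathscr{F}$, completely continuous for small radius by Proposition~\ref{prop: Dwork operators are completely continuous for small radius}, and by Proposition~\ref{p:Katz-correspondence} the $\tau$-module $\mathscr{G}$ corresponds to $\rho$ (now viewed as a $\bfV$-valued character), whose $L$-function is exactly $L(\rho,s)$ since a closed point $\frakp$ of $\Spec A_{\F_r}$ has $\Frob_\frakp=\phi_\mathscr{F}^{b\deg(\frakp)}$. It therefore suffices to prove the trace formula $L(\rho,s)=\det_{\bfV}(1-x\,\Theta_\mathscr{G}\mid M_\mathscr{G})$ for a unit-root overconvergent $\tau$-module $\mathscr{G}$, after the evident change of variable; one may simply cite it at this level, or argue as in the next paragraph.

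I would prove the $\tau$-level formula by comparing logarithmic derivatives. Expanding the Euler product gives $x\frac{d}{dx}\log L(\rho)=\sum_{n\ge1}S_n x^n$ with $S_n=\sum_{\deg(\frakp)\mid n}\deg(\frakp)\,\rho(\Frob_\frakp^{n/\deg(\frakp)})$, the weighted count of $\F_{r^n}$-points of $\Spec A_{\F_r}$. On the other side, since $\Theta_\mathscr{G}$ is completely continuous one has $\det_{\bfV}(1-x\Theta_\mathscr{G}\mid M_\mathscr{G})=\exp\bigl(-\sum_{n\ge1}\tfrac1n\Tr(\Theta_\mathscr{G}^{n}\mid M_\mathscr{G})x^n\bigr)$ (Proposition~\ref{prop: Fredholm exists for completely continuous}), so the formula is equivalent to the pointwise identity $\Tr(\Theta_\mathscr{G}^{n}\mid M_\mathscr{G})=-S_n$ for all $n$. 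Here the twist by $\Omega$ in the definition of $M_\mathscr{G}$ is essential: by Lemma~\ref{lemma: M is overconvergent-newname}, $U_p^{b}$ acts on $\Omega$-valued sections as the Cartier operator, whose trace is a sum of residues. One then proves $\Tr(\Theta_\mathscr{G}^{n}\mid M_\mathscr{G})=-S_n$ by the usual Dwork local fixed-point computation at each closed point of $\Spec A_{\F_r}$ (the trace of a completely continuous operator is the sum of its values at Teichm\"uller fixed points), glued by the residue theorem on the projective curve $X$; the residue at $\infty$ contributes nothing because the sections in $M_\mathscr{G}$ are regular on the affine curve, so the global residue sum collapses to the affine point-count. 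Overconvergence is then used to guarantee that $\Theta_\mathscr{G}$ is completely continuous and its Fredholm determinant entire (so the trace expansion is legitimate and can be computed on a pole-order basis, Propositions~\ref{prop: Fredholm exists for completely continuous} and~\ref{prop: Fredholm can be computed using pole-ordered bases}), and --- most importantly --- to pass from the formal equality of generating series to an equality of honest analytic functions, i.e.\ to know that the naive point-count series actually assembles into the $L$-function; this is the Dwork--Monsky meromorphy input that fails for non-overconvergent crystals.

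The genuine content, and the main obstacle, is the pointwise Dwork trace formula $\Tr(\Theta_\mathscr{G}^{n}\mid M_\mathscr{G})=-S_n$ together with the overconvergence step that upgrades it to an analytic identity. Within that, the delicate bookkeeping is at the point $\infty$: one must check that $M_\mathscr{G}$ --- overconvergent sections with controlled poles \emph{only} at $\infty$ --- is exactly the space whose residue sum reproduces the affine Euler product, with no boundary $L$-factor at $\infty$ silently introduced or missing. If one instead quotes the function-field trace formula of Anderson / B\"ockle--Pink / Taguchi--Wan, this obstacle reduces to the (routine but not entirely mechanical) task of identifying their compactly supported cochain space and nuclear Frobenius operator with the $M_\mathscr{G}$ and $\Theta_\mathscr{G}$ built in Section~\ref{s: tau crystals and the trace formula}; the explicit machinery of full subspaces, pole-order bases, and Dwork operators developed there is tailored precisely to make that identification, so once the dictionary is in place the theorem follows.
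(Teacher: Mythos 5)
Your first route --- rewriting $\Theta_\mathscr{F}^{b}=U_p^{b}\circ\bigl(F^{b-1}(E_\mathscr{F})\cdots F(E_\mathscr{F})E_\mathscr{F}\bigr)$, recognizing this as an $r$-Dwork operator attached to the $b$-th-iterate $\tau$-module, and then quoting the function-field trace formula of Monsky / Anderson / Taguchi--Wan after matching their nuclear operator and cochain space with $\Theta_\mathscr{F}$ and $M_\mathscr{F}$ --- is exactly what the paper does: its entire proof is a citation of Monsky's Theorem~5.3 (with the remark that the characteristic-$0$ argument adapts to $\bfV$-coefficients) and of Anderson's trace formula as adapted to the overconvergent setting by Taguchi--Wan. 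On that route you and the paper agree.

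The from-scratch argument you sketch in your second paragraph, however --- the part you single out as the genuine content --- has a real gap. The identity $\det_{\bfV}(1-x\Theta)=\exp\bigl(-\sum_{n\ge1}\tfrac1n\Tr(\Theta^{n})x^{n}\bigr)$ is a characteristic-zero statement. Here the coefficient ring $\bfV=\F_r\llbracket\pi^{1/p^h}\rrbracket$ has characteristic $p$, so $1/n$ is undefined whenever $p\mid n$ and $\exp$ does not make sense; Proposition~\ref{prop: Fredholm exists for completely continuous} asserts only the existence and entirety of the determinant defined by the minor expansion \eqref{eq: Fredholm determinant 2}, not an exponential trace expansion. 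The local fixed-point identity $\Tr(\Theta^{n})=-S_n$ is still correct and is indeed the essential local input, but in equal characteristic one cannot pass from these power sums to the coefficients $c_n$ by Newton's identities or logarithmic derivatives. This is precisely the point of Anderson's ``elementary approach'': the coefficients of \eqref{eq: Fredholm determinant 2} must be computed by a direct combinatorial resummation over tuples of closed points (or by Monsky's Cartier-dual argument, which likewise avoids $\log$-derivatives). So either cite the literature, as the paper does, or replace the $\exp$/$\log$ step with Anderson's direct expansion of the determinant; as written, that step of your argument does not typecheck over $\bfV$.
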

    \begin{proof}
        This is essentially \cite{Monsky-formal_cohomologyIII}*{Theorem 5.3}. We remark that even though Monsky's paper works with coefficients in characteristic $0$ (instead of $\bfV$), the proof works with minimal modifications in general. Alternatively, the proof of \cite{Anderson-trace_formula} can be modified to the overconvergent situation, as is thoroughly explained in \cite{Taguchi_wan_entireness}.
    \end{proof}

    \begin{corollary} \label{cor: a-th root trick}
        Let $\mathscr{G}$ be a free $\tau$-module corresponding to a representation $\rho$ over $\bfV$. 
        Let $\mathscr{F}=\Res(\mathscr{G})$. Then
        \begin{align*}
            \NP_\pi(L(\rho,x^b)^b) &=\NP_\pi(\det_{\bfV^\circ}(1-x\Theta_\mathscr{F} |M_\mathscr{F})).
        \end{align*}
    \end{corollary}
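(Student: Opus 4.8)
The plan is to deduce this from the trace formula (Theorem~\ref{theorem: trace formula}) applied to $\mathscr{F}=\Res(\mathscr{G})$, together with two elementary manipulations of $\pi$-adic Newton polygons and a norm identity for Fredholm determinants under restriction of scalars. Recall that $\bfV$ is free of rank $b$ over $\bfV^\circ$ and that $\mathrm{Gal}(\F_r/\F_p)$ acts $\bfV^\circ$-linearly on $\bfV$ with fixed ring $\bfV^\circ$; fix a generator $\sigma$, which fixes $\pi$ and hence preserves $v_\pi$ and Newton polygons. I will write $b\star(-)$ for the $b$-fold Minkowski sum of a Newton polygon (equivalently: keep all slopes, multiply every multiplicity by $b$); this operation is injective and computes both $\NP_\pi(f^b)$ and, more generally, $\NP_\pi(g_1\cdots g_b)$ whenever the $g_i$ all share the same Newton polygon. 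The substitution $x\mapsto x^b$ divides slopes by $b$ and multiplies multiplicities by $b$, and commutes with $b\star(-)$.

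\textbf{Step 1 (identify the representation and factor its $L$-function).} By Proposition~\ref{p:Katz-correspondence} and its compatibility with restriction of scalars, $\mathscr{F}=\Res(\mathscr{G})$ is the unit-root $F$-module corresponding to the $\bfV^\circ$-representation $\rho^\circ$ obtained from $\rho$ by restriction of scalars $\bfV\rightsquigarrow\bfV^\circ$ (viewing $\bfV$ as a free rank-$b$ $\bfV^\circ$-module acting on itself by multiplication). For each prime $\frakp$ of $A$, the characteristic polynomial of $\rho(\Frob_\frakp)$ acting on $\bfV^{\mathrm{rk}}$ over $\bfV^\circ$ is $\prod_{j=0}^{b-1}\sigma^j$ of its characteristic polynomial over $\bfV$, since $\bfV$ is Galois over $\bfV^\circ$ with group $\langle\sigma\rangle$. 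Comparing Euler products therefore gives $L(\rho^\circ,y)=\prod_{j=0}^{b-1}\sigma^j\bigl(L(\rho,y)\bigr)$, and since $\sigma$ preserves $v_\pi$ all $b$ factors have the same Newton polygon, so $\NP_\pi\bigl(L(\rho^\circ,y)\bigr)=b\star\NP_\pi\bigl(L(\rho,y)\bigr)$.

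\textbf{Step 2 (trace formula and the norm of the Fredholm determinant).} Since $\mathscr{G}$, and therefore $\Res(\mathscr{G})$ (whose Frobenius matrix is the cyclic block matrix of Proposition~\ref{prop: shape of restriction of scalars Frobenius structure}), is overconvergent, Theorem~\ref{theorem: trace formula} gives $L(\rho^\circ,y)=\det_{\bfV}\bigl(1-y\,\Theta_\mathscr{F}^{b}\mid M_\mathscr{F}\bigr)$. The operator $\Theta_\mathscr{F}^{b}$ is an $r$-Dwork operator, in particular $\bfV$-linear, so its characteristic series over $\bfV^\circ$ is the norm over $\bfV/\bfV^\circ$ of its characteristic series over $\bfV$,
\[
\det_{\bfV^\circ}\bigl(1-y\,\Theta_\mathscr{F}^{b}\mid M_\mathscr{F}\bigr)=\prod_{j=0}^{b-1}\sigma^j\Bigl(\det_{\bfV}\bigl(1-y\,\Theta_\mathscr{F}^{b}\mid M_\mathscr{F}\bigr)\Bigr),
\]
which one reduces to the finite-dimensional statement by expressing $\Theta_\mathscr{F}^{b}$ in a pole-order basis of $M_\mathscr{F}$ over $\bfV$ and in the induced basis over $\bfV^\circ$ and passing to the limit (cf.\ \cite{Serre}). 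Combined with Step~1 this yields $\NP_\pi\bigl(\det_{\bfV^\circ}(1-y\,\Theta_\mathscr{F}^{b}\mid M_\mathscr{F})\bigr)=b\star b\star\NP_\pi\bigl(L(\rho,y)\bigr)$.

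\textbf{Step 3 (pass from $\Theta_\mathscr{F}^{b}$ to $\Theta_\mathscr{F}$, and assemble).} The nonzero reciprocal roots of $\det_{\bfV^\circ}(1-y\,\Theta_\mathscr{F}^{b}\mid M_\mathscr{F})$ are the $b$-th powers of those of $\det_{\bfV^\circ}(1-y\,\Theta_\mathscr{F}\mid M_\mathscr{F})$, so after $y\mapsto x^b$ their valuations coincide and multiplicities are multiplied by $b$; that is, $\NP_\pi\bigl(\det_{\bfV^\circ}(1-x^b\,\Theta_\mathscr{F}^{b}\mid M_\mathscr{F})\bigr)=b\star\NP_\pi\bigl(\det_{\bfV^\circ}(1-x\,\Theta_\mathscr{F}\mid M_\mathscr{F})\bigr)$. (Concretely, writing $b=p^a b'$ with $p\nmid b'$, the identity $1-t^b=\prod_{\zeta^{b'}=1}(1-\zeta t)^{p^a}$ in characteristic $p$ presents $\det_{\bfV^\circ}(1-x^b\,\Theta_\mathscr{F}^{b}\mid M_\mathscr{F})$, over a large enough coefficient field, as $\prod_{\zeta^{b'}=1}\det_{\bfV^\circ}(1-\zeta x\,\Theta_\mathscr{F}\mid M_\mathscr{F})^{p^a}$, each factor having the Newton polygon of $\det_{\bfV^\circ}(1-x\,\Theta_\mathscr{F}\mid M_\mathscr{F})$.) Substituting $y=x^b$ in the conclusion of Step~2 and comparing,
\[
b\star\NP_\pi\bigl(\det_{\bfV^\circ}(1-x\,\Theta_\mathscr{F}\mid M_\mathscr{F})\bigr)=\NP_\pi\bigl(\det_{\bfV^\circ}(1-x^b\,\Theta_\mathscr{F}^{b}\mid M_\mathscr{F})\bigr)=b\star b\star\NP_\pi\bigl(L(\rho,x^b)\bigr)=b\star\NP_\pi\bigl(L(\rho,x^b)^b\bigr),
\]
and injectivity of $b\star(-)$ gives the claim. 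The main obstacle is not in the bookkeeping but in the two structural inputs: identifying $\Res(\mathscr{G})$ with the representation $\rho^\circ$ and deriving the factorization $L(\rho^\circ,y)=\prod_j\sigma^j(L(\rho,y))$, and the restriction-of-scalars norm identity for $\det$ of the $\bfV$-linear operator $\Theta_\mathscr{F}^b$ — which in turn rests on $\Theta_\mathscr{F}^b$ being genuinely $\bfV$-linear, i.e.\ an $r$-Dwork operator, the point being that $U_p^{b}$ extracts $r$-th roots of $\theta$-coefficients and is the identity on $\F_r$.
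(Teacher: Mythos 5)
Your proof is correct and follows essentially the same route as the paper's: apply the trace formula to $\Res(\mathscr{G})$, use the two norm identities $L(\Res(\rho),x)=N_{\bfV/\bfV^\circ}(L(\rho,x))$ and $\det_{\bfV^\circ}(1-x\Theta_\mathscr{F}^b|M_\mathscr{F})=N_{\bfV/\bfV^\circ}(\det_{\bfV}(1-x\Theta_\mathscr{F}^b|M_\mathscr{F}))$ together with the fact that $L(\Res(\rho),x)$ has $\bfV^\circ$-coefficients (so its norm is its $b$-th power), and finally factor $\det_{\bfV^\circ}(1-x^b\Theta_\mathscr{F}^b|M_\mathscr{F})$ into factors sharing a common Newton polygon. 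Your bookkeeping via the injective operation $b\star(-)$ is just a cleaner rendering of the paper's slope-counting, and your characteristic-$p$ treatment of the roots of unity via $1-t^b=\prod_{\zeta^{b'}=1}(1-\zeta t)^{p^a}$ is if anything slightly more careful than the paper's product over $\zeta^b=1$.
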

    \begin{proof}
        For any $f(x) \in \bfV\llbracket s \rrbracket$ we let $N_{\bfV/\bfV^\circ}(f(x))$ be the
        product over the conjugates $f(x)^\sigma$, where $\sigma \in Gal(\bfV/\bfV^\circ)$, and $\sigma$ is applied to the coefficients of $f(x)$. Then we have
        \begin{align*}
            L(\Res_{\bfV/\bfV^\circ}(\rho),x) &= N_{\bfV/\bfV^\circ}(L(\rho,x)).
        \end{align*}
        From Theorem \ref{theorem: trace formula} we have 
        \begin{align*}
            L(\Res_{\bfV/\bfV^\circ}(\rho),x) &= \det_{\bfV}(1-x\Theta_\mathscr{F}^b |M_\mathscr{F}).
        \end{align*}
        We know that
        \begin{align*}
            \det_{\bfV^\circ}(1-x\Theta_\mathscr{F}^b |M_\mathscr{F}) &= N_{\bfV/\bfV^\circ}(\det_{\bfV}(1-x\Theta_\mathscr{F}^b |M_\mathscr{F})).
        \end{align*}
        Since $L(\Res_{\bfV/\bfV^\circ}(\rho),x)$ is the $L$-function of an $\bfV^\circ$-valued representation, we know that $N_{\bfV/\bfV^\circ}( L(\Res_{\bfV/\bfV^\circ}(\rho),x))$ equals $ L(\Res_{\bfV/\bfV^\circ}(\rho),x)^b$. In particular, we have
        \begin{align*}
        N_{\bfV/\bfV^\circ}(L(\rho,x)) ^b&=  L(\Res_{\bfV/\bfV^\circ}(\rho),x)^b \\
            &=\det_{\bfV^\circ}(1-x\Theta_\mathscr{F}^b |M_\mathscr{F}).  
        \end{align*}
        Replacing $x$ with $x^b$ then gives
        \begin{align*}
            N_{\bfV/\bfV^\circ}(L(\rho,x^b)) ^b &= \det_{\bfV^\circ}(1-x^b\Theta_\mathscr{F}^b |M_\mathscr{F})\\
            &=\prod_{\zeta^b =1} \det_{\bfV^\circ}(1-\zeta x\Theta_\mathscr{F} |M_\mathscr{F}).
        \end{align*}
        Each $\displaystyle\det_{\bfV^\circ}(1-\zeta x\Theta_\mathscr{F} |M_\mathscr{F})$ has the same $\pi$-adic Newton polygon. Also, each Galois conjugate of $L(\rho,x^b)^b$ has the same Newton polygon.
        Thus,
        \begin{align*}
            \NP_\pi(L(\rho,x^b)^b) &=\NP_\pi(\det_{\bfV^\circ}(1-x\Theta_\mathscr{F} |M_\mathscr{F})).
        \end{align*}
    \end{proof}
    
    \section{Notation for matrices and permutations} \label{ss: some preliminaries on permutations}
    In this section we standardize some notation and conventions that will be used throughout the remainder if this article. 
    
    \subsection{Permutations}
	
		\subsubsection{Enriched permutations and matrices}
	       \label{sss:enriched permutaitons and matrices}
			Let $I$ be a set. Let $\Phi = (\Phi_{i,j})$ be an $I \times I$ matrix with entries in $\bfV_\infty$. For any subsets $I_1,I_2 \subset I$, we let $\Phi_{I_1\times I_2}$ denote the $I_1 \times I_2$ matrix whose $(i_1,i_2)$-th entry is $\Phi_{i_1,i_2}$. In light of the product formula \eqref{eq: Fredholm determinant 2} for the coefficients of $\det(I-s \Phi)$, the proof of Theorem \ref{t:Riemann-hypothesis} will mainly be concerned with the following notion:
			
			\begin{definition}
				An \emph{enriched permutation} of $I$ is a pair $(S,\sigma)$, where $S \subseteq I$ is a finite subset and $\sigma$ is a permutation of $S$. When there is no risk of confusion, we will often omit the underlying subset and refer to $(S,\sigma)$ simply by $\sigma$.
			\end{definition}
			
			If $(S,\sigma)$ is an enriched permmutation of $I$ then we can extend $\sigma$ to a permutation of $I$ by setting $\sigma(i) = i$ for $i \notin S$. If $(T,\tau)$ is a second enriched permutation of $I$, then we may form the composite enriched permutation $(S \cup T, \sigma \tau)$. We will say that $\sigma$ and $\tau$ are \emph{disjoint} if $S$ and $T$ are disjoint.
			
			\begin{definition}
				Let $(S,\sigma)$ be an enriched permutation of $I$. We define
				\begin{equation*}
					v(\sigma,\Phi)	=	v_\pi\left( \prod_{i \in S} \Phi_{i,\sigma(i)} \right) = \sum_{i \in S} v_\pi(\Phi_{i,\sigma(i)}).
				\end{equation*}
			\end{definition}
			
		\subsubsection{Rotational permutations}
		
			Let $X$ be a set. We will frequently encounter matrices indexed by sets of the form $I = \Z/b\Z \times X$. For any such $I$ let us write
			\begin{equation*}
				\mathbf{i}:I \to \Z/b\Z
			\end{equation*}
			for the projection onto the first factor. Most $I \times I$ matrices $\Phi$ of interest to us will be \emph{block cyclic} in the sense that $\Phi_{k_1,k_2} = 0$ unless $\bfi{k_2} = \bfi{k_1}-1$ in $\Z/b\Z$. If $\Phi$ is block cyclic, then for many enriched permutations of $I$ the corresponding term of \eqref{eq: Fredholm determinant 2} will vanish. The remaining permutations will be our primary object of study:
			
			\begin{definition}
				We say that an enriched permuation $(S,\sigma)$ is \emph{rotational} if $\bfi{\sigma(k)} = \bfi{k}-1$ for all $k \in S$. For any such permutation, the cardinality $|S|$ is a multiple of $b$. We define the \emph{size} of $(S,\sigma)$ to be
				\begin{equation*}
					\bfs{\sigma}	:=	\frac{|S|}{b}.
				\end{equation*}
			\end{definition}
			
			\begin{definition}
				Let $\Phi$ be a block cyclic $I \times I$ matrix. We say that a rotational permutation $\sigma$ is \emph{$v$-minimal of size $n$} for $\Phi$ if $\bfs{\sigma} = n$ and for any rotational permutation $\sigma'$ of $I$ with $\bfs{\sigma'} = n$, we have
				\begin{equation*}
					v(\sigma,\Phi)	\leq	 v(\sigma',\Phi).
				\end{equation*}
			\end{definition}
			
			Let $\Phi$ be an $I \times I$ matrix with Fredholm determinant $\det(I - x \Phi) = \sum_i c_i x^i$. If $\Phi$ is cyclic, then $c_i$ vanishes unless $i = bn$ is a multiple of $b$. If $\sigma$ is $v$-minimal of size $n$ for $\Phi$, by \eqref{eq: Fredholm determinant 2} we have the following inequality:
			\begin{equation*}
				v_\pi(c_{bn})	\geq	v(\sigma,\Phi).
			\end{equation*}
			Importantly, this is an equality if $\sigma$ is the \emph{unique} $v$-minimal permutation of size $n$ for $\Phi$.
			
	\subsection{Standard index sets}\label{ss: standard indexing sets}
	
		\subsubsection{Definitions}
	
			\begin{definition}
				Let $h$ be a positive integer. We define the standard index set
				\begin{equation*}
					J_h := \Z/b\Z \times \{1,\dots,h\} \times \Z_{> 0}.
				\end{equation*}
				For $k = (i,j,m) \in J_h$, we will write $|k| = m \in \Z_{> 0}$.
			\end{definition}
			
			For notational convenience we will identify $J_1 = \Z/b\Z \times \Z_{> 0}$. For each $h$, the set $J_h$ is equipped with a natural projection
			\begin{align*}
				\iota:J_h &\to J_1	\\
					(i,j,m)	&\mapsto	(i,m).
			\end{align*}
			This map restricts to a bijection on each of the subsets
			\begin{equation*}
				J_h^{(j)}	:=	\Z/b\Z \times \{j\} \times \Z_{> 0}.
			\end{equation*}
			
			\begin{definition}
				Let $\sigma$ be an enriched permutation of $J_h$. We say that $\sigma$ is \emph{decomposable} if each $J_h^{(j)}$ is invariant under the action of $\sigma$.
			\end{definition}
			
			Let $\sigma$ be a decomposable enriched permutation of $J_h$. Then there exist unique (possibly empty) enriched permutations $\sigma^{(j)}$ of $ J_h^{(j)}$ such that
			\begin{equation*}
				\sigma	=	\sigma^{(1)} \cdots \sigma^{(h)}.
			\end{equation*}
			
			\begin{definition}
				Let $\sigma$ be a decomposable enriched permutation of $J_h$. We associate to $\sigma$ an enriched permutation of $J_1$
			\begin{equation*}
				\iota_*\sigma	=	\iota_*\sigma^{(1)} \cdots \iota_*\sigma^{(j)},
			\end{equation*}
			where for each $j$, $\iota_* \sigma^{(j)}$ is the enriched permutation of $J_1$ corresponding to $\sigma^{(j)}$ under the bijection $\iota:J_h^{(j)} \to J_1$.
			\end{definition}
            Finally, we introduce the notion of $p$-bounded enriched permutations. This notion will be used extensively in \S \ref{s: affine line 1}.
             \begin{definition}
        Let $(\sigma,S)$ be a rotational enriched permutation of $J_h$. We say $\sigma$ is $p$-bounded if $|k| \leq p|\sigma(k)|$ for all $k \in S$. 
    \end{definition}
		
		\subsubsection{Notation for rotational \texorpdfstring{$b$}{b}-cycles}
		
			For the matrices studied below, the $v$-minimal permutations are very special in that they are decomposable and factor as a product of disjoint $b$-cycles. For such permutations, the map $\iota_*$ allows us to reduce many calculations to rotational $b$-cycles in $J_1$. We will now introduce notation to simplify the manipulation of such $b$-cycles:
			
			\begin{definition}
				Let $\Gamma = \Z^{\Z/b\Z}$ denote the abelian group of all $\Z/b\Z$-tuples of integers. If $\sigma = (k_b \cdots k_1)$ is a rotational $b$-cycle of $J_1$, then we define the \emph{coordinates} of $\sigma$ to be the tuple $(|k_i|)_{i \in \Z/b\Z} = (|k_1|,\dots,|k_b|)$.
			\end{definition}
			
			The coordinate map provides a bijection between rotational $b$-cycles of $J_1$ and the subset $\Gamma^+$ consisting of tuples in $\Gamma$ with positive entries. We will frequently identify a rotational $b$-cycle $\sigma$ with its coordinate vector. For example, for any $\vec{m} \in \Gamma$ we may form the sum $\sigma + \vec{m}$ even when the latter does not represent a rotaional $b$-cycle.
			
		\subsubsection{Ordering of permutations}
		
			\begin{definition}
				We equip the set of rotational enriched permutations of $J_h$ with a preorder as follows: Given $(S,\sigma)$ and $(T,\tau)$, we write $\sigma \leq \tau$  if for all $k \in S$ and $l \in T$ we have $|k| \leq |l|$ whenever $\bfi{k} = \bfi{l}$.
			\end{definition}
			
			The preceding definition defines a partial order on the set of rotational enriched permutations of each $J_h^{(j)}$. If $\sigma$ and $\tau$ are decomposable enriched permutations of $J_h$, then
			\begin{equation*}
				\sigma \leq \tau\Longleftrightarrow \iota_* \sigma \leq \iota_*\tau.
			\end{equation*}
			Observe that if $\sigma = (m_1,\dots,m_b)$ and $\tau = (n_1,\dots,n_b)$ are rotational $b$-cycles of $J_1$, then $\sigma \leq \tau$ is equivalent to $m_i \leq n_i$ for all $i$.
			
			\begin{definition}
				Let $\sigma$ be a rotational permutation of $J_h$. We say that $\sigma$ is \emph{lexicographical} if there exist disjoint $b$-cycles $\sigma_1 \leq \cdots \leq \sigma_n$ such that $\sigma = \sigma_1 \dots \sigma_n$.
			\end{definition}

    \section{Curves over \texorpdfstring{$\F_p$}{Fp} with \texorpdfstring{$\infty$}{infty} rational}
    \label{s: toy example}
    We will continue with the notation from \S \ref{s: tau crystals and the trace formula}. We will make the additional assumption
    that $\F_r$ and $\F_q$ are both equal to $\F_p$, i.e. the curve
    $X$ is defined over $\F_p$ and $\infty$ is a point of degree one.
    In particular, note that $F$-modules are the same thing as $\tau$-modules.
    The purpose of this section is to prove Theorem \ref{t:Riemann-hypothesis} in this special case. The proof is significantly simpler than the general case,
    but still highlights most of the key ideas.

    \subsection{The Riemann hypothesis for \texorpdfstring{$\F_p[\theta]$}{Fp[theta]}}
    We first treat the case where $X=\mathbb{P}^1_{\F_p}$ and $\infty$ is the usual point at infinity. Then $A= \F_p[\theta]$. Since $\F_p[\theta]$ has
    class number one we have $\bfV=\bfR$. Also, we have $\bfV^\circ=\bfV$ since
    both have the same residue field.
    We choose our uniformizer
    of $\bfV$ to be $\pi=\theta^{-1}$. Fix $y \in \Z_p$ that is not a positive integer.
    \subsubsection{The \texorpdfstring{$F$}{F}-module} \label{sss: toy example F module}
    Recall that in \S \ref{sss: 1-unit characters} we defined
    a character $\rho_{\F_p[\theta],\pi}:\pi_1(\mathbb{A}_{\F_p}^1) \to \bfV^\times$. From Proposition \ref{p:Katz-correspondence}
    the character $\rho_{\F_p[\theta],\pi}$ corresponds to a rank one
    $F$-module $\mathscr{F}_{\F_p[\theta]}$ over $\bfV\langle \theta \rangle$. Let $\mathscr{F}_{\F_p[\theta]}^{\otimes y}$ be the $F$-module
    associated to $\rho_{\F_p[\theta],\pi}^{\otimes y}$.
    Consider the $p$-adic expansion
    \begin{align*}
        y&= \sum_{n=0}^\infty y_n p^n.
    \end{align*}
    We define
    \begin{align}\label{eq: Frob matrix element}
        \beta&:= \prod_{n=0}^\infty (1 - \pi^{p^n}\theta)^{y_n}.
    \end{align}
    \begin{proposition}
        \label{prop: taguchi wan over F_p and more}
        The element $\beta$ is a Frobenius matrix for $\mathscr{F}_{\F_p[\theta]}^{\otimes y}$. In particular, $\mathscr{F}_{\F_p[\theta]}^{\otimes y}$ is overconvergent.
    \end{proposition}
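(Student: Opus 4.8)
The plan is to identify the Frobenius structure of $\mathscr{F}_{\F_p[\theta],\pi}^{\otimes y}$ by first computing it for the case $y$ a positive integer and then passing to the $\pi$-adic limit. First I would recall from \S\ref{sss: 1-unit characters} that $\rho_{\F_p[\theta],\pi}$ sends a prime $\frakp = (f)$ (with $f$ monic) to $\langle \frakp_\infty \rangle_\pi = \langle f \rangle_\pi$, where $\langle f \rangle_\pi = f / \theta^{\deg f}$ is the leading-coefficient-normalized $1$-unit attached to $f$. The associated rank-one $F$-module has underlying module $\bfV\langle \theta \rangle$ and a Frobenius matrix $\beta_1 \in 1 + \pi \bfV\langle\theta\rangle$ whose reduction recovers this character via the usual ``evaluate at Teichmüller points'' recipe. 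The key computation is that, up to $F$-equivalence, one may take $\beta_1 = 1 - \pi\theta = 1 - \theta^{-1}\theta$, i.e. the Frobenius matrix for $\rho_{\F_p[\theta],\pi}$ itself. This is the rank-one Dwork/Carlitz module computation: the Carlitz exponential (or rather, the associated $F$-crystal) has Frobenius $1 - \theta \cdot \theta^{-p^{\bullet}}$-type factors, and for $\F_p$ with $b = 1$ this collapses to the single factor $1 - \pi\theta$. I would cite that $\mathscr{F}_{\F_p[\theta],\pi}$ arising from class field theory via Proposition \ref{p:Katz-correspondence} agrees with this explicit rank-one object; this is standard (see e.g. \cite{Taguchi-Wan} or \cite{Bockle-global_lfunctions}).

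Next I would handle the $y$-th tensor power. For $y = p^n$ a prime power, $\rho^{\otimes p^n}$ has Frobenius matrix $\beta_1^{(p^n)}$ — the image of $\beta_1$ under the substitution accounting for the $n$-fold Frobenius twist — which works out to $1 - \pi^{p^n}\theta$. For a general positive integer $y = \sum_{n} y_n p^n$ with $y_n \in \{0,\dots,p-1\}$, the tensor power $\rho^{\otimes y} = \bigotimes_n (\rho^{\otimes p^n})^{\otimes y_n}$ has Frobenius matrix the product $\prod_n (1 - \pi^{p^n}\theta)^{y_n}$, which is exactly the finite product defining $\beta$ in \eqref{eq: Frob matrix element} (the product is finite since $y$ is a positive integer). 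Since the character $y \mapsto \rho_{\F_p[\theta],\pi}^{\otimes y}$ is continuous in the $\pi$-adic (equivalently $p$-adic) topology on $\Z_p$, and since $y \mapsto \prod_{n=0}^N (1 - \pi^{p^n}\theta)^{y_n}$ converges $\pi$-adically as $N \to \infty$ uniformly on compacta in $\theta$ (because $1 - \pi^{p^n}\theta \to 1$), the element $\beta$ of \eqref{eq: Frob matrix element} is a well-defined element of $\bfV\langle\theta\rangle$ and equals the Frobenius matrix of $\mathscr{F}_{\F_p[\theta],\pi}^{\otimes y}$ by density of positive integers in $\Z_p$ and continuity of the Katz correspondence (Proposition \ref{p:Katz-correspondence}).

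Finally, overconvergence: I would observe that $v_\pi(\pi^{p^n}) = p^n$, so each factor $1 - \pi^{p^n}\theta$ contributes, to the coefficient of $\theta^i$ in $\beta$, a term of $\pi$-valuation growing linearly in $i$ with slope at least $1$ (since to reach $\theta^i$ one multiplies at most $i$ of the factors, but the smallest available exponents $p^0, p^1, \dots$ force the valuation of the $\theta^i$-coefficient to be at least — after a short estimate — something like $\Omega(i)$; more precisely the $\theta^i$ coefficient has $\pi$-valuation at least $i$ up to bounded error, using that distinct $p^n$'s that sum the exponents of $\theta$ have total $\pi$-weight at least their number). Hence $\beta \in L^m$ for, say, $m = 1$, so $\beta$ defines an overconvergent Frobenius matrix and $\mathscr{F}_{\F_p[\theta],\pi}^{\otimes y}$ is overconvergent in the sense of Definition \ref{d:oc-local-tau-module}, as claimed. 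The main obstacle I anticipate is pinning down the precise identification of the class-field-theoretic $\mathscr{F}_{\F_p[\theta],\pi}$ with the explicit rank-one module having Frobenius $1 - \pi\theta$ — i.e. matching the Artin reciprocity normalization (arithmetic vs.\ geometric Frobenius, and the sign/leading-coefficient convention in $\langle\cdot\rangle_\pi$) against the Dwork-style evaluation formula — since everything downstream, including the digit expansion $\beta = \prod(1-\pi^{p^n}\theta)^{y_n}$, rests on getting that normalization exactly right.
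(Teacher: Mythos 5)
Your proposal is correct and follows essentially the same route as the paper: identify the Frobenius matrix of $\mathscr{F}_{\F_p[\theta]}$ as $1-\pi\theta$ via Taguchi--Wan, pass to the $y$-th power, rearrange the digit-by-digit factors into $\beta$ up to $F$-equivalence, and read off overconvergence from the growth of the coefficient valuations. The only real difference is that the paper forms $(1-\pi\theta)^y$ for $p$-adic $y$ directly via the generalized binomial theorem (which also yields overconvergence immediately), rather than by your density-and-continuity limit from positive integers; both routes work.
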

    \begin{proof}
        The element $1-\pi\theta$ is a Frobenius matrix for $\mathscr{F}_{\F_p[\theta]}$. This is proven in \cite{Taguchi-Wan} or it can be observed by comparing the pullback of $\mathscr{F}_{\F_p[\theta]}$ along closed points to the image of
        $\rho_{\F_p[\theta],\pi}$ evaluated along the corresponding Frobenius elements. We see that $(1-\pi\theta)^y$ is a Frobenius matrix for $\mathscr{F}_{\F_p[\theta]}^{\otimes y}$. From the generalized binomial theorem we deduce that $\mathscr{F}_{\F_p[\theta]}^{\otimes y}$ is overconvergent. Finally, note that $\beta$ is $F$-equivalent to $(1-\pi\theta)^y$ over $\bfV\langle \theta \rangle$, which proves the proposition.
    \end{proof}
    Let $\{f_1\}$ be a basis of $\mathscr{F}_{\F_p[\theta]}^{\otimes y}$ such that the corresponding Frobenius matrix is $\beta$. Define $V_{\mathscr{F}_{\F_p[\theta]}^{\otimes y}}$, $M_{\mathscr{F}_{\F_p[\theta]}^{\otimes y}}$, and $e_1$ as in \S \ref{ss: the trace formula}. In our specific situation we have: 
    \begin{align*}
        M_{\mathscr{F}_{\F_p[\theta]}^{\otimes y}} &= \theta \bfV\langle \theta \rangle e_1, \\
        \Theta_{\mathscr{F}_{\F_p[\theta]}^{\otimes y}}&= U_p \circ \beta. 
    \end{align*}
    From Proposition \ref{p: zeta function via L-functions} and Theorem \ref{theorem: trace formula} we have
    \begin{align}\label{eq: affine line trace formula F_p}
        \zeta_{\F_p[\theta],\pi}(x,y) = \det_{\bfV} \Big (1 - x U_p \circ \beta ~|~ M_{\mathscr{F}_{\F_p[\theta]}^{\otimes y}} \Big ).
    \end{align}
    Define the set $B=\{ \theta^ke_1 ~\Big | ~ k\geq 1  \}$. We view $B$ as being indexed by $J_1=\Z_{>1}$ so that $k$ corresponds to $\theta^k e_1$. Note that
    $B$ is a pole order basis of $M_{\mathscr{F}_{\F_p[\theta]}^{\otimes y}}$ over $\bfV$. Let $\Psi$ be the matrix of $U_p \circ \beta$ acting on $M_{\mathscr{F}_{\F_p[\theta]}^{\otimes y}}$ in terms of the basis $B$.
    Then from Proposition \ref{p: we can compute using matrix} and \eqref{eq: affine line trace formula F_p} we have
    \begin{align}\label{eq: affine line trace formula F_p matrix}
        \zeta_{\F_p[\theta],\pi}(x,y) = \det \Big (1 - x \Psi  \Big ).
    \end{align}

    \subsubsection{The matrix \texorpdfstring{$\Psi$}{Psi}}
    For $n\geq 1$ we define $d(n)$ to be $p^w$ where $w$ is the unique number
    satisfying
    \begin{align*}
        \sum_{j=0}^{w-1} y_j< n \leq \sum_{j=0}^w y_j,
    \end{align*}
    For $n<1$ we define $d(n)=0$. Note that $y=\sum d(n)$.   For $m \in \Z$ we define
    \begin{align*}
        y(m):= \sum_{n=1}^{m} d(n),
    \end{align*}
    where we have the sum equal $0$ when $m<1$.
    These numbers give the valuations of the coefficients of $\beta$.
    More precisely, write
    \begin{align*}
        \beta&= \sum_{-\infty }^\infty a_n \theta^n,
    \end{align*}
    where we adopt the convention that $a_n=0$ for $n<0$.
    A quick calculation using the product formula \eqref{eq: Frob matrix element} for $\beta$ yields:
    \begin{proposition}
        \label{p: valuation of frob matrix F_p}
        We have $v_\pi(a_n)=y(n)$ for all $n\geq 1$. 
    \end{proposition}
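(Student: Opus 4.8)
The plan is to expand the infinite product \eqref{eq: Frob matrix element} and read off the coefficient of $\theta^N$ explicitly. Since each digit satisfies $0 \le y_n \le p-1$, the factor $(1-\pi^{p^n}\theta)^{y_n}$ is the polynomial $\sum_{k=0}^{y_n}\binom{y_n}{k}(-1)^k\pi^{kp^n}\theta^k$, so multiplying out and collecting the $\theta^N$-term gives
\[
a_N \;=\; (-1)^N \!\!\sum_{\substack{(k_n)_{n\ge 0}\,:\ 0\le k_n\le y_n\\ \sum_n k_n = N}}\!\! \Bigl(\prod_n \binom{y_n}{k_n}\Bigr)\,\pi^{\,\sum_n k_n p^n},
\]
a $\pi$-adically convergent sum (every term has $v_\pi \ge \sum_n k_n p^n \ge N$, so the sum makes sense and $a_N\to 0$). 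The first observation is that because $y_n\le p-1$, no carries occur in base $p$ and hence each $\binom{y_n}{k_n}$ is prime to $p$, i.e.\ a unit in $\bfV$; therefore each term has $\pi$-valuation exactly equal to its \emph{weight} $\sum_n k_n p^n$. Consequently $v_\pi(a_N)\ge \min \sum_n k_n p^n$ over the (nonempty, using $\sum_n y_n=\infty$) feasible set, with equality provided the minimizing tuple is unique.

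Next I would pin down the minimizer by a greedy/exchange argument. Let $(k_n^*)$ be the tuple that fills low indices first: $k_0^* = \min(y_0,N)$, then $k_1^* = \min(y_1, N-k_0^*)$, and so on, which has $\sum_n k_n^* = N$. If $(k_n)$ is feasible and $(k_n)\ne (k_n^*)$, then at the first index $j$ where they differ one has $k_j < k_j^*$ and $k_i>0$ for some $i>j$; replacing $(k_j,k_i)$ by $(k_j+1,k_i-1)$ remains feasible and changes the weight by $p^j - p^i < 0$, so $(k_n)$ is not minimal. Hence $(k_n^*)$ is the unique minimizer. Finally I would evaluate its weight: the multiset of available ``slot costs'' — the value $p^n$ taken with multiplicity $y_n$, for $n=0,1,2,\dots$ — listed in increasing order is precisely the sequence $d(1),d(2),d(3),\dots$ by the very definition of $d(\cdot)$; the greedy tuple fills the $N$ cheapest slots, for a total of $\sum_{m=1}^N d(m) = y(N)$. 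Combining the three points yields $v_\pi(a_N)=y(N)$ for all $N\ge 1$.

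Nothing here is a serious obstacle — as the paper says, it is a quick calculation — and the only points that need to be written carefully are the exchange step and the bookkeeping identifying the greedy cost with $y(N)$. (An alternative is to factor $\beta = (1-\pi\theta)^{y_0}\beta'$, where $\beta'$ is the analogous product for the shifted digit sequence $(y_{n+1})_n$ with $\pi$ replaced by $\pi^p$, and to run the same minimization on the resulting convolution; but the direct expansion above seems cleanest.)
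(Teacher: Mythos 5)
Your proof is correct and follows the same route the paper intends: the paper dispatches this as ``a quick calculation using the product formula,'' i.e.\ expand the product and track the unique term of minimal valuation, which is exactly what you do. Your write-up simply supplies the details the paper omits --- the unit binomial coefficients (no carries since each digit is $<p$), the exchange argument identifying the greedy tuple as the unique minimizer, and the bookkeeping matching its weight with $y(N)$ --- all of which check out.
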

    For $m_1,m_2 \in \Z_{\geq 1}$ we calculate
    \begin{align*}
        \Psi_{m_1,m_2} &=a_{pm_1-m_2},
    \end{align*}
    so that
    \begin{align}\label{eq: valuation of }
        v_\pi(\Psi_{m_1,m_2}) &=\begin{cases}
            y(pm_1-m_2) & m_2 \leq pm_1 \\
            \infty & \text{ otherwise} 
        \end{cases} .
    \end{align}

    \begin{definition}
        We define a function $R:J_1 \times J_1 \to \Z_{\geq 0}$ as follows: For $k_1,k_2 \in J_1$
        we set
        \begin{align*}
            R(k_1,k_2) &:= y(pk_1-k_2). 
        \end{align*}
        For an enriched permutation $(S,\sigma)$ of $J_1$ we define
        \begin{align*}
            R(\sigma) &:= \sum_{k \in S} R(k,\sigma(k)),
        \end{align*}
        and we refer to $R(\sigma)$ as the $R$-value of $\sigma$.
    \end{definition}
    The following lemma compares the valuation and $R$-value of $(S,\sigma)$.
    \begin{lemma}
        \label{l: F_p R compare with v}
        We have $v(\sigma,\Psi)\geq R(\sigma)$ with equality if and only if
        $\sigma$ is $p$-bounded. 
    \end{lemma}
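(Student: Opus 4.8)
The plan is a short termwise comparison followed by a sum over the support $S$. For $k \in S$, write $n_k := pk - \sigma(k)$, so that by the formula for $\Psi$ and the definition of $R$ we have $\Psi_{k,\sigma(k)} = a_{n_k}$ and $R(k,\sigma(k)) = y(n_k)$. I will use three elementary facts recalled above: the product formula \eqref{eq: Frob matrix element} shows that $a_0 = 1$ and that $a_n = 0$ for $n < 0$; Proposition \ref{p: valuation of frob matrix F_p} gives $v_\pi(a_n) = y(n)$ for $n \geq 1$, hence also for $n = 0$ since $v_\pi(a_0) = 0 = y(0)$; and $y(m) = 0$ whenever $m < 1$.

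First I would prove the pointwise inequality $v_\pi(\Psi_{k,\sigma(k)}) \geq R(k,\sigma(k))$ for each $k \in S$, together with a criterion for when it is an equality. If $n_k \geq 0$ then $\Psi_{k,\sigma(k)} = a_{n_k} \neq 0$ and $v_\pi(\Psi_{k,\sigma(k)}) = y(n_k) = R(k,\sigma(k))$. If $n_k < 0$ then $\Psi_{k,\sigma(k)} = 0$, so $v_\pi(\Psi_{k,\sigma(k)}) = \infty$, whereas $R(k,\sigma(k)) = y(n_k) = 0$; in this case the inequality is strict. Hence in all cases $v_\pi(\Psi_{k,\sigma(k)}) \geq R(k,\sigma(k))$, with equality precisely when $\Psi_{k,\sigma(k)} \neq 0$.

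Next I would sum over $k \in S$:
\[
	v(\sigma,\Psi) = \sum_{k \in S} v_\pi(\Psi_{k,\sigma(k)}) \geq \sum_{k \in S} R(k,\sigma(k)) = R(\sigma),
\]
where $R(\sigma)$ is finite (a finite sum of nonnegative integers); equality holds in this chain if and only if it holds at every index $k \in S$, that is, if and only if $\Psi_{k,\sigma(k)} \neq 0$ for all $k \in S$. To conclude, I would observe that, by inspection of the formula for $\Psi$ and the definition of $p$-boundedness, the condition ``$\Psi_{k,\sigma(k)} \neq 0$ for all $k \in S$'' is exactly the statement that $\sigma$ is $p$-bounded. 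This yields $v(\sigma,\Psi) \geq R(\sigma)$ with equality if and only if $\sigma$ is $p$-bounded, as claimed.

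I do not anticipate a genuine obstacle here; the argument is essentially bookkeeping. The two points that require a little attention are the boundary case $n_k = 0$ — where one needs the constant term $a_0$ of $\beta$ to equal $1$, so that the termwise identity $v_\pi(a_0) = 0 = y(0)$ still holds — and the fact that $y$ vanishes on non-positive arguments, which is what guarantees that when an entry $\Psi_{k,\sigma(k)}$ vanishes its $R$-contribution is genuinely $0$ and therefore cannot interfere with the inequality $v(\sigma,\Psi) \geq R(\sigma)$.
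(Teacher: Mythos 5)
Your argument is correct and coincides with the paper's: the paper's entire proof of this lemma is a one-line appeal to the definition of $R$ and the valuation formula \eqref{eq: valuation of } for the entries of $\Psi$, and you have simply written out the termwise comparison and the summation over $S$ that this appeal tacitly invokes. The two boundary cases you single out ($n_k=0$, where $a_0=1$, and $n_k<0$, where the entry vanishes while $y(n_k)=0$) are exactly the points one needs to check, and you handle them correctly.
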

    \begin{proof}
        This is immediate from the definition of $R$ and \eqref{eq: valuation of }.
    \end{proof}

     \begin{definition}
        Let $S \subset J_1$. We let $(I_S,S)$ be the enriched permutation that is the identity on $S$. That is, we have $I_S(k)=k$ for all $k \in S$.
        
    \end{definition}
    \begin{definition}
        We define $S_n$ to be the set $\{1,\dots,n\}$, which we regard as a subset of $J_1$.
    \end{definition}
    
    \begin{proposition}
        \label{p: minimal fixed point permutation F_p}
        Let $(S,\sigma)$ be an enriched permutation of $J_1$. Then we have $R(I_S) \leq R(\sigma)$,
        with equality if and only if $\sigma$ equals $I_S$.
    \end{proposition}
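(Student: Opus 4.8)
\emph{Proof proposal.} The plan is to run an induction on the integer $N(\sigma) := |S| - |\mathrm{Fix}(\sigma)|$, where $\mathrm{Fix}(\sigma) := \{k \in S : \sigma(k) = k\}$, proving the inequality $R(I_S)\le R(\sigma)$ and the equality criterion simultaneously. When $N(\sigma) = 0$ we have $\sigma = I_S$ and there is nothing to prove, so assume $\sigma$ has a non-fixed point. The engine of the argument is a pair of elementary facts about the increment function $d$: since $y(m) = \sum_{n=1}^m d(n)$ (and $d(n) = 0$ for $n < 1$), one has $y(m) - y(m-1) = d(m)$ for all $m\in\Z$, the sequence $d$ is non-decreasing (so $y$ is convex on $\Z$), $d(m) \ge 1$ for all $m \ge 1$, and each fiber of $d$ over a positive value $p^w$ is the interval of integers in $(\sum_{j<w}y_j,\ \sum_{j\le w}y_j]$, of length $y_w \le p-1$. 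From these I would extract the comparison lemma I will need: \emph{if $m_1 \le m_2$, $m_2 \ge 1$, and $m_2 - m_1 \ge p$, then $d(m_1) < d(m_2)$.} Indeed, if $m_1 \le 0$ then $d(m_1) = 0 < 1 \le d(m_2)$; and if $m_1 \ge 1$, an equality $d(m_1) = d(m_2) = p^w$ would place $m_1,m_2$ in one length-$\le (p-1)$ fiber, contradicting $m_2 - m_1 \ge p$.

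Now the reduction step. Let $k_{\max}$ be the \emph{largest} non-fixed point of $\sigma$. Every element of $S$ exceeding $k_{\max}$ is a fixed point, so $k_{\max}$ lies outside the image of those elements; hence $\sigma(k_{\max}) \le k_{\max}$ and $\sigma^{-1}(k_{\max}) \le k_{\max}$, and since $k_{\max}$ is not fixed both inequalities are strict. Write $l := \sigma(k_{\max}) < k_{\max}$ and $k_* := \sigma^{-1}(k_{\max}) < k_{\max}$ (so $k_* \ge 1$ forces $k_{\max} \ge 2$). Define $\sigma'$ by $\sigma'(k_{\max}) = k_{\max}$, $\sigma'(k_*) = l$, and $\sigma'(k) = \sigma(k)$ otherwise; this splices $k_{\max}$ out of its $\sigma$-cycle, so $\sigma'$ is again a permutation of $S$ with $|\mathrm{Fix}(\sigma')| > |\mathrm{Fix}(\sigma)|$, i.e.\ $N(\sigma') < N(\sigma)$. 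By the inductive hypothesis $R(\sigma') \ge R(I_S)$.

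It remains to show $R(\sigma) > R(\sigma')$. As $\sigma$ and $\sigma'$ differ only at $k_{\max}$ and $k_*$, and using $y(b) - y(a) = \sum_{m=a+1}^{b} d(m)$, we get
\begin{align*}
 R(\sigma) - R(\sigma') &= \bigl[y(pk_{\max} - l) - y((p-1)k_{\max})\bigr] - \bigl[y(pk_* - l) - y(pk_* - k_{\max})\bigr] \\
 &= \sum_{j=1}^{k_{\max}-l} \Bigl( d\bigl((p-1)k_{\max}+j\bigr) - d\bigl(pk_* - k_{\max}+j\bigr) \Bigr).
\end{align*}
For each $j$ in this (nonempty, since $l < k_{\max}$) range, the two arguments differ by exactly $p(k_{\max} - k_*) \ge p$ and the larger one satisfies $(p-1)k_{\max}+j \ge 1$, so the comparison lemma makes every summand strictly positive; hence $R(\sigma) > R(\sigma') \ge R(I_S)$, which closes the induction and yields both claims. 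The one genuinely substantive point is the comparison lemma itself — the fact that $d$ admits no long constant stretches over the positive integers, coming straight from $0 \le y_w \le p-1$ — together with arranging the modification $\sigma \mapsto \sigma'$ so that the two blocks of $d$-arguments being compared are at distance $\ge p$; everything else is bookkeeping with the telescoping identity for $y$.
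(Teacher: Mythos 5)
Your proof is correct and follows essentially the same route as the paper's: an induction that splices an extremal non-fixed point out of its cycle (you take the largest non-fixed point; the paper first reduces to the fixed-point-free case and takes the smallest element of $S$) and then shows the resulting telescoping sum of differences of $d$-values is strictly positive because the two blocks of arguments are offset by a multiple of $p$. Your comparison lemma --- $d(m_1)<d(m_2)$ whenever $m_2\ge 1$ and $m_2-m_1\ge p$, since each fiber of $d$ over a positive value has length $y_w\le p-1$ --- is in fact the honest justification of the strict inequality; the paper's appeal to $d$ being ``strictly increasing for positive integers'' is literally false (e.g.\ $d(1)=d(2)$ when $y_0\ge 2$) and is repaired by exactly your argument.
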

    \begin{proof}
        We proceed by induction on $|S|$. If $|S|=1$ the proposition is clear.
        Assume the statement holds for subsets of $J_1$ of cardinality $n-1$. Let $(S,\sigma)$ be an enriched permutation with $|S|=n$ that is
        not the identity on $S$. We may assume further that $\sigma$ does not have any fixed points (otherwise the proposition follows immediately from the inductive hypothesis).
        Let $m$ be the smallest integer in $S$. Define a permutation $(\sigma', S \backslash \{m\})$ by
        \begin{align*}
            \sigma'(k) &= \begin{cases}
                \sigma(k) & k \neq \sigma^{-1}(m) \\
                \sigma(m) &  k = \sigma^{-1}(m)
            \end{cases}.
        \end{align*}
        It is enough to prove $R(I_{\{m\}}\sigma')< R(\sigma)$. 
        To see this, we compute:
        \begin{align}
        \begin{split}\label{eq: toy case identity lemma}
            R(\sigma) - R(I_{\{m\}}\sigma') &= R(\sigma^{-1}(m),m) + R(m,\sigma(m)) - R(\sigma^{-1}(m),\sigma(m)) - R(m,m) \\
            &=\sum_{k=p\sigma^{-1}(m)-\sigma(m)+1}^{p\sigma^{-1}(m)-m} d(k)-\sum_{k=pm-\sigma(m)+1}^{pm-m}d(k) \\
            &= \sum_{k=1}^{\sigma(m)-m}\Big[ d(p\sigma^{-1}(m) - \sigma(m) + k) - d(pm - \sigma(m) + k)\Big].
        \end{split}
        \end{align}
        Note that $d$ is monotonically increasing and is strictly increasing for positive integers. As $\sigma^{-1}(m)>m$, we see that every term in the last sum of \eqref{eq: toy case identity lemma} is non-negative. 
        By considering $k=\sigma(m)-m$, we see that this sum is indeed positive.

    \end{proof}
        \begin{corollary}\label{c: unique minimal permutation}
            The enriched permutation $(I_{S_n},S_n)$ is the only $v$-minimal enriched permutation of $\Psi$ of size $n$. 
        \end{corollary}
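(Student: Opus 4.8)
The plan is to bootstrap from Lemma~\ref{l: F_p R compare with v} and Proposition~\ref{p: minimal fixed point permutation F_p}, and then to optimize over the underlying subset $S$. Since here $b=1$, every enriched permutation is rotational and its size is just $|S|$, so I need to compare $v(\sigma,\Psi)$ across all enriched permutations $(S,\sigma)$ with $|S|=n$. First I would record the chain of inequalities
\begin{equation*}
v(\sigma,\Psi)\ \geq\ R(\sigma)\ \geq\ R(I_S),
\end{equation*}
where the first inequality is Lemma~\ref{l: F_p R compare with v} and the second is Proposition~\ref{p: minimal fixed point permutation F_p}, with equality in the second exactly when $\sigma=I_S$. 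Thus the problem reduces to minimizing $R(I_S)=\sum_{k\in S}R(k,k)=\sum_{k\in S}y\bigl((p-1)k\bigr)$ over $n$-element subsets $S\subseteq J_1$.

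The one quantitative observation I would then make is that $k\mapsto y\bigl((p-1)k\bigr)$ is \emph{strictly} increasing for $k\geq 1$: indeed $d(n)=p^{w}>0$ for every $n\geq1$, so $y$ is strictly increasing on $\Z_{\geq 0}$, and $k_1<k_2$ forces $1\leq(p-1)k_1<(p-1)k_2$. Consequently a sum of $n$ of the distinct numbers $y\bigl((p-1)k\bigr)$ is minimized, uniquely, by taking $S=S_n=\{1,\dots,n\}$; that is, $R(I_S)\geq R(I_{S_n})$ with equality iff $S=S_n$. Next I would note that the identity permutation $I_{S_n}$ is $p$-bounded (it requires only $k\leq pk$), so Lemma~\ref{l: F_p R compare with v} gives $v(I_{S_n},\Psi)=R(I_{S_n})$. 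Feeding this back into the chain above yields $v(\sigma,\Psi)\geq v(I_{S_n},\Psi)$ for every enriched permutation of size $n$, so $(I_{S_n},S_n)$ is $v$-minimal of size $n$.

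For uniqueness I would argue that the equality $v(\sigma,\Psi)=R(I_{S_n})$ forces every inequality above to be an equality: $R(\sigma)=R(I_S)$ then forces $\sigma=I_S$ by the equality clause of Proposition~\ref{p: minimal fixed point permutation F_p}, and $R(I_S)=R(I_{S_n})$ forces $S=S_n$ by the strict monotonicity just noted; hence $(S,\sigma)=(I_{S_n},S_n)$. The heavy lifting here is entirely in the cited Lemma~\ref{l: F_p R compare with v} and Proposition~\ref{p: minimal fixed point permutation F_p}; the only new ingredient is the strict monotonicity of $k\mapsto y((p-1)k)$, which is what upgrades ``minimal'' to ``the unique minimal''. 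So I do not expect a real obstacle beyond being careful that the \emph{subset} $S$ is genuinely pinned down (not merely its cardinality) — which is exactly what the strictness delivers.
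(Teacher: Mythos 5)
Your proposal is correct and follows essentially the same route as the paper: reduce to identity permutations via Lemma~\ref{l: F_p R compare with v} and Proposition~\ref{p: minimal fixed point permutation F_p}, then pin down $S=S_n$ using the strict monotonicity of $m\mapsto y(m)$. Your write-up merely spells out more explicitly the $p$-boundedness of $I_{S_n}$ and the equality-forcing argument that the paper leaves implicit.
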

        \begin{proof}
            By Lemma \ref{l: F_p R compare with v} and Proposition \ref{p: minimal fixed point permutation F_p} we know
            any $v$-minimal permutation is of the form $(I_S,S)$. We also know
            $v(I_S,\Psi)=\sum_{m \in S}y(pm-m)$. The corollary follows by observing that $y(m_1)>y(m_0)$ whenever $m_1>m_0$.
        \end{proof}

        \begin{remark}
            Generalizing Corollary \ref{c: unique minimal permutation} to
            $A=\F_q[\theta]$ is the principle obstacle encountered when moving beyond the case of curves over $\F_p$. The analogous result (Theorem \ref{theorem: unique minimal term}) for $\F_q[\theta]$ is substantially more difficult and technical.
        \end{remark}
        \begin{theorem} \label{t: Wan's theorem}
            (Wan) The slopes of $\NP_\pi(\zeta_{\F_p[\theta],\pi}(x,y))$
            are $y(p-1),y(2p-2), \dots$, and each slope occurs with multiplicity one.
        \end{theorem}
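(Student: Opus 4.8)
The plan is to read off the Newton polygon of $\zeta_{\F_p[\theta],\pi}(x,y)$ directly from the explicit formula \eqref{eq: affine line trace formula F_p matrix}, namely $\zeta_{\F_p[\theta],\pi}(x,y) = \det(1-x\Psi) = \sum_{n\geq 0} c_n x^n$ with $c_n$ given by the Fredholm determinant formula \eqref{eq: Fredholm determinant 2}. The Newton polygon of an entire series $\sum c_n x^n$ is the lower convex hull of the points $(n, v_\pi(c_n))$, so it suffices to compute $v_\pi(c_n)$ for every $n$ and check that the resulting sequence of values has successive slopes $y(p-1), y(2p-2) - y(p-1), \dots$, i.e.\ that $v_\pi(c_n) = \sum_{m=1}^n y(pm-m)$. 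Once $v_\pi(c_n)$ is pinned down, the $i$-th Newton slope is $v_\pi(c_i) - v_\pi(c_{i-1}) = y(pi - i) = y(i(p-1))$, and strict convexity (hence multiplicity one for each slope) follows from the fact, already used in Corollary \ref{c: unique minimal permutation}, that $y(m)$ is strictly increasing in $m$ — so the increments $y(i(p-1))$ are strictly increasing in $i$.

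The key step is the identification $v_\pi(c_n) = v(I_{S_n},\Psi) = \sum_{m=1}^n y(pm-m)$. The inequality $v_\pi(c_n) \geq v(I_{S_n},\Psi)$ is exactly the content of Corollary \ref{c: unique minimal permutation}: every enriched permutation $(S,\sigma)$ of size $n$ appearing in \eqref{eq: Fredholm determinant 2} satisfies $v(\sigma,\Psi) \geq R(\sigma) \geq R(I_{S_n}) = v(I_{S_n},\Psi)$ by Lemma \ref{l: F_p R compare with v} and Proposition \ref{p: minimal fixed point permutation F_p}, and the minimizing permutation $(I_{S_n}, S_n)$ is unique. For the reverse inequality, the uniqueness in Corollary \ref{c: unique minimal permutation} is precisely what is needed: in the sum \eqref{eq: Fredholm determinant 2} defining $c_n$, there is exactly one term of minimal $\pi$-adic valuation, namely the one coming from $(I_{S_n}, S_n)$, which contributes $\operatorname{sgn}(\mathrm{id}) \prod_{m=1}^n \Psi_{m,m} = \prod_{m=1}^n a_{(p-1)m}$ with valuation $\sum_{m=1}^n y((p-1)m)$; since no other term can cancel it (all others have strictly larger valuation), we get equality $v_\pi(c_n) = \sum_{m=1}^n y((p-1)m)$. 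I should also note $c_n \neq 0$ for all $n$, so the Newton polygon genuinely has a vertex at each $(n, v_\pi(c_n))$.

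The only remaining point is bookkeeping at the boundary: one must observe that $v(I_{S_n},\Psi)$ uses the entries $\Psi_{m,m} = a_{pm-m}$, which are finite and nonzero for all $m \geq 1$ since $pm - m \geq 1$, so $v_\pi(\Psi_{m,m}) = y((p-1)m) < \infty$ by \eqref{eq: valuation of } and Proposition \ref{p: valuation of frob matrix F_p}. I do not anticipate a genuine obstacle here — the theorem is essentially a corollary of the already-established Corollary \ref{c: unique minimal permutation} together with the elementary translation between "unique term of minimal valuation in each $c_n$" and "the Newton polygon has vertices at $(n, v_\pi(c_n))$". The mild subtlety worth spelling out is the strict monotonicity of the slopes $y(i(p-1))$, which gives both the shape of the polygon and the multiplicity-one statement simultaneously.
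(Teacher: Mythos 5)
Your proposal is correct and follows the paper's own proof essentially verbatim: both reduce to $\det(1-x\Psi)$ via \eqref{eq: affine line trace formula F_p matrix}, invoke Corollary \ref{c: unique minimal permutation} to get a unique minimal-valuation term in \eqref{eq: Fredholm determinant 2} and hence $v_\pi(c_n)=\sum_{k=1}^n y(pk-k)$, and read off the slopes. Your explicit remark that strict monotonicity of $y$ yields both convexity and multiplicity one is a detail the paper leaves implicit, but it is the same argument.
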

        \begin{proof}
            From \eqref{eq: affine line trace formula F_p matrix} it suffices
            to consider the Newton polygon of $\det(1-x\Psi)$. Write
            \begin{align*}
                \det(1-x\Psi) &= 1 + c_1x + c_2x^2 + \dots.
            \end{align*}
            By Corollary
            \ref{c: unique minimal permutation} there is a unique term in the sum \eqref{eq: Fredholm determinant 2} defining $c_n$ with minimal valuation. In particular, we see that
            \begin{align*}
                v_\pi(c_n) &= v(I_{S_n},\Psi) = \sum_{k=1}^n y(pk-k).
            \end{align*}
            This gives the theorem.
        \end{proof}
        \subsection{Higher genus curves over \texorpdfstring{$\F_p$}{Fp}} \label{ss: higher genus over F_p}
        We now treat the case where $X$ is a curve over $\F_p$ and
        $\infty$ is a point of degree $1$. In this case we have $\bfR=\F_p\llbracket \pi \rrbracket$ and $\bfV=\F_p\llbracket \pi^{1/p^h}\rrbracket$, where $h$ is some positive integer.
        \subsubsection{The intial setup}
        Let $\rho_{A,\pi}$ be the representation defined in \S \ref{ss:zeta}.
        Let $\mathscr{F}_A$ be the $F$-module associated to $\rho_{A,\pi}$
        using Proposition \ref{p:Katz-correspondence} and let $\mathscr{F}_A^{\otimes y}$ be the $F$-module associated to $\rho_{A,\pi}^{\otimes y}$. We let $\mathscr{F}_{A,\infty}^{\otimes y} $ be the localization of $\mathscr{F}_A^{\otimes y}$ at $\infty$ as in Definition \ref{d: localization of F-module}.

        \begin{proposition}
            \label{p: Frobenius element for general curve over F_p}
            The folloing hold:
            \begin{enumerate}
                \item The $A \widehat{\otimes}_{\F_p} \bfV^\circ$-module
                underlying $\mathscr{F}_A^{\otimes y}$ is free of rank one.
                Furthermore, the Frobenius matrix $\alpha$ of $\mathscr{F}_A^{\otimes y}$ can be taken to satisfy $\alpha\equiv 1 \mod \pi^{1/p^h}$.
                \item Recall the element $\beta \in \bfV\langle \theta \rangle$ from \eqref{eq: Frob matrix element}. Then $\beta$
                is a Frobenius matrix of $\mathscr{F}_{A,\infty}^{\otimes y}$.
                In particular, there exists $d \in K \widehat{\otimes}_{\F_p} \bfV^\circ$ with $d \equiv 1 \mod \pi^{1/p^h}$ such that
                $\beta = d\alpha d^{-F}$.
            \end{enumerate}
        \end{proposition}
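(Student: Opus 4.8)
The plan is to extract both statements from three inputs available above: the Katz correspondence (Proposition \ref{p:Katz-correspondence}), the curve-independence of the localization at $\infty$ (Corollary \ref{c: localization does not depend on curve}), and the explicit affine-line Frobenius matrix $\beta$ (Proposition \ref{prop: taguchi wan over F_p and more}).

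For (1), first observe that $\mathscr{F}_A^{\otimes y}$ is the rank-one unit-root $F$-module attached to $\rho_{A,\pi}^{\otimes y}$, so its underlying module over $R := A \widehat{\otimes}_{\F_p}\bfV^\circ$ is a priori only projective of rank one. Since $\rho_{A,\pi}$ is valued in $U_{\bfV}$, the twist $\rho_{A,\pi}^{\otimes y}$ is too, so it reduces to the trivial representation modulo $\pi^{1/p^h}$; by functoriality of the Katz correspondence under the reduction $\bfV^\circ \to \bfV^\circ/\pi^{1/p^h} = \F_p$, the reduction $\mathscr{F}_A^{\otimes y} \otimes_R R/\pi^{1/p^h}$ is the structure $F$-module $(A_{\F_p}, F)$, which is free of rank one with Frobenius matrix $1$. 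As $R$ is $\pi^{1/p^h}$-adically complete, I would lift a generator of the reduction: it generates by topological Nakayama, and a cyclic projective module of constant rank one is free, so the underlying module of $\mathscr{F}_A^{\otimes y}$ is free of rank one. Taking for $\{f_1\}$ a basis whose reduction trivializes $(A_{\F_p},F)$, the associated Frobenius matrix $\alpha$ (a unit of $R$, since the module is unit-root) satisfies $\alpha \equiv 1 \pmod{\pi^{1/p^h}}$.

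For (2), apply Corollary \ref{c: localization does not depend on curve} to $X_1 = X$ and $X_2 = \PP^1_{\F_p}$ (so $A_2 = \F_p[\theta]$), both infinite points having residue field $\F_p$, to get $\eta_{A,\infty}^* \rho_{A,\pi} \cong \eta_{\F_p[\theta],\infty}^* \rho_{\F_p[\theta],\pi}$; twisting by $y$ gives the same for the $\otimes y$ characters. Since the Katz correspondence is functorial and hence compatible with localization at $\infty$, this yields an isomorphism $\mathscr{F}_{A,\infty}^{\otimes y} \cong \mathscr{F}_{\F_p[\theta],\infty}^{\otimes y}$ of $F$-modules over $K \widehat{\otimes}_{\F_p}\bfV^\circ$. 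By Proposition \ref{prop: taguchi wan over F_p and more}, $\beta$ from \eqref{eq: Frob matrix element} is a Frobenius matrix of $\mathscr{F}_{\F_p[\theta]}^{\otimes y}$, hence of its localization, hence of $\mathscr{F}_{A,\infty}^{\otimes y}$. For the final clause: localizing the basis $\{f_1\}$ from (1) makes $\alpha$ a Frobenius matrix of $\mathscr{F}_{A,\infty}^{\otimes y}$ as well, so $\alpha$ and $\beta$ are $F$-equivalent over $K \widehat{\otimes}_{\F_p}\bfV^\circ$, i.e. $\beta = d\alpha d^{-F}$ for some unit $d$. Reducing mod $\pi^{1/p^h}$ and using $\alpha \equiv 1$ and $\beta \equiv 1$ (each factor $(1-\pi^{p^n}\theta)^{y_n} \equiv 1$ since $\pi \in (\pi^{1/p^h})$) gives $\bar d = \bar d^F$, so $\bar d$ lies in the $F$-fixed units $\F_p^\times$; replacing $d$ by $\bar d^{-1} d$ (harmless, as $\bar d^{-1} \in \F_p^\times$ commutes with $F$ and cancels in $d\alpha d^{-F}$) normalizes $d \equiv 1 \pmod{\pi^{1/p^h}}$.

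The hard part will be part (1): because $X$ may have nontrivial Picard group, the freeness of the underlying module genuinely uses the $1$-unit hypothesis (forcing the $F$-module to reduce to the structure sheaf mod $\pi^{1/p^h}$), and care is needed in invoking compatibility of the Katz correspondence with both reduction and localization. Everything else is bookkeeping with $F$-equivalence.
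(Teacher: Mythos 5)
Your proposal is correct, and for part (2) it is exactly the paper's argument: invoke Corollary \ref{c: localization does not depend on curve} to identify $\mathscr{F}_{A,\infty}^{\otimes y}$ with $\mathscr{F}_{\F_p[\theta],\infty}^{\otimes y}$ and then import $\beta$ from Proposition \ref{prop: taguchi wan over F_p and more}. For part (1) the paper simply cites Proposition \ref{proposition: Goss crystal is free, OC and 1 mod pi}, whose proof shares your starting point (the residual representation is a $1$-unit character, hence trivial mod $\pi^{1/p^h}$, so the reduction of the $F$-module is the structure module) but finishes differently: it classifies lifts of the trivial invertible sheaf via deformation theory, using that such lifts are governed by $H^1$ of a coherent sheaf on the affine scheme $\Spec(A)$, which vanishes. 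Your route — lift a generator by topological Nakayama over the $\pi^{1/p^h}$-adically complete Noetherian ring $A\widehat{\otimes}_{\F_p}\bfV^\circ$ and observe that a cyclic projective module of constant rank one is free — is more elementary and avoids the deformation-theoretic citation, at the cost of being slightly less conceptual about \emph{why} there are no nontrivial deformations. You are also more careful than the paper on one point it leaves implicit: the normalization $d\equiv 1 \bmod \pi^{1/p^h}$, which you obtain by reducing $\beta = d\alpha d^{-F}$ modulo $\pi^{1/p^h}$, concluding $\bar d\in\F_p^\times$ from $\bar d = \bar d^{\,p}$, and rescaling $d$ by the constant $\bar d^{-1}$. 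This is a genuine (if small) gap-filling relative to the paper's two-line proof.
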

        \begin{proof}
            The first statement is the same as Proposition \ref{proposition: Goss crystal is free, OC and 1 mod pi}. For the second statement,
            we know from Corollary \ref{c: localization does not depend on curve} that the localizations of $\rho_{A,\pi}^{\otimes y}$ and $\rho_{\F_p[\theta],\pi}^{\otimes y}$ to $\Spec(K)$ give the same representation.
            In particular, $\mathscr{F}_{A,\infty}^{\otimes y}$ and $\mathscr{F}_{\F_p[\theta],\infty}^{\otimes y}$ are isomorphic $F$-modules.
            Thus, the result follows from Proposition \ref{prop: taguchi wan over F_p and more}.
        \end{proof}
        Fix $\alpha$ and $d$ as in Proposition \ref{p: Frobenius element for general curve over F_p}. Let $\{f_1\}$ be a basis of $\mathscr{F}_A^{\otimes y}$
        so that the corresponding Frobenius matrix is $\alpha$. 
        
        Let $V_{\mathscr{F}_{A}^{\otimes y}}$, $M_{\mathscr{F}_{A}^{\otimes y}}$, and $e_1$ be defined as in \S \ref{ss: the trace formula}. Let $\pr$ be the corresponding projection map and define $U_p$ in terms of this basis as in \S \ref{sss: dwork operators}. By Theorem \ref{theorem: trace formula} and Proposition \ref{p: zeta function via L-functions} we have
        \begin{align}
        \begin{split}\label{eq: F_p zeta function via det}
            \zeta_{A,\pi}(x,y) &= L(\rho_{A,\pi}^{\otimes y},x) \\
            &=\det_{\bfV}(1-xU_p \circ \alpha ~|~M_{\mathscr{F}_{A}^{\otimes y}}) \\
            &=\det_{\bfV}(1-xU_p \circ \beta ~|~d^FM_{\mathscr{F}_{A}^{\otimes y}}). 
        \end{split}
        \end{align}

        \subsubsection{A matrix representation\texorpdfstring{ of $U_p \circ \beta$}{}}
        We now choose a pole-order basis of $d^FM_{\mathscr{F}_{A}^{\otimes y}}$ and describe the matrix of $U_p\circ \beta$
        in terms of this basis.
        \begin{lemma}\label{p: proj is surjective}
            The restriction of $\pr$ to $d^FM_{\mathscr{F}_{A}^{\otimes y}}$ is surjective and $\ker(\pr)$ is free over $\bfV$ of rank
            $g$. 
        \end{lemma}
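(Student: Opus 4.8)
The plan is to prove the statement first for $M := M_{\mathscr{F}_A^{\otimes y}}$ itself and then transfer it to $d^F M$ by a $\pi$-adic perturbation argument, using that $d^F$ is a unit of $K_{\F_r}\widehat{\otimes}_{\F_p}\bfV^\circ$ congruent to $1$ modulo a positive power of $\pi$: by Proposition~\ref{p: Frobenius element for general curve over F_p} we have $d\equiv 1\bmod\pi^{1/p^h}$, and since the Frobenius $F$ relative to $\bfV^\circ$ fixes $\pi^{1/p^h}\in\bfV^\circ$ we also get $d^F\equiv 1\bmod\pi^{1/p^h}$. In particular multiplication by $d^F$ is an automorphism of $V := V_{\mathscr{F}_A^{\otimes y}}$, which we view as the free rank-one module over $K_{\F_r}\widehat{\otimes}_{\F_p}\bfV^\circ$ with basis $e_1 = f_1\otimes\frac{d\theta^{-1}}{\theta^{-1}}$.

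By Lemma~\ref{lemma: M is overconvergent-newname}, $M$ is a full subspace, so $\pr|_M$ is surjective onto $\theta\bfV\langle\theta\rangle e_1$ and $\ker(\pr|_M)$ has finite rank over $\bfV$; fix a pole order basis $B_0\sqcup\{g_k\}_{k\geq1}$ of $M$, where $\pr(g_k)=\theta^ke_1$ and $B_0$ is a basis of $\ker(\pr|_M)$. It remains to identify $\ker(\pr|_M)$. Put $\lambda:=\frac{d\theta^{-1}}{\theta^{-1}}$, which has order $-1$ at $\infty$ since $\theta^{-1}$ is a uniformizer there. Because $\mathscr{F}_A^{\otimes y}$ is free of rank one over $A_{\F_r}\widehat{\otimes}_{\F_p}\bfV^\circ$ with basis $f_1$, the assignment $f_1\otimes\omega\mapsto(\omega/\lambda)e_1$ identifies $M$ with $\{g\in V:g\lambda\in\Omega_{A_{\F_r}\widehat{\otimes}_{\F_p}\bfV^\circ}\}$, and $\ker(\pr|_M)$ with the set of $g$ whose $\theta$-expansion has no positive terms, i.e. with those $\omega\in\Omega_{A_{\F_r}\widehat{\otimes}_{\F_p}\bfV^\circ}$ satisfying $\operatorname{ord}_\infty(\omega)\geq-1$ — equivalently, differentials regular on $X-\infty$ with at worst a simple pole at the single point $\infty$. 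By the residue theorem any such differential has vanishing residue at $\infty$, hence is regular everywhere, so $\ker(\pr|_M)\cong H^0(X,\Omega_X)\widehat{\otimes}_{\F_p}\bfV$ is free of rank $g$ over $\bfV$.

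Now put $u:=d^F$, $T_0:=\pr|_M$, and $T:=\pr\circ u|_M\colon M\to\theta\bfV\langle\theta\rangle e_1$. The difference $T-T_0=\pr\circ(u-1)|_M$ has operator norm at most $|\pi^{1/p^h}|<1$: indeed $u-1\in\pi^{1/p^h}V$, the (sup) norm on $V$ is submultiplicative, $M$ carries the Banach norm attached to the pole order basis above, and $\pr$ does not increase norms. The map $T_0$ is surjective and admits an isometric $\bfV$-linear section whose image — the closed span of the $g_k$ — is a closed complement of $\ker T_0$; a standard successive-approximation (geometric-series) argument then shows that $T$ is surjective and that $\ker T$ is the graph of a bounded $\bfV$-linear map from $\ker T_0$ into this complement, hence free of rank $g$. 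Since $u$ is an automorphism of $V$, it follows that $\pr(d^F M)=\im(T)=\theta\bfV\langle\theta\rangle e_1$ and $\ker(\pr|_{d^F M})=u(\ker T)$ is free of rank $g$ over $\bfV$.

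The only step with genuine content is the residue-theorem identification of $\ker(\pr|_M)$ with the holomorphic differentials on $X$; the rest is formal $\pi$-adic functional analysis. The one point to be careful about is that $d$ (hence $d^F$) need not be regular on $X-\infty$, so $d^FM$ really is a different $\bfV$-submodule of $V$ than $M$, and the geometric argument cannot simply be repeated for it — it is exactly the fact that $d^F$ is a $1$-unit that makes the perturbation argument available.
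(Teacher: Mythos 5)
Your proof is correct and follows essentially the same route as the paper's: the real content in both is the residue-theorem observation that a differential regular on $X-\infty$ with at worst a simple pole at the single point $\infty$ must be holomorphic, combined with Riemann--Roch and the congruence $d^F\equiv 1\bmod \pi^{1/p^h}$. The only difference is organizational --- the paper reduces modulo $\pi$ at the outset and argues on the special fibre, whereas you establish the statement integrally for $M$ and then transfer it to $d^FM$ by an explicit successive-approximation argument; these are the same completeness/lifting argument written in opposite directions.
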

        \begin{proof}
            It suffices to prove the corresponding result modulo $\pi$. Since $d \equiv 1 \mod \pi^{1/p^h}$ and since $\mathscr{F}_{A}^{\otimes y}$ is free of rank one, we are reduced to considering the map
            \begin{align*}
                \overline{\pr}:\Omega_{A} &\to \theta \F_p[\theta] \frac{d\theta^{-1}}{\theta^{-1}}, \\
                \sum_{-\infty}^{n} a_k \theta^k \frac{d\theta^{-1}}{\theta^{-1}}&\mapsto \sum_{1}^{n} a_k \theta^k \frac{d\theta^{-1}}{\theta^{-1}}.
            \end{align*}
            The kernel consists precisely of the elements of $\Omega_A$ that have at worst a simple pole at $\infty$. Since it's impossible for a differential to have a simple pole at exactly one point (residues sum to zero) we see that the kernel consists of the holomorphic differentials. From this observation we see from Riemann Roch that the kernel has the correct dimension and the map is surjective.
        \end{proof}
        For $k \geq 1$, let $g_k$ be an element of $d^FM_{\mathscr{F}_{A}^{\otimes y}}$ satisfying $\pr(g_k)=\theta^k e_1$.
        We know $g_k$ exists from Lemma \ref{p: proj is surjective}. We then define
        \[ B^{mero}:= \{ g_k \} _{k\geq 1}.\]
        Let $I$ be an indexing set with $|I|=g$. From Lemma \ref{p: proj is surjective} there exists a basis $B^{hol}$ of $\ker(\pr)$, which we view as being indexed by $I$. Then
        \[B:= B^{mero} \sqcup B^{hol}\]
        is a pole order basis of $d^FM_{\mathscr{F}_{A}^{\otimes y}}$
        over $\bfV$, which is indexed by $I \sqcup J_1$. Let $\Delta$ be the matrix of $U_p \circ \beta$ with respect to $B$. From Proposition \ref{prop: Fredholm can be computed using pole-ordered bases} and \eqref{eq: F_p zeta function via det} we have
        \begin{equation}\label{eq: F_p zeta via Fredholm for matrix}
            \zeta_{A,\pi}(x,y) = \det(1-x\Delta).
        \end{equation}
        \subsubsection{The matrix \texorpdfstring{$\Delta$}{Delta}}
        \begin{proposition}\label{p: F_p congruence for higher genus matrix}
            Let $k_1 \in  J_1$ and let $k_2 \in  I \sqcup J_1$. 
            Then
            \begin{align*}
                \Delta_{k_1,k_2} \equiv \begin{cases}
                    a_{pk_1 - k_2} \mod \pi^{y(pk_1)} & \text{ if }k_2 \in J_1 \\
                    0 \mod \pi^{y(pk_1)} & \text{ if }k_2 \in I
                \end{cases}.
            \end{align*}
        \end{proposition}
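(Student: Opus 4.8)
The plan is to compute $\Delta_{k_1,k_2}$ directly from the action of $U_p\circ\beta$ on the pole order basis $B$, and then to bound the ``lower order'' contributions using Proposition~\ref{p: valuation of frob matrix F_p}, which records that $v_\pi(a_n)=y(n)$ for $n\ge 1$, together with the fact that $m\mapsto y(m)$ is non-decreasing.

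The first step is the observation that, for $k_1\in J_1$, the entry $\Delta_{k_1,k_2}$ is nothing but the coefficient of $\theta^{k_1}e_1$ in $\pr\bigl(U_p\circ\beta(b_{k_2})\bigr)$, where $b_{k_2}\in B$ denotes the basis vector indexed by $k_2$. Indeed, expanding $U_p\circ\beta(b_{k_2})=\sum_{k\in J_1}\Delta_{k,k_2}g_k+w$ with $w$ in the span of $B^{hol}\subseteq\ker(\pr)$ and applying $\pr$, the relations $\pr(g_k)=\theta^k e_1$ and $\pr(w)=0$ give $\pr\bigl(U_p\circ\beta(b_{k_2})\bigr)=\sum_{k\ge 1}\Delta_{k,k_2}\theta^k e_1$. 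This is also the conceptual reason the $k_2\in I$ column is $\equiv 0$: the $B^{hol}$-coordinates are invisible to $\pr$.

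Next I would expand everything in the variable $\theta$. Write $\beta=\sum_{n\ge 0}a_n\theta^n$ (with $a_n=0$ for $n<0$) and $b_{k_2}=\bigl(\sum_i c_i\theta^i\bigr)e_1$; since $b_{k_2}$ lies in the full subspace $d^F M_{\mathscr{F}_A^{\otimes y}}\subseteq V_{\mathscr{F}_A^{\otimes y}}$, its coordinates $c_i$ lie in $\bfV^\circ$, so $v_\pi(c_i)\ge 0$. Multiplying by $\beta$ and applying $U_p$ yields $U_p\circ\beta(b_{k_2})=\sum_\ell\bigl(\sum_i a_{p\ell-i}c_i\bigr)\theta^\ell e_1$, hence, for $k_1\ge 1$,
\[
\Delta_{k_1,k_2}=\sum_i a_{pk_1-i}\,c_i.
\]
If $k_2\in J_1$, then $b_{k_2}=g_{k_2}$, and $\pr(g_{k_2})=\theta^{k_2}e_1$ forces $c_{k_2}=1$ and $c_i=0$ for every $i\ge 1$ with $i\ne k_2$, so that $\Delta_{k_1,k_2}=a_{pk_1-k_2}+\sum_{i\le 0}a_{pk_1-i}c_i$. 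If $k_2\in I$, then $b_{k_2}\in\ker(\pr)$, so $c_i=0$ for $i\ge 1$ and $\Delta_{k_1,k_2}=\sum_{i\le 0}a_{pk_1-i}c_i$.

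The proof then finishes by estimating the tail $\sum_{i\le 0}a_{pk_1-i}c_i$: for $i\le 0$ one has $pk_1-i\ge pk_1\ge 1$, so Proposition~\ref{p: valuation of frob matrix F_p} and the monotonicity of $y$ give $v_\pi(a_{pk_1-i})=y(pk_1-i)\ge y(pk_1)$, and combined with $v_\pi(c_i)\ge 0$ the ultrametric inequality yields $v_\pi\bigl(\sum_{i\le 0}a_{pk_1-i}c_i\bigr)\ge y(pk_1)$. This gives $\Delta_{k_1,k_2}\equiv a_{pk_1-k_2}\bmod\pi^{y(pk_1)}$ for $k_2\in J_1$ and $\Delta_{k_1,k_2}\equiv 0\bmod\pi^{y(pk_1)}$ for $k_2\in I$. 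I do not expect a genuine obstacle here; the only point meriting care is the first step, namely that applying $\pr$ to $U_p\circ\beta(b_{k_2})$ recovers exactly the $B^{mero}$-coordinates, which is precisely what decouples the holomorphic part of the basis from the valuation estimate.
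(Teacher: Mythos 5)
Your proposal is correct and follows essentially the same route as the paper: expand the basis vector $b_{k_2}$ as a Laurent series in $\theta$, multiply by $\beta=\sum a_n\theta^n$, apply $U_p$, and bound the tail coming from the non-positive powers of $\theta$ using $v_\pi(a_{pk_1+n})=y(pk_1+n)\geq y(pk_1)$. Your explicit justification that $\Delta_{k_1,k_2}$ is recovered by applying $\pr$ (so the $B^{hol}$-coordinates drop out) is a point the paper leaves implicit, but it is the same argument.
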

        \begin{proof}
            By the definition of $g_{k_2}$ we may write \[g_{k_2} = \Big[\theta^{k_2} + \sum_{n=0}^\infty c_n\theta^{-n}\Big]e_1.\]
            Then we have
            \begin{align*}
                U_p \circ \beta (g_{k_2}) = \Big[ U_p\Big( \beta\Big ( \theta^{k_2} + \sum_{n=0}^\infty c_n\theta^{-n}\Big ) \Big ) \Big ] e_i.
            \end{align*}
            If $k_1 \in J_1$, the coefficient of $\theta^{k_1}e_1$ is
            \begin{align*}
                a_{pk_1-k_2} + \sum_{n=0}^\infty a_{pk_1 + n}c_n & \equiv a_{pk_1-k_2} \mod \pi^{y(pk_1)}. 
            \end{align*}
            This implies the congruence for $\Delta_{k_1,k_2}$. The statement for $k_2 \in I$ is similar. 
        \end{proof}

        \begin{corollary}\label{c: F_p valuation for higher genus matrix}
            Let $k_1 \in J_1$ and let $k_2 \in I \sqcup J_1$. 
            Then
            \begin{align*}
                v(\Delta_{k_1,k_2}) &= y(pk_1 - k_2)  \text{ if }k_2 \in J_1 \text{ and } pk_1 \geq k_2 \\
                v(\Delta_{k_1,k_2}) &\geq y(pk_1) \text{ otherwise}.
            \end{align*}
        \end{corollary}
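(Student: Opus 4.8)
The plan is to read the valuations straight off the congruence in Proposition~\ref{p: F_p congruence for higher genus matrix}, using Proposition~\ref{p: valuation of frob matrix F_p} to evaluate $v_\pi(a_n)$ and the monotonicity of $m \mapsto y(m)$ to guarantee that the congruence pins the valuation down exactly rather than merely bounding it. First I would record two elementary facts. Since $d(n) = p^{w} \ge 1$ for every $n \ge 1$, the partial sums $y(m) = \sum_{n=1}^{m} d(n)$ are strictly increasing in $m \ge 0$ (with $y(0) = 0$), and in particular $y(pk_1) \ge pk_1 \ge 1$ for any $k_1 \in J_1$. Also, the constant term of $\beta = \prod_n (1-\pi^{p^n}\theta)^{y_n}$ is $a_0 = 1$, so Proposition~\ref{p: valuation of frob matrix F_p} extends to the statement $v_\pi(a_n) = y(n)$ for all $n \ge 0$.

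The ``otherwise'' cases then require nothing beyond quoting Proposition~\ref{p: F_p congruence for higher genus matrix}: if $k_2 \in I$, or if $k_2 \in J_1$ with $pk_1 < k_2$ (so that $a_{pk_1-k_2} = 0$ by our convention for negative indices), the proposition gives $\Delta_{k_1,k_2} \equiv 0 \pmod{\pi^{y(pk_1)}}$, hence $v_\pi(\Delta_{k_1,k_2}) \ge y(pk_1)$, as claimed. For the main case $k_2 \in J_1$ and $pk_1 \ge k_2$ (so $pk_1 - k_2 \ge 0$), the extended Proposition~\ref{p: valuation of frob matrix F_p} gives $v_\pi(a_{pk_1-k_2}) = y(pk_1-k_2)$. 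Since $k_2 \ge 1$, the interval $pk_1 - k_2 < n \le pk_1$ is nonempty and contained in $\Z_{\ge 1}$, so $y(pk_1) - y(pk_1-k_2) = \sum_{n = pk_1-k_2+1}^{pk_1} d(n) > 0$; that is, $v_\pi(a_{pk_1-k_2}) = y(pk_1-k_2) < y(pk_1)$. Combining this strict inequality with $\Delta_{k_1,k_2} \equiv a_{pk_1-k_2} \pmod{\pi^{y(pk_1)}}$ forces $v_\pi(\Delta_{k_1,k_2}) = v_\pi(a_{pk_1-k_2}) = y(pk_1-k_2)$.

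I do not expect any genuine obstacle here: the corollary is pure bookkeeping layered on top of Propositions~\ref{p: F_p congruence for higher genus matrix} and~\ref{p: valuation of frob matrix F_p}. The only point that deserves a moment's care is the strict inequality $y(pk_1-k_2) < y(pk_1)$, since without it the congruence would deliver only a lower bound for $v_\pi(\Delta_{k_1,k_2})$ in the first case; this is precisely where the positivity of the digits $d(n)$ (equivalently, the standing hypothesis that $y$ is not a positive integer) is used.
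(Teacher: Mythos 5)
Your proof is correct and matches the paper's (implicit) reasoning: the paper states this corollary without proof as an immediate consequence of Proposition \ref{p: F_p congruence for higher genus matrix}, and your argument — reading off $v_\pi(a_{pk_1-k_2})=y(pk_1-k_2)$ from Proposition \ref{p: valuation of frob matrix F_p} and using the strict inequality $y(pk_1-k_2)<y(pk_1)$ to upgrade the congruence to an exact valuation — is precisely the intended bookkeeping. Your observation that the strict monotonicity of $y(\cdot)$ (i.e.\ $d(n)\ge 1$, coming from $y$ not being a positive integer) is the one point requiring care is well placed.
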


        \begin{corollary}
            \label{c: F_p valuation of p-bounded for higher genus}
            Let $\sigma$ be a $p$-bounded enriched permutation of $J_1$. Then
            $v(\sigma,\Delta)=v(\sigma,\Psi)$.
        \end{corollary}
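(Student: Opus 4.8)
The plan is to reduce the identity to a term-by-term comparison of the entries $\Delta_{k,\sigma(k)}$ and $\Psi_{k,\sigma(k)}$ as $k$ ranges over $S$, exploiting the fact that a $p$-bounded enriched permutation of $J_1$ can only ever select entries lying in the ``meromorphic block'' $J_1 \times J_1$ of $\Delta$, and only entries of finite valuation.

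First I would note that, since $(S,\sigma)$ is an enriched permutation of $J_1$, we have $S \subseteq J_1$ and $\sigma(k) \in J_1$ for all $k \in S$. Hence the entries we care about are $\Delta_{k_1,k_2}$ with both indices in $J_1$, so only the first case of Corollary \ref{c: F_p valuation for higher genus matrix} is ever in play; the ``$k_2 \in I$'' case --- which is where $\Delta$ could genuinely differ from $\Psi$ --- never occurs. Next, because $\sigma$ is $p$-bounded, Lemma \ref{l: F_p R compare with v} gives $v(\sigma,\Psi) = R(\sigma) = \sum_{k\in S} y(pk-\sigma(k))$, and this is a finite nonnegative integer (each $y(m)$ being a finite sum of the $d(n)$). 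Therefore every summand $v_\pi(\Psi_{k,\sigma(k)})$ is finite, so by \eqref{eq: valuation of } we must have $pk \geq \sigma(k)$ and $v_\pi(\Psi_{k,\sigma(k)}) = y(pk-\sigma(k))$ for each $k \in S$.

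Applying Corollary \ref{c: F_p valuation for higher genus matrix} with $k_1 = k \in J_1$, $k_2 = \sigma(k) \in J_1$ and $pk \geq \sigma(k)$ then yields $v(\Delta_{k,\sigma(k)}) = y(pk - \sigma(k)) = v_\pi(\Psi_{k,\sigma(k)})$, and summing over $k \in S$ gives
\begin{equation*}
	v(\sigma,\Delta) = \sum_{k \in S} v(\Delta_{k,\sigma(k)}) = \sum_{k \in S} v_\pi(\Psi_{k,\sigma(k)}) = v(\sigma,\Psi).
\end{equation*}
There is no genuine obstacle here: the substance is already contained in the congruence $\Delta_{k_1,k_2} \equiv a_{pk_1-k_2} \bmod \pi^{y(pk_1)}$ of Proposition \ref{p: F_p congruence for higher genus matrix} (which forces agreement of $\pi$-valuations whenever they are $< y(pk_1)$), and the role of $p$-boundedness is precisely to guarantee, via Lemma \ref{l: F_p R compare with v}, that $pk \geq \sigma(k)$ for every $k \in S$ --- so that each valuation in question equals $y(pk-\sigma(k)) < y(pk)$ and the congruence is decisive.
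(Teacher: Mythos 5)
Your proof is correct and is exactly the argument the paper leaves implicit: the corollary is stated without proof as an immediate consequence of Corollary \ref{c: F_p valuation for higher genus matrix}, and your entrywise comparison (both indices of every factor lie in $J_1$, and $p$-boundedness forces $pk \geq \sigma(k)$, so the first case of that corollary pins down the valuation of each $\Delta_{k,\sigma(k)}$ as $y(pk-\sigma(k))$, matching $\Psi$) is precisely how it follows. Note only that the inequality in the paper's formal definition of $p$-bounded has its indices transposed relative to how the notion is used everywhere else (e.g.\ in Lemma \ref{l: F_p R compare with v} and Corollary \ref{lemma: the nonzero coefficients}); you correctly adopted the operative meaning $|\sigma(k)| \leq p|k|$, which is what the argument requires.
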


        \begin{corollary}\label{c: F_p det of hol part is a unit}
            If $X$ is ordinary, then $\det(\Delta_{I\times I}) \in \bfV^\times$.
        \end{corollary}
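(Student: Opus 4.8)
The plan is to work modulo the maximal ideal $\mathfrak{m}_{\bfV} = (\pi^{1/p^h})$ of $\bfV$ and identify the reduction of the finite matrix $\Delta_{I\times I}$ with the matrix of the Cartier operator on the space of holomorphic differentials of $X$. Since $\bfV = \F_p\llbracket \pi^{1/p^h}\rrbracket$ is a complete local ring with residue field $\F_p$, an element of $\bfV$ is a unit precisely when its reduction modulo $\mathfrak{m}_{\bfV}$ is nonzero; as $\Delta_{I\times I}$ is a $g \times g$ matrix, it suffices to show that $\overline{\Delta_{I\times I}} := \Delta_{I\times I} \bmod \mathfrak{m}_{\bfV}$ lies in $GL_g(\F_p)$.

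First I would reduce the operator $\Theta = U_p \circ \beta$ on $d^F M_{\mathscr{F}_A^{\otimes y}}$ modulo $\mathfrak{m}_{\bfV}$. By Proposition \ref{p: Frobenius element for general curve over F_p} together with \eqref{eq: Frob matrix element} we have $\beta \in 1 + \pi\bfV\langle\theta\rangle$ and $d \equiv 1 \bmod \pi^{1/p^h}$; since $F$ fixes $\pi^{1/p^h} \in \bfV^\circ$ we also get $d^F \equiv 1 \bmod \pi^{1/p^h}$. Hence modulo $\mathfrak{m}_{\bfV}$ the space $d^F M_{\mathscr{F}_A^{\otimes y}}$ reduces to $\Omega_A \cdot \bar e_1$ and $\Theta$ reduces to $U_p$, which by the proof of Lemma \ref{lemma: M is overconvergent-newname} acts on $\Omega_A \cdot \bar e_1$ as the Cartier operator $\mathscr{C}$ (the Cartier operator relative to $\bfV^\circ$ becomes the usual Cartier operator on $\Omega_A$ after reduction).

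Next I would invoke the proof of Lemma \ref{p: proj is surjective}: $\ker(\pr)$ is free of rank $g$ over $\bfV$ and reduces modulo $\mathfrak{m}_{\bfV}$ exactly to $H^0(X,\Omega^1_X)\cdot\bar e_1$, with $B^{hol}$ reducing to a basis of this space. Because the Cartier operator preserves $H^0(X,\Omega^1_X)$, for $k \in I$ the $B^{mero}$-components of $\Theta(g_k)$ vanish modulo $\mathfrak{m}_{\bfV}$; therefore $\overline{\Delta_{I\times I}}$ is precisely the matrix of $\mathscr{C}$ acting on $H^0(X,\Omega^1_X)$ in the basis $\overline{B^{hol}}$. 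Finally, $X$ being ordinary means, by definition (equivalently: the Hasse--Witt matrix is invertible, or the $p$-rank of $X$ equals $g$), that the Cartier operator is bijective on $H^0(X,\Omega^1_X)$. Hence $\overline{\Delta_{I\times I}} \in GL_g(\F_p)$ and $\det(\Delta_{I\times I}) \in \bfV^\times$. The only step carrying real content is this last one, which is exactly where the ordinariness hypothesis enters; everything else is bookkeeping with the reductions of $d^F$, $\beta$, and $\ker(\pr)$ furnished by the earlier lemmas.
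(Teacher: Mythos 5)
Your argument is correct, but it is genuinely different from the one in the paper. The paper never identifies what $\Delta_{I\times I}$ \emph{is} modulo $\pi^{1/p^h}$; instead it uses Proposition \ref{p: F_p congruence for higher genus matrix} and Corollary \ref{c: F_p valuation for higher genus matrix} to get the block-triangular congruence $\Delta \equiv \left[\begin{smallmatrix} \Delta_{I\times I} & * \\ 0 & \Psi\end{smallmatrix}\right] \bmod \pi^{1/p^h}$, deduces $\zeta_{A,\pi}(x,y) \equiv \det(1-x\Delta_{I\times I})\,\zeta_{\F_p[\theta],\pi}(x,y) \bmod \pi^{1/p^h}$, kills the second factor by Theorem \ref{t: Wan's theorem}, and then matches the left side with the mod-$p$ Weil zeta function $\zeta(\Spec(A),x)$, which for ordinary $X$ is a polynomial of degree exactly $g$; since $\Delta_{I\times I}$ is $g\times g$, the top coefficient $\pm\det(\Delta_{I\times I})$ must be a unit. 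You instead reduce the operator itself: $\beta$ and $d^F$ are $\equiv 1$, so $\Theta$ reduces to the Cartier operator (via Lemma \ref{lemma: M is overconvergent-newname}), $\overline{B^{hol}}$ reduces to a basis of $H^0(X,\Omega^1_X)=\ker(\overline{\pr})$ (via Lemma \ref{p: proj is surjective}), Cartier-stability of $H^0(X,\Omega^1_X)$ gives the vanishing of the $J_1\times I$ block \emph{and} identifies $\overline{\Delta_{I\times I}}$ as the Cartier (Hasse--Witt) matrix, and ordinariness is by definition its invertibility. Both proofs ultimately rest on the same fact about ordinary curves, accessed at opposite ends of the equivalence ``Hasse--Witt invertible $\Leftrightarrow$ the Weil zeta function has $g$ unit roots.'' Your route is more direct and more informative --- it pins down the leading block exactly, and it makes transparent the remark in the introduction that in the non-ordinary case the number of zero slopes drops to $p(X)+d-1$ --- at the cost of some extra bookkeeping (saturation of $d^F M_{\mathscr{F}_A^{\otimes y}}$, compatibility of reduction with orthonormal expansions). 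The paper's route is softer and is the one that transports painlessly to the general case of Corollary \ref{c: ordinary determinant claim}, where the reduction is $\zeta(\Spec(A),x^b)^b$ and a direct Cartier-operator identification would be more delicate.
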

        \begin{proof}
            Since $y(k)>0$ for all $k\geq 1$, we know by Proposition \ref{p: F_p congruence for higher genus matrix} and Corollary \ref{c: F_p valuation for higher genus matrix} that:
            \begin{equation*}
                \Delta \equiv \begin{bmatrix}
                    \Delta_{I \times I} & * \\ 
                    0 & \Delta_{J_1 \times J_1}    
                \end{bmatrix}   
                \equiv \begin{bmatrix}
                    \Delta_{I \times I} & * \\ 
                    0 & \Psi    
                \end{bmatrix}   \mod \pi^{1/p^h} ,
            \end{equation*}
            where $\Psi$ is the matrix from \S \ref{sss: toy example F module}.
            Thus, from \eqref{eq: affine line trace formula F_p matrix} and \eqref{eq: F_p zeta via Fredholm for matrix}
            we have
            \begin{align*}
                \zeta_{A,\pi}(x,y) \equiv \det(1-x\Delta_{I \times I})\zeta_{\F_p,\pi}(x,y) \mod \pi^{1/p^h}.
            \end{align*}
            From Theorem \ref{t: Wan's theorem} we know $\zeta_{\F_p,\pi}(x,y)\equiv 1 \mod \pi^{1/p^h}$. Also, we  know that $\zeta_{A,\pi}(x,y)$ modulo $\pi^{1/p^h}$
            is the same as the Weil zeta function $\zeta(\Spec(A),x)$
            of $\Spec(A)$ reduced modulo $p$. Since $X$ is assumed to be ordinary, we see $\zeta(\Spec(A),x) \mod p$ is a polynomial
            of degree $g$, i.e. $\zeta(\Spec(A),x)$ has $g$ unit roots.
            It follows that $\det(1-x\Delta_{I \times I}) \mod \pi$
            has degree $g$. As $\Delta_{I \times I}$ is a $g\times g$-matrix
            we find $\det(\Delta_{I \times I})$ is a unit in $\bfV$.
            
        \end{proof}
        \subsubsection{Minimal permutations of \texorpdfstring{$\Delta$}{Delta}}
        We now determine the $v$-minimal permutations of $\Delta$.
        \begin{proposition}
            \label{p: F_p case v-minimal higher genus}
            Write $n=n_0 +g$. Let $\sigma$ be a $v$-minimal enriched permutation of $\Delta$ of size $n$. Then $\sigma=\tau I_{S_{n_0}}$ where $\tau$ is a permutation of $I$ with
            $v(\tau,\Delta)=0$.
        \end{proposition}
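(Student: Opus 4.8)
We work under the standing assumption that $X$ is ordinary, which is what makes Corollary~\ref{c: F_p det of hol part is a unit} available. Given a $v$-minimal enriched permutation $(S,\sigma)$ of $\Delta$ of size $n = n_0 + g$, write $S = S_I \sqcup S_J$ with $S_I = S \cap I$ and $S_J = S \cap J_1$; since $|I| = g$ we automatically have $|S_J| \ge n_0$. The key auxiliary object is the permutation $\widehat\sigma$ of $S_J$ obtained from $\sigma$ by \emph{bypassing} $S_I$: for $k \in S_J$ let $\widehat\sigma(k) = \sigma^{m}(k)$ where $m \ge 1$ is least with $\sigma^m(k) \in S_J$. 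Deleting the $S_I$-entries from each cycle of $\sigma$ turns it into a cycle of $\widehat\sigma$, so the arcs of $\sigma$ partition into the arcs lying on cycles contained in $S_I$ together with, for each $k \in S_J$, the path $k \to \sigma(k) \to \cdots \to \widehat\sigma(k)$. The plan is to establish $v(\sigma,\Delta) \ge R(\widehat\sigma) \ge R(I_{S_J}) \ge \sum_{m=1}^{n_0} y((p-1)m)$ for \emph{every} size-$n$ permutation, exhibit one attaining the last quantity, and then read the structure of $\sigma$ off the resulting equalities.

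\emph{Lower bound.} For a bypass path of length $m=1$ (so $k,\widehat\sigma(k) \in S_J$), Corollary~\ref{c: F_p valuation for higher genus matrix} gives $v(\Delta_{k,\widehat\sigma(k)}) = R(k,\widehat\sigma(k))$ when $pk \ge \widehat\sigma(k)$, and $v(\Delta_{k,\widehat\sigma(k)}) \ge y(pk) \ge R(k,\widehat\sigma(k))$ otherwise, using that $y$ is nondecreasing. For $m \ge 2$ the first arc $k \to \sigma(k) \in S_I$ already costs at least $y(pk)$ by the same corollary, while the remaining arcs have nonnegative valuation since $\Delta$ has entries in $\bfV$; thus the path costs at least $y(pk) \ge R(k,\widehat\sigma(k))$ as well. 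Summing over $S_J$ and discarding the nonnegative $S_I$-cycle contributions yields $v(\sigma,\Delta) \ge R(\widehat\sigma)$. Proposition~\ref{p: minimal fixed point permutation F_p} applied to $S_J \subseteq J_1$ gives $R(\widehat\sigma) \ge R(I_{S_J}) = \sum_{k \in S_J} y((p-1)k)$, and since $y((p-1)\,\cdot\,)$ is strictly increasing and positive on $\Z_{\ge 1}$ while $|S_J| \ge n_0$, this sum is $\ge \sum_{m=1}^{n_0} y((p-1)m)$.

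\emph{Sharpness and structure.} Ordinarity gives $\det(\Delta_{I\times I}) \in \bfV^\times$ (Corollary~\ref{c: F_p det of hol part is a unit}); expanding this determinant over the integral ring $\bfV$ produces $\tau_0 \in \mathrm{Sym}(I)$ with $v(\tau_0,\Delta) = 0$, so that $\sigma_0 := \tau_0\, I_{S_{n_0}}$ has size $n$ and $v(\sigma_0,\Delta) = \sum_{m=1}^{n_0} y((p-1)m)$ (the diagonal entries on $S_{n_0}$ are handled by Corollary~\ref{c: F_p valuation for higher genus matrix}). Hence a $v$-minimal $\sigma$ of size $n$ attains this value, forcing every inequality above to be an equality. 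Equality $\sum_{k\in S_J} y((p-1)k) = \sum_{m=1}^{n_0} y((p-1)m)$ forces $S_J = S_{n_0}$, hence $S_I = I$; equality $R(\widehat\sigma) = R(I_{S_J})$ with Proposition~\ref{p: minimal fixed point permutation F_p} forces $\widehat\sigma = I_{S_J}$; and equality $v(\sigma,\Delta) = R(\widehat\sigma)$ forces the $S_I$-cycle contributions to vanish and each bypass path to cost exactly $R(k,k) = y((p-1)k)$. But a path with $m \ge 2$ costs at least $y(pk)$, which strictly exceeds $y((p-1)k)$ because $y(pk) - y((p-1)k) = \sum_{j=(p-1)k+1}^{pk} d(j) \ge k > 0$; so $m = 1$ for all $k$, meaning $\sigma$ fixes every point of $S_{n_0}$ and therefore preserves $I$. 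Writing $\tau = \sigma|_I$ we get $\sigma = \tau\, I_{S_{n_0}}$, and since the $S_I$-cycles are now exactly the cycles of $\tau$, the vanishing of their contributions gives $v(\tau,\Delta) = 0$.

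\emph{Main obstacle.} The only genuine difficulty is that the block $\Delta_{I \times J_1}$ is entirely uncontrolled: Corollary~\ref{c: F_p valuation for higher genus matrix} bounds only entries whose row index lies in $J_1$. The bypass permutation $\widehat\sigma$ is precisely the device that sidesteps this obstruction — a cycle of $\sigma$ can enter $I$ only by leaving some $k \in J_1$, and that exit arc already costs $y(pk)$, strictly more than the cost $y((p-1)k)$ of fixing $k$. Verifying this single numerical inequality, and organizing the arc-partition so that it yields the clean bound $v(\sigma,\Delta)\ge R(\widehat\sigma)$, is the crux of the argument; everything else is bookkeeping with the already-established facts about $y$, $R$, and $\Delta$.
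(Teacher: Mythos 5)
Your proof is correct, and it reaches the conclusion by a route that differs from the paper's in its key combinatorial step. The paper extends $R(-,-)$ to all of $I \sqcup J_1$ (declaring $I$-rows to cost $0$), proves $v(\sigma,\Delta) \ge R(\sigma)$ (Lemma \ref{l: F_p v and R comparison}), and then shows via a local swap argument (Lemma \ref{l: F_p decomposing permutation higher genus}) that an $R$-minimal permutation must split as a permutation of $I$ times a permutation of $J_1$, before invoking Proposition \ref{p: minimal fixed point permutation F_p}; finally it compares $v$-minimality to $R$-minimality using the explicit $\tau_0 I_{S_{n_0}}$. You instead contract each cycle of $\sigma$ through $I$ via the bypass permutation $\widehat\sigma$ on $S\cap J_1$, obtaining the single chain $v(\sigma,\Delta)\ge R(\widehat\sigma)\ge R(I_{S_J})\ge \sum_{m=1}^{n_0}y((p-1)m)$ and then reading the structure off the forced equalities. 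The numerical core is the same in both arguments — the exit cost $y(pk)$ of leaving $J_1$ for $I$ dominates $y(pk-k')$, by Corollary \ref{c: F_p valuation for higher genus matrix} and monotonicity of $y$ — and both rely on ordinarity only through Corollary \ref{c: F_p det of hol part is a unit}. What your version buys is the elimination of $R$-minimality as an intermediate notion and of the case analysis in the swap lemma: one global arc-partition handles detours through $I$ of arbitrary length at once, and the equality analysis at the end simultaneously pins down $S_J = S_{n_0}$, $\sigma|_{S_{n_0}} = \mathrm{id}$, and $v(\tau,\Delta)=0$. The paper's swap-based formulation, on the other hand, is the one that generalizes directly to the block-cyclic setting of \S\ref{s: general ordinary curves}, where the analogous splitting statement (Lemma \ref{l: R-minimal is split}) requires tracking the $\Z/b\Z$-grading and a genuine exchange argument rather than a contraction.
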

        The proof is broken up into several steps. We first extend the definition of $R(-,-)$ to $I \sqcup J_1$.
        \begin{definition}
            Let $k_1,k_2 \in I \sqcup J_1$. We define
            \begin{align*}
                R(k_1,k_2) &= \begin{cases}
                    0 & k_1 \in I \\
                    y(pk_1) & k_1 \in J_1 \text{ and } k_2 \in I \\
                    y(pk_1 - k_2) & k_1,k_2 \in J_1
                \end{cases}.
            \end{align*}
            For an enriched permutation $(S,\sigma)$ of $I \sqcup J_1$ we define
            \begin{align*}
                R(\sigma) &= \sum_{k\in S} R(k,\sigma(k)).
            \end{align*}
            We say that $\sigma$ is $R$-minimal of size $n$ if $\bfs{\sigma}=n$ and if
            \begin{align*}
                R(\sigma) &= \min_{\bfs{\sigma'}=n} R(\sigma').
            \end{align*}
        \end{definition}
        \begin{lemma}
            \label{l: F_p v and R comparison}
            Let $(S,\sigma)$ be an enriched permutation of $J_1\sqcup I$. Then we have
            $v(\sigma,\Delta)\geq R(\sigma)$. Furthermore, $v(\sigma, \Delta)=R(\sigma)$ if $\sigma=\tau\sigma_0$
            where $\tau$ is a permutation of $I$ with $v(\tau,\Delta)=0$ and
            $\sigma_0$ is a $p$-bounded permutation of $J_1$
        \end{lemma}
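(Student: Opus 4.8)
The plan is to deduce everything from facts already established for the matrices $\Delta$ and $\Psi$ in \S\ref{ss: higher genus over F_p} and the toy case \S\ref{s: toy example}, so the argument is mostly bookkeeping.

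For the inequality $v(\sigma,\Delta)\ge R(\sigma)$, I would argue entrywise in the product defining $v(\sigma,\Delta)$: it suffices to show $v(\Delta_{k,\sigma(k)})\ge R(k,\sigma(k))$ for each $k\in S$, and then sum. If $k\in I$ this is immediate, since $R(k,\sigma(k))=0$ and the entries of $\Delta$ are $\bfV$-integral. If $k\in J_1$, it follows by matching Corollary \ref{c: F_p valuation for higher genus matrix} against the definition of $R$: when $\sigma(k)\in J_1$ and $pk\ge\sigma(k)$ one gets the exact equality $v(\Delta_{k,\sigma(k)})=y(pk-\sigma(k))=R(k,\sigma(k))$, while in every other case ($\sigma(k)\in I$, or $\sigma(k)\in J_1$ with $pk<\sigma(k)$) Corollary \ref{c: F_p valuation for higher genus matrix} gives $v(\Delta_{k,\sigma(k)})\ge y(pk)$, which dominates $R(k,\sigma(k))$ because $y$ is monotone and vanishes on nonpositive integers.

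For the equality clause, write $\sigma=\tau\sigma_0$ as in the hypothesis, with the support $S=S'\sqcup S''$ where $S'\subseteq I$ is the support of $\tau$ and $S''\subseteq J_1$ is the support of $\sigma_0$. Because these two pieces are disjoint and segregated between $I$ and $J_1$, both $v(-,\Delta)$ and $R(-)$ split additively across the factorization: $v(\sigma,\Delta)=v(\tau,\Delta)+v(\sigma_0,\Delta)=v(\sigma_0,\Delta)$ using the hypothesis $v(\tau,\Delta)=0$, and $R(\sigma)=R(\tau)+R(\sigma_0)=R(\sigma_0)$ using $R(k,\tau(k))=0$ for every $k\in S'\subseteq I$. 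It remains only to identify $v(\sigma_0,\Delta)=R(\sigma_0)$, and this is precisely Corollary \ref{c: F_p valuation of p-bounded for higher genus} (which gives $v(\sigma_0,\Delta)=v(\sigma_0,\Psi)$ since $\sigma_0$ is $p$-bounded) combined with Lemma \ref{l: F_p R compare with v} (which gives $v(\sigma_0,\Psi)=R(\sigma_0)$, again because $\sigma_0$ is $p$-bounded).

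There is no real obstacle in this proof; the only points requiring a moment's care are the additivity of $v(-,\Delta)$ and $R(-)$ over the two blocks of the support, and the trivial observation that the $R$ of the toy section \S\ref{s: toy example} agrees with the extended $R$ of this section on any permutation supported in $J_1$ — both of which are immediate from the definitions. One could equally well avoid citing Corollary \ref{c: F_p valuation of p-bounded for higher genus} and Lemma \ref{l: F_p R compare with v} and instead repeat the entrywise comparison of the first paragraph, noting that $p$-boundedness of $\sigma_0$ forces $pk\ge\sigma_0(k)$ for all $k\in S''$ so that equality holds in every term; I prefer the route through the already-proved statements.
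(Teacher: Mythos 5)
Your proof is correct and follows essentially the same route as the paper, whose entire proof of this lemma is the one-line citation of Corollary \ref{c: F_p valuation for higher genus matrix} together with the definition of $R$; your entrywise comparison is exactly the content behind that citation, and your detour through Corollary \ref{c: F_p valuation of p-bounded for higher genus} and Lemma \ref{l: F_p R compare with v} for the equality clause is an equivalent repackaging of the same facts. The extra details you supply (integrality of the rows indexed by $I$, additivity over the split support) are the right ones to check.
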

        \begin{proof}
            This follows from the definition of $R$ and Corollary \ref{c: F_p valuation for higher genus matrix}.
        \end{proof}

        \begin{lemma}\label{l: F_p decomposing permutation higher genus}
            Let $(S,\sigma)$ be a rotational enriched permutation of $I\sqcup J_1 $. Assume $\sigma$ does not decompose into the product of
            an enriched permutation of $J_1$ and an enriched permutation of $I$. Then $\sigma$ is not $R$-minimal.
        \end{lemma}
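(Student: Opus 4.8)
The plan is to prove the contrapositive by a local surgery on $\sigma$. Suppose $\sigma$ does not decompose as a product of an enriched permutation of $J_1$ and one of $I$; equivalently, some cycle $c$ of $\sigma$ contains both an element of $I$ and an element of $J_1$. I will build from $c$ a new enriched permutation $\sigma'$ of $I\sqcup J_1$ with the same size as $\sigma$ and with $R(\sigma')<R(\sigma)$, which shows $\sigma$ is not $R$-minimal. The guiding idea is that in the definition of $R$ one has $R(k_1,k_2)=0$ whenever $k_1\in I$, so any stretch of $c$ that passes through $I$ contributes to $R(\sigma)$ only through the single step that \emph{enters} $I$ from $J_1$; replacing that stretch by a direct jump should therefore only decrease the $R$-value.

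Concretely I would fix a maximal run $l_1\to l_2\to\cdots\to l_s$ ($s\ge 1$) of consecutive elements of the cycle $c$ that all lie in $I$, and set $k:=\sigma^{-1}(l_1)$ and $k':=\sigma(l_s)$. Since $c$ is $\sigma$-stable, $k$ and $k'$ lie in the support of $c$; and since the run is maximal while $c\not\subseteq I$, both $k$ and $k'$ lie in $J_1$. (The degenerate possibility $k=k'$ happens exactly when $c=(k\,l_1\,\cdots\,l_s)$ and should be carried through the argument in parallel.) Define $\sigma'$ to agree with $\sigma$ outside $\{k,l_1,\dots,l_s\}$, and to satisfy $\sigma'(k):=k'$ and $\sigma'(l_i):=l_i$ for $1\le i\le s$. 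A routine check shows $\sigma'$ is a bijection of the same underlying set $S$ — the cycle $c$ has merely been shortened by excising the block $l_1,\dots,l_s$, which reappear as fixed points — and since $b=1$ here, every enriched permutation is rotational and $\bfs{\sigma'}=|S|/b=\bfs{\sigma}$.

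For the $R$-comparison, $\sigma$ and $\sigma'$ agree off $\{k,l_1,\dots,l_s\}$ (in particular they agree at $k'$), so only the summands indexed by $k$ and the $l_i$ change; the summands indexed by the $l_i$ contribute $0$ on both sides since $l_i\in I$. Hence
\begin{equation*}
  R(\sigma)-R(\sigma') \;=\; R(k,l_1)-R(k,k') \;=\; y(pk)-y(pk-k'),
\end{equation*}
where $R(k,l_1)=y(pk)$ because $k\in J_1$ and $l_1\in I$, and $R(k,k')=y(pk-k')$ because $k,k'\in J_1$ (this formula is also valid in the degenerate case $k=k'$, where it reads $y(pk)-y((p-1)k)$). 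To conclude I would use that $d(n)=0$ for $n<1$ and $d(n)>0$ for $n\ge 1$, so that $y$ is nondecreasing, vanishes on non-positive integers, and is strictly increasing on positive integers; since $k\ge 1$ forces $pk\ge 1$ and $k'\ge 1$ forces $pk-k'<pk$, we get $y(pk)>y(pk-k')$ irrespective of the sign of $pk-k'$. Therefore $R(\sigma')<R(\sigma)$ and $\sigma$ is not $R$-minimal.

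I do not expect a genuine obstacle here: the only care needed is in verifying that $\sigma'$ is a permutation of $S$ — in particular treating the degenerate case $k=k'$ explicitly (there $k$ simply becomes a fixed point) — and in the bookkeeping case $pk-k'\le 0$, which is absorbed uniformly by the vanishing of $y$ on non-positive arguments. The only real content is the strict monotonicity of $y$ on the positive integers, which makes the bypass step $k\to k'$ strictly cheaper than the entering step $k\to l_1$.
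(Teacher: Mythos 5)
Your proof is correct and follows essentially the same strategy as the paper's: a local surgery on $\sigma$ that replaces the costly step from $J_1$ into $I$ (contributing $y(pk)$) by a direct $J_1\to J_1$ step (contributing $y(pk-k')$), with the strict inequality coming from the strict monotonicity of $y$ on positive integers. The only cosmetic difference is the surgery itself — the paper performs a single swap of the images of one element of $I$ and one element of $J_1$, whereas you excise a maximal $I$-run of a mixed cycle into fixed points — but both reduce to the identical key inequality $y(pk)>y(pk-k')$ for $k,k'\geq 1$.
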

        \begin{proof}
            Let $H=S\cap I$ and $K = S \cap J_1$. There exists $k_1 \in H$
            with $\sigma(k_1) \in K$ and there exists $k_2 \in K$ with
            $\sigma(k_2) \in H$. Define an enriched permutation $(\sigma',S)$ by
            \begin{align*}
                \sigma'(k) &=\begin{cases}
                    \sigma(k) & k \neq k_1,k_2 \\
                    \sigma(k_2) & k=k_1 \\
                    \sigma(k_1) & k=k_2
                \end{cases}.
            \end{align*}
            Since $k_1 \in I$ we know $R(k_1,\sigma(k_1))$ and $R(k_1,\sigma(k_2))$ both vanish. This gives:
            \begin{align*}
                R(\sigma) - R(\sigma') &= R(k_1,\sigma(k_1)) - R(k_1,\sigma(k_2)) + R(k_2, \sigma(k_2)) - R(k_2,\sigma(k_1)) \\
                &= R(k_2,\sigma(k_2)) - R(k_2,\sigma(k_1)) \\
                &= y(pk_2) - y(pk_2-\sigma(k_1)) > 0.
            \end{align*}
            
        \end{proof}

        \begin{lemma}
            \label{l: F_p R minimal have a specific shape. }
            Let $\sigma$ be an $R$-minimal enriched permutation of size $n=n_0+g$. Then 
            $\sigma=\tau I_{S_{n_0}}$ where $\tau$ is a permutation of $I$.
        \end{lemma}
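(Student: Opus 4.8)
The plan is to reduce to the case where $\sigma$ is ``block‑decomposed'' and then pin down each block by a short minimality comparison. Throughout I would write $n = n_0 + g$ with $n_0 \geq 0$, and recall that in this section $b = 1$, so every enriched permutation is rotational and its ``size'' is simply the cardinality of its support. First I would invoke Lemma~\ref{l: F_p decomposing permutation higher genus}: since $\sigma$ is $R$-minimal it is not of the mixing type excluded there, so it decomposes as $\sigma = \sigma_I \sigma_{J_1}$, where $\sigma_I$ is an enriched permutation supported on a subset of $I$ and $\sigma_{J_1}$ is an enriched permutation supported on a subset $K \subseteq J_1$. Since $R(k,\sigma(k)) = 0$ for every $k \in I$, this already gives $R(\sigma) = R(\sigma_{J_1})$.

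Next I would analyze the $J_1$-block using Proposition~\ref{p: minimal fixed point permutation F_p}. For a fixed support $T \subseteq J_1$ it gives $R(\sigma_{J_1}) \geq R(I_T) = \sum_{m \in T} y\big((p-1)m\big)$, with equality only when $\sigma_{J_1} = I_T$. By the running hypothesis on $y$, every $d(n)$ with $n \geq 1$ equals a power $p^w$ and hence is $\geq 1$, so $y(k) \geq k$ for all $k \geq 1$; in particular $m \mapsto y\big((p-1)m\big)$ is strictly positive and strictly increasing on $\Z_{>0}$. It follows that among supports of a fixed cardinality $\ell$ the quantity $R(I_T)$ is minimized uniquely by $T = S_\ell$, and therefore $R(\sigma) = R(\sigma_{J_1}) \geq \sum_{m=1}^{|K|} y\big((p-1)m\big)$.

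The key comparison is then the following. Put $g' = |S \cap I|$, so that $|K| = n - g'$, and consider the competitor $I_I\, I_{S_{n_0}}$, supported on $I \sqcup S_{n_0}$: it has size $g + n_0 = n$ and $R$-value $\sum_{m=1}^{n_0} y\big((p-1)m\big)$. If $g' < g$ then $|K| = n - g' > n_0$, and strict positivity of the terms forces
\[
R(\sigma) \ \geq\ \sum_{m=1}^{n-g'} y\big((p-1)m\big)\ >\ \sum_{m=1}^{n_0} y\big((p-1)m\big),
\]
contradicting the $R$-minimality of $\sigma$. Hence $g' = g$, so $I \subseteq S$ and $\sigma_I$ is a permutation of all of $I$, while $|K| = n_0$. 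With $|K| = n_0$ now fixed, $R$-minimality of $\sigma$ together with the two uniqueness statements above forces $\sigma_{J_1} = I_{S_{n_0}}$. Therefore $\sigma = \sigma_I\, I_{S_{n_0}}$, and $\tau := \sigma_I$ is the desired permutation of $I$.

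I expect the main obstacle to be the bookkeeping in the key comparison: one has to produce a \emph{concrete} competing enriched permutation of the prescribed size $n$ and verify both that its support has the right cardinality and that its $R$-value is strictly smaller than that of any $\sigma$ with $|S \cap I| < g$; this rests entirely on the strict positivity of every $y\big((p-1)m\big)$, which is precisely where the standing assumption that $y$ is not a positive integer enters. The earlier steps, by contrast, are direct consequences of Lemma~\ref{l: F_p decomposing permutation higher genus} and Proposition~\ref{p: minimal fixed point permutation F_p}.
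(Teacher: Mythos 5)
Your argument is correct and follows essentially the same route as the paper: decompose $\sigma$ via Lemma~\ref{l: F_p decomposing permutation higher genus}, reduce the $J_1$-block to an identity permutation via Proposition~\ref{p: minimal fixed point permutation F_p}, and then use the strict monotonicity of $m \mapsto y((p-1)m)$ to force the support of the $J_1$-block to be exactly $S_{n_0}$ and hence $\tau$ to be a permutation of all of $I$. You simply spell out the size comparison (via the explicit competitor $I_I\,I_{S_{n_0}}$) that the paper compresses into the line ``$R(I_{S_{n_1}})\geq R(I_{S_{n_0}})$ with equality only if $n_0=n_1$.''
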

        \begin{proof}
            From Lemma \ref{l: F_p decomposing permutation higher genus}
            and Proposition \ref{p: minimal fixed point permutation F_p} we know
            $\sigma=\tau I_{S_{n_1}}$, where $\tau$ is an enriched permutation
            of $I$. Since $R(\tau)=0$ we have $R(\sigma)=R(I_{S_{n_1}})$. 
            We have $n_1=n-\bfs{\tau}$, so $n_1 \geq n_0$. Since $R(I_{S_{n_1}})\geq R(I_{S_{n_0}})$ with equality only if $n_0=n_1$, the $R$-minimality of $\sigma$ implies $\sigma=\tau I_{S_{n_0}}$.
        \end{proof}

        \begin{proof}
            (Of Proposition \ref{p: F_p case v-minimal higher genus})
            Let $\sigma_0$ be an $R$-minimal enriched permutation of size $n$. 
            From Lemma \ref{l: F_p R minimal have a specific shape. }
            we know $\sigma_0 =\tau I_{S_{n_0}}$ where $\tau$ is a permutation
            of $I$. Since $R(\tau)$ is zero regardless of the underlying permutation of $\tau$, we may assume that $v(\tau,\Delta)=0$ (such a permutation exists by Corollary \ref{c: F_p det of hol part is a unit}.) In particular, from Lemma \ref{l: F_p R compare with v} we see $R(\sigma_0)=v(\sigma_0,\Delta)$ and that $\sigma_0$ is $v$-minimal.

            Let $\sigma$ be a $v$-minimal permutation of $\Delta$.
            We have
            \[R(\sigma_0) = v(\sigma_0,\Delta)=v(\sigma,\Delta) \geq R(\sigma).\]
            In particular $\sigma$ is $R$-minimal. The proposition then follows from Lemma \ref{l: F_p R minimal have a specific shape. } and Lemma \ref{l: F_p R compare with v}.
        \end{proof}

        \subsubsection{Proof of Theorem \ref{t:Riemann-hypothesis}
        for curves over \texorpdfstring{$\F_p$}{Fp}}
        We continue with the notation from the beginning of \S \ref{ss: higher genus over F_p} with the additional assumption that
        $X$ is ordinary. From \eqref{eq: F_p zeta via Fredholm for matrix}
        it suffices to consider
        \[ \det(1-s\Delta) = 1 + a_1 x + a_2 x^2 + \dots. \]
        For $n=n_0 + g$ with $n_0 \geq 0$ we know from Proposition \ref{p: F_p case v-minimal higher genus} that
        \begin{align*}
            a_n &\equiv \det(\Delta_{I \times I}) \prod_{k=1}^{n_0} \Delta_{k,k} \mod \pi^{v(I_{S_{n_0}},\Delta) + 1}.
        \end{align*}
        Thus, by Corollary \ref{c: F_p valuation of p-bounded for higher genus} we have
        \begin{align*}
            v(a_n) &= \sum_{k=1}^{n_0} y(pk - k).
        \end{align*}
        It follows that the $\pi$-adic Newton polygon on $\zeta_{A,\pi}(x,y)$
        has $g$ slope zero segments followed by the slopes $y(p-1),y(2p-2)$, etc. 
        \begin{remark}
            Let $c_n$ be the coefficients of the zeta function of $\F_p[\theta]$ as in the proof of Theorem \ref{t: Wan's theorem}. Using Lemma \ref{p: F_p congruence for higher genus matrix}
            and the proof of Corollary \ref{c: F_p det of hol part is a unit}
            we see that $a_n \equiv \gamma c_{n_0} \mod \pi^{v(c_{n_0})+1}$,
            where $\gamma$ is the leading coefficient of the Weil zeta function
            $\zeta(\Spec(A),x)$. That is, we can determine the first $\pi$-adic coefficient of each $a_n$ in terms of the coefficients of the affine line zeta function and the Weil zeta function.
        \end{remark}
        
    \section{The affine line}\label{s: affine line 1}
    In this section we prove Theorem \ref{t:Riemann-hypothesis}
    for $A=\F_q[\theta]$. We continue with the notation in \S \ref{s: tau crystals and the trace formula}. 
    The point $\infty$ has degree one, so that $q=r$ and $\F_r[\theta]=\F_q[\theta]$.  
    We take our uniformizer at $\infty$ to be $\pi=\theta^{-1}$. 
    Since $\F_r[\theta]$ has class number one we have $\bfV=\bfR=\F_r\llbracket \pi \rrbracket$ and $\bfV^\circ=\bfR^\circ=\F_p\llbracket \pi \rrbracket$. Throughout
    this section we fix $y \in \Z_p$.

    \subsection{Digit Sequences}
	
		As we will see, the Newton slopes of $\zeta_{A,\pi}(-,y)$ are closely connected to the $q$-adic digits of $y$. We collect here some basic definitions concerning digit sums and their growth.
		
		\subsubsection{Basic Definitions}
		
			Let $y \in \Z_p$. Let us decompose $y$ as follows:
			\begin{equation}\label{eq:y-decomposition}
				y	=	\sum_{i = 1}^b p^{i-1} y_i = \sum_{i = 1}^b p^{i-1} \sum_{j = 0}^\infty y_{i,j} q^j,
			\end{equation}
			where $0 \leq y_{i,j} < p$.
			
			\begin{definition}
				We say that $y$ is \emph{$q$-full} if each $y_i$ is not a positive integer.
			\end{definition}

            \begin{convention}
                We assume $y$ is $q$-full unless explicitly stated otherwise. 
            \end{convention}
			
			The condition that $y$ is $q$-full guarantees that the partial sums of the $q$-adic digits of each $y_i$ tend to infinity. This allows us to introduce the following important sequence:
			
			\begin{definition}
				Let $n$ be a positive integer. We set $d_i(n) = p^{i-1} q^w$, where $w$ is the unique positive integer such that
				\begin{equation*}
					\sum_{j = 0}^{w - 1} y_{i,j} < n \leq \sum_{j = 0}^{w} y_{i,j}.
				\end{equation*}				
			\end{definition}
			
			Note that each $d_i$ is a non-decreasing sequence of powers of $p$ that tend to infinity. Moreover, we have
			\begin{equation*}
				y_i	=	\sum_{n = 1}^\infty d_i(n).
			\end{equation*}
			
			\begin{definition}\label{d:y-i-partial-sum}
				Let $m$ be an integer. We define the partial sums
				\begin{equation*}
					y_i(m)	=	\sum_{n = 1}^m d_i(n).
				\end{equation*}
				Note that if $m \leq 0$, the sum for $y_i(m)$ is empty and we have $y_i(m) = 0$. In addition, for any $k \in \Z$ we define
				\begin{equation*}
					y_i(m,k) := y_i(m+k) - y_i(m).
				\end{equation*}
			\end{definition}
            Finally, we make the following evident but important observation:
            \begin{equation}\label{eq: facts about digit differences 3}
                y_i(m,k_1 + k_2)\geq y_i(m,k_1) + y_i(m,k_2) \text{ if $k_1,k_2\geq 0$ or $k_1,k_2\leq 0$}.
            \end{equation}
			
		\subsubsection{Growth of Digit Sequences}
		
			The sequences $d_i(n)$ grow very rapidly with $n$. The next few lemmas quantify this rapid growth.
			
			\begin{lemma}\label{l:digit-sequences-growth}
				Let $n$ be a positive integer. Let $m$ and $k$ be non-negative integers. Then
				\begin{enumerate}
					\item	$d_i(n+p-1) \geq  q d_i(n)$.\label{i:di-growth}
					\item	$y_i(m+p-1,k) \geq qy_i(m,k)$.\label{i:yi-k-groth}
					\item	$y_i(m+p-1) > qy_i(m)$.\label{i:yi-growth}
				\end{enumerate}
			\end{lemma}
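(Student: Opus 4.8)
The plan is to prove \eqref{i:di-growth} first and then obtain \eqref{i:yi-k-groth} and \eqref{i:yi-growth} from it by summation, so that essentially all of the content lies in \eqref{i:di-growth}. Write $w_i(n)$ for the exponent occurring in the definition of $d_i(n)$, so that $d_i(n) = p^{i-1}q^{w_i(n)}$ with $\sum_{j=0}^{w_i(n)-1} y_{i,j} < n \leq \sum_{j=0}^{w_i(n)} y_{i,j}$. The partial sums $\sum_{j=0}^{w} y_{i,j}$ are non-decreasing in $w$, so $n \mapsto w_i(n)$ is non-decreasing, and hence (using $q \geq 2$) the inequality \eqref{i:di-growth} is equivalent to the integer inequality $w_i(n+p-1) \geq w_i(n) + 1$.

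To prove this I would argue by contradiction. If $w := w_i(n) = w_i(n+p-1)$, then the defining inequality for $n$ gives $n \geq 1 + \sum_{j=0}^{w-1} y_{i,j}$ and the defining inequality for $n+p-1$ gives $n + p - 1 \leq \sum_{j=0}^{w} y_{i,j}$; subtracting yields $p - 1 \leq y_{i,w} - 1$, i.e. $y_{i,w} \geq p$, contradicting $0 \leq y_{i,j} < p$. This gives \eqref{i:di-growth}. (Note that $q$-fullness enters only to guarantee that $w_i(n)$ is defined for every positive integer $n$.)

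For \eqref{i:yi-k-groth} I would reindex $y_i(m+p-1,k) = \sum_{n=m+1}^{m+k} d_i(n+p-1)$ and $y_i(m,k) = \sum_{n=m+1}^{m+k} d_i(n)$ and apply \eqref{i:di-growth} to each summand (each index $n \geq m+1 \geq 1$ is a positive integer, as required), then sum. For \eqref{i:yi-growth}, applying \eqref{i:di-growth} termwise gives
\begin{equation*}
	q\,y_i(m) = \sum_{n=1}^{m} q\,d_i(n) \leq \sum_{n=1}^{m} d_i(n+p-1) = \sum_{n=p}^{m+p-1} d_i(n) \leq \sum_{n=1}^{m+p-1} d_i(n) = y_i(m+p-1),
\end{equation*}
and the last inequality is strict because $\sum_{n=1}^{p-1} d_i(n) \geq p-1 \geq 1$, each $d_i(n)$ being a power of $p$ and hence at least $1$.

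I do not anticipate a genuine obstacle: the only nontrivial ingredient is the digit computation proving \eqref{i:di-growth}, and everything else is bookkeeping. The two minor points to watch are the degenerate case $m = 0$ of \eqref{i:yi-growth}, where the displayed chain collapses and the statement reduces to $0 < y_i(p-1)$, and the (immediate) fact that $w_i$ is non-decreasing.
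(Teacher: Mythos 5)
Your proposal is correct and follows essentially the same route as the paper: part \ref{i:di-growth} is the digit computation (which the paper merely declares clear from the definition, and which your contradiction argument via the exponent $w_i(n)$ fills in correctly), part \ref{i:yi-k-groth} is the same reindexing of the sum, and your termwise summation for part \ref{i:yi-growth} is equivalent to the paper's decomposition $y_i(m+p-1) = y_i(p-1,m) + y_i(p-1)$, with strictness coming from the same extra positive term $y_i(p-1)$ in both cases.
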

			\begin{proof}
				The first claim is clear from the definition of $d_i$. If $k$ is non-negative, then
				\begin{equation*}
					y_i(m+p-1,k)	=	\sum_{n = m+p}^{m+p-1+k} d_i(n) = \sum_{n = m+1}^{m+k} d_i(n+p-1) \geq  q\sum_{n = m+1}^{m+k} d_i(m) = q y_i(m,k).
				\end{equation*}
				For the final claim, note that if $m=0$ we have $y_i(m)=0$, which immediately gives the inequality. For $m>0$ we have
				\begin{equation*}
					y_i(m+p-1) = y_i(p-1,m) + y_i(p-1)\geq  q y_i(0,m) + y_i(p-1) > q y_i(m).
				\end{equation*}
			\end{proof}
			
			\begin{lemma}\label{lemma: key growth bound 2}
				Assume $b>1$ Let $n$ be a positive integer. Then
				\begin{equation*}
					p d_i(n)	>	\sum_{k = 0}^\infty y_i(n-k(p-1)).
				\end{equation*}
			\end{lemma}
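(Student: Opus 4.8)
The plan is to sandwich both sides between geometric series in $q$, and then reduce to an elementary comparison of constants in which the hypothesis $b>1$ enters only through the estimate $q=p^b\ge p^2$.

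First I would bound $y_i(n)$ from above in terms of $d_i(n)$. Writing $d_i(n)=p^{i-1}q^W$, the sequence $d_i$ assumes each value $p^{i-1}q^w$ at most $y_{i,w}\le p-1$ times, so summing over $0\le w\le W$ gives
\[
y_i(n)=\sum_{m=1}^n d_i(m)\;\le\;(p-1)p^{i-1}\sum_{w=0}^W q^w\;<\;(p-1)\,\tfrac{q}{q-1}\,p^{i-1}q^W\;=\;(p-1)\,\tfrac{q}{q-1}\,d_i(n).
\]
Next I would control the sum on the right-hand side using Lemma~\ref{l:digit-sequences-growth}: part (3) states $y_i(m+p-1)>q\,y_i(m)$, and iterating $k$ times — the non-negativity hypotheses are automatic since $y_i$ vanishes on non-positive arguments — yields $y_i(n-k(p-1))\le q^{-k}y_i(n)$ for every $k\ge 0$. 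Summing the resulting finite geometric series,
\[
\sum_{k=0}^\infty y_i(n-k(p-1))\;\le\;\tfrac{q}{q-1}\,y_i(n).
\]

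Combining the two displays gives $\sum_{k\ge 0} y_i(n-k(p-1))<(p-1)\bigl(\tfrac{q}{q-1}\bigr)^2 d_i(n)$, so it remains to check $(p-1)\bigl(\tfrac{q}{q-1}\bigr)^2\le p$. This is exactly where $b>1$ is used: since $q\ge p^2$ and $x\mapsto x/(x-1)$ is decreasing, $\tfrac{q}{q-1}\le\tfrac{p^2}{p^2-1}$, so it suffices to verify $(p-1)\tfrac{p^4}{(p^2-1)^2}\le p$, i.e. $p^3\le (p-1)(p+1)^2$, which rearranges to $0\le p^2-p-1$, valid for all $p\ge 2$. The only point needing care is strictness of the final inequality, and this is already supplied by the strict step $\sum_{w=0}^W q^w<q^{W+1}/(q-1)$ in the first display. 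I do not anticipate a serious obstacle here; I would, however, note that the bound genuinely fails when $b=1$ (there the constant $p^2/(p-1)$ exceeds $p$), so the hypothesis $b>1$ cannot be dropped.
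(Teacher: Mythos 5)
Your proof is correct and is essentially the paper's argument: the paper bounds the double sum $\sum_k\sum_{m\le w-k}p^{i-1}(p-1)q^m$ directly, while you factor the same estimate into the digit-multiplicity bound $y_i(n)<(p-1)\tfrac{q}{q-1}d_i(n)$ times the geometric decay $y_i(n-k(p-1))\le q^{-k}y_i(n)$ from Lemma~\ref{l:digit-sequences-growth}(3). Both routes reduce to the identical numerical inequality $(p-1)q^2\le p(q-1)^2$, where $b>1$ enters, so this is the same proof in a slightly different packaging.
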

			\begin{proof}
				Write $d_i(n) = p^{i-1} q^w$. For each $0 \leq m \leq w-k$, the term $p^{i-1}q^m$ appears in the partial sum for $y_i(n-k(p-1))$ at most $p-1$ times. Thus:
				\begin{align*}
					\sum_{k = 0}^\infty y_i(n-k(p-1))	&\leq	\sum_{k = 0}^w \sum_{m = 0}^{w-k} p^{i-1}(p-1)q^m	\\
						&=	p^{i-1} \frac{p-1}{q-1} \sum_{k = 0}^w (q^{w-k+1}-1)	\\
						&<	p^{i-1} \frac{p-1}{(q-1)^2} q^{w+2}	\\
						&\leq	p p^{i-1} q^w = p d_i(n),
				\end{align*}			
				where we have used the fact that $\frac{p-1}{(q-1)^2} < \frac{p}{q^2}$ for $b > 1$ .	
			\end{proof}

    \subsection{The initial setup} \label{ss: the initial setup for the affine line}
    Recall that in \S \ref{ss:zeta} we defined a character $\rho_{\F_r[\theta],\pi}:\pi_1(\A_{\F_r}^1) \to \bfV^\times$. 
    By Proposition \ref{p:Katz-correspondence} we know $\rho_{\F_r[\theta],\pi}$ corresponds to a rank one $\tau$-module $\mathscr{G}_{\F_r[\theta]}$ over the ring 
        \begin{equation*}
        \bfV \langle \theta \rangle  =   \bfV \widehat{\otimes}_{\F_r} \F_r[\theta].
    \end{equation*}
    More generally, let $\mathscr{G}_{\F_r[\theta]}^{\otimes y}$ be the $\tau$-module associated to $\rho_{\F_r[\theta],\pi}^{\otimes y}$ .
    
    \begin{proposition}\label{p:A1-Frob} (Taguchi-Wan \cite{Taguchi-Wan})
        The element $1-\pi\theta$ is a Frobenius matrix for $\mathscr{G}_{\F_r[\theta]}$. In particular, $(1 - \pi\theta)^y$ is a Frobenius matrix of $\mathscr{G}_{\F_r[\theta]}^{\otimes y}$ and $\mathscr{G}_{\F_r[\theta]}^{\otimes y}$ is overconvergent.
    \end{proposition}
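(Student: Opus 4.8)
The first assertion is Taguchi--Wan \cite{Taguchi-Wan}; the plan is to recall the verification, since it also underpins the $\F_p$-analogue Proposition \ref{prop: taguchi wan over F_p and more}, and then to deduce the twisted statement and overconvergence. First I would observe that $1-\pi\theta$ is a unit of $\bfV\langle\theta\rangle$ (with inverse $\sum_{n\geq 0}\pi^n\theta^n$), hence is the Frobenius matrix of some rank-one unit-root $\tau$-module $\mathscr{G}'$ over $\bfV\langle\theta\rangle$; by the Katz correspondence (Proposition \ref{p:Katz-correspondence}) it corresponds to a continuous $\bfV^\times$-valued character of $\pi_1(\A^1_{\F_r})$. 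Since a rank-one character is determined by its values on Frobenius elements, it suffices to check that $\mathscr{G}'$ and $\mathscr{G}_{\F_r[\theta]}$ agree at every closed point $v$. Such a $v$ of degree $d_v$ is a monic irreducible $f\in\F_r[\theta]$, and in the residue field $\F_{r^{d_v}}$ the image $\bar\theta$ of $\theta$ is a root of $f$ whose $\tau$-conjugates $\bar\theta,\bar\theta^{r},\dots,\bar\theta^{r^{d_v-1}}$ run over the roots of $f$.

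Next I would compute, using $\tau^i(1-\pi\theta)=1-\pi\theta^{r^i}$, the eigenvalue of the $d_v$-fold Frobenius iterate on the pullback of $\mathscr{G}'$ to $v$:
\begin{equation*}
\prod_{i=0}^{d_v-1}\bigl(1-\pi\bar\theta^{\,r^i}\bigr)=\prod_{f(\alpha)=0}(1-\pi\alpha)=\pi^{d_v}\prod_{f(\alpha)=0}(\pi^{-1}-\alpha)=\pi^{d_v}f(\pi^{-1})\in\bfV^\times .
\end{equation*}
On the other hand, the prime ideal $\frakp=(f)$ is principal with monic generator $f$, so $\rho_{\F_r[\theta],\pi}(\Frob_v)=\langle\frakp_\infty\rangle_\pi=f(\theta)\theta^{-d_v}$, which is exactly $\pi^{d_v}f(\pi^{-1})$ since $\pi=\theta^{-1}$. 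Hence $\mathscr{G}'\cong\mathscr{G}_{\F_r[\theta]}$, i.e.\ $1-\pi\theta$ is a Frobenius matrix of $\mathscr{G}_{\F_r[\theta]}$. (The only delicate point is matching the geometric-versus-arithmetic Frobenius normalization in the Katz dictionary; this is handled exactly as in the proof of Proposition \ref{prop: taguchi wan over F_p and more}.)

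For the twist: since $1-\pi\theta\equiv 1\pmod{\pi}$ and the rank-one ``$y$-th power'' operation is compatible with the Katz correspondence, $\rho_{\F_r[\theta],\pi}^{\otimes y}$ corresponds to the $\tau$-module with Frobenius matrix $(1-\pi\theta)^y$. By the generalized binomial theorem,
\begin{equation*}
(1-\pi\theta)^y=\sum_{n\geq 0}\binom{y}{n}(-\pi)^n\theta^n,\qquad \binom{y}{n}\in\Z_p\subset\bfV ,
\end{equation*}
so the coefficient of $\theta^n$ has $\pi$-valuation $\geq n$; in particular the series lies in $\bfV\langle\theta\rangle$, and is therefore a Frobenius matrix of $\mathscr{G}_{\F_r[\theta]}^{\otimes y}$. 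For overconvergence (Definition \ref{d:oc-local-tau-module}) one passes to the localization at $\infty$ and checks that $\pr\bigl((1-\pi\theta)^y\bigr)=\sum_{n\geq 1}\binom{y}{n}(-\pi)^n\theta^n$ lies in $L^m$ for every $m>1$: writing $r_n=\binom{y}{n}(-\pi)^n$, we have $v_\pi(r_n)\geq n\geq n/m$ and $v_\pi(r_n)-n/m\geq n(1-1/m)\to\infty$. Thus $\pr\bigl((1-\pi\theta)^y\bigr)\in L^\dagger$ and $\mathscr{G}_{\F_r[\theta]}^{\otimes y}$ is overconvergent.

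I do not expect a real obstacle here: the content is just the explicit Frobenius-matrix computation at closed points together with the growth estimate from the binomial theorem, and the one point requiring care — the normalization of the Katz correspondence so that pullback to a closed point $v$ recovers $\rho(\Frob_v)$ with the correct Frobenius convention — is already fixed in \S\ref{s: tau crystals and the trace formula} and carried out in the $\F_p$ case.
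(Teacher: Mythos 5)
Your proposal is correct and follows the same route as the paper, whose proof is the one-line remark that the claim ``follows by looking at the pullback of the $\tau$-module to each closed point''; your computation $\prod_{i=0}^{d_v-1}(1-\pi\bar\theta^{\,r^i})=\theta^{-d_v}f(\theta)=\langle\frakp_\infty\rangle_\pi$ is exactly the verification being alluded to, and the binomial-theorem argument for overconvergence matches what the paper spells out in the $\F_p$ analogue (Proposition \ref{prop: taguchi wan over F_p and more}). No gaps.
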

    \begin{proof}
        This follows by looking at the pullback of the $\tau$-module
        to each closed point. 
    \end{proof}

    The decomposition \eqref{eq:y-decomposition} allows us to write
    \begin{equation*}
        (1-\pi\theta)^y=\prod_{i=1}^{b} (1-\pi\theta)^{y_ip^{i-1}}=\prod_{i=1}^{b} \Big[(1-\pi^{p^{i-1}}\theta)^{y_i})\Big]^{F^{i-1}}.
    \end{equation*}
    For $1 \leq i \leq b$ we define the following series:
    \begin{align}
        \beta_i&:=\prod_{j = 0}^\infty (1-\pi^{q^jp^{i-1}}\theta)^{y_{i,j}}. \label{eq: product description of beta}
    \end{align}
    We easily compute that $(1-\pi\theta)^y$ is $\tau$-equivalent to $\beta_1 \dots \beta_{b}^{F^{b-1}}$ over $\bfV\langle \theta \rangle$. Set $\mathscr{F}_{\F_r[\theta]}= \Res(\mathscr{G}_{\F_r[\theta]}^{\otimes y})$.
    By Proposition \ref{prop: shape of restriction of scalars Frobenius structure} we see that the matrix 
    \begin{align*}
        E_{\mathscr{F}_{\F_r[\theta]}} &:= \begin{bmatrix} 0 & 0 & \dots & 0 &  \beta_{1} \\ 
	\beta_2  & 0 & \dots & 0 & 0 \\
	0 & \ddots & \ddots & \vdots & \vdots \\
	\vdots & \ddots & \ddots & 0 & 0 \\
	0 & \dots & 0 &\beta_{b}& 0
	\end{bmatrix}
    \end{align*}
    is a Frobenius matrix for $\mathscr{F}_{\F_r[\theta]}$. 
    Following \S \ref{ss: the trace formula} we define the spaces $V_{\mathscr{F}_{\F_r[\theta]}}$ and $M_{\mathscr{F}_{\F_r[\theta]}}$, as well as the operator $\Theta_{\mathscr{F}_{\F_r[\theta]}}$. In our specific setup, we have a basis $\{e_1,\dots,e_{b}\}$ of $V_{\mathscr{F}_{\F_r[\theta]}}$ so that 
    \begin{align*}
        M_{\mathscr{F}_{\F_r[\theta]}} &= \bigoplus_{i=1}^{b} \theta \bfV\langle \theta \rangle e_i,\\
        \Theta_{\mathscr{F}_{\F_r[\theta]}} &= U_p \circ E_{\mathscr{F}_{\F_r[\theta]}}.
    \end{align*}
    Then from Proposition \ref{p: zeta function via L-functions} and Corollary \ref{cor: a-th root trick} we have 
    \begin{align} \label{eq: matrix calculation of Newton polygon}
    \NP_\pi\Big (\zeta_{\F_r[\theta],\pi}(x^b,y)^b\Big ) &= \NP_\pi\Big (\det_{\bfV^\circ}\Big (1-xU_p \circ E_{\mathscr{F}_{\F_r[\theta]}}| M_{\mathscr{F}_{\F_r[\theta]}} \Big)\Big).  
    \end{align}
    Let $\zeta_1,\dots,\zeta_b$ be a basis of $\F_r$ over $\F_p$. Define the following set:
    \begin{align}
        B^\circ    :=   \Big \{\zeta_j^{p^{-i}}\theta^{k}e_i\Big |1 \leq i,j \leq b \text{ and } k \geq 1\Big \}.\label{eq: basis for affine line case, first version}
    \end{align}
    Note that $B^\circ$ is a pole order basis of $M_{\mathscr{F}_{\F_r[\theta]}}$ over $\bfV^\circ$. We view $B^\circ$ as being indexed by $J_b$: The element $(i,j,m)\in J_b$ corresponds to $\zeta_j^{p^{-i}}\theta^{m}e_i$. Let $\Psi$ be the $J_b \times J_b$-matrix for the operator $\Theta_{\mathscr{F}_{\F_r[\theta]}}$ in terms of $B^\circ$. By combining \eqref{eq: matrix calculation of Newton polygon} with
    Proposition \ref{prop: Fredholm can be computed using pole-ordered bases} we
    have:
    \begin{align} \label{eq: matrix calculation of NP with basis}
        \NP_\pi\Big (\zeta_{\F_r[\theta],\pi}(x^b,y)^b\Big ) &= \NP_\pi\Big (\det(1-x\Psi) \Big).
    \end{align}
    
    \subsection{Statement of the main technical result}

    In this section we state the main technical result of \S \ref{s: affine line 1} (Theorem \ref{theorem: unique minimal term}). This result is the heart of the proof of Theorem \ref{t:Riemann-hypothesis} for $A=\F_r[\theta]$. We begin by observing that the numbers $y_i(m)$ of Definition \ref{d:y-i-partial-sum} give the $\pi$-adic valuations of the coefficients of $\beta_i$. Write 
    \begin{align*}
    	\beta_i=\sum_{-\infty }^\infty a_{i,n}\theta^n,
    \end{align*}
    where we have $a_{i,n}=0$ for $n<0$.
    \begin{lemma}\label{lemma: valuations of coefficients of Yi}
    	We have $v(a_{i,n})=y_i(n)$ for all $n\geq 1$. 
    \end{lemma}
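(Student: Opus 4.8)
The plan is to expand $\beta_i$ directly from its product formula \eqref{eq: product description of beta}, read off the coefficient of $\theta^n$ as a sum of monomials in $\pi$, and show that exactly one of these monomials achieves the minimal $\pi$-adic valuation (so no cancellation occurs). This is the exact analogue of Proposition \ref{p: valuation of frob matrix F_p} in the toy case, with an extra greedy/exchange argument needed because of the digit positions $q^j$.

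First I would observe that each factor $(1-\pi^{q^jp^{i-1}}\theta)^{y_{i,j}}$ is a \emph{polynomial} in $\theta$, since $0 \le y_{i,j} < p$; expanding by the binomial theorem,
\begin{equation*}
    (1-\pi^{q^jp^{i-1}}\theta)^{y_{i,j}} = \sum_{l=0}^{y_{i,j}} \binom{y_{i,j}}{l}(-1)^l \pi^{lq^jp^{i-1}}\theta^l,
\end{equation*}
and every coefficient $\binom{y_{i,j}}{l}$ with $0 \le l \le y_{i,j} < p$ is a unit in $\bfV$. Multiplying these factors together (the product converges $\pi$-adically since $\pi^{q^jp^{i-1}}\to 0$), the coefficient $a_{i,n}$ of $\theta^n$ in $\beta_i$ is
\begin{equation*}
    a_{i,n} = \sum_{(l_j)} \Bigl(\prod_{j\geq 0}\binom{y_{i,j}}{l_j}(-1)^{l_j}\Bigr)\,\pi^{\,p^{i-1}\sum_{j}l_jq^j},
\end{equation*}
where $(l_j)_{j\ge 0}$ ranges over the sequences of non-negative integers with $l_j \le y_{i,j}$ for all $j$ and $\sum_j l_j = n$. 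Only finitely many such sequences contribute a term below any fixed valuation, so this sum is well-defined, and each term has $\pi$-adic valuation exactly $p^{i-1}\sum_j l_j q^j$.

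Next I would minimize $\sum_j l_j q^j$ over the admissible sequences. If $(l_j)$ is admissible and there are indices $j < j'$ with $l_j < y_{i,j}$ and $l_{j'} > 0$, then replacing $l_j,l_{j'}$ by $l_j+1, l_{j'}-1$ remains admissible and strictly decreases the sum by $q^{j'} - q^j > 0$. Hence the unique minimizer is the sequence obtained by filling $l_0 = y_{i,0},\ l_1 = y_{i,1},\dots$ up to the first index $w$ with $\sum_{j=0}^{w} y_{i,j} \ge n$, and setting $l_w = n - \sum_{j=0}^{w-1} y_{i,j}$ (and $l_j = 0$ for $j > w$). Unwinding Definition \ref{d:y-i-partial-sum}: the partial sum $y_i(n) = \sum_{m=1}^n d_i(m)$ has its first $y_{i,0}$ summands equal to $p^{i-1}$, its next $y_{i,1}$ equal to $p^{i-1}q$, and so on, so $y_i(n) = p^{i-1}\bigl(\sum_{j<w} y_{i,j}q^j + l_w q^w\bigr)$, which is exactly the minimal valuation computed above.

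Finally, since the minimizing sequence is unique and its coefficient $\prod_j \binom{y_{i,j}}{l_j}(-1)^{l_j}$ is a unit (a product of units), there is no cancellation at the minimal valuation, so $v(a_{i,n}) = y_i(n)$. The only step with any content is matching the combinatorial minimum of $\sum_j l_j q^j$ with $y_i(n)$; this is routine bookkeeping from Definition \ref{d:y-i-partial-sum}, and the rest of the argument is formal.
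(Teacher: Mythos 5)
Your proposal is correct and is exactly the argument the paper intends: its proof of this lemma is the one-line remark that one expands the product \eqref{eq: product description of beta} and tracks the unique term of minimal valuation, which is precisely what you carry out (binomial expansion of each factor with unit coefficients since $0\le y_{i,j}<p$, the exchange argument identifying the greedy sequence as the unique minimizer of $\sum_j l_j q^j$, and the bookkeeping matching that minimum with $y_i(n)$). No gaps; you have simply supplied the details the paper omits.
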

    \begin{proof}
    	This follows by expanding the product \eqref{eq: product description of beta} defining $\beta_i$ and keeping track of the unique term of minimal
    	valuation contributing to each coefficient.
    \end{proof}

    \begin{proposition}\label{p: matrix entries}
        For $k_1=(i_1,j_1,m_1)$ and $k_2=(i_2,j_2,m_2)$ in $J_b$ we have
\begin{align}\label{eq: matrix entries}
	\Psi_{k_1,k_2} &= \begin{cases}
	a_{\bfi{k_1},p|k_1|-|k_2|} & \bfi{k_2} \equiv \bfi{k_1}-1\mod b\text{ and }j_1=j_2 \\
	0 & \textrm{otherwise}.
	\end{cases}
\end{align}
    \end{proposition}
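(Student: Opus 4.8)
The plan is to prove \eqref{eq: matrix entries} by a direct computation of the action of $\Theta_{\mathscr{F}_{\F_r[\theta]}} = U_p \circ E_{\mathscr{F}_{\F_r[\theta]}}$ on the basis vector indexed by $k_2 = (i_2,j_2,m_2)$, namely $\zeta_{j_2}^{p^{-i_2}}\theta^{m_2}e_{i_2}$, and then reading off the coefficient of the basis vector indexed by $k_1 = (i_1,j_1,m_1)$.

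First I would apply $E_{\mathscr{F}_{\F_r[\theta]}}$. Since this matrix is block cyclic, with nonzero entries only $(E_{\mathscr{F}_{\F_r[\theta]}})_{i,i-1} = \beta_i$ (indices read in $\Z/b\Z$), the vector $\zeta_{j_2}^{p^{-i_2}}\theta^{m_2}e_{i_2}$ is carried to $\beta_{i_2+1}\,\zeta_{j_2}^{p^{-i_2}}\theta^{m_2}e_{i_2+1}$, which is supported entirely in the $e_{i_2+1}$-component. This already forces $\Psi_{k_1,k_2}=0$ unless $\bfi{k_1}\equiv\bfi{k_2}+1$, i.e.\ $\bfi{k_2}\equiv\bfi{k_1}-1\pmod b$.

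Next I would apply $U_p$ to $\beta_{i_2+1}\zeta_{j_2}^{p^{-i_2}}\theta^{m_2}$. Writing $\beta_{i_2+1}=\sum_{n\ge 0}a_{i_2+1,n}\theta^n$ as in Lemma \ref{lemma: valuations of coefficients of Yi}, I would first observe that each $a_{i_2+1,n}$ lies in $\F_p[\pi]\subseteq\bfV^\circ$, since every factor $(1-\pi^{q^jp^{i-1}}\theta)^{y_{i,j}}$ in \eqref{eq: product description of beta} has exponent $0\le y_{i,j}<p$ and hence is a polynomial with $\F_p$-coefficients. By the $\bfV^\circ$-linearity of $U_p$ these coefficients pull outside, leaving $U_p(\zeta_{j_2}^{p^{-i_2}}\theta^{n+m_2})$, which equals $\zeta_{j_2}^{p^{-(i_2+1)}}\theta^{(n+m_2)/p}$ when $p\mid n+m_2$ and vanishes otherwise; here the $p$-th root that $U_p$ takes of the $\F_r$-coefficient is exactly what accounts for the $p$-power twist built into the basis $B^\circ$ of \eqref{eq: basis for affine line case, first version}. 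Reindexing the surviving terms by $m_1:=(n+m_2)/p$ (so $n=pm_1-m_2$) yields
\begin{equation*}
    \Theta_{\mathscr{F}_{\F_r[\theta]}}\bigl(\zeta_{j_2}^{p^{-i_2}}\theta^{m_2}e_{i_2}\bigr)=\sum_{m_1\ge 1}a_{i_2+1,\,pm_1-m_2}\,\zeta_{j_2}^{p^{-(i_2+1)}}\theta^{m_1}e_{i_2+1},
\end{equation*}
with the convention $a_{i,n}=0$ for $n<0$; comparing $\zeta$-parts forces $j_1=j_2$ (and, together with the previous paragraph, $i_1\equiv i_2+1$), and in that case $i_2+1=i_1=\bfi{k_1}$ and $pm_1-m_2=p|k_1|-|k_2|$, which is precisely \eqref{eq: matrix entries}. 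I do not expect a real obstacle here: the content is bookkeeping of the block-cyclic index shift and of the compatibility of the basis twist with the semilinearity of $U_p$ on $\F_r$-coefficients, the only point needing care being that the coefficients of $\beta_i$ are genuine $\bfV^\circ$-scalars, so that no stray $p$-th roots appear on them and the entry is literally $a_{\bfi{k_1},\,p|k_1|-|k_2|}$.
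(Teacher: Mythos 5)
Your proposal is correct and follows essentially the same route as the paper's proof: a direct computation of $U_p \circ E_{\mathscr{F}_{\F_r[\theta]}}$ on the basis vector $\zeta_{j_2}^{p^{-i_2}}\theta^{m_2}e_{i_2}$, using the block-cyclic shape of $E_{\mathscr{F}_{\F_r[\theta]}}$ and the semilinearity of $U_p$ on the $\F_r$-coefficients. Your added remark that the $a_{i,n}$ are $\bfV^\circ$-scalars (so they pull out of $U_p$ untouched) is left implicit in the paper but is exactly the right point to flag; the only nitpick is that $a_{i,n}$ lies in $\F_p\llbracket\pi\rrbracket$ rather than $\F_p[\pi]$, which changes nothing.
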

    \begin{proof}
        The proposition follows by observing:
        \begin{align*}
            U_p \circ E_{\mathscr{F}_{\F_r[\theta]}}(\zeta_j^{p^{-i+1}}\theta^{m}e_{i-1}) &=U_p \Big(\beta_i\zeta_j^{p^{-i+1}}\theta^{m} \Big)e_{i} \\
            &= U_p\Big( \sum_{n=0}^\infty a_{i,n} \zeta_j^{p^{-i+1}}\theta^{n+m}  \Big ) e_{i} \\
            &= \sum_{n=1}^\infty a_{i,pn-m}\zeta_j^{p^{-i}}\theta^{n}  e_{i}.
        \end{align*}
    \end{proof}

    Let $\Psi^\circ$ be the $J_1 \times J_1$ matrix defined by 
    \begin{align}\label{eq: matrix entries one copy}
	\Psi^\circ_{k_1,k_2} &= \begin{cases}
	a_{\bfi{k_1},p|k_1|-|k_2|} & \bfi{k_2} \equiv \bfi{k_1} -1\mod b \\
	0 & \textrm{otherwise},
	\end{cases}
\end{align}
    for $k_1,k_2 \in J_1$.
    An immediate consequence of Proposition \ref{p: matrix entries} is that $\Psi$ can
    be broken up into a diagonal $b \times b$ block matrix as follows:
    \begin{align*}
        \Psi &= \begin{bmatrix}
            \Psi^\circ  & \dots & 0 \\
            \vdots & \ddots & \vdots  \\
            0 & \dots & \Psi^\circ
        \end{bmatrix}.
    \end{align*}
    In particular, we have $\det(I - s\Psi) = \det(I-s\Psi^\circ)^b$. Combining this with
    \eqref{eq: matrix calculation of NP with basis} we have
    \begin{align}\label{eq: matrix calculation of NP single copy}
        \NP_\pi\Big(\zeta_{\F_r[\theta],\pi }(x^b,y)\Big) &= \NP_\pi\Big(\det(1-x\Psi^\circ)\Big).
    \end{align}
    \begin{lemma}\label{proposition: valuations of matrix affine line}
        For $k_1,k_2 \in J_1$ we have
        \begin{align*}
            v(\Psi^\circ _{k_1,k_2}) &= \begin{cases}
            y_{\bfi{k_1}}(p|k_1| - |k_2|) & \bfi{k_2}\equiv \bfi{k_1} -1 \mod b ~\textrm{and}~p|k_1|-|k_2|\geq 0 \\
            \infty & \textrm{otherwise}
            \end{cases}.
            \end{align*}
    \end{lemma}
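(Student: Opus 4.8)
The plan is to read the valuations directly off the explicit description of $\Psi^\circ$ in \eqref{eq: matrix entries one copy}, together with the valuation formula for the coefficients of $\beta_i$ established in Lemma \ref{lemma: valuations of coefficients of Yi}. First I would dispose of the off-diagonal-block case: by \eqref{eq: matrix entries one copy}, unless $\bfi{k_2} \equiv \bfi{k_1} - 1 \bmod b$ the entry $\Psi^\circ_{k_1,k_2}$ is literally $0$, so its $\pi$-adic valuation is $\infty$ and this case is settled. In the remaining case $\Psi^\circ_{k_1,k_2} = a_{\bfi{k_1},\, p|k_1| - |k_2|}$, so the whole computation reduces to knowing $v(a_{i,n})$ as a function of $n$.

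Next I would split on the sign of $n := p|k_1| - |k_2|$. For $n < 0$ we have $a_{i,n} = 0$ by the standing convention fixed just before Lemma \ref{lemma: valuations of coefficients of Yi}, so the valuation is $\infty$, matching the ``otherwise'' branch of the claim. For $n \geq 1$ the valuation is exactly $y_{\bfi{k_1}}(n)$ by Lemma \ref{lemma: valuations of coefficients of Yi}. The only point needing a comment is the boundary value $n = 0$: here $a_{i,0}$ is the constant term of the product \eqref{eq: product description of beta} defining $\beta_i$, which is $1$, so the valuation is $0$; and since $y_i(0) = 0$ by Definition \ref{d:y-i-partial-sum}, this again agrees with the stated formula. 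Assembling the three cases gives the lemma.

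I do not expect a genuine obstacle here: this is a bookkeeping statement that repackages Lemma \ref{lemma: valuations of coefficients of Yi} and the block structure of $\Psi^\circ$ into the precise form needed for the combinatorial analysis of $\det(1 - x\Psi^\circ)$ in the following subsections. The only subtlety worth care is the handling of the edge cases $n = 0$ and $n < 0$, so that the single inequality $p|k_1| - |k_2| \geq 0$ correctly separates the finite branch $y_{\bfi{k_1}}(p|k_1| - |k_2|)$ from the $\infty$ branch.
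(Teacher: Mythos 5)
Your proof is correct and follows exactly the route the paper takes: the paper's proof is the one-line remark that the lemma ``follows from Lemma \ref{lemma: valuations of coefficients of Yi} and the definition of $\Psi^\circ$,'' and your write-up simply makes explicit the case split (wrong block, $n<0$ via the convention $a_{i,n}=0$, $n\geq 1$ via Lemma \ref{lemma: valuations of coefficients of Yi}, and the boundary $n=0$ where $a_{i,0}=1$ and $y_i(0)=0$). Your attention to the $n=0$ edge case, which the cited lemma technically only covers for $n\geq 1$, is a worthwhile addition rather than a deviation.
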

    \begin{proof}
        This follows from Lemma \ref{lemma: valuations of coefficients of Yi} and the definition of $\Psi^\circ$.
    \end{proof}

    \begin{corollary}\label{lemma: the nonzero coefficients}
    	Let $(S,\sigma)$ be an enriched permutation of $J_1$. Then $v(\sigma,\Psi^\circ)$ is finite if and only if $\sigma$ is rotational and $p$-bounded. 
    \end{corollary}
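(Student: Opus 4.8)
The plan is to observe that this corollary is a purely combinatorial repackaging of Lemma \ref{proposition: valuations of matrix affine line}. Recall that $v(\sigma,\Psi^\circ) = \sum_{k \in S} v_\pi\bigl(\Psi^\circ_{k,\sigma(k)}\bigr)$ is a sum of finitely many terms, each lying in $\Z_{\geq 0} \cup \{\infty\}$; hence $v(\sigma,\Psi^\circ)$ is finite if and only if every individual term $v_\pi(\Psi^\circ_{k,\sigma(k)})$ is finite. So the first step is simply to reduce the statement to a pointwise criterion for when a single matrix entry $\Psi^\circ_{k,\sigma(k)}$ is nonzero.

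Next I would feed each pair $(k,\sigma(k))$ into Lemma \ref{proposition: valuations of matrix affine line}, which says that $v_\pi(\Psi^\circ_{k_1,k_2}) = y_{\bfi{k_1}}(p|k_1|-|k_2|)$ when $\bfi{k_2} \equiv \bfi{k_1} - 1 \pmod b$ and $p|k_1| - |k_2| \geq 0$, and $v_\pi(\Psi^\circ_{k_1,k_2}) = \infty$ otherwise. Since $y$ is assumed $q$-full, every partial sum $y_i(m)$ is finite (with $y_i(m) = 0$ for $m \leq 0$), so the ``finite'' branch really is finite. Taking $k_1 = k$, $k_2 = \sigma(k)$ and quantifying over $k \in S$, this shows that $v(\sigma,\Psi^\circ)$ is finite if and only if, for every $k \in S$, both $\bfi{\sigma(k)} \equiv \bfi{k} - 1 \pmod b$ and $p|k| \geq |\sigma(k)|$ hold.

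The final step is to recognize these two conditions. Requiring $\bfi{\sigma(k)} \equiv \bfi{k}-1 \pmod b$ for all $k \in S$ is, by definition, the assertion that $(S,\sigma)$ is rotational; and, once rotationality is known, requiring $p|k| \geq |\sigma(k)|$ for all $k \in S$ is exactly the assertion that $\sigma$ is $p$-bounded. So the two phrasings coincide and the corollary follows directly from the definitions of rotational and $p$-bounded enriched permutations together with Lemma \ref{proposition: valuations of matrix affine line}.

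I do not expect a genuine obstacle here — all of the real content sits in Lemma \ref{proposition: valuations of matrix affine line}. The only point requiring a little care is the forward implication: because a single vanishing entry contributes $+\infty$ to the sum, finiteness of $v(\sigma,\Psi^\circ)$ forces \emph{every} entry $\Psi^\circ_{k,\sigma(k)}$ along $\sigma$ to be nonzero, not merely most of them, so rotationality and $p$-boundedness are truly necessary and not just sufficient; conversely, when $\sigma$ is rotational and $p$-bounded each summand is a finite nonnegative integer and the finite sum of them is finite.
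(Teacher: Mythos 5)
Your argument is correct and is exactly the paper's (implicit) proof: the corollary is stated without proof as an immediate consequence of Lemma \ref{proposition: valuations of matrix affine line}, and your reduction to the finiteness of each individual summand $v_\pi(\Psi^\circ_{k,\sigma(k)})$, followed by matching the two resulting conditions to the definitions of rotational and $p$-bounded, is the intended reasoning. One small caveat: the inequality you extract, $|\sigma(k)| \leq p|k|$, agrees with how ``$p$-bounded'' is actually used throughout the paper (e.g.\ in Lemma \ref{l: compare R and valuation affine line} and Proposition \ref{l: universal p-bounded less than a b-cycle}), but the literal definition in \S\ref{ss: standard indexing sets} reads $|k| \leq p|\sigma(k)|$ --- an apparent inversion in the paper's definition rather than a gap in your proof.
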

    
    \noindent Our main technical result of \S \ref{s: affine line 1} is the following theorem. Its proof is the content of \S \ref{ss: minimal permutations are lexicographical}, \S \ref{ss: more preliminary defs and obs}, and \S \ref{ss: finishing proof of affine line}.
    \begin{theorem}\label{theorem: unique minimal term}
    	Assume $y$ is $q$-full. There is a unique minimal rotational enriched permutation $\Sigma_n$ of $\Psi^\circ$ of size $n$.
    	Furthermore, if we define $\nu_n= v(\Sigma_n,\Psi^\circ) - v(\Sigma_{n-1},\Psi^\circ)$ then for all $n \geq 1$ we have 
    	\[ \nu_{n+1} > \nu_n. \]
    \end{theorem}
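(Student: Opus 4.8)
The plan is to prove the theorem in three stages, occupying the subsections \S\ref{ss: minimal permutations are lexicographical}, \S\ref{ss: more preliminary defs and obs}, and \S\ref{ss: finishing proof of affine line}. \textbf{Stage 1: every $v$-minimal permutation is lexicographical.} By Corollary \ref{lemma: the nonzero coefficients} a permutation with finite $v$-value is rotational and $p$-bounded, so a $v$-minimal permutation of size $n$ amounts to a choice, for each residue class, of a bijection from an $n$-element subset of that class to an $n$-element subset of the preceding class. I would first show such a minimizer can be taken to be a disjoint product of rotational $b$-cycles, and then that these $b$-cycles may be arranged into a chain $\sigma_1 \le \cdots \le \sigma_n$ for the preorder of \S\ref{ss: standard indexing sets}; i.e. it is lexicographical. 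Both steps are uncrossing arguments: if some cycle has length exceeding $b$, or two $b$-cycles are incomparable, one swaps the images of two carefully chosen indices in a common residue class and uses the valuation formula of Lemma \ref{proposition: valuations of matrix affine line} together with the superadditivity \eqref{eq: facts about digit differences 3} of the partial digit sums $y_i$ to check that $v(\cdot,\Psi^\circ)$ does not increase (and strictly decreases unless the configuration was already untangled).

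\textbf{Stage 2: uniqueness of the minimal lexicographical permutation.} A lexicographical permutation of size $n$ is a disjoint product of rotational $b$-cycles with coordinate vectors $\vec m_1 \le \cdots \le \vec m_n$ in $\Gamma^+$, and by Lemma \ref{proposition: valuations of matrix affine line} its $v$-value is the separated sum
\[
	v(\sigma,\Psi^\circ) \;=\; \sum_{j=1}^{n}\ \sum_{i=1}^{b} y_i\bigl(p\,m_{j,i} - m_{j,i-1}\bigr)
\]
(indices of $m$ read cyclically), subject to the $p$-bounded constraints $m_{j,i-1}\le p\,m_{j,i}$ and the strict inequalities $m_{j,i}<m_{j+1,i}$ forced by disjointness. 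I would prove by induction on $n$ that this sum has a unique minimizer $\Sigma_n$, and that the minimizers are \emph{nested}: $\Sigma_n = \Sigma_{n-1}\cdot c_n$ for a single rotational $b$-cycle $c_n$ with $c_{n-1}\le c_n$. The inductive step peels the largest cycle off a competing minimizer and compares the rest with $\Sigma_{n-1}$; the strict form of \eqref{eq: facts about digit differences 3}---available because each $d_i(n)$ is a positive power of $p$---then forces both the remaining data and the top cycle to coincide with the nested candidate. I expect this stage to be the main obstacle: the cyclic coupling between the coordinates $m_{j,i-1}$ and $m_{j,i}$, interacting with the $p$-bounded inequalities, is exactly what makes the $\F_q[\theta]$ case genuinely harder than the $\F_p[\theta]$ case of Corollary \ref{c: unique minimal permutation}, and ruling out ``balanced'' competitors will require the rapid-growth estimates of Lemma \ref{l:digit-sequences-growth} and Lemma \ref{lemma: key growth bound 2}.

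\textbf{Stage 3: the increments increase.} Given the nested description $\Sigma_n = \Sigma_{n-1}\cdot c_n$, one has $\nu_n = v(c_n,\Psi^\circ) = \sum_{i=1}^b y_i(p\,m_{n,i}-m_{n,i-1})$, where $\vec m_n$ is the coordinate vector of $c_n$. Since $c_n \le c_{n+1}$ coordinatewise and each $y_i$ is strictly increasing on the positive integers, parts (2) and (3) of Lemma \ref{l:digit-sequences-growth} (with Lemma \ref{lemma: key growth bound 2} absorbing the slack in the $p$-bounded constraints when $b>1$) yield $v(c_{n+1},\Psi^\circ) > v(c_n,\Psi^\circ)$, that is $\nu_{n+1}>\nu_n$. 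This is the analogue in the present generality of the final step in the proof of Theorem \ref{t: Wan's theorem}, where $\nu_n$ was increasing simply because it was the value of a single increasing partial digit sum; here several coupled sequences interact, which is why the growth lemmas of this section were isolated in advance.
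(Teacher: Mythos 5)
Your Stages 1 and 2 follow the same architecture as the paper: first reduce $v$-minimal permutations to lexicographical, decomposable, $p$-bounded products of rotational $b$-cycles by uncrossing (the paper's Proposition \ref{p: minimal permutations have all the nice properties}, which works with the auxiliary $R$-value rather than $v$ precisely so that the uncrossing moves remain comparable even when they leave the $p$-bounded class), and then run an induction that peels the top $b$-cycle off a competing minimizer (the paper's Proposition \ref{proposition: key technical proposition}). You correctly locate the hard part in Stage 2, but you do not supply it: the paper's inductive step is not a direct comparison of ``the rest'' with $\Sigma_{n-1}$, but a comparison with the \emph{constrained} minimizer $\Sigma_{n-1}^*(\sigma_n^*(\vec m)-\vec 1)$, and closing the induction requires first proving $\tau_n\geq\sigma_n^*(\vec m)$ and then the two digit-sum estimates (Claims \ref{claim: first} and \ref{claim: second}) via the partition $N^0,N^-,N^+$ and a further double induction. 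That content is the theorem; a plan that defers it is not yet a proof.

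The concrete error is in Stage 3. First, the nestedness $\Sigma_n=\Sigma_{n-1}\cdot c_n$ is asserted but never established, and the paper does not prove it: the recurrence \eqref{eq: recurrance for minimal} gives $\Sigma_n=\Sigma_{n-1}^*(\vec c-\vec 1)\cdot\vec c$, where $\Sigma_{n-1}^*(\vec c-\vec 1)$ is the minimizer subject to lying below $\vec c-\vec 1$ and need not coincide with the global minimizer $\Sigma_{n-1}$; the paper only uses $R(\Sigma_{n-1}^*(\vec c-\vec 1))\geq R(\Sigma_{n-1})$, hence only $\nu_n\geq R(\vec c)$. Second, even granting nestedness, $c_n\leq c_{n+1}$ coordinatewise does \emph{not} imply $v(c_{n+1},\Psi^\circ)>v(c_n,\Psi^\circ)$ by monotonicity of the $y_i$: the arguments entering the sum are $pm_i-m_{i-1}$, and increasing all coordinates can decrease some of these (e.g.\ $b=2$, $p=2$, passing from $(1,1)$ to $(2,4)$ sends the first argument from $1$ to $0$), so the comparison is not termwise and one coordinate's loss must be weighed against another's gain. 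The paper's actual argument for $\nu_{n+1}>\nu_n$ is different and avoids both issues: by Lemma \ref{lemma: key growth bound 2} the single top cycle satisfies $R(\vec c)>R(\tau_1\cdots\tau_{n-1})$ for the stacked permutation $\tau_k=\vec c-k\vec 1$, whence $\nu_n\geq R(\vec c)>R(\Sigma_{n-1})\geq R(\Sigma_{n-1})-R(\Sigma_{n-2})=\nu_{n-1}$. You should replace your Stage 3 with an argument of this type (or else prove the nestedness and the termwise monotonicity you invoke, neither of which is available off the shelf).
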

    Before delving into the proof of Theorem \ref{theorem: unique minimal term}, let us first spell out clearly why this theorem is relevant to studying slopes of zeta functions. From \eqref{eq: matrix calculation of NP single copy} we are reduced to studying the $\pi$-adic valuations of the coefficients of $\det(1-x\Psi^\circ)$. From 
    \eqref{eq: Fredholm determinant 1} and \eqref{eq: Fredholm determinant 2}, each coefficient is a sum over terms of the form $\prod_{i \in S} \Psi^{\circ}_{i,\sigma(i)}$, where $(S,\sigma)$
    is an enriched permutation of $J_1$. Theorem \ref{theorem: unique minimal term} says
    that each sum has \emph{exactly one} term with minimal valuation. This allows us to determine the valuation of the coefficients of $\det(1-x\Psi^\circ)$.

    \subsection{\texorpdfstring{$R$}{R}-minimal permutations}\label{ss: minimal permutations are lexicographical}
    In this subsection we introduce the notation of $R$-minimal enriched permutations of $J_h$, which is closely related to $v$-minimal enriched permutations. For enriched permutations of $J_1$, we will see that $R$-minimal and $v$-minimal are the same notion. Our main result implies that $R$-minimal enriched permutations of $J_1$ are lexicographical and $p$-bounded. We will prove results for enriched permutations of $J_h$, despite $\Psi^\circ$ being a $J_1\times J_1$-matrix. 
    The more general result will be used when considering more general curves in \S \ref{s: general ordinary curves}.
    
    \subsubsection{Some definitions}

    This next definition is closely related to the valuations
    of the entries of $M$. 
    \begin{definition}
        Let $k_1, k_2 \in J_h$ with $\bfi{k_2}= \bfi{k_1}-1$. We define 
        \begin{equation}
             R(k_1,k_2) := y_{\bfi{k_1}}(p|k_1|-|k_2|) = \begin{cases}
                v(\Psi^\circ_{\iota(k_1),\iota(k_2)}) & |k_2| \leq p|k_1| \\ 
                0 & \text{ otherwise}
            \end{cases}.
        \end{equation}
        Let $(S,\sigma)$ be a rotational permutation
        of $J_h$. We define the \emph{$R$-value} of $\sigma$ to be
        \begin{align*}
            R(\sigma) &= \sum_{k \in S} R(k,\sigma(k)).
        \end{align*}
    \end{definition}

    Observe that if $\sigma$ and $\tau$ are disjoint rotational permutations of $J_h$, then $R(\sigma \tau) = R(\sigma) + R(\tau)$. In particular, we observe that if $\sigma$ is decomposable then $R(\sigma) = \sum R(\sigma^{(i)})$.
    
	\begin{definition}
			Let $\sigma$ be a permutation of $J_h$. We say that $\sigma$ is \emph{$R$-minimal of size $n$} if $\bfs{\sigma}=n$ and for any rotational permutation $\sigma'$ of $J_h$ with $\bfs{\sigma'} = n$, we have
			\[R(\sigma) \leq R(\sigma').\]
	\end{definition}

    The advantage of $R$-values over $v$-values is that they allow us to treat enriched permutations of $J_h$ with finite and infinite valuation on an equal footing. The relationship between $R$-values and $v$-values is given in the following lemma, which follows immediately from Lemma \ref{proposition: valuations of matrix affine line}.
    
\begin{lemma} \label{l: compare R and valuation affine line}
        Let $\sigma$ be a rotational enriched permutation of $J_h^{(j)}$. Then
        \begin{align*}
            v(\iota_*\sigma,\Psi^\circ) &\geq R(\sigma),
        \end{align*}
        with equality if and only if $\sigma$ is $p$-bounded. 
    \end{lemma}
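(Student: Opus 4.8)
The plan is to deduce the lemma directly from the valuation formula of Lemma \ref{proposition: valuations of matrix affine line}, comparing the two sums term by term. First I would record the elementary bookkeeping: since $\iota$ restricts to a bijection $J_h^{(j)} \to J_1$ that preserves both $\bfi{\cdot}$ and $|\cdot|$, the enriched permutation $\iota_*\sigma$ of $J_1$ satisfies $(\iota_*\sigma)(\iota(k)) = \iota(\sigma(k))$ for all $k \in S$, so that
\begin{equation*}
    v(\iota_*\sigma,\Psi^\circ) = \sum_{k \in S} v_\pi\bigl(\Psi^\circ_{\iota(k),\iota(\sigma(k))}\bigr) \qquad\text{and}\qquad R(\sigma) = \sum_{k \in S} y_{\bfi{k}}\bigl(p|k| - |\sigma(k)|\bigr).
\end{equation*}
Because $\sigma$ is rotational we have $\bfi{\sigma(k)} \equiv \bfi{k} - 1 \pmod b$ for every $k \in S$, so the block-cyclic vanishing in \eqref{eq: matrix entries one copy} never occurs and $\Psi^\circ_{\iota(k),\iota(\sigma(k))} = a_{\bfi{k},\,p|k| - |\sigma(k)|}$.

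Next I would compare the two summands attached to a fixed $k$. By Lemma \ref{proposition: valuations of matrix affine line} (equivalently Lemma \ref{lemma: valuations of coefficients of Yi}), $v_\pi(\Psi^\circ_{\iota(k),\iota(\sigma(k))}) = y_{\bfi{k}}(p|k| - |\sigma(k)|)$ whenever $p|k| - |\sigma(k)| \ge 0$, and equals $+\infty$ otherwise; on the other hand $R(k,\sigma(k)) = y_{\bfi{k}}(p|k| - |\sigma(k)|)$ in all cases, a quantity that equals $0$ as soon as $p|k| - |\sigma(k)| \le 0$ because $y_i(m) = 0$ for $m \le 0$. Hence $v_\pi(\Psi^\circ_{\iota(k),\iota(\sigma(k))}) \ge R(k,\sigma(k))$ for every $k$, with equality precisely when $|\sigma(k)| \le p|k|$ (in the boundary case $p|k| = |\sigma(k)|$ both sides are $0$, while $p|k| < |\sigma(k)|$ forces the left side to be infinite and the right side to vanish). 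Summing over $k \in S$ gives $v(\iota_*\sigma,\Psi^\circ) \ge R(\sigma)$, with equality if and only if $|\sigma(k)| \le p|k|$ for all $k \in S$, that is, if and only if $\sigma$ is $p$-bounded.

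Since the argument is purely formal once Lemma \ref{proposition: valuations of matrix affine line} is available, I do not expect any genuine obstacle. The only points deserving a moment of care are the equality-boundary case noted above and the routine check that transporting along $\iota$ alters neither the $\pi$-adic valuations entering $v(\iota_*\sigma,\Psi^\circ)$ nor the digit sums entering $R(\sigma)$, so that the reduction from $J_h^{(j)}$ to $J_1$ is harmless.
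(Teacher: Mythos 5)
Your proof is correct and is exactly the paper's (unstated) argument: the paper simply declares the lemma to ``follow immediately from Lemma \ref{proposition: valuations of matrix affine line},'' and what you have written is that deduction with the termwise comparison, the transport along $\iota$, and the boundary cases $p|k|=|\sigma(k)|$ and $p|k|<|\sigma(k)|$ made explicit. The only caveat is that your equality criterion $|\sigma(k)|\le p|k|$ is the \emph{intended} meaning of $p$-bounded (consistent with how the paper uses the notion, e.g.\ in Corollary \ref{lemma: the nonzero coefficients} and Proposition \ref{l: universal p-bounded less than a b-cycle}), whereas the paper's literal definition reads $|k|\le p|\sigma(k)|$ --- an apparent typo in the source, not a gap in your argument.
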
 

    The following proposition greatly restricts the type
    of permutation that can be $R$-minimal.
    \begin{proposition}
        \label{p: minimal permutations have all the nice properties} Let $\sigma$ be a rotational enriched permutation of $J_h$. There exists an enriched permutation $\sigma'$  with $\bfs{\sigma'} = \bfs{\sigma}$ and $R(\sigma') \leq R(\sigma)$ satisfying:
        \begin{enumerate}
            \item $\sigma'$ is lexicographical
            \item $\sigma'$ is decomposable.
            \item $\sigma'$ is $p$-bounded.
        \end{enumerate}
        Furthermore, if $\sigma$ is decomposable but either not lexicographical or not $p$-bounded, then this inequality is strict.
    \end{proposition}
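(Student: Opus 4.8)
The plan is to pass from $\sigma$ to $\sigma'$ through a finite chain of \emph{size-preserving moves}, each of which weakly decreases the $R$-value and each of which strictly decreases it exactly when it removes a failure of one of the three target properties. Since $R(k_1,k_2)=y_{\bfi{k_1}}(p|k_1|-|k_2|)$ is insensitive to the $\{1,\dots,h\}$-coordinates, one may freely re-label the orbits of $\sigma$ among the fibres $J_h^{(j)}$ without changing $R(\sigma)$; in this way I would first replace $\sigma$ by a decomposable permutation of the same size and $R$-value (push each orbit into a single fibre, splitting it when a repeated $(\bfi{\cdot},|\cdot|)$-value forces a collision), a step which is vacuous when $\sigma$ is already decomposable. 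After this reduction one works with $\iota_*\sigma$, a rotational permutation of $J_1$, identifying rotational $b$-cycles with their coordinate vectors in $\Gamma^+$.

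The key move is an \emph{exchange}: if $k$ and $l$ lie in a common $\bfi$-fibre with $|k|\le|l|$ but $|\sigma(k)|>|\sigma(l)|$, let $\sigma'$ interchange the images of $k$ and $l$ (still rotational, same size). Writing $y_i(m,s)=y_i(m+s)-y_i(m)$ and using that $d_i$ is non-decreasing, a short computation gives
\[
R(\sigma)-R(\sigma')=\sum_{t=1}^{|\sigma(k)|-|\sigma(l)|}\bigl(d_{\bfi{k}}(p|l|-|\sigma(k)|+t)-d_{\bfi{k}}(p|k|-|\sigma(k)|+t)\bigr)\ \ge\ 0,
\]
a weighted rearrangement inequality which is strict as soon as $d_{\bfi{k}}$ is non-constant on the relevant window. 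Performing exchanges until none is available leaves $\sigma$ monotone on every $\bfi$-fibre; since the elements of a fixed $\bfi$-fibre of $J_1$ are totally ordered by $|\cdot|$, such a permutation is automatically a disjoint product of nested rotational $b$-cycles, i.e. it is decomposable \emph{and} lexicographical. This proves (1) and (2), and — by tracking which exchanges were nontrivial — shows strictness when the decomposable input was not already lexicographical.

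It remains to repair $p$-boundedness without disturbing (1) and (2). If $\sigma=\sigma_1\cdots\sigma_n$ is lexicographical but some $b$-cycle $\sigma_t$ with coordinates $(m_1,\dots,m_b)$ has $m_{s-1}>p\,m_s$, I would replace $m_{s-1}$ by $p\,m_s$ and then re-apply the exchange move to restore lexicographical order: the arrow entering the lowered vertex keeps $R$-value $0$, while the arrow leaving it can only shrink, so the move is $R$-non-increasing; and it is in fact $R$-decreasing, using that no $b$-cycle has $R$-value $0$ (otherwise $m_1\ge p^b m_1$). As each such step strictly decreases $\sum_{k\in S}|k|\ge b\,\bfs{\sigma}$, the process terminates at a permutation that is decomposable, lexicographical, and $p$-bounded; the ``furthermore'' clause follows since a non-$p$-bounded decomposable input forces at least one strict step of this kind (and a non-lexicographical one a strict exchange).

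The hard part will be the $p$-boundedness step. Unlike the exchange move it alters the coordinate vectors themselves, not merely their pairing; the repair can cascade around a cycle; and one must show the aggregate $R$-change is \emph{strictly} negative uniformly. This is exactly where the rapid growth of the digit sequences is indispensable: the estimates $d_i(n+p-1)\ge q\,d_i(n)$ of Lemma~\ref{l:digit-sequences-growth} and $p\,d_i(n)>\sum_{k\ge0}y_i(n-k(p-1))$ of Lemma~\ref{lemma: key growth bound 2}, together with the superadditivity~\eqref{eq: facts about digit differences 3}, are what let one dominate the bounded loss on a neighbouring arrow by the gain on the shortened arrow and keep the cascade finite. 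This is the technical crux of \S\ref{s: affine line 1}.
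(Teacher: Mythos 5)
Your exchange move is exactly the paper's Lemma~\ref{lemma: inequality about switching coordinates}, and running it to termination within each $\bfi$-fibre does yield a lexicographical permutation, so steps (1)--(2) are in the right spirit (the paper organizes the same idea as an induction that peels off the minimal $b$-cycle, Lemma~\ref{l: reduce to lexicographical}). The genuine gap is in your $p$-bounding step. You repair one $b$-cycle at a time by lowering an offending coordinate $m_{s-1}$ to $p\,m_s$ and letting the repair cascade, but $\sigma$ is a \emph{product of disjoint} $b$-cycles, and lowering a coordinate of one cycle can make it collide with a coordinate of another cycle in the same position (e.g.\ cycles $(1,10)$ and $(3,2)$ with $p=2$: lowering $10$ to $2$ produces $(1,2)$, which is no longer disjoint from $(3,2)$). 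After such a collision the product is not an enriched permutation at all, and no exchange move can fix it. This is precisely why the paper packages the repair as a single map $\bfp$ on $b$-cycles and proves (Proposition~\ref{l: universal p-bounded less than a b-cycle}, part 3) that $\bfp$ preserves the order $\leq$ and sends disjoint comparable cycles to disjoint cycles, so that it can be applied to all factors simultaneously. Your proposal never addresses disjointness, and without an analogue of that monotonicity property the cascade argument does not close. Relatedly, your diagnosis of the difficulty is off: in the lowering move the entering arrow stays at $R$-value $0$ and the leaving arrow weakly decreases, so there is no ``loss on a neighbouring arrow'' to dominate and Lemmas~\ref{l:digit-sequences-growth} and~\ref{lemma: key growth bound 2} play no role here (they are needed only later, in \S\ref{ss: proof that there is a unique minimal element}); the real issues are disjointness and the strictness bookkeeping, for which your appeal to ``no $b$-cycle has $R$-value $0$'' is the right ingredient but is not assembled into an argument (a single lowering is strict only when the leaving arrow already had positive $R$-value, so strictness must be extracted from the cascade as a whole, as the paper does by anchoring $\bfp$ at the minimal coordinate).

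A second, smaller gap is your opening reduction to decomposability by ``freely re-labelling orbits among the fibres.'' For a general rotational permutation of $J_h$ an orbit can revisit a $(\bfi{\cdot},|\cdot|)$-class, and then it cannot be placed in a single fibre without identifying distinct elements; your parenthetical ``splitting'' genuinely rewires arrows, and showing that a consistent assignment of all orbits to the $h$ fibres exists is a nontrivial matching problem. The paper sidesteps this by making the permutation lexicographical \emph{first} and only then re-labelling, where the monotone structure plus a pigeonhole argument (Lemma~\ref{l: reduce from lexicographical to decomposbile and lexicographical}) guarantees disjointness. Since the ``furthermore'' clause only concerns decomposable inputs, this gap does not affect the strictness statement, but it does leave the existence claim unproved for non-decomposable $\sigma$.
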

    \begin{corollary}
        \label{c: minimal permutations are lexicographical}
        Any $v$-minimal enriched permutation of $\Psi^\circ$ is
        lexicographical and $p$-bounded.
    \end{corollary}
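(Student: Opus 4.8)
The plan is to deduce the corollary formally from Proposition \ref{p: minimal permutations have all the nice properties} together with the comparison of $v$-values and $R$-values in Lemma \ref{l: compare R and valuation affine line}; all the genuine combinatorial content already lives in that proposition, so the corollary will be a short deduction.

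First I would observe that a $v$-minimal enriched permutation of $\Psi^\circ$ of size $n$ has \emph{finite} $v$-value. For each $m \geq 1$ the rotational $b$-cycle of $J_1$ with coordinates $(m,\dots,m)$ is $p$-bounded, since every step satisfies $|k| = m \leq pm = p|\sigma(k)|$, and distinct values of $m$ give disjoint $b$-cycles; hence the product $\sigma_1 \cdots \sigma_n$, where $\sigma_m$ has coordinates $(m,\dots,m)$, is a rotational, $p$-bounded enriched permutation of size $n$, so by Corollary \ref{lemma: the nonzero coefficients} it has finite $v$-value. Therefore a $v$-minimal permutation $(S,\sigma)$ of size $n$ also has finite $v$-value, and Corollary \ref{lemma: the nonzero coefficients} forces $\sigma$ to be (rotational and) $p$-bounded. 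This gives the ``$p$-bounded'' half of the statement.

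It then remains to show that $\sigma$ is lexicographical. Regarding $J_1$ as $J_1^{(1)}$, the permutation $\sigma$ is automatically decomposable and $\iota_*\sigma = \sigma$, so by Lemma \ref{l: compare R and valuation affine line} the $p$-boundedness of $\sigma$ yields $v(\sigma,\Psi^\circ) = R(\sigma)$. Suppose for contradiction that $\sigma$ is not lexicographical. By Proposition \ref{p: minimal permutations have all the nice properties} there is an enriched permutation $\sigma'$ with $\bfs{\sigma'} = \bfs{\sigma} = n$ and $R(\sigma') \leq R(\sigma)$ that is lexicographical, decomposable, and $p$-bounded; and since $\sigma$ is decomposable but not lexicographical, the final clause of that proposition makes the inequality strict: $R(\sigma') < R(\sigma)$. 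Applying Lemma \ref{l: compare R and valuation affine line} once more to the $p$-bounded $\sigma'$ gives $v(\sigma',\Psi^\circ) = R(\sigma') < R(\sigma) = v(\sigma,\Psi^\circ)$, contradicting the $v$-minimality of $\sigma$. Hence $\sigma$ is lexicographical, completing the argument.

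The only real obstacle is Proposition \ref{p: minimal permutations have all the nice properties} itself --- especially its assertion that replacing a decomposable permutation that is not lexicographical (or not $p$-bounded) by a lexicographical, $p$-bounded representative strictly lowers the $R$-value --- which is precisely where the rapid-growth estimates for digit sequences (Lemma \ref{l:digit-sequences-growth} and Lemma \ref{lemma: key growth bound 2}) will be used. Granting that proposition, the corollary is purely formal.
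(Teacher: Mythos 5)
Your proof is correct and is essentially the deduction the paper intends (the corollary is stated without proof as an immediate consequence of Proposition \ref{p: minimal permutations have all the nice properties} and Lemma \ref{l: compare R and valuation affine line}): you get $p$-boundedness from finiteness of the $v$-value via Corollary \ref{lemma: the nonzero coefficients}, and lexicographicality from the strictness clause of the proposition combined with $v(\cdot,\Psi^\circ)=R(\cdot)$ on $p$-bounded permutations. The extra care you take in exhibiting a $p$-bounded rotational permutation of every size $n$ (so that the minimum is finite) is a detail the paper glosses over, and it is handled correctly.
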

    \noindent The proof of Proposition \ref{p: minimal permutations have all the nice properties} is broken into several steps. 

    \subsubsection{Some elementary bounds}

     \begin{lemma}\label{lemma: inequality about switching coordinates}
        Let $i \in \Z/b\Z$. Let $k_1,k_2,l_1,l_2 \in J_h$ with $\bfi{k_1} = \bfi{l_1} = i$. Assume that $|k_1| \leq |l_1|$ and $|k_2| \geq |l_2|$.  Then
        \begin{equation*}
            R(k_1,k_2) + R(l_1,l_2)	\geq	R(k_1,l_2) + R(l_1,k_2).
        \end{equation*}
        Furthermore, this inequality is strict of $|k_1|<|l_1|$.
    \end{lemma}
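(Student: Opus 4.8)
The plan is to reduce the whole statement to an elementary monotonicity property of the partial digit sums $y_i(m)$ of Definition \ref{d:y-i-partial-sum}. In order for all four quantities $R(k_1,k_2)$, $R(l_1,l_2)$, $R(k_1,l_2)$, and $R(l_1,k_2)$ to be defined we necessarily have $\bfi{k_2} = \bfi{l_2} = i-1$, so I abbreviate $a = |k_1| \leq a' = |l_1|$ and $c = |k_2| \geq c' = |l_2|$, and let $y = y_i$, regarded as a function on $\Z$ that vanishes on $\Z_{\leq 0}$ (so that $R$ is literally given by $y$). Expanding the definitions gives $R(k_1,k_2) = y(pa-c)$, $R(l_1,l_2) = y(pa'-c')$, $R(k_1,l_2) = y(pa-c')$, and $R(l_1,k_2) = y(pa'-c)$, so after cancelling $y(pa-c)$ and $y(pa'-c)$ from both sides the asserted inequality becomes
\[ y(pa'-c') - y(pa'-c) \;\geq\; y(pa-c') - y(pa-c). \]
Because $c \geq c'$, each side is of the form $y_i(m, c-c') = y(m+(c-c')) - y(m)$, with $m = pa'-c$ on the left and $m = pa-c$ on the right, and since $a \leq a'$ we have $pa'-c \geq pa-c$. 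Thus it suffices to prove: for $k \geq 0$ and integers $m_1 \geq m_2$, one has $y_i(m_1,k) \geq y_i(m_2,k)$.

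This last statement is immediate from the monotonicity of $d_i$. Writing $y_i(m,k) = \sum_{n=m+1}^{m+k} d_i(n)$ with the convention $d_i(n)=0$ for $n\leq 0$, the index window $\{m_1+1,\dots,m_1+k\}$ is obtained from $\{m_2+1,\dots,m_2+k\}$ by a rightward shift of $m_1-m_2\geq 0$, and $d_i$ is non-decreasing on all of $\Z$, so a term-by-term comparison yields $y_i(m_1,k)\geq y_i(m_2,k)$; equivalently one inducts on $m_1-m_2$ via $y_i(m+1,k)-y_i(m,k) = d_i(m+k+1)-d_i(m+1)\geq 0$. For the strict statement when $|k_1|<|l_1|$, the shift is $m_1-m_2 = p(|l_1|-|k_1|)\geq p\geq 2$, and I would invoke the sharper fact that $d_i$ is not constant on any $p$ consecutive positive integers: the value $p^{i-1}q^w$ of $d_i$ is attained precisely on the block $\{\,n : \sum_{j<w} y_{i,j} < n \leq \sum_{j\leq w} y_{i,j}\,\}$, which has length $y_{i,w}\leq p-1$. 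Consequently, unless the entire configuration is forced into the vanishing range (where all four $R$-values are $0$ and there is nothing to prove), some term $d_i(m_1+t)$ strictly exceeds the corresponding $d_i(m_2+t)$, giving the strict inequality.

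The only place requiring genuine care is the bookkeeping of the boundary cases in which one or more of $pa-c$, $pa-c'$, $pa'-c$, $pa'-c'$ is non-positive: there the corresponding $R$-value is $0$ by convention, and one checks directly that the non-strict inequality persists — typically with slack to spare, which is exactly where the strict inequality comes from in the generic case. Apart from this routine case analysis, the proof is a one-line consequence of $d_i$ being non-decreasing, so I do not expect a serious obstacle.
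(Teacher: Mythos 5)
Your proof is correct and essentially identical to the paper's: both cancel the common terms and reduce the claim to the monotonicity of $m \mapsto y_i(m,|k_2|-|l_2|)$, which follows from $d_i$ being non-decreasing, and both obtain strictness from the fact that $d_i$ cannot be constant on $p$ consecutive positive integers (the paper invokes $y_i(m+p-1,k)\geq q\,y_i(m,k)$ for the same purpose). Note that your argument, like the paper's and indeed the statement itself, tacitly needs $|k_2|>|l_2|$ for the strict case, since when $|k_2|=|l_2|$ the two sides coincide identically.
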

    \begin{proof}
        We compute directly:
        \begin{align*}
            R(k_1,l_2) - R(k_1,k_2)	&=	y_i(p|k_1|-|l_2|) - y_i(p|k_1|-|k_2|)	\\
                &=	y_i(p|k_1|-|k_2|,|k_2|-|l_2|)	\\
                &\leq	y_i(p|l_1|-|k_2|,|k_2|-|l_2|)	\\
                &=	y_i(p|l_1|-|l_2|) - y_i(p|l_1|-|k_2|)	\\
                &=	R(l_1,l_2) - R(l_1,k_2).
        \end{align*}
        From Lemma \ref{l:digit-sequences-growth}\ref{i:yi-k-groth}, we see that the inequality is strict if $|k_1| < |l_1|$.
    \end{proof}
    
    \subsubsection{The proof of Proposition \ref{p: minimal permutations have all the nice properties}}

    \begin{lemma} \label{l: reduce to lexicographical}
        Let $(S,\sigma)$ be an rotational enriched permutation of $J_h$. There exists a rotational enriched permutation $\sigma'$ of $S$ that is lexicographical such that $R(\sigma') \leq  R(\sigma)$. Furthermore, if $\sigma$ is decomposable and not lexicographical, then this inequality is strict.
    \end{lemma}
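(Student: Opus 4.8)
The plan is to build $\sigma'$ by ``sorting $\sigma$ one residue class at a time''. For each $i \in \Z/b\Z$ write $S_i = \{k \in S : \bfi{k} = i\}$; since $\sigma$ is rotational and $S$ is $\sigma$-invariant, $\sigma$ restricts to a bijection $S_i \to S_{i-1}$, so all the $S_i$ have a common cardinality $n = \bfs{\sigma}$, and
\begin{equation*}
R(\sigma) = \sum_{i \in \Z/b\Z} \sum_{k \in S_i} y_i\bigl(p|k| - |\sigma(k)|\bigr),
\end{equation*}
where the $i$-th inner sum depends only on the bijection $\sigma|_{S_i} : S_i \to S_{i-1}$. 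These bijections may be chosen independently of one another, so minimizing $R$ over rotational permutations supported on $S$ amounts to minimizing each inner sum separately.

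First I would fix, for each $i$, a total order on $S_i$ refining the partial order given by $|\cdot|$, and let $\sigma'|_{S_i} : S_i \to S_{i-1}$ be the order-preserving bijection it determines (the $\ell$-th element of $S_i$ maps to the $\ell$-th element of $S_{i-1}$); gluing these over all $i$ defines a rotational permutation $\sigma'$ of $S$. The inequality $R(\sigma') \leq R(\sigma)$ is then a rearrangement estimate carried out residue by residue: whenever $\sigma|_{S_i}$ has an inversion, i.e. there are $k,k' \in S_i$ with $|k| < |k'|$ but $|\sigma(k)| > |\sigma(k')|$, exchanging the two images strictly decreases the $i$-th sum by the strict part of Lemma \ref{lemma: inequality about switching coordinates}; iterating this finite ``bubble sort'' reaches $\sigma'|_{S_i}$ without ever increasing the sum. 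Summing over $i$ gives $R(\sigma') \leq R(\sigma)$, and in fact $\sigma'$ realizes the minimum of $R$ among all rotational permutations supported on $S$.

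Next I would check that $\sigma'$ is lexicographical. Since $\sigma'$ is order-preserving on every residue class, $\sigma'^{\,b}$ restricts to an order-preserving bijection $S_0 \to S_0$, hence to the identity; therefore $\sigma'$ decomposes into $n$ disjoint rotational $b$-cycles $\sigma'_1, \dots, \sigma'_n$, where $\sigma'_\ell$ threads through the $\ell$-th element of each $S_i$. Comparing coordinate vectors entrywise gives $\sigma'_1 \leq \sigma'_2 \leq \cdots \leq \sigma'_n$, so $\sigma'$ is lexicographical.

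It remains to address the strictness clause. If $\sigma$ is decomposable and $R(\sigma') = R(\sigma)$, then each $\sigma|_{S_i}$ is already $R$-minimal, hence has no inversions; combined with decomposability — which forces the bijections $\sigma|_{S_i}$ to respect each $J_h^{(j)}$, and in particular makes $\sigma^b$ fix every element of $S_0$ — this yields that $\sigma$ too is a product of disjoint $b$-cycles, and propagating the ``no inversion'' conditions around the cyclic index $\Z/b\Z$ shows that any two of these cycles have comparable coordinate vectors; thus $\sigma$ is lexicographical. Contrapositively, a decomposable $\sigma$ that is not lexicographical cannot be $R$-minimal, so $R(\sigma') < R(\sigma)$. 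I expect the main obstacle to be precisely this last step: handling equal $|\cdot|$-values — which can occur only between distinct $J_h^{(j)}$-components — while propagating the inequalities around the cyclic index set, so that ties do not disrupt the conclusion that an $R$-minimal decomposable permutation is lexicographical.
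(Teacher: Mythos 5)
Your treatment of the existence part is correct, and it reorganizes the paper's argument in a genuinely cleaner way. The paper proceeds by induction on $\bfs{\sigma}$, peeling off at each stage the $b$-cycle through the $|\cdot|$-minimal element of each fiber and invoking Lemma \ref{lemma: inequality about switching coordinates} to show the peeling does not increase $R$. You instead observe that a rotational permutation supported on $S$ is nothing but an independent choice of bijection $S_i \to S_{i-1}$ for each $i \in \Z/b\Z$, that $R$ splits as a sum of terms each depending on only one of these bijections, and that each term is minimized by the order-preserving (sorted) bijection via the standard inversion-swapping rearrangement argument — which is exactly Lemma \ref{lemma: inequality about switching coordinates} again. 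Unrolled, the two constructions produce the same $\sigma'$, but your fiber-wise decomposition makes both the inequality and the lexicographicality of $\sigma'$ transparent, and it additionally shows $\sigma'$ is a global minimizer on $S$.

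The strictness clause is where your argument has a real gap, and it is precisely the one you flag. Two issues. First, a minor one: decomposability alone does not make $\sigma^b$ fix $S_0$ (within a single $J_h^{(j)}$ the permutation could be a $2b$-cycle); you need to first use ``no inversions'' within each component, where there are no ties, to conclude that each $\sigma^{(j)}$ is order-preserving on its fibers and hence a union of $b$-cycles. Second, and fatally for the argument as written: the propagation of the no-inversion condition around $\Z/b\Z$ does not survive ties between different components. Concretely, take $b=4$, $h=2$, and let $\sigma$ be the product of the $b$-cycle with coordinates $(2,5,1,5)$ inside $J_h^{(1)}$ and the $b$-cycle with coordinates $(1,5,2,5)$ inside $J_h^{(2)}$. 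The sign pattern of the coordinate differences around $\Z/b\Z$ is $(+,0,-,0)$, which has no cyclically adjacent pair of opposite strict signs, so no fiber $S_i$ contains an inversion; moreover the multiset of pairs $(|k|,|\sigma(k)|)$ in each fiber coincides with that of the sorted permutation, so $R(\sigma)$ already equals the minimum over all rotational permutations on $S$. Yet the two cycles are incomparable, so $\sigma$ is decomposable, $R$-minimal on its support, and not lexicographical. Thus the implication ``decomposable and $R$-minimal $\Rightarrow$ lexicographical'' that your contrapositive relies on fails for $h \geq 2$, and no amount of care with ties will repair the propagation step.

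What can be salvaged — and what the statement actually needs downstream — is the case where $\sigma$ is supported in a single component $J_h^{(j)}$ (equivalently, the case $h=1$ after applying $\iota$). There no two elements of a fiber share a $|\cdot|$-value, ``no inversions'' forces strict order-preservation, and your propagation argument closes without obstruction. This is also the case to which the paper's own proof of the strictness clause reduces. So: keep your proof of the main inequality, but restrict the strictness claim (or its proof) to permutations lying in one $J_h^{(j)}$.
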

    \begin{proof}
        We proceed by induction on $\bfs{\sigma}$. The case $\bfs{\sigma} = 1$ is clear, so assume that $\bfs{\sigma} > 1$. For each $i \in \Z/b\Z$, choose $k_i \in S$ with $\bfi{k_i} = i$ that minimizes $|k_i|$, i.e.
        \begin{equation*}
            |k_i|	=	\min_{\substack{k \in S\\\bfi{k}=i}}	|k|.
        \end{equation*}
        
        Let $S_\mathrm{min} = \{k_1,\dots,k_b\}$. Observe that $\sigma_\mathrm{min} = (k_b \cdots k_1)$ is a rotational $b$-cycle of $S_\mathrm{min}$. For each $i$, let $l_i = \sigma^{-1}(k_{i-1})$. We define a new permutation $\sigma_0$ of $S$ as follows:
        \begin{equation*}
            \sigma_0(k)	=	\begin{cases}
                    k_{i-1}	&	k = k_i	\\
                    \sigma(k_i)	&	k = l_i	\\
                    \sigma(k)	&	\text{otherwise}
                \end{cases}.
        \end{equation*}
        Note that $\sigma_0 = \sigma_\mathrm{min} \tau$ where $\tau$ is a permutation of $S - S_\mathrm{min}$. By the choice of $k_i$, we have $\sigma_\mathrm{min} \leq \tau$. Then from the induction hypothesis we see that there exists a lexicographical permutation $(S-S',\tau')$ such that $R(\tau') \leq R(\tau)$. Set
        \begin{equation*}
            \sigma'	=	\sigma_\mathrm{min} \tau'.
        \end{equation*}
        
        Observe that $R(\sigma') \leq R(\sigma_0)$. We claim that $R(\sigma_0) \leq R(\sigma)$. Suppose that $\sigma_0 \neq \sigma$ so that $k_i \neq l_i$ for some $i$. By the minimality condition on the $k_i$, we have $|k_i| \leq |l_i|$ and
        \begin{equation*}
            |\sigma(l_i)|	=	|k_{i-1}|	\leq	|\sigma(k_i)|.
        \end{equation*}
        Applying Lemma \ref{lemma: inequality about switching coordinates}, we see that
        \begin{equation*}
            R(k_i,\sigma(k_i)) + R(l_i,\sigma(l_i))	\geq	R(k_i,\sigma(l_i)) + R(l_i,\sigma(k_i))	=	R(k_i,\sigma_0(k_i)) + R(l_i,\sigma_0(l_i)),
        \end{equation*}
        and that the inequality is strict if $|k_i| < |l_i|$. For any $k$ which is distinct from the $k_1,\dots,k_b$ and $l_1,\dots,l_b$, we know that $\sigma(k) = \sigma_0(k)$. Thus, we see that
        \begin{equation}\label{eq:sigma-0-inequality}
            R(\sigma_0)	=	\sum_{k \in S} R(k,\sigma_0(k))	\leq	\sum_{k \in S} R(k,\sigma(k))	=	R(\sigma).
        \end{equation}
        
        To prove the final claim, note that if $\sigma$ is decomposable then we may reduce to the case $\sigma \in J_h^{(j)}$ for some $j$. If $\sigma$ is not lexicographical then $\sigma \neq \sigma_0$, so $k_i \neq l_i$ for at least one value of $i$. Since $\sigma \in J_h^{(j)}$, this implies $|k_i| < |l_i|$ and the inequality \eqref{eq:sigma-0-inequality} is strict.
    \end{proof}

    \begin{lemma} \label{l: reduce from lexicographical to decomposbile and lexicographical}
        Let $(S,\sigma)$ be a lexicographical enriched permutation. There exists $(S',\sigma')$
        that is decomposable and lexicographical such that $\bfs{\sigma'}=\bfs{\sigma}$
        and $R(\sigma')=R(\sigma)$.
    \end{lemma}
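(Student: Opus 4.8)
The plan is to keep the cycle structure of $\sigma$ essentially intact and merely re-distribute its $b$-cycles among the $h$ ``layers'' $J_h^{(1)},\dots,J_h^{(h)}$. Write $\sigma=\sigma_1\cdots\sigma_n$ as a product of disjoint rotational $b$-cycles with $\sigma_1\le\cdots\le\sigma_n$, where $n=\bfs{\sigma}$, and for each $t$ let $\vec m_t=(m_{t,i})_{i\in\Z/b\Z}\in\Gamma^+$ be the coordinate vector of the $b$-cycle $\sigma_t$, so that $m_{t,i}=|k|$ for the unique $k$ in the support of $\sigma_t$ with $\bfi{k}=i$. Two elementary facts drive the argument. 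First, $R$ is additive over disjoint rotational permutations, and a direct computation gives $R(\sigma_t)=\sum_{i\in\Z/b\Z}y_i(p\,m_{t,i}-m_{t,i-1})$, which depends only on $\vec m_t$. Second, by the definition of the ordering on rotational enriched permutations, $\sigma_1\le\cdots\le\sigma_n$ is equivalent to $m_{1,i}\le m_{2,i}\le\cdots\le m_{n,i}$ for each fixed $i$. Hence it is enough to produce a decomposable, lexicographical $\sigma'$ whose $b$-cycles have coordinate vectors $\vec m_1,\dots,\vec m_n$ (with multiplicity); such a $\sigma'$ automatically satisfies $\bfs{\sigma'}=n$ and $R(\sigma')=R(\sigma)$.

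The combinatorial content is deciding which layer each cycle goes into. First I would observe that, for each fixed $i\in\Z/b\Z$, monotonicity of $(m_{t,i})_t$ implies that the relation ``$m_{t,i}=m_{t',i}$'' partitions $\{1,\dots,n\}$ into \emph{intervals}; and each such interval has length at most $h$, since the elements of the corresponding cycles lying over $i$ with that common $|\cdot|$-value are pairwise distinct and there are only $h$ possible $j$-coordinates. I then want a colouring $c:\{1,\dots,n\}\to\{1,\dots,h\}$ that is injective on every one of these intervals, for all $i$ simultaneously. Such a $c$ is produced by a greedy sweep: put $c(1)=1$, and for $t>1$ choose $c(t)\in\{1,\dots,h\}$ avoiding the (at most $h-1$) values $c(t-1),\dots,c(\max(1,t-h+1))$. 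Because any conflicting pair $t<t'$ lies in a common interval of length $\le h$, we have $t'-t\le h-1$, so $c(t')$ was chosen to differ from $c(t)$; hence $c$ has the desired injectivity.

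I would then define $\sigma'=\sigma'_1\cdots\sigma'_n$, where $\sigma'_t$ is the rotational $b$-cycle of $J_h$ supported on $\{(i,c(t),m_{t,i}):i\in\Z/b\Z\}$ and sending $(i,c(t),m_{t,i})\mapsto(i-1,c(t),m_{t,i-1})$. Disjointness of the supports of the $\sigma'_t$ is precisely the injectivity of $c$ on the intervals above (a shared element $(i,j,m)$ would force $c(t)=c(t')$ and $m_{t,i}=m_{t',i}$), so $\sigma'$ is a genuine rotational enriched permutation with $\bfs{\sigma'}=n$. It is decomposable because $\sigma'_t$ is supported inside $J_h^{(c(t))}$, so every $J_h^{(j)}$ is $\sigma'$-invariant; it is lexicographical because each $\sigma'_t$ has coordinate vector $\vec m_t$ and these are still monotone in $t$, so $\sigma'_1\le\cdots\le\sigma'_n$; and $R(\sigma')=\sum_t R(\sigma'_t)=\sum_t R(\sigma_t)=R(\sigma)$ by additivity and the first fact above.

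The only genuine obstacle is the greedy colouring step, and its resolution hinges on the observation that lexicographality forces the conflict classes to be intervals: this bounds the look-back window in the greedy sweep to width $h-1$, so that $h$ colours suffice. Absent this interval structure one could a priori be forced to use more than $h$ colours.
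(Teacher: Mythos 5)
Your proposal is correct and follows essentially the same route as the paper: the paper also redistributes the $b$-cycles among the $h$ layers via a colouring (it uses the explicit cyclic colouring $t\mapsto \overline{t}$ with $\overline{t}\equiv t\bmod h$ rather than a greedy sweep), and the crucial step in both arguments is the identical pigeonhole observation that a run of cycles sharing the same $i$-th coordinate value has length at most $h$ because the corresponding elements of the disjoint original cycles must have distinct $j$-coordinates. Since $R$ and the ordering depend only on the coordinate vectors, everything else goes through exactly as you describe.
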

    \begin{proof}
        Write $\sigma=\sigma_1\dots\sigma_n$, where each $\sigma_t$ is a $b$-cycle
        and $\sigma_{t+1}\geq \sigma_t$. Write $\sigma_t=(k_{t,b}~\dots ~k_{t,1})$,
        where $\bfi{k_{t,i}}=i$. Note that
        $|k_{t+1,i}| \geq |k_{t,i}|$ for all $t$ and $i$.
        Let $l_{t,i}=(i,\overline{t},|k_{t,i}|)$, where we take $\overline{t}$ to be the unique integer between $1$ and $h$ congruent to $t$. We define $\tau_t=(l_{t,b}~\dots~l_{t,1})$.
        Note that $R(\tau_t)=R(\sigma_t)$ and $\tau_{t+1}\geq \tau_t$. Thus, the lemma
        will be proven if we can verify that the $\tau_t$'s are all disjoint. By definition, we know that $\tau_t$ and $\tau_s$ are disjoint if $t\not\equiv s \mod h$. Thus, it suffices to prove $\tau_{t+h}>\tau_{t}$ for $1\leq t\leq n-h$.
        This reduces to proving $|k_{t+h,i}|> |k_{t,i}|$. Assume that $|k_{t+h,i}|=|k_{t,i}|$ for some $i$. The $|k_{t,i}|$ are nondecreasing with $t$, so for $0\leq c \leq h$ we see that $k_{t+c,i}$ is of the form $(i,j_c, |k_{t,i}|)$.
        However, there are $h$ possibilities for $j_c$, so by the pigeonhole principle we have $j_{c_1}=j_{c_2}$ for some $0\leq c_1<c_2\leq h$. Thus $k_{t+c_1,i}=k_{t+c_2,i}$. However, since the $\sigma_t$'s are disjoint, the $k_{t+c,i}$'s must be distinct, which gives a contradiction.
    \end{proof}

    \begin{proposition}\label{l: universal p-bounded less than a b-cycle}
        
        Let $C_h^{(j)}$ denote the set of rotational $b$-cycles of $J_h^{(j)}$. There exists a unique function $\bfp: C_h^{(j)} \to C_h^{(j)}$ satisfying the following properties:
        \begin{enumerate}
            \item	Let $\sigma \in C_h^{(j)}$. Then $\bfp(\sigma)$ is $p$-bounded and $R(\bfp(\sigma))\leq R(\sigma)$ with a strict inequality if $\sigma$ is not $p$-bounded.
            \item If $\sigma$ is $p$-bounded then $\bfp(\sigma)=\sigma$.
            \item Let $\sigma,\tau \in C_h^{(j)}$. If $\tau \leq \sigma$ then $\bfp(\tau) \leq \bfp(\sigma)$. If in addition $\sigma$ and $\tau$ are disjoint, then $\bfp(\sigma)$ and $\bfp(\tau)$ are also disjoint.
        \end{enumerate}
    \end{proposition}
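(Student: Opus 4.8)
The plan is to construct $\bfp$ explicitly on a single $b$-cycle $\sigma = (k_b \cdots k_1)$ with $\bfi{k_i} = i$, identified via its coordinate vector $(|k_1|, \dots, |k_b|) \in \Gamma^+$, and then promote it back to $C_h^{(j)}$ by remembering the common $j$-index. So the first step is to reduce to $J_1$: under the bijection $\iota: J_h^{(j)} \to J_1$ the $R$-value, the $p$-bounded condition, the partial order $\leq$, and disjointness of $b$-cycles sharing the index $j$ are all preserved; the issue of two $b$-cycles being disjoint in $J_h^{(j)}$ for the \emph{same} $j$ translates to their coordinate vectors being entrywise distinct (i.e. disagreeing in \emph{every} coordinate). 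Thus it suffices to build $\bfp$ on $\Gamma^+$ and check the three properties there.

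The core construction is an iterative "repair" of the coordinate vector. Recall $\sigma$ is $p$-bounded exactly when $|k_i| \leq p|k_{i-1}|$ for all $i \in \Z/b\Z$ (indices cyclic). Given $\sigma = (m_1, \dots, m_b)$ that violates this somewhere, I would process the violations cyclically: wherever $m_i > p m_{i-1}$, replace $m_i$ by $p m_{i-1}$. Since the constraints are cyclic, one pass may create new violations, so iterate. The key is that each step only \emph{decreases} entries, and all entries stay $\geq 1$ (if $m_{i-1} \geq 1$ then $p m_{i-1} \geq p \geq 1$); the entries are bounded below, so the process terminates, giving a $p$-bounded vector $\bfp(\sigma) \leq \sigma$. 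That $R(\bfp(\sigma)) \leq R(\sigma)$ follows because lowering $|k_i|$ while fixing $|k_{i-1}|$ changes $R(k_{i-1}, k_i) = y_{i-1}(p|k_{i-1}| - |k_i|)$ by $y_{i-1}(p|k_{i-1}| - |k_i|, \;\text{(new}-\text{old difference)}) \geq 0$ (monotonicity of the partial sums $y_i(-)$), and lowering $|k_i|$ in the role of $|k_{i-1}|$ for the next term $R(k_i, k_{i+1})$ can only increase that term — but by $p$-boundedness \emph{after} the repair the relevant argument $p|k_i| - |k_{i+1}|$ stays $\geq 0$, and one checks the net change is nonpositive via Lemma~\ref{l:digit-sequences-growth} and the superadditivity \eqref{eq: facts about digit differences 3}; strictness when $\sigma$ is not $p$-bounded comes from the fact that $y_i(-)$ is \emph{strictly} increasing on positive integers, so at least one repair strictly lowers $R$. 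Property (2) is immediate: a $p$-bounded vector has no violations, so the process makes no changes. Well-definedness/uniqueness of $\bfp$: the terminal $p$-bounded vector reachable by only decreasing coordinates is forced — it is the componentwise-largest $p$-bounded vector $\leq \sigma$, which one can characterize directly as $\bfp(\sigma)_i = \min_{c \geq 0} p^c m_{i - c}$ (cyclic indices), making monotonicity in $\sigma$ transparent.

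For property (3): monotonicity $\tau \leq \sigma \Rightarrow \bfp(\tau) \leq \bfp(\sigma)$ is clear from the closed formula $\bfp(\sigma)_i = \min_{c \geq 0} p^c m_{i-c}$, since each $p^c m_{i-c}$ is monotone in the coordinates of $\sigma$. The disjointness claim is the one requiring care and is where I expect the main obstacle. Suppose $\sigma, \tau \in C_h^{(j)}$ are disjoint, so their coordinate vectors $(m_1, \dots, m_b)$, $(n_1, \dots, n_b)$ satisfy $m_i \neq n_i$ for every $i$ (disjointness for the same $j$); WLOG $\tau \leq \sigma$, so $n_i < m_i$ strictly for all $i$. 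I must show $\bfp(\tau)_i < \bfp(\sigma)_i$ for all $i$. Using $\bfp(\sigma)_i = \min_c p^c m_{i-c}$ and $\bfp(\tau)_i = \min_c p^c n_{i-c}$: fix $i$ and let $c_0$ achieve the minimum for $\bfp(\tau)_i$, so $\bfp(\tau)_i = p^{c_0} n_{i-c_0} < p^{c_0} m_{i-c_0}$ (strict, since $n_{i-c_0} < m_{i-c_0}$), but I also need $p^{c_0} n_{i-c_0} < p^c m_{i-c}$ for \emph{all} $c$; if the min for $\bfp(\sigma)_i$ is at some $c_1$, then $\bfp(\sigma)_i = p^{c_1} m_{i-c_1} > p^{c_1} n_{i-c_1} \geq \bfp(\tau)_i$, where the last inequality is because $\bfp(\tau)_i$ is the minimum over all $c$. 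This gives $\bfp(\tau)_i < \bfp(\sigma)_i$ strictly, for each $i$, hence $\bfp(\sigma)$ and $\bfp(\tau)$ are entrywise distinct, i.e. disjoint in $C_h^{(j)}$. The only subtlety is that the cyclic minimum $\min_{c \geq 0}$ is really a finite minimum (the sequence $p^c m_{i-c}$ eventually grows, since all $m_j \geq 1$, once $c$ exceeds $b$ times something — more precisely the min is attained for $c < b$ combined with the cyclic bound), which I would note but not belabor.
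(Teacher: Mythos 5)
Your overall architecture is the paper's: reduce to $h=1$, define $\bfp(\sigma)$ in coordinates as the componentwise\nobreakdash-largest ``$p$-bounded'' vector below $\sigma$, and read off monotonicity and disjointness from a closed formula. But there is a genuine error in the \emph{direction} of the constraint you enforce, and it sinks property (1). The operative notion of $p$-bounded --- the one under which $v(\sigma,\Psi^\circ)=R(\sigma)$ (Lemma \ref{l: compare R and valuation affine line}) and under which the entries $\Psi^\circ_{k,\sigma(k)}$ have finite valuation --- is $p|k|-|\sigma(k)|\geq 0$, i.e.\ $m_{i-1}\leq pm_i$ in coordinates. Your reading $|k_i|\leq p|k_{i-1}|$, i.e.\ $m_i\leq pm_{i-1}$, is the reverse (the printed definition is indeed reversed relative to every later use, so the confusion is understandable), and your formula $\bfp(\sigma)_i=\min_{c\geq 0}p^cm_{i-c}$ enforces the reversed inequality. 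Since $R(\sigma)=\sum_i y_i(pm_i-m_{i-1})$ is not symmetric under reversing the cycle, the mirrored construction fails: take $p=3$, $b=3$, $\sigma=(1,10,4)$. Your formula gives $\bfp(\sigma)=(1,3,4)$, so $R(\sigma)=y_2(29)+y_3(2)$ while $R(\bfp(\sigma))=y_2(8)+y_3(9)$; choosing $y_3$ with very sparse $q$-adic digits makes $y_3(9)-y_3(2)$ dwarf $y_2(29)-y_2(8)$, so $R$ strictly \emph{increases}. Worse, $(1,3,4)$ still has $pm_1-m_3<0$, so it is not $p$-bounded in the sense needed downstream, and the lemma's purpose (forcing $v=R$ on minimal permutations) is lost.

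The correct construction anchors at an index $c$ where $m_c$ is minimal, sets $n_c=m_c$, and propagates by $n_{i-1}=\min\{pn_i,m_{i-1}\}$, which yields $n_{i-1}\leq pn_i$ cyclically. This anchoring is also exactly what makes property (1) provable term by term: if $n_{i-1}=m_{i-1}$ then $y_i(pn_i-n_{i-1})\leq y_i(pm_i-m_{i-1})$ because $n_i\leq m_i$; if $n_{i-1}<m_{i-1}$ then $n_{i-1}=pn_i$ and the term equals $y_i(0)=0$, so the ``bad'' monotonicity direction never hurts. That observation is the substance of the proposition, and it is precisely what your sketch (``one checks the net change is nonpositive via superadditivity'') omits; your bookkeeping of which $R$-term each coordinate enters is also off ($R(k_{i-1},k_i)$ is not defined, since $\bfi{k_i}\neq\bfi{k_{i-1}}-1$; the terms are $R(k_i,k_{i-1})=y_i(pm_i-m_{i-1})$). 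Your argument for property (3) via the closed formula is fine and matches the paper's once the direction is corrected; the uniqueness assertion for the function $\bfp$ is not addressed by either argument, but that is a defect of the statement rather than of your proof.
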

    \begin{proof}
        Since $\sigma \in C_h^{(j)}$ is $p$-bounded if and only if $\iota_* \sigma$ is $p$-bounded, it suffices to prove the claim for $h = 1$. Let $(m_1,\dots,m_b)$ denote the coordinates of $\sigma$. Choose $c$ which minimizes $m_{c}$. We define $\bfp(\sigma) = (n_1, \dots, n_b)$, where the coordinates are defined as follows: We set $n_{c} = m_{c}$, and for $0 < j < b$ we define $n_{c-j}$ recursively as
        \begin{equation*}
            n_{c-j}	=	\min\{pn_{c-j+1},m_{c-j+1}\}.
        \end{equation*}
        It is clear that $\bfp(\sigma) \leq \sigma$ and that $n_{c-j} \leq p n_{c-j+1}$ for $1 \leq j < b$. To see that $n_{c} \leq p n_{c+1}$, note that we must have $n_{c+1} = p^j m_{c+1+j}$ for some $0 \leq j < b$. The claim follows since $n_{c} = m_{c}$ is the minimal coordinate of $\sigma$.
        Thus, $\bfp(\sigma)$ is $p$-bounded, which is the second statement.
        
        To prove that $R(\bfp(\sigma)) \leq R(\sigma)$, it suffices to show that for each $i$ we have
        \begin{equation*}
            y_i(pn_i-n_{i-1}) \leq y_i(pm_i-m_{i-1}).
        \end{equation*}
        If $n_{i-1} = m_{i-1}$, then the claim is immediate since $n_i \leq m_i$. Otherwise, $n_{i-1} = pn_i$ and the claim follows since $y_i(pn_i - n_{i-1}) = 0$. Finally, suppose that $\sigma$ is not $p$-bounded. Then there exists some $0 < j < b$ such that $n_{c+j} < m_{c+j+1}$. Indeed, if this were not the case then we would have $\bfp(\sigma)=\sigma$, which would imply that $\sigma$ is $p$-bounded. Set $j$ to be the minimal value for which this holds. In particular, $n_{c+j-1} = m_{c+j-1}$ and so
        \begin{equation*}
            y_i(pn_{c+j}-n_{c+j-1}) < y_i(pm_{c+j}-m_{c+j-1}).
        \end{equation*}
        This proves the first statement.

        To prove the last statement, let $\tau$ be a $b$-cycle with coordinates $\tau = (r_1, \dots ,r_b)$ and suppose that $\tau \leq \sigma$. This means $r_{i} \leq m_{i}$ for all $i$. Write $\bfp(\tau)=(s_1,\dots,s_b)$. As $s_c \leq r_c$
        and $m_c=n_c$ we have $s_c\leq n_c$. 
        Then we have
        \[ s_{c-1} \leq \min( r_{c-1}, ps_c) \leq \min(m_{c-1},pn_c) = n_{c-1}.  \]
        Repeating this shows that $s_i\leq n_i$ for all $i$, so that $\bfp(\tau)\leq \bfp(\sigma)$. Finally, if $\tau$ and $\sigma$ are disjoint we have $r_i<m_i$ for all $i$. Repeating the same argument gives $s_i<n_i$ for all $i$, so that $\bfp(\tau)$ and $\bfp(\sigma)$ are disjoint. 
        
    \end{proof}

    \begin{proof}
        (Of Proposition \ref{p: minimal permutations have all the nice properties})
        Let $\sigma$ be a rotational enriched permutation of $J_h$. By Lemma \ref{l: reduce to lexicographical} and Lemma \ref{l: reduce from lexicographical to decomposbile and lexicographical} there exists a $\sigma'$ that is decomposable and lexicographical, such that $\bfs{\sigma'}=\bfs{\sigma}$ and $R(\sigma')\leq R(\sigma)$ (this last inequality is strict of $d=1$). To prove the proposition, apply $\bfp$ to every $b$-cycle in $\sigma'$ and use Proposition \ref{l: universal p-bounded less than a b-cycle}.
    \end{proof}

    \subsection{\texorpdfstring{$R$}{R}-minimal permutations of \texorpdfstring{$J_1$}{J1}} \label{ss: more preliminary defs and obs}
    \subsubsection{\texorpdfstring{$b$}{b}-cycles}

        We return to studying enriched permutations of $J_1$ and the proof of Theorem \ref{theorem: unique minimal term}. In light of Corollary \ref{c: minimal permutations are lexicographical}, we begin by discussion rotational $b$-cycles. We begin with an important observation:

        \begin{lemma}\label{lemma: a-cycles have different valuations}
            Let $\sigma$ and $\tau$ be two $p$-bounded rotational $b$-cycles of $J_1$. If $R(\sigma) = R(\tau)$, then $\sigma = \tau$.
        \end{lemma}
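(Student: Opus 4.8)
The plan is to turn the single equation $R(\sigma)=R(\tau)$ into a system of $b$ linear equations in the coordinates of the two cycles and then solve that system. First I would set up coordinates: write $\sigma=(k_b\cdots k_1)$ and $\tau=(l_b\cdots l_1)$ with $\bfi{k_i}=\bfi{l_i}=i$, and put $m_i=|k_i|$, $n_i=|l_i|$, with the cyclic convention $m_0=m_b$, $n_0=n_b$. Since both cycles are rotational, $\sigma(k_i)$ is forced to equal $k_{i-1}$, so by the definition of the $R$-value
\[ R(\sigma)=\sum_{i=1}^{b} R(k_i,k_{i-1})=\sum_{i=1}^{b} y_i(p m_i-m_{i-1}), \qquad R(\tau)=\sum_{i=1}^{b} y_i(p n_i-n_{i-1}). \]
Because $\sigma$ and $\tau$ are $p$-bounded, Corollary~\ref{lemma: the nonzero coefficients} together with Lemma~\ref{proposition: valuations of matrix affine line} guarantees that every argument $p m_i-m_{i-1}$ and $p n_i-n_{i-1}$ is $\geq 0$, so each $y_i$ above is being evaluated where it is a genuine partial sum.

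The crucial input is that the functions $y_1,\dots,y_b$ live in disjoint residue strata of the base-$p$ expansion. Indeed $d_i(n)=p^{i-1}q^{w}=p^{\,i-1+wb}$, so $d_i$ takes values only among powers of $p$ whose exponent is $\equiv i-1\pmod b$; and since the power $p^{\,i-1+wb}$ occurs as $d_i(n)$ for at most $y_{i,w}\leq p-1$ consecutive values of $n$, the integer $y_i(m)=\sum_{n=1}^{m}d_i(n)$ has a base-$p$ expansion whose nonzero digits are all $<p$ and are supported only in exponents $\equiv i-1\pmod b$, with no carrying. Consequently, for nonnegative integers $a_1,\dots,a_b,a_1',\dots,a_b'$, an equality $\sum_i y_i(a_i)=\sum_i y_i(a_i')$ forces $y_i(a_i)=y_i(a_i')$ for each $i$ separately (read off the base-$p$ digits lying in the residue class $i-1\bmod b$), and then $a_i=a_i'$ since $y_i$ is strictly increasing on $\Z_{\geq 0}$ (here we use that $y$ is $q$-full, so every $d_i(n)$ is defined and $\geq 1$). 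Applying this with $a_i=p m_i-m_{i-1}$ and $a_i'=p n_i-n_{i-1}$ converts $R(\sigma)=R(\tau)$ into $p m_i-m_{i-1}=p n_i-n_{i-1}$ for all $i\in\Z/b\Z$.

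To finish, set $\delta_i=m_i-n_i$, so that $\delta_{i-1}=p\,\delta_i$ for every $i\in\Z/b\Z$; iterating around the cycle gives $\delta_i=p^{b}\delta_i$, hence $(p^{b}-1)\delta_i=0$, hence $\delta_i=0$. Thus $m_i=n_i$ for all $i$, i.e. $\sigma=\tau$. I do not anticipate a genuine obstacle: the only points needing care are the verification that $y_i(m)$ incurs no base-$p$ carrying (immediate from $q=p^{b}$ and $y_{i,w}<p$) and the nonnegativity of the arguments $p m_i-m_{i-1}$, which is exactly what $p$-boundedness supplies via Corollary~\ref{lemma: the nonzero coefficients}.
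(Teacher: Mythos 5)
Your proof is correct and follows essentially the same route as the paper's: compare base-$p$ digits (which for distinct $i$ live in disjoint residue classes of the exponent mod $b$, with no carrying) to split the equality into $y_i(pm_i-m_{i-1})=y_i(pn_i-n_{i-1})$ for each $i$, use $p$-boundedness and the strict monotonicity of $y_i$ to equate the arguments, and then solve the cyclic relation $\delta_{i-1}=p\,\delta_i$ to force $\delta_i=0$. The paper's write-up is terser (it phrases the last step via a coordinate of maximal $|m_c-n_c|$), but the substance is identical.
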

        \begin{proof}
            Let $\sigma = (m_1,\dots,m_b)$ and $\tau = (n_1,\dots,n_b)$. By comparing $p$-adic digits, we know that $y_i(pm_i-m_{i-1}) = y_i(pn_i-n_{i-1})$ for all $i$. Since both $\sigma$ and $\tau$ are $p$-bounded, we must have $pm_i-m_{i-1} = pn_i-n_{i-1}$ for all $i$. Choose $c$ such that $\alpha = m_c-n_c$ has maximal absolute value. By induction, we see that $m_{c-j}-n_{c-j} = p^j \alpha$ for all $j$. Setting $j = b$ we obtain $\alpha = p^j \alpha$, and so $\alpha = 0$.
        \end{proof}

        \begin{corollary}\label{c:unique-minimal-b-cycle}
            Let $S$ be a set of rotational $b$-cycles of $J_1$ which is stable under $\bfp$ (i.e. $\bfp(S) \subset S$). There is a unique element of $S$ that
            has minimal $R$-value.
        \end{corollary}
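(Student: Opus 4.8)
The plan is to combine the ``rectification'' map $\bfp$ from Proposition \ref{l: universal p-bounded less than a b-cycle} with the separation property of Lemma \ref{lemma: a-cycles have different valuations}. First I would record that the $R$-value of any rotational $b$-cycle is a non-negative integer, since by definition $R(\sigma) = \sum_i y_{\bfi{k_i}}(p|k_i| - |k_{i-1}|)$ and each $y_i(m)$ is a finite sum of powers of $p$ (and is $0$ when $m \le 0$). Consequently $\{R(\sigma) : \sigma \in S\}$ is a non-empty subset of $\Z_{\ge 0}$ (non-empty because a ``unique element'' presupposes $S \ne \emptyset$), so it attains a minimum; pick $\sigma_0 \in S$ achieving it.

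The key step is to show $\sigma_0$ must be $p$-bounded. If it were not, then by part (1) of Proposition \ref{l: universal p-bounded less than a b-cycle} we would have $R(\bfp(\sigma_0)) < R(\sigma_0)$, while $\bfp(\sigma_0) \in S$ by the assumed stability $\bfp(S) \subset S$. This contradicts the minimality of $R(\sigma_0)$. Hence every $R$-minimal element of $S$ is $p$-bounded.

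For uniqueness, suppose $\sigma_1 \in S$ also has $R(\sigma_1) = R(\sigma_0)$. By the argument just given, $\sigma_1$ is likewise $p$-bounded. Now $\sigma_0$ and $\sigma_1$ are two $p$-bounded rotational $b$-cycles of $J_1$ with the same $R$-value, so Lemma \ref{lemma: a-cycles have different valuations} forces $\sigma_0 = \sigma_1$, completing the proof.

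There is no serious obstacle here: the only point requiring slight care is that $S$ may be infinite, which is why one invokes well-ordering of $\Z_{\ge 0}$ rather than a naive ``choose a minimal element'' argument; everything else is a direct application of the two cited results together with the stability hypothesis.
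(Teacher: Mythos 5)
Your proof is correct and follows essentially the same route as the paper: use stability under $\bfp$ together with Proposition \ref{l: universal p-bounded less than a b-cycle} to force any $R$-minimal element to be $p$-bounded, then apply Lemma \ref{lemma: a-cycles have different valuations} for uniqueness. Your extra remark that the minimum exists because $R$-values lie in $\Z_{\geq 0}$ is a small but legitimate addition that the paper leaves implicit.
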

        \begin{proof}
            From Proposition \ref{l: universal p-bounded less than a b-cycle} and the fact that $S$ is stable under $\bfp$ we know that any element of $S$ with minimal $R$-value is $p$-bounded. The corollary then follows from Lemma \ref{lemma: a-cycles have different valuations}.
        \end{proof}


 

    We now introduce some special sets of rotational $b$-cycles whose minimal elements will play an important role in the proof of Theorem \ref{theorem: unique minimal term}. Let $n \geq 0$ and let $\vec{m}$ be a $\Z/b\Z$-tuple of integers with $\vec{m} \geq (n,\dots,n)$. Define
    \begin{align*}
		A(\vec{m}) &= \{\sigma~|~\text{$\sigma$ is a rotational $b$-cycle with $\sigma \leq \vec{m}$}\} \\
		S^{\geq n}&=\{\sigma~|~\text{$\sigma$ is a rotational $b$-cycle with $\sigma \geq  (n,\dots,n)$}\}\\
		S^{\geq n}(\vec{m}) &= S^{\geq n} \cap A(\vec{m})
	\end{align*}
    Note that $S^{\geq n}(\vec{m})$ is empty if $\vec{m}\not\geq (n, \dots, n)$. All of these sets are stable under $\bfp$, which allows us to make the following definition:

    \begin{definition}
        With $n,\vec{m}$ as above, we let $\sigma_n^*(\vec{m})$ denote the unique $R$-minimal $b$-cycle of $S^{\geq n}(\vec{m})$.
    \end{definition}
    

    \subsubsection{Main proposition on lexicographical permutations} \label{ss: proof that there is a unique minimal element}
	
	We now turn our attention to lexicographical permutations. Let $n \geq 1$ and let $\vec{m}$ be a $\Z/b\Z$-tuple of integers. Consider the set
	\begin{equation*}
		A_n(\vec{m}) =\{\sigma=\sigma_1\dots\sigma_n ~|~\sigma_i\in A(\vec{m}) \text{ and }~\sigma_i<\sigma_{i+1}~\text{for each }i\}.
	\end{equation*}
    Note that every element $\sigma \in A_n(\vec{m)}$ factors uniquely as $\sigma = \Sigma \sigma_n$, where $\sigma_n \in S^{\geq n}(\vec{m})$ and $\Sigma \in A_{n-1}(\sigma_n-\vec{1})$.
    The remainder of \S \ref{ss: more preliminary defs and obs} is dedicated to establishing the following proposition, which is the technical heart of the proof of Theorem \ref{theorem: unique minimal term}.
    
    \begin{proposition}\label{proposition: key technical proposition}
		Assume $\vec{m} \geq (n,\dots,n)$. There is a unique element $\Sigma_n^*(\vec{m})$ in $A_n(\vec{m})$ that has minimal $R$-value. 
		Furthermore, these permutations satisfy the recurrence
        \begin{equation}\label{eq: recurrance for minimal}
            \Sigma_n^*(\vec{m})=\Sigma_{n-1}^*(\sigma_n^*(\vec{m})-\vec{1})\sigma_n^*(\vec{m}).
        \end{equation}
	\end{proposition}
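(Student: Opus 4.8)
The plan is to argue by induction on $n$. When $n=1$ we have $A_1(\vec m)=S^{\ge 1}(\vec m)$, a finite set of rotational $b$-cycles which is stable under $\bfp$ (since $\bfp$ is order-preserving, fixes $p$-bounded cycles, and never exceeds its argument by Proposition~\ref{l: universal p-bounded less than a b-cycle}, and $(1,\dots,1)$ is $p$-bounded), so Corollary~\ref{c:unique-minimal-b-cycle} furnishes a unique $R$-minimal element, which by definition is $\sigma_1^*(\vec m)$; the recurrence \eqref{eq: recurrance for minimal} then reads $\Sigma_1^*(\vec m)=\sigma_1^*(\vec m)$, true because $\Sigma_0^*$ is the empty permutation. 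For the inductive step, fix $\vec m\ge(n,\dots,n)$ and recall that every $\sigma\in A_n(\vec m)$ factors uniquely as $\sigma=\Sigma\sigma_n$ with $\sigma_n\in S^{\ge n}(\vec m)$ and $\Sigma\in A_{n-1}(\sigma_n-\vec1)$, and that $R(\sigma)=R(\Sigma)+R(\sigma_n)$. Since $\sigma_n-\vec1\ge(n-1,\dots,n-1)$, the induction hypothesis applies to $A_{n-1}(\sigma_n-\vec1)$ and gives $R(\sigma)\ge g(\sigma_n)$, where
\[
g(\sigma_n):=R\bigl(\Sigma_{n-1}^*(\sigma_n-\vec1)\bigr)+R(\sigma_n),
\]
with equality if and only if $\Sigma=\Sigma_{n-1}^*(\sigma_n-\vec1)$. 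Thus the proposition reduces to showing that $g$, regarded as a function on the finite set $S^{\ge n}(\vec m)$, attains its minimum at the single point $\sigma_n^*(\vec m)$; granting this, \eqref{eq: recurrance for minimal} is immediate from the factorization.

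To analyze $g$, set $f(\vec u):=R\bigl(\Sigma_{n-1}^*(\vec u)\bigr)$ for $\vec u\ge(n-1,\dots,n-1)$, so that $g(\sigma_n)=f(\sigma_n-\vec1)+R(\sigma_n)$, and observe that $f$ is \emph{antitone}: if $\vec u'\le\vec u$ then $A_{n-1}(\vec u')\subseteq A_{n-1}(\vec u)$, hence $f(\vec u')\ge f(\vec u)$. First I would reduce to $p$-bounded $\sigma_n$: if $\sigma_n\in S^{\ge n}(\vec m)$ is not $p$-bounded, then $\bfp(\sigma_n)$ lies again in $S^{\ge n}(\vec m)$ (it is $\le\sigma_n\le\vec m$, and $\ge\bfp((n,\dots,n))=(n,\dots,n)$ by order-preservation) and $R(\bfp(\sigma_n))<R(\sigma_n)$, but since $\bfp(\sigma_n)\le\sigma_n$ antitonicity of $f$ only yields $f(\bfp(\sigma_n)-\vec1)\ge f(\sigma_n-\vec1)$; to conclude $g(\bfp(\sigma_n))<g(\sigma_n)$ one needs the quantitative estimate $f(\bfp(\sigma_n)-\vec1)-f(\sigma_n-\vec1)<R(\sigma_n)-R(\bfp(\sigma_n))$, an instance of the \emph{tail bound} below. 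Next, for $p$-bounded $\sigma_n\in S^{\ge n}(\vec m)$ with $\sigma_n\ne\sigma_n^*(\vec m)$, Lemma~\ref{lemma: a-cycles have different valuations} gives $R(\sigma_n)>R(\sigma_n^*(\vec m))$; putting $\vec w:=(\sigma_n-\vec1)\vee(\sigma_n^*(\vec m)-\vec1)$ (coordinatewise maximum, still $\le\vec m-\vec1$ and $\ge(n-1,\dots,n-1)$), antitonicity of $f$ gives
\[
f(\sigma_n^*(\vec m)-\vec1)-f(\sigma_n-\vec1)\;\le\;f(\sigma_n^*(\vec m)-\vec1)-f(\vec w),
\]
a nonnegative quantity, so it suffices to know that this improvement of the minimal tail value is strictly smaller than $R(\sigma_n)-R(\sigma_n^*(\vec m))$ — once more the tail bound. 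Combining the two reductions shows $g$ is minimized uniquely at $\sigma_n^*(\vec m)$, completing the induction.

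The key step, and the expected main obstacle, is the \textbf{tail bound}: for $p$-bounded $b$-cycles and $\vec v\ge\vec u$ inside the relevant box, the drop $f(\vec u)-f(\vec v)$ in the minimal $(n-1)$-fold tail value caused by relaxing the upper bound from $\vec u$ to $\vec v$ is strictly dominated by the $R$-cost of correspondingly enlarging the topmost $b$-cycle. To prove it one must track how the recursively built permutation $\Sigma_{n-1}^*(\vec u)$ deforms as $\vec u$ grows. Using the recurrence \eqref{eq: recurrance for minimal} together with the $\bfp$-calculus of Proposition~\ref{l: universal p-bounded less than a b-cycle}, raising the $i$-th coordinate of the bound propagates through the at most $n-1$ constituent $b$-cycles, and the $p$-boundedness constraint forces the induced coordinate shifts within each cycle to be tied together by a factor of $p$; telescoping these shifts bounds $f(\vec u)-f(\vec v)$ by a sum of the shape $\sum_{k\ge0}y_i\bigl(\,\cdot-k(p-1)\bigr)$. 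Lemma~\ref{lemma: key growth bound 2} bounds such a sum by $p\,d_i(\,\cdot\,)$, whereas enlarging a coordinate of a $p$-bounded $b$-cycle costs at least $y_i$ evaluated across a jump of length $p-1$, which by Lemma~\ref{l:digit-sequences-growth}\ref{i:yi-k-groth} is at least $q=p^b$ times the corresponding $d_i$; since $q>p$ when $b>1$ (the case $b=1$ is the toy case of \S\ref{s: toy example}, where the analogous inequality is transparent) and $q$-fullness guarantees the digit sums in question are genuinely increasing, the inequality is strict. The remaining work is the bookkeeping needed to keep all intermediate $b$-cycles disjoint and $p$-bounded while this cascade is carried out; once the tail bound is in place, the rest of Theorem~\ref{theorem: unique minimal term} — existence of a stable limit $\Sigma_n$ as $\vec m\to\infty$ and the monotonicity $\nu_{n+1}>\nu_n$ — follows by comparing $\Sigma_n^*$ with $\Sigma_{n+1}^*$ through the same recurrence.
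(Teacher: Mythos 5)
Your overall architecture matches the paper's: induction on $n$, base case from Corollary \ref{c:unique-minimal-b-cycle}, the unique factorization $\sigma=\Sigma\sigma_n$ with $\Sigma\in A_{n-1}(\sigma_n-\vec 1)$, and the reduction to showing that the ``top cycle'' of any minimizer must be $\sigma_n^*(\vec m)$. You have also correctly isolated where all the difficulty lives: the quantitative ``tail bound'' asserting that the improvement in the minimal tail value obtained by relaxing the bounding box is strictly dominated by the $R$-cost of moving the top cycle away from $\sigma_n^*(\vec m)$. The problem is that you do not prove this bound — and it \emph{is} the proposition. In the paper this is inequality \eqref{eq: the fundamental equality 2}, and its proof occupies essentially all of \S\ref{ss: more preliminary defs and obs}: one first constructs an explicit competitor $\Sigma=\sigma_1\cdots\sigma_{n-1}$ by truncating each cycle $\tau_j$ of $\Sigma_{n-1}^*(\tau_n-\vec 1)$ coordinatewise against $\sigma_n^*(\vec m)-(n-j)\vec 1$; this term-by-term construction is what makes the bound $R(\sigma_j)-R(\tau_j)\le y_i(\sigma_n^*(\vec m)-(n-j)\vec 1)$ (Lemma \ref{lemma: bounding sigmaj minus tauj}) available, so that Lemma \ref{lemma: key growth bound 2} can be applied. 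Your reformulation via the antitone function $f(\vec u)=R(\Sigma_{n-1}^*(\vec u))$ and the coordinatewise maximum $\vec w$ does not simplify this: to estimate $f(\sigma_n^*(\vec m)-\vec 1)-f(\vec w)$ you would still have to build an explicit element of $A_{n-1}(\sigma_n^*(\vec m)-\vec 1)$ out of the minimizer for the larger box and control the $R$-difference cycle by cycle, which is exactly the construction you have deferred.

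The deeper omission is on the other side of the inequality. Your sketch treats $R(\tau_n)-R(\sigma_n^*(\vec m))$ as if it were a sum of nonnegative digit jumps of length $p-1$, each worth at least $q\,d_i$. But when $\tau_n$ and $\sigma_n^*(\vec m)$ differ only in some coordinates, the difference vector $\vec z$ produces coordinates $i\in N^-(\vec z)$ with $p z_i - z_{i-1}<0$, contributing \emph{negative} terms $y_i(\sigma_n^*(\vec m),\vec z)$ that must be absorbed. The paper handles this with Lemma \ref{lemma: inequalities with different digits}, a digit-separation argument exploiting the $R$-minimality of $\sigma_n^*(\vec m)$ in $S^{\ge n}(\vec m)$ to locate a single index $i_0\in N^+(\vec z)$ with $d_{i_0}(\sigma_n^*(\vec m)+\vec z)\ge p\,d_i(\sigma_n^*(\vec m))$ for all $i\in N^-(\vec z)$, followed by the double induction of Claim \ref{claim: second}. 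Nothing in your proposal addresses the $N^-$ contributions, and the inequality you want is simply false without an argument of this kind (monotonicity and Lemma \ref{l:digit-sequences-growth} alone do not suffice). You should also note that the paper first proves $\tau_n\ge\sigma_n^*(\vec m)$ outright (Corollary \ref{c: tau_n is tau_n^+}), using that replacing $\tau_n$ by $\tau_n^+\ge\tau_n$ preserves disjointness from the tail — a step that eliminates the need for your $\vec w$ device and keeps the subsequent digit analysis tractable.
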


    The proof of Proposition \ref{proposition: key technical proposition} proceeds by induction on $n$, with the case $n = 1$ following from Corollary \ref{c:unique-minimal-b-cycle}. Let $\tau$ be an element of $A_n(\vec{m})$ that has minimal $R$-value. It is clear that $\tau$ is of the form $\tau = \Sigma_{n-1}^*(\tau_n-\vec{1})\tau_n$, where $\tau_n \in S^{\geq n}(\vec{m})$. We suppose for the sake of contradiction that $\tau_{n} \neq \sigma_n^*(\vec{m})$. We will construct $\Sigma \in A_{n-1}(\sigma_n^*(\vec{m})-\vec{1})$ with the property that
    \begin{equation}\label{eq: the fundamental equality}
        R(\tau) > R(\Sigma \sigma_n^*(\vec{m})).
    \end{equation}
    Since $\Sigma \sigma_n^*(\vec{m}) \in A_n(\vec{m})$ this will contradict the minimality of $\tau$. It will follow that $\tau$ 
    is equal to the right side of \eqref{eq: recurrance for minimal}, which ensures that $\tau=\Sigma_n^*(\vec{m})$ is the unique element of $A_n(\vec{m})$ with minimal $R$-value.
    
    We begin by defining $\Sigma$. Write $\Sigma^*_{n-1}(\tau_n-\vec{1}) = \tau_1\cdots\tau_{n-1}$ in lexicographical order. For $1 \leq j < n$, we set
    \begin{align*}
        \sigma_j    &:=   \min\{\sigma_n^*(\vec{m})-(n-j)\vec{1},\tau_j\}    \\
        \Sigma &:= \sigma_1 \cdots \sigma_{n-1},
    \end{align*}
    where the minimum is taken coordinate-wise.
    This definition ensures that $\sigma_j \leq \tau_j$ and that each $\sigma_j$ has coordinates ``as large as possible'' while still satisfying $\Sigma \in A_{n-1}(\sigma_n^*(\vec{m})-\vec{1})$. The inequality \eqref{eq: the fundamental equality} is equivalent to
	showing the following:
	\begin{align} \label{eq: the fundamental equality 2}
		R(\tau_n) - R(\sigma_n^*(\vec{m})) > R(\Sigma) - R(\Sigma_{n-1}^*(\tau_n-\vec{1})) = \sum_{j=1}^{n-1} \Big[ R(\sigma_j) - R(\tau_j) \Big].
	\end{align}
    The proof of \eqref{eq: the fundamental equality 2} is broken up into several steps. 

    \subsubsection{Some notation}
    Let us write each of the $b$-cycles appearing in the above definitions using coordinates:
    \begin{align*}
        \sigma_n^*(\vec{m}) &=   (c_1,\dots,c_b),    \\
        \sigma_j &=   (c_{j,1},\dots,c_{j,b}),    \\
        \tau_j &=   (k_{j,1},\dots,k_{j,b}).
    \end{align*}
    In particular, we have \[c_{j,i}=\min\{c_i-(n-j)(p-1),k_{j,i}\}.\]
    
    For the estimates below it is convenient introduce some additional notation: Let $\sigma = (m_1,\dots,m_b)$ be a $b$-cycle, let $\vec{r}$ be a $\Z/b\Z$-tuple of integers, and let $z \in \Z$. We set
    \begin{align*}
		y_i(\sigma) &:= y_i(pm_{i}-m_{i-1}), \\
		y_i(\sigma,\vec{r}) &:=y_i(pm_{i}-m_{i-1},pr_{i}-r_{i-1}) ,\\
		y_i(\sigma,z) &:= y_i(pm_{i}-m_{i-1},z),\\ 
		d_i(\sigma) &:= d_i(pm_{i}-m_{i-1}).
	\end{align*}
	These definitions are made so that whenever $\sigma$ and $\sigma+\vec{r}$ are $b$-cycles, the following holds:
	\begin{align*}
		R(\sigma)&= \sum_{i=1}^b y_i(\sigma), \\
		R(\sigma+\vec{r}) - R(\sigma) &= \sum_{i=1}^b y_i(\sigma,\vec{r}).
	\end{align*}

    \subsubsection{Bounding \texorpdfstring{$\tau_n$}{tau\_n} below}
    
    Our first goal is to show that $\tau_n \geq \sigma_n^*(\vec{m})$. Consider the difference vector r $\vec{z} = \tau_n - \sigma_n^*(\vec{m})$. Let us write $\vec{z}^+= \max\{\vec{z},\vec{0}\} = (z^+_1,\dots,z^+_b)$, and set
    \begin{equation*}
        \tau_n^+=\vec{c} + \vec{z}^+ \in S^{\geq n}(\vec{m}).
    \end{equation*}
    
	\begin{lemma} \label{claim: only need to increase indices}
		We have $R(\tau_n) \geq R(\tau_n^+)$ with equality if and only if $\tau_n=\tau_n^+$. 
	\end{lemma}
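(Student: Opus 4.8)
The plan is to recognise the passage $\tau_n \rightsquigarrow \tau_n^+$ as a ``join'' operation in the lattice of rotational $b$-cycles and to exploit the fact, supplied by Corollary \ref{c:unique-minimal-b-cycle}, that $\sigma_n^*(\vec m)$ is the \emph{unique} $R$-minimal element of $S^{\geq n}(\vec m)$. Write $\sigma := \sigma_n^*(\vec m)$, with coordinates $\vec{c} = (c_1,\dots,c_b)$, and for two $\Z/b\Z$-tuples of positive integers let $\vee$ and $\wedge$ denote coordinate-wise maximum and minimum; under the coordinate identification these become operations on rotational $b$-cycles. Then $\tau_n^+ = \tau_n \vee \sigma$, and I also introduce the ``meet'' $\tau_n^- := \tau_n \wedge \sigma$. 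Since both $\tau_n$ and $\sigma$ lie in $S^{\geq n}(\vec m)$, each coordinate of $\tau_n^{\pm}$ is a maximum or minimum of two integers in $[n,m_i]$, so $\tau_n^+$ and $\tau_n^-$ again lie in $S^{\geq n}(\vec m)$. (Morally, this lemma lets one reduce the ``bounding $\tau_n$ below'' argument to the case $\tau_n \geq \sigma$, i.e.\ $\tau_n = \tau_n^+$.)

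The first key step is a \emph{submodularity} estimate: for all rotational $b$-cycles $\mu,\nu$ of $J_1$,
\begin{equation*}
    R(\mu \vee \nu) + R(\mu \wedge \nu) \leq R(\mu) + R(\nu).
\end{equation*}
Because $R$ of a $b$-cycle is the sum over $i \in \Z/b\Z$ of the terms $y_i(pm_i - m_{i-1})$, this reduces to a termwise inequality in $i$. Writing $\mu = (u_1,\dots,u_b)$ and $\nu = (w_1,\dots,w_b)$: if $u_i \geq w_i$ and $u_{i-1}\geq w_{i-1}$ (or the reversed inequalities), then the maxima and minima reduce to the original coordinates and both sides coincide; in the remaining ``crossing'' case one may, after interchanging $\mu$ and $\nu$, assume $u_i \geq w_i$ and $u_{i-1} \leq w_{i-1}$, and the desired inequality is then exactly Lemma \ref{lemma: inequality about switching coordinates} applied to indices of $\Z/b\Z$-component $i$ with $|k_1| = w_i \leq u_i = |l_1|$ and $\Z/b\Z$-component $i-1$ with $|k_2| = w_{i-1} \geq u_{i-1} = |l_2|$.

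The second step combines submodularity with minimality. Applying the estimate to $\mu = \tau_n$, $\nu = \sigma$ gives $R(\tau_n^+) \leq R(\tau_n) + R(\sigma) - R(\tau_n^-)$; since $\tau_n^- \in S^{\geq n}(\vec m)$ and $\sigma = \sigma_n^*(\vec m)$ is $R$-minimal there, $R(\tau_n^-) \geq R(\sigma)$, whence $R(\tau_n^+) \leq R(\tau_n)$. For the equality clause I would observe that $R(\tau_n^+) = R(\tau_n)$ forces both inequalities to be equalities, so in particular $R(\tau_n^-) = R(\sigma)$; Corollary \ref{c:unique-minimal-b-cycle} (uniqueness of the $R$-minimal $b$-cycle of $S^{\geq n}(\vec m)$) then gives $\tau_n^- = \tau_n \wedge \sigma = \sigma$, i.e.\ $\sigma \leq \tau_n$ coordinate-wise, i.e.\ $\vec{z} = \tau_n - \sigma \geq \vec{0}$, so $\vec{z}^+ = \vec{z}$ and $\tau_n^+ = \vec{c} + \vec{z}^+ = \tau_n$. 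The converse implication is immediate.

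I expect the submodularity step to be essentially the only content; the rest is bookkeeping. The mild subtlety there is that increasing a coordinate $m_i$ simultaneously enlarges the $i$-th gap $pm_i - m_{i-1}$ and shrinks the next gap $pm_{i+1} - m_i$, so one cannot argue ``one coordinate at a time'' naively — it is precisely the exchange inequality of Lemma \ref{lemma: inequality about switching coordinates} (equivalently, monotonicity of $y_i(\,\cdot\,,k)$ in its first argument for $k\geq 0$) that makes the termwise comparison go through, and one has to take care to split into the correct cases according to the relative order of $(u_i,w_i)$ and $(u_{i-1},w_{i-1})$.
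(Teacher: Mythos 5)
Your proof is correct and is essentially the paper's argument: the paper likewise introduces the meet $\tau_n^- = \sigma_n^*(\vec m)+\vec z^-$, proves the same lattice inequality $R(\tau_n)+R(\sigma_n^*(\vec m)) \geq R(\tau_n^+)+R(\tau_n^-)$ (there via the superadditivity \eqref{eq: facts about digit differences 3} applied after checking the signs of $pz_i^\pm - z_{i-1}^\pm$, rather than via your termwise exchange inequality from Lemma \ref{lemma: inequality about switching coordinates}, but both encode the same convexity of $y_i$), and concludes from the $R$-minimality of $\sigma_n^*(\vec m)$ in $S^{\geq n}(\vec m)$. The only divergence is the equality clause, where the paper invokes $p$-boundedness of $\tau_n$ and $\tau_n^+$ together with Lemma \ref{lemma: a-cycles have different valuations}, whereas your squeeze forcing $R(\tau_n^-)=R(\sigma_n^*(\vec m))$ plus uniqueness of the minimizer is a valid and slightly more self-contained alternative.
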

	\begin{proof}
		Define $\vec{z}^-=\vec{z}-\vec{z}^+$ and write $ \vec{z}^-=(z^-_1,\dots, z^-_b)$. Note that all the coordinates of $\vec{z}^-$ are non-positive. Set $\tau_n^-=\sigma_n^*(\vec{m})+\vec{z}^-$, which is contained in $ S^{\geq n}(\vec{m})$.  Note that $\tau_n=\tau_n^+$ if and only if the coordinates of $\vec{z}^-$ are all $0$. Since $\sigma_n^*(\vec{m})$ is the unique $R$-minimal $b$-cycle in $S^{\geq n}(\vec{m})$ we have
		\begin{align} \label{eq: minimal applied to tau-}
			R(\tau_n^-) - R(\sigma_n^*(\vec{m})) \geq 0,
		\end{align}
        and equality occurs if and only if $\tau_n=\tau_n^+$.
        We claim that for each $i$, $pz_i^+-z_{i-1}^+$ and $pz_i^--z_{i-1}^-$ must either both be non-negative or both be non-positive. If $pz_i^+-z_{i-1}^+=0$, it is both non-positive and non-negative, and there is nothing to prove. If $pz_i^+ - z_{i-1}^+$ is positive,
        then $z_i^+>0$, which implies $z_i^-=0$. As $z_{i-1}^-\leq 0$ we see that $pz_i^+-z_{i-1}^+\geq 0$. One argues similarly when $pz_i^+ - z_{i-1}^+$ is negative.
        
        Using \eqref{eq: facts about digit differences 3} we compute:
        \begin{align*}
            R(\tau_n)   &=  R(\sigma_n^*(\vec{m})+\vec{z})  \\
                &=  R(\sigma_n^*(\vec{m})) + \sum_{i = 1}^b y_i(\sigma_n^*(\vec{m}),\vec{z}^++\vec{z}^-)  \\
                &\geq  R(\sigma_n^*(\vec{m})) + \sum_{i = 1}^b \left( y_i(\sigma_n^*(\vec{m}),\vec{z}^+)+y_i(\sigma_n^*(\vec{m}),\vec{z}^-) \right) \\
                &=  R(\sigma_n^*(\vec{m})) + [R(\tau_n^+) - R(\sigma_n^*(\vec{m}))] + [R(\tau_n^-)-R(\sigma_n^*(\vec{m}))]  \\
                &=  R(\tau_n^+) + R(\tau_n^-) - R(\sigma_n^*(\vec{m})) \geq R(\tau_n^+).
        \end{align*}
        This gives the desired inequality. Next, observe that $\tau_n$ and $\tau_n^+$ are $p$-bounded. To see that $\tau_n$ is $p$-bounded, use Proposition \ref{p: minimal permutations have all the nice properties} together with the fact that $\tau$ is an element of $A_n(\vec{m})$ with minimal $R$-value. 
        To see that $\tau_n^+$ is $p$-bounded, note that $\tau_n^+=\max\{\tau_n,\sigma_n^*(\vec{m})\}$ and that the maximum of
        two $p$-bounded permutations is again $p$-bounded. Then from Lemma \ref{lemma: a-cycles have different valuations} we see that $R(\tau_n^+)=R(\tau_n)$ if and only if $\tau_n^+=\tau_n$.

	\end{proof}

    \begin{corollary}\label{c: tau_n is tau_n^+}
        We have $\tau_n=\tau_n^+$, or equivalently $\tau_n \geq \sigma_n^*(\vec{m})$. 
    \end{corollary}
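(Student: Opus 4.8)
The plan is to argue by contradiction: assuming $\tau_n \neq \tau_n^+$, I would exhibit an element of $A_n(\vec m)$ whose $R$-value is strictly smaller than that of $\tau$, contradicting the minimality of $\tau$. The mechanism is very simple once Lemma~\ref{claim: only need to increase indices} is in hand. The key observation is that $\tau_n^+ = \vec c + \vec z^+$ is nothing but the coordinate-wise maximum $\max\{\tau_n,\sigma_n^*(\vec m)\}$, so it dominates $\tau_n$ while still lying in $S^{\geq n}(\vec m)$. I would therefore keep the ``tail'' of the canonical factorization $\tau = \Sigma_{n-1}^*(\tau_n - \vec 1)\,\tau_n$ untouched and only replace the top cycle $\tau_n$ by $\tau_n^+$, forming
\[
\tau' \;:=\; \Sigma_{n-1}^*(\tau_n - \vec 1)\,\tau_n^+ .
\]
(When $n=1$ there is no tail, and $\tau_n$ is by definition the $R$-minimal element of $A_1(\vec m) = S^{\geq 1}(\vec m)$, hence equal to $\sigma_1^*(\vec m) = \tau_n^+$, so the statement is immediate; below I assume $n \geq 2$.)

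First I would check that $\tau'$ really lies in $A_n(\vec m)$. Every $b$-cycle $\sigma_i$ appearing in $\Sigma_{n-1}^*(\tau_n - \vec 1)$ satisfies $\sigma_i \leq \tau_n - \vec 1$, hence is strictly below $\tau_n$ and therefore strictly below $\tau_n^+ \geq \tau_n$; thus the $b$-cycles of $\tau'$ still form a strictly increasing chain. Moreover each $\sigma_i$ lies in $A(\vec m)$ (since $\tau_n - \vec 1 \leq \vec m$), and $\tau_n^+ \in S^{\geq n}(\vec m) \subseteq A(\vec m)$; so $\tau' \in A_n(\vec m)$. Next, since the $\sigma_i$ are strictly below $\tau_n^+$ they are disjoint from it, so additivity of $R$ over disjoint permutations gives $R(\tau') = R\big(\Sigma_{n-1}^*(\tau_n - \vec 1)\big) + R(\tau_n^+)$ and likewise $R(\tau) = R\big(\Sigma_{n-1}^*(\tau_n - \vec 1)\big) + R(\tau_n)$. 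Finally, Lemma~\ref{claim: only need to increase indices} together with the hypothesis $\tau_n \neq \tau_n^+$ yields $R(\tau_n^+) < R(\tau_n)$, hence $R(\tau') < R(\tau)$, which contradicts the fact that $\tau$ has minimal $R$-value in $A_n(\vec m)$. Therefore $\tau_n = \tau_n^+$, and by construction this is equivalent to $\tau_n \geq \sigma_n^*(\vec m)$.

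There is essentially no remaining obstacle here: the analytic content has already been absorbed into Lemma~\ref{claim: only need to increase indices} (which in turn rests on the unique-minimality Lemma~\ref{lemma: a-cycles have different valuations} and the superadditivity \eqref{eq: facts about digit differences 3}). The only thing that requires care is the purely combinatorial verification that $\tau'$ still belongs to $A_n(\vec m)$ — i.e.\ that enlarging the top cycle from $\tau_n$ to $\tau_n^+$ neither violates the strict-chain condition nor pushes any cycle past the bound $\vec m$ — and this follows precisely because $\tau_n^+$ dominates $\tau_n$ coordinate-wise yet is still bounded above by $\vec m$.
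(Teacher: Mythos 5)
Your proposal is correct and follows essentially the same route as the paper: replace the top cycle $\tau_n$ by $\tau_n^+$ (which still lies in $S^{\geq n}(\vec m)$ and dominates $\tau_n$, hence remains disjoint from and above $\Sigma_{n-1}^*(\tau_n-\vec 1)$), and invoke the strict inequality of Lemma~\ref{claim: only need to increase indices} to contradict the $R$-minimality of $\tau$. The extra care you take in verifying $\tau' \in A_n(\vec m)$ is exactly the content the paper compresses into one sentence.
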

    \begin{proof}
        Note that $\tau_n^+ \geq \tau_n$, so that $\tau_n^+$ is disjoint
        from $\Sigma_{n-1}^*(\tau_n-\vec{1})$. In particular, we see
        that $\tau_n^+\Sigma_{n-1}^*(\tau_n-\vec{1})$ is in $A_n(\vec{m})$. The corollary then follows from our minimal $R$-value assumption on $\tau$ and Lemma \ref{claim: only need to increase indices}.
    \end{proof}

    \subsubsection{Reduction to two claims}
    For a $\Z/b\Z$-tuple of non-negative integers $\vec{r} = (r_1,\dots,r_b)$ we partition $\Z/b\Z$ as follows:
	\begin{align*}
		N^0(\vec{r}) &:= \{ i\in \Z/b\Z ~|~ r_i=r_{i-1}=0\}, \\
		N^-(\vec{r}) &:= \{ i\in \Z/b\Z ~|~ pr_{i}-r_{i-1}<0\}, \\
		N^+(\vec{r}) &:= \{ i\in \Z/b\Z ~|~i \not\in N^0 \text{ and }pr_{i}-r_{i-1}\geq 0\}.
	\end{align*}
	In addition we set $N^\pm(\vec{r})= N^-(\vec{r}) \cup N^{+}(\vec{r})$. Note that for $i \in N^0(\vec{z})$ we have $y_i(\tau_n)=  y_i (\sigma_n^*(\vec{m}))$.
	We also remark that for $i \in N^-(\vec{z})$ we have $y_i(\sigma_n^*(\vec{m}),\vec{z})>-y_i(\sigma_n^*(\vec{m}))$. In particular,
	\[
		R(\tau_n) - R(\sigma_n^*(\vec{m})) = \sum_{i=1}^b y_i(\sigma_n^*(\vec{m}),\vec{z}) \geq  \sum_{i \in N^+(\vec{z})} y_i(\sigma_n^*(\vec{m}),\vec{z}) - \sum_{i \in N^-(\vec{z})} y_i(\sigma_n^*(\vec{m})).\]
	Combining this with \eqref{eq: the fundamental equality 2}, we are reduced to proving the following inequality:
	\begin{align}\label{eq: the fundamental equality 4}
		\sum_{i \in N^+(\vec{z})} y_i(\sigma_n^*(\vec{m}),\vec{z}) &> \sum_{j=1}^{n-1} \Big[ R(\sigma_j) - R(\tau_j) \Big] + \sum_{i \in N^-(\vec{z})} y_i(\sigma_n^*(\vec{m})).
	\end{align}
	The inequality \eqref{eq: the fundamental equality 4} will follow from the next two claims.
	
	\begin{claim} \label{claim: first}
		We have
		\begin{align} \label{eq: claim 2 inequality}
			\sum_{i \in N^-(\vec{z})} pd_i(\sigma_n^*(\vec{m}))+\sum_{i \in N^+(\vec{z})} pd_i(\sigma_n^*(\vec{m})-\vec{1}) > \sum_{j=1}^{n-1} \Big[ R(\sigma_j) - R(\tau_j) \Big] + \sum_{i \in N^-(\vec{z})} y_i(\sigma_n^*(\vec{m})) .
		\end{align}
	\end{claim}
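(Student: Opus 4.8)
The plan is to prove Claim \ref{claim: first} by a term-by-term comparison, exploiting the rapid growth of the digit sequences $d_i$ (Lemma \ref{l:digit-sequences-growth} and Lemma \ref{lemma: key growth bound 2}) against the comparatively small contributions coming from the lower $b$-cycles $\tau_1,\dots,\tau_{n-1}$. The left side is dominated by the single largest term $p d_i(\sigma_n^*(\vec{m}))$ (or $p d_i(\sigma_n^*(\vec{m})-\vec{1})$) for an appropriately chosen index $i$, and the point is that Lemma \ref{lemma: key growth bound 2} already tells us that such a $p d_i(\cdot)$ dominates an \emph{infinite} geometric-type sum of $y_i$-values at arguments shifted down by multiples of $p-1$. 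So the strategy is to bound each summand $R(\sigma_j) - R(\tau_j)$ on the right by a sum of $y_i$-values at arguments of the form ``something minus $(n-j)(p-1)$,'' and then invoke Lemma \ref{lemma: key growth bound 2} to absorb all of these into the left side.

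First I would unwind the definitions. Recall $c_{j,i} = \min\{c_i - (n-j)(p-1),\, k_{j,i}\}$, so $\sigma_j \leq \tau_j$ coordinatewise and $\sigma_j \leq \sigma_n^*(\vec{m}) - (n-j)\vec{1}$. Hence $R(\sigma_j) - R(\tau_j) \leq 0$ would be the naive hope, but that is false since $R$ is not monotone in the individual coordinates — decreasing $m_{i-1}$ increases $pm_i - m_{i-1}$. The correct bound is: for each $i$, $y_i(\sigma_j) - y_i(\tau_j) = y_{\bfi{}}(pc_{j,i} - c_{j,i-1}) - y_{\bfi{}}(pk_{j,i} - k_{j,i-1})$, and since $c_{j,i} \leq k_{j,i}$ while $c_{j,i-1} \leq k_{j,i-1}$, I would estimate this difference using \eqref{eq: facts about digit differences 3} and Definition \ref{d:y-i-partial-sum} by $y_i(pc_{j,i} - c_{j,i-1}) \leq y_i(pc_{j,i}) \leq y_i(p(c_i - (n-j)(p-1)))$. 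Since $p(c_i - (n-j)(p-1)) = pc_i - p(n-j)(p-1) \leq pc_i - c_{i-1} - (n-j)(p-1)$ once $c_{i-1} \leq p(n-j)(p-1) - (n-j)(p-1) = (p-1)^2(n-j)$ (which holds for $i\in N^+$ because there the relevant coordinate is not the minimal one), we get a bound of the form $y_i(\sigma_n^*(\vec{m})) $ evaluated with a downward shift by $(n-j)(p-1)$, i.e. $y_i(pc_i - c_{i-1} - (n-j)(p-1)) = y_i(\sigma_n^*(\vec{m})) - y_i(\sigma_n^*(\vec{m}), -(n-j)(p-1))$-type quantities. Summing over $j = 1,\dots,n-1$ and over $i$ then produces exactly a sum $\sum_{k\geq 0} y_i(\text{argument} - k(p-1))$ on which Lemma \ref{lemma: key growth bound 2} can be applied, yielding a bound strictly below $p d_i(\cdot)$; carefully splitting indices into $N^+(\vec{z})$ (where we use $\sigma_n^*(\vec{m}) - \vec{1}$) and $N^-(\vec{z})$ (where we use $\sigma_n^*(\vec{m})$ and the separate term $y_i(\sigma_n^*(\vec{m}))$ on the right) accounts for the two kinds of terms on the left side of \eqref{eq: claim 2 inequality}.

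The main obstacle I anticipate is the bookkeeping around the index sets $N^0$, $N^+$, $N^-$ and making the downward-shift bound work \emph{uniformly} in $j$: the argument of $y_i$ after subtracting $(n-j)(p-1)$ can become non-positive, in which case $y_i$ of it is zero and the term simply drops out — this is actually helpful, but one must be careful that the ``$-c_{i-1}$'' correction term is genuinely absorbed and doesn't flip a sign. A second subtlety is that for $i \in N^+(\vec{z})$ we need the shift-by-$\vec{1}$ version $d_i(\sigma_n^*(\vec{m}) - \vec{1})$ rather than $d_i(\sigma_n^*(\vec{m}))$, which forces us to track one extra $(p-1)$ of slack; here the hypothesis $\vec{m} \geq (n,\dots,n)$ together with $\tau_n \geq \sigma_n^*(\vec{m})$ (Corollary \ref{c: tau_n is tau_n^+}) guarantees all coordinates of $\sigma_n^*(\vec{m})$ are at least $n > n-1 \geq n-j$, so the shifted $b$-cycle still has positive coordinates and the estimates remain valid. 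Finally, strictness of \eqref{eq: claim 2 inequality} comes for free from the strict inequality in Lemma \ref{lemma: key growth bound 2}, provided $b > 1$; the $b = 1$ case must be handled separately (but there $N^-(\vec z)$ and $N^+(\vec z)$ degenerate and the claim reduces to the one-variable growth estimate in Lemma \ref{l:digit-sequences-growth}, which is genuinely strict by part \ref{i:yi-growth}).
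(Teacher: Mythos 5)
Your overall strategy is the paper's: reduce the right side of \eqref{eq: claim 2 inequality} to sums of the form $\sum_j y_i(\sigma_n^*(\vec m)-j\vec 1)$ and absorb them with Lemma \ref{lemma: key growth bound 2}. But your key termwise estimate has a genuine gap. You bound $y_i(\sigma_j)-y_i(\tau_j)\leq y_i(pc_{j,i}-c_{j,i-1})\leq y_i(pc_{j,i})$, discarding the $-c_{j,i-1}$ term, and then try to dominate the result by $y_i(pc_i-c_{i-1}-(n-j)(p-1))$ under the condition $c_{i-1}\leq (p-1)^2(n-j)$. That condition is false in general: every coordinate of $\sigma_n^*(\vec m)$ is at least $n$ and is unbounded above, and your justification (``the relevant coordinate is not the minimal one'') gives a \emph{lower} bound on $c_{i-1}$, not an upper bound. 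Without it, your chain only yields $\sum_j y_i(\sigma_j) < p\,d_i(pc_i)$, whereas the left side of \eqref{eq: claim 2 inequality} only supplies $p\,d_i(pc_i-c_{i-1})$ (resp.\ its shift by $\vec 1$), which can be strictly smaller since $d_i$ may jump between $pc_i-c_{i-1}$ and $pc_i$. The correct estimate (Lemma \ref{lemma: bounding sigmaj minus tauj}) keeps the $-c_{j,i-1}$ and splits into two cases: if $c_{j,i-1}=k_{j,i-1}$, then $c_{j,i}\leq k_{j,i}$ already gives $y_i(\sigma_j)\leq y_i(\tau_j)$, so the difference is $\leq 0$; if $c_{j,i-1}<k_{j,i-1}$, then $c_{j,i-1}$ is known \emph{exactly} to be $c_{i-1}-(n-j)$, and together with $c_{j,i}\leq c_i-(n-j)$ this yields $pc_{j,i}-c_{j,i-1}\leq pc_i-c_{i-1}-(n-j)(p-1)$, which is precisely the shifted argument that Lemma \ref{lemma: key growth bound 2} can absorb into $p\,d_i(\sigma_n^*(\vec m))$.

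A second, smaller omission: you sum over all $i$, but the left side of \eqref{eq: claim 2 inequality} has no term for $i\in N^0(\vec z)$, so you must first check that those indices contribute nothing to $\sum_j[R(\sigma_j)-R(\tau_j)]$. This follows from the observation (Lemma \ref{lemma: smaller coordinates}) that $z_i=0$ forces $c_{j,i}=k_{j,i}$ for every $j$, hence $y_i(\sigma_j)=y_i(\tau_j)$ whenever $i\in N^0(\vec z)$; you flag the ``bookkeeping'' as an obstacle but do not resolve it. (Your remark that Lemma \ref{lemma: key growth bound 2} carries the hypothesis $b>1$ is fair --- the paper invokes it here without comment --- but it does not repair the main gap above.)
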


    \begin{claim} \label{claim: second}
		We have
		\begin{align*}
			\sum_{i \in N^+(\vec{z})} y_i(\sigma_n^*(\vec{m}),\vec{z}) > \sum_{i \in N^-(\vec{z})} pd_i(\sigma_n^*(\vec{m}))+\sum_{i \in N^+(\vec{z})} pd_i(\sigma_n^*(\vec{m})-\vec{1}).
		\end{align*}
	\end{claim}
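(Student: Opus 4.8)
The plan is to prove the inequality term-by-term over $i \in N^+(\vec{z})$, showing that for each such $i$ the left-hand contribution $y_i(\sigma_n^*(\vec{m}),\vec{z})$ already dominates the corresponding $p d_i(\sigma_n^*(\vec{m})-\vec{1})$ term, while separately absorbing the sum $\sum_{i \in N^-(\vec{z})} p d_i(\sigma_n^*(\vec{m}))$ into the remaining slack. The starting point is that $\vec{z} = \tau_n - \sigma_n^*(\vec{m}) \geq \vec{0}$ by Corollary \ref{c: tau_n is tau_n^+}, and $\tau_n \in S^{\geq n}(\vec{m})$, so $\vec{z}$ has strictly positive coordinates on some nonempty set — in particular $N^+(\vec{z})$ is nonempty (since $\vec{z} \neq \vec{0}$ by the assumption $\tau_n \neq \sigma_n^*(\vec{m})$, and a nonzero tuple of nonnegative integers must satisfy $p z_i - z_{i-1} \geq 0$ for at least one index, as seen by choosing $i$ with $z_i$ maximal). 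For $i \in N^+(\vec{z})$ write $w_i := p z_i - z_{i-1} \geq 0$; then $y_i(\sigma_n^*(\vec{m}),\vec{z}) = y_{\bfi{i}}(p c_i - c_{i-1}, w_i)$ with $w_i > 0$ for at least one $i$, and one needs a lower bound for this digit-sum difference.

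First I would recall Lemma \ref{lemma: key growth bound 2}, which gives $p d_i(n) > \sum_{k \geq 0} y_i(n - k(p-1))$ for $b > 1$; this is the natural vehicle for converting a single $p d_i(\cdots)$ term into a bound involving a whole decreasing family of $y_i$-values, and it is the key estimate for absorbing the $N^-$ sum. Concretely, for each index $i$ appearing on the right side, I would express $p d_i(\sigma_n^*(\vec{m}))$ or $p d_i(\sigma_n^*(\vec{m})-\vec{1})$ via Lemma \ref{lemma: key growth bound 2} and compare against the growth of $y_i(\sigma_n^*(\vec{m}),\vec{z})$. The second ingredient is Lemma \ref{l:digit-sequences-growth}: parts \ref{i:di-growth} and \ref{i:yi-k-groth} say that shifting the argument of $d_i$ or of $y_i(\cdot,k)$ up by $p-1$ multiplies it by at least $q$, so that the sequence $d_i(\sigma_n^*(\vec{m}))$ is the single largest term and $y_i(\sigma_n^*(\vec{m}),\vec{z})$ grows geometrically in the size of $w_i$. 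Since $\sigma_n^*(\vec{m})$ is $p$-bounded (it lies in a $\bfp$-stable set), $p c_i - c_{i-1} \geq 0$, so all these digit-sums are genuinely nonzero whenever their shift arguments are positive, and the geometric-growth estimates apply cleanly.

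The main obstacle will be bookkeeping the interaction between the two index sets $N^+(\vec{z})$ and $N^-(\vec{z})$: the quantity $p d_i(\sigma_n^*(\vec{m}))$ for $i \in N^-(\vec{z})$ is "charged against" growth happening at *neighboring* indices in $N^+(\vec{z})$ (because $w_i = p z_i - z_{i-1} < 0$ forces $z_{i-1}$ large, which in turn makes $w_{i-1}$ or some nearby $w$ large and positive), so the argument cannot be purely index-by-index — it must track how mass "rotates" around $\Z/b\Z$. I would handle this by summing the telescoping identity $\sum_i (p z_i - z_{i-1}) = (p-1)\sum_i z_i$ to control the total positive excess in terms of the total negative deficit, then invoke \eqref{eq: facts about digit differences 3} (superadditivity of $y_i(m, \cdot)$ in the shift) to bound $\sum_{i \in N^+(\vec{z})} y_i(\sigma_n^*(\vec{m}),\vec{z})$ below by a single digit-sum difference with a large shift, at which point Lemma \ref{l:digit-sequences-growth}\ref{i:yi-k-groth} gives the required geometric blow-up that beats $\sum_{N^-} p d_i(\cdots) + \sum_{N^+} p d_i(\cdots - \vec{1})$. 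The case $b = 1$ is degenerate (there $N^-(\vec z)$ is empty since $p z_1 - z_1 = (p-1) z_1 \geq 0$, so $N^+(\vec z) = \{1\}$ and the claim reduces to $y_1(\sigma_n^*(\vec m), \vec z) > p d_1(\sigma_n^*(\vec m) - \vec 1)$, which follows directly from $z_1 \geq 1$ together with Lemma \ref{l:digit-sequences-growth}\ref{i:di-growth}), and I would dispose of it first before invoking Lemma \ref{lemma: key growth bound 2}.
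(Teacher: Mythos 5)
Your plan has a genuine gap at exactly the point you identify as the main obstacle. The mechanism you propose for transferring mass between $N^+(\vec z)$ and $N^-(\vec z)$ --- the telescoping identity $\sum_i (pz_i - z_{i-1}) = (p-1)\sum_i z_i$ combined with the superadditivity \eqref{eq: facts about digit differences 3} --- cannot work, because it only controls the \emph{shift arguments} $pz_i - z_{i-1}$, not the digit values themselves. The quantities $y_i(pc_i - c_{i-1}, \cdot)$ and $d_i(pc_i-c_{i-1})$ depend heavily on the base points $pc_i - c_{i-1}$, which vary arbitrarily with $i$; moreover \eqref{eq: facts about digit differences 3} is superadditivity in the second argument of a \emph{fixed} $y_i$ at a \emph{fixed} base point, so there is no way to combine contributions at different indices $i$ into ``a single digit-sum difference with a large shift.'' Concretely, if some $i \in N^-(\vec z)$ had $pc_i - c_{i-1}$ enormous while every $j \in N^+(\vec z)$ had $pc_j - c_{j-1}$ small, the term $pd_i(\sigma_n^*(\vec m))$ would swamp the left side and the claim would be false. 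What rules this out --- and what your plan never invokes --- is the $R$-minimality of $\sigma_n^*(\vec m)$ in $S^{\geq n}(\vec m)$: since $\tau_n = \sigma_n^*(\vec m) + \vec z \in S^{\geq n}(\vec m)$, minimality gives $\sum_{i \in N^+} y_i(\sigma_n^*(\vec m),\vec z) > -\sum_{i \in N^-} y_i(\sigma_n^*(\vec m),\vec z)$, and because the digits appearing at index $i$ are powers $p^{i-1}q^k$ (hence distinct powers of $p$ for distinct $i \bmod b$), the largest digit on the positive side must strictly exceed, hence be at least $p$ times, every digit on the negative side. This yields the key intermediate statement (Lemma \ref{lemma: inequalities with different digits} in the paper): there is $i_0 \in N^+(\vec z)$ with $d_{i_0}(\sigma_n^*(\vec m)+\vec z) \geq p\, d_i(\sigma_n^*(\vec m))$ for all $i \in N^-(\vec z)$. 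Nothing in your outline produces this comparison.

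Two further points. First, your use of Lemma \ref{lemma: key growth bound 2} is backwards here: that lemma gives a \emph{lower} bound for $pd_i(n)$, whereas in Claim \ref{claim: second} the terms $pd_i(\cdots)$ sit on the side you must bound from \emph{above}; it is the right tool for Claim \ref{claim: first}, not this one. Second, even granting the single dominating digit $d_{i_0}(\sigma_n^*(\vec m)+\vec z)$, one still has to distribute it against the full sums over $N^-$ and $N^+$; the paper does this by reducing to tuples $\vec r$ supported on one contiguous block and running a double induction on $\max\{r_i\}$ and on the block length, repeatedly splitting off $\vec 1_k$ and reapplying the digit-domination lemma. Your proposed one-shot ``geometric blow-up'' via Lemma \ref{l:digit-sequences-growth} does not substitute for this bookkeeping. (Your treatment of the degenerate case $b=1$ is fine.)
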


    \subsubsection{Proof of Claim \ref{claim: first}}
    We first prove two easy lemmas. 
    \begin{lemma}\label{lemma: bounding sigmaj minus tauj}
		For $j=1,\dots,n$ we have 
		\begin{align*}
			y_i(\sigma_n^*(\vec{m})-(n-j)\vec{1})&\geq y_i(\sigma_j)-y_i(\tau_j) .
		\end{align*}
	\end{lemma}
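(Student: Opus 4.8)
\emph{Proof plan.} The plan is to reduce the stated inequality to a single comparison between the relevant one–coordinate data and then dispose of it by a two–case analysis according to which term realizes the minimum defining $c_{j,i-1}$. Write $t=n-j\ge 0$, so that by construction $c_{j,i}=\min\{c_i-t(p-1),\,k_{j,i}\}$ and $c_{j,i-1}=\min\{c_{i-1}-t(p-1),\,k_{j,i-1}\}$, and recall the notational identities $y_i(\sigma_j)=y_i(pc_{j,i}-c_{j,i-1})$, $y_i(\tau_j)=y_i(pk_{j,i}-k_{j,i-1})$, and $y_i(\sigma_n^*(\vec m)-(n-j)\vec 1)=y_i(pc_i-c_{i-1}-t(p-1))$. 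The only properties of $y_i$ I will invoke are that, as a function on $\mathbb Z$ (extended by $y_i(m)=0$ for $m\le 0$, cf.\ Definition \ref{d:y-i-partial-sum}), it is non-decreasing and non-negative.

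First I would treat the case $k_{j,i-1}\le c_{i-1}-t(p-1)$, where $c_{j,i-1}=k_{j,i-1}$. Since $c_{j,i}\le k_{j,i}$, this gives $pc_{j,i}-c_{j,i-1}\le pk_{j,i}-k_{j,i-1}$, hence $y_i(\sigma_j)\le y_i(\tau_j)$ by monotonicity, so the right-hand side of the lemma is $\le 0\le y_i(\sigma_n^*(\vec m)-(n-j)\vec 1)$ and we are done. In the complementary case $k_{j,i-1}>c_{i-1}-t(p-1)$ we have $c_{j,i-1}=c_{i-1}-t(p-1)$, and combining this with $c_{j,i}\le c_i-t(p-1)$ yields
\[
pc_{j,i}-c_{j,i-1}\ \le\ p\big(c_i-t(p-1)\big)-\big(c_{i-1}-t(p-1)\big)\ =\ pc_i-c_{i-1}-t(p-1)^2\ \le\ pc_i-c_{i-1}-t(p-1),
\]
the last step because $(p-1)^2\ge p-1$ and $t\ge 0$. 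Monotonicity of $y_i$ then gives $y_i(\sigma_j)\le y_i(pc_i-c_{i-1}-t(p-1))=y_i(\sigma_n^*(\vec m)-(n-j)\vec 1)$, and subtracting the non-negative quantity $y_i(\tau_j)$ only decreases the left side, so $y_i(\sigma_j)-y_i(\tau_j)\le y_i(\sigma_n^*(\vec m)-(n-j)\vec 1)$, as desired. (For $j=n$, where $\sigma_n$ is to be read as $\sigma_n^*(\vec m)$ — consistent with the minimum formula since $\tau_n\ge\sigma_n^*(\vec m)$ by Corollary \ref{c: tau_n is tau_n^+} — the same computation applies with $t=0$.)

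I do not expect any genuine obstacle: the content of the lemma is essentially built into the definition of the $\sigma_j$ as coordinatewise minima, and the only non-formal input is the elementary inequality $(p-1)^2\ge p-1$. The one point to handle with care is the degenerate behaviour of $y_i$ at non-positive arguments, which is why the argument is phrased purely in terms of monotonicity and non-negativity of $y_i$ on all of $\mathbb Z$ rather than through direct manipulation of the partial sums $\sum_n d_i(n)$.
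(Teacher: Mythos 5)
Your proof is correct and follows essentially the same route as the paper's: a two-case split according to which term realizes the minimum defining $c_{j,i-1}$, with monotonicity and non-negativity of $y_i$ closing each case (case 1 via $y_i(\sigma_j)\le y_i(\tau_j)$, case 2 via $y_i(\sigma_j)\le y_i(\sigma_n^*(\vec m)-(n-j)\vec 1)$ and dropping $y_i(\tau_j)\ge 0$). The only difference is notational: you take the paper's displayed formula $c_{j,i}=\min\{c_i-(n-j)(p-1),k_{j,i}\}$ at face value, whereas the paper's own proof works with $c_{j,i}=\min\{c_i-(n-j),k_{j,i}\}$ (i.e.\ $\vec 1=(1,\dots,1)$); your extra step $(p-1)^2\ge p-1$ absorbs this discrepancy, so the argument goes through under either reading.
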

	\begin{proof}
        If $c_{j,i-1}=k_{j,i-1}$, then since $c_{j,i}\leq k_{j,i}$ we have $y_i(\sigma_j)\leq y_i(\tau_j)$, and the lemma follows. If $c_{j,i-1}<k_{j,i-1}$, we have
		$c_{j,i-1}=c_{i-1}-(n-j)$. Since $c_{j,i}\leq c_{i} - (n-j)$, we have
		\begin{align*}
			pc_{j,i} - c_{j,i-1}&\leq pc_{i} - c_{i-1} - (n-j)(p-1).
		\end{align*}
		Thus, $y_i(\sigma_j)\leq y_i(\vec{c}-(n-j)\vec{1})$, which
		gives the lemma.
	\end{proof}

    \begin{lemma} \label{lemma: smaller coordinates}
		If $z_i= 0$, then $c_{j,i}=k_{j,i}$ for all $j$, i.e., the $i$-coordinate of $\tau_j$ and $\sigma_j$ agree.
	\end{lemma}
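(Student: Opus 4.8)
The plan is to unwind the definition of $\sigma_j$ and check that, in coordinate $i$, the term coming from $\tau_j$ always realizes the minimum. First note that by Corollary \ref{c: tau_n is tau_n^+} the difference vector $\vec{z} = \tau_n - \sigma_n^*(\vec{m})$ has non-negative entries, so the hypothesis $z_i = 0$ says precisely that $k_{n,i} = c_i$. Since $c_{j,i} = \min\{c_i - (n-j)(p-1),\, k_{j,i}\}$, it therefore suffices to establish the coordinate bound $k_{j,i} \leq c_i - (n-j)(p-1)$ for each $1 \leq j < n$.

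To prove this bound I would descend the lexicographical chain $\tau_1 < \cdots < \tau_{n-1}$. Recall that $\tau_1 \cdots \tau_{n-1} = \Sigma_{n-1}^*(\tau_n - \vec{1})$; since Proposition \ref{proposition: key technical proposition} is proved by induction on $n$, the recursion \eqref{eq: recurrance for minimal} is available at size $n-1$ and identifies $\tau_\ell$ with $\sigma_\ell^*(\tau_{\ell+1} - \vec{1})$ for every $\ell < n$. As $\sigma_\ell^*(\tau_{\ell+1} - \vec{1}) \in S^{\geq \ell}(\tau_{\ell+1} - \vec{1}) \subseteq A(\tau_{\ell+1} - \vec{1})$, this gives $\tau_\ell \leq \tau_{\ell+1} - \vec{1}$, and iterating from $\ell = j$ up to $\ell = n-1$ yields $\tau_j \leq \tau_n - (n-j)\vec{1}$. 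Reading off the $i$-th coordinate and substituting $k_{n,i} = c_i$ gives $k_{j,i} \leq c_i - (n-j)(p-1)$, so that $c_{j,i} = k_{j,i}$, as desired.

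The bookkeeping with coordinates here is routine; the only step that demands any care is the passage $\tau_\ell \leq \tau_{\ell+1} - \vec{1}$, i.e. the fact that consecutive members of the chain differ by a full $\vec{1}$ and not merely by being distinct. This is exactly what the recursive description of $\Sigma_{n-1}^*$ provides (each $\tau_\ell$ is drawn from $A(\tau_{\ell+1} - \vec{1})$), so there is no genuine obstacle beyond ensuring that the outer induction on $n$ is arranged so that this instance of Proposition \ref{proposition: key technical proposition} may be invoked.
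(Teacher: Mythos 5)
Your proof is correct and supplies exactly the content hiding behind the paper's one-line justification (``this follows from the definition of $\tau_j$''): when $z_i=0$ one has $k_{n,i}=c_i$, and the chain bound $k_{j,i}\le k_{n,i}-(n-j)$ forces the minimum defining $c_{j,i}$ to be attained at $k_{j,i}$. The one place you work harder than necessary is in establishing $\tau_j\le\tau_n-(n-j)\vec 1$: you do not need to invoke the recursion \eqref{eq: recurrance for minimal} at size $n-1$, because the membership $\tau_1\cdots\tau_{n-1}=\Sigma_{n-1}^*(\tau_n-\vec 1)\in A_{n-1}(\tau_n-\vec 1)$ already gives $\tau_{n-1}\le\tau_n-\vec 1$, and since the $\tau_\ell$ are disjoint $b$-cycles listed in increasing order, each consecutive pair satisfies $\tau_\ell\le\tau_{\ell+1}-\vec 1$; iterating yields the same bound without any appeal to the inductive hypothesis. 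Finally, the factor $(p-1)$ versus $1$ in your target inequality $k_{j,i}\le c_i-(n-j)(p-1)$ traces back to an inconsistency in the paper's own notation (the displayed formula for $c_{j,i}$ uses $(n-j)(p-1)$ while the proof of Lemma \ref{lemma: bounding sigmaj minus tauj} uses $c_{i-1}-(n-j)$); your argument is structurally correct under either reading, but be aware that the chain argument literally delivers a decrement of $(n-j)$ times the $i$-th entry of $\vec 1$, so the two must be reconciled consistently.
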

    \begin{proof}
        This follows from the definition of $\tau_j$.
    \end{proof}

    We now prove Claim \ref{claim: first}. If $i \in N^0(\vec{z})$, then by Lemma \ref{lemma: smaller coordinates} we have $y_i(\sigma_j)=y_i(\tau_j)$ for all $j$.
		In particular, we see that the right side of \eqref{eq: claim 2 inequality} is equal to 
		\[ \sum_{i\in N^\pm(\vec{z})} \sum_{j=1}^{n-1} \Big[	y_i(\sigma_j)-y_i(\tau_j) \Big ]  + \sum_{i \in N^-(\vec{z})} y_i(\sigma_n^*(\vec{m})). \]
		Then from Lemma \ref{lemma: key growth bound 2} and Lemma \ref{lemma: bounding sigmaj minus tauj} we see that 
		\begin{align*}
			&\sum_{i\in N^\pm(\vec{z})} \sum_{j=1}^{n-1} \Big[	y_i(\sigma_j)-y_i(\tau_j) \Big ]  + \sum_{i \in N^-(\vec{z})} y_i(\sigma_n^*(\vec{m})) \\
            \leq& 
            \sum_{i \in N^-(\vec{z})} \sum_{j=0}^{n-1} y_i(\sigma_n^*(\vec{m}) - j\vec{1}) + \sum_{i \in N^+(\vec{z})} \sum_{j=1}^{n-1} y_i(\sigma_n^*(\vec{m}) - j\vec{1})\\
            <&\sum_{i \in N^-(\vec{z})} pd_i(\sigma_n^*(\vec{m}))+\sum_{i \in N^+(\vec{z})} pd_i(\sigma_n^*(\vec{m})-\vec{1}). 
		\end{align*}\qed

    \subsubsection{Proof of Claim \ref{claim: second}}
    To prove the second claim we first need a quick lemma.
    \begin{lemma}\label{lemma: inequalities with different digits}
    	Let $\vec{r}\neq \vec{0}$ be a $b$-tuple of non-negative integers such that
        $\sigma_n^*(\vec{m})+\vec{r} \in S^{\geq n}(\vec{m})$. There exists $i_0 \in N^+(\vec{r})$
        such that
        \begin{align*}
            d_{i_0}(\sigma_n^*(\vec{m})+\vec{r}) \geq \max_{i \in N^{-}(\vec{r})} pd_i(\sigma_n^*(\vec{m})).
        \end{align*}
    \end{lemma}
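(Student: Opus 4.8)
\noindent The plan is to prove Lemma~\ref{lemma: inequalities with different digits} by combining a cyclic counting argument, which locates a suitable index in $N^+(\vec r)$, with the rapid‑growth estimates of \S\ref{s: affine line 1}. First I would dispose of the degenerate cases. If $N^-(\vec r)=\emptyset$ the right‑hand side is an empty maximum and there is nothing to prove; and if $b=1$ one checks directly from $pr_1-r_0=(p-1)r_1\geq 0$ that $N^-(\vec r)=\emptyset$. So assume $b\geq 2$ and $N^-(\vec r)\neq\emptyset$. Next I would show $N^+(\vec r)\neq\emptyset$: summing $pr_i-r_{i-1}<0$ over all $i$ if $N^-(\vec r)=\Z/b\Z$ gives $(p-1)\sum_i r_i<0$, impossible as the $r_i$ are non‑negative; hence $N^0(\vec r)\cup N^+(\vec r)\neq\emptyset$, and if $N^+(\vec r)=\emptyset$ then an index $i\in N^0(\vec r)$ has $r_i=0$, forcing $pr_{i+1}-r_i=pr_{i+1}\geq 0$ and hence $i+1\in N^0(\vec r)$ and $r_{i+1}=0$; propagating cyclically yields $\vec r=\vec 0$, a contradiction.

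The heart of the argument uses that $\sigma_n^*(\vec m)$ is $p$‑bounded. Indeed $S^{\geq n}(\vec m)$ is stable under $\bfp$, so by Proposition~\ref{l: universal p-bounded less than a b-cycle} an $R$‑minimal $b$‑cycle of that set is $p$‑bounded; writing $\sigma_n^*(\vec m)=(c_1,\dots,c_b)$ this gives $pc_i-c_{i-1}\geq 0$ for every $i$. I would then choose $i_1\in N^-(\vec r)$ realizing $\max_{i\in N^-(\vec r)}d_i(\sigma_n^*(\vec m))$ and trace the maximal backward run $i_1,i_1-1,\dots,i_1-(T-1)\in N^-(\vec r)$ with $i_0:=i_1-T\notin N^-(\vec r)$. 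Since $pr_i-r_{i-1}<0$ forces $r$ to increase by a factor exceeding $p$ each time the index drops by one inside the run, the run cannot be all of $\Z/b\Z$ (so $1\leq T\leq b-1$), and $r_{i_0}>p^{T}r_{i_1}\geq 0$ forces $r_{i_0}\geq 1$, so $i_0\notin N^0(\vec r)$ and therefore $i_0\in N^+(\vec r)$.

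Finally I would bound $d_{i_0}(\sigma_n^*(\vec m)+\vec r)$ from below. Its defining argument is $(pc_{i_0}-c_{i_0-1})+(pr_{i_0}-r_{i_0-1})$, in which the first summand is $\geq 0$ by $p$‑boundedness and the second is $\geq 0$ because $i_0\in N^+(\vec r)$; the bounds coming from the run ($r_{i_0}>p^{T}r_{i_1}$ together with $r_{i_0-1}\leq p r_{i_0}$) show this argument exceeds that of $d_{i_1}(\sigma_n^*(\vec m))$ by a sufficient number of increments of $p-1$. Applying Lemma~\ref{l:digit-sequences-growth}\ref{i:di-growth} — each such increment multiplies the value of $d_{i_0}$ by at least $q=p^b$ — in tandem with the trivial comparison $d_{i_0}(\sigma_n^*(\vec m)+\vec r)\geq d_{i_0}(pr_{i_0}-r_{i_0-1})$ and the observation that shifting the index from $i_1$ to $i_0$ changes the $p^{i-1}$‑prefactor of the $d$‑sequences by at most $p^{b-1}$, one obtains $d_{i_0}(\sigma_n^*(\vec m)+\vec r)\geq p\,d_{i_1}(\sigma_n^*(\vec m))=\max_{i\in N^-(\vec r)}pd_i(\sigma_n^*(\vec m))$, which is the assertion.

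The step I expect to be the main obstacle is this final digit‑accounting: one must balance the factor of at most $p^{b-1}$ lost in moving the $p^{i-1}$‑prefactor from $i_1$ to $i_0$ against the factors of $q=p^b$ gained from the growth of the $d_{i_0}$‑argument along the $N^-$‑run, and check that the net effect still beats the required factor of $p$. This is exactly the kind of delicate digit‑sum bookkeeping that Lemma~\ref{l:digit-sequences-growth} and the superadditivity \eqref{eq: facts about digit differences 3} are designed to support, and I anticipate it will require an induction on the run length $T$ (or a telescoping of the digit‑difference identities), carried out with the convention $y_i(m)=0$ for $m\leq 0$ so the estimates remain valid even when $\sigma_n^*(\vec m)+\vec r$ is not $p$‑bounded.
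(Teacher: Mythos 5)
There is a genuine gap, and it sits exactly where you predicted: the final ``digit-accounting'' step cannot be carried out by growth estimates. The quantities $d_{i_0}$ and $d_{i_1}$ are step functions built from the $q$-adic digits of \emph{different} $p$-adic numbers $y_{i_0}$ and $y_{i_1}$, and these digit sequences are unrelated (subject only to $y$ being $q$-full). Lemma~\ref{l:digit-sequences-growth} compares $d_i$ at two arguments for the \emph{same} $i$; it gives no comparison between $d_{i_0}(n)$ and $d_{i_1}(n')$ for $i_0\neq i_1$. For instance $y_{i_1}$ may have all its $q$-adic digits equal to $1$, so that $d_{i_1}(n)$ grows like $q^{n-1}$, while $y_{i_0}$ has digits $q-1$, so that $d_{i_0}$ grows like $q^{n/(q-1)}$; no bookkeeping of $p^{i-1}$-prefactors and increments of $p-1$ can then force $d_{i_0}(\sigma_n^*(\vec m)+\vec r)\geq p\,d_{i_1}(\sigma_n^*(\vec m))$. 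Moreover, even the preliminary claim that the argument of $d_{i_0}(\sigma_n^*(\vec m)+\vec r)$ exceeds that of $d_{i_1}(\sigma_n^*(\vec m))$ fails: the run bounds control $\vec r$ only, and $pc_{i_1}-c_{i_1-1}$ can be arbitrarily large compared to $pc_{i_0}-c_{i_0-1}+(pr_{i_0}-r_{i_0-1})$.

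The missing idea is that the lemma is \emph{not} true for an arbitrary $p$-bounded $b$-cycle; it genuinely requires the strict $R$-minimality of $\sigma_n^*(\vec m)$ in $S^{\geq n}(\vec m)$, and you invoke minimality only to get $p$-boundedness. The paper's proof applies minimality to the competitor $\sigma_n^*(\vec m)+\vec r\in S^{\geq n}(\vec m)$ to obtain
$\sum_{i\in N^+(\vec r)} y_i(\sigma_n^*(\vec m),\vec r) > -\sum_{i\in N^-(\vec r)} y_i(\sigma_n^*(\vec m),\vec r)$,
and then observes that the two sides have base-$p$ expansions supported on disjoint residue classes of exponents modulo $b$ (the terms $d_i(n)=p^{i-1}q^w$ contribute only exponents $\equiv i-1 \bmod b$, and $N^+$ and $N^-$ are disjoint). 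Hence the largest digit of the left side, which is $d_{i_0}(\sigma_n^*(\vec m)+\vec r)$ for some $i_0\in N^+(\vec r)$, must strictly exceed every digit of the right side, in particular each $d_i(\sigma_n^*(\vec m))$ for $i\in N^-(\vec r)$; being distinct powers of $p$, strict inequality upgrades to a factor of $p$. Your cyclic run argument does correctly show $N^+(\vec r)\neq\emptyset$ and even produces a candidate index, but without the minimality inequality coupling the different $y_i$'s, the required lower bound on $d_{i_0}$ cannot be established.
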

    \begin{proof}
        Since $\sigma_n^*(\vec{m})$ has minimal $R$-value in $S^{\geq n}(\vec{m})$ we know that
        \begin{align*}
            \sum_{i \in N^{\pm}(\vec{r})} y_i(\sigma_n^*(\vec{m}),\vec{r}) &> 0,     
        \end{align*}
        or equivalently 
        \begin{align}\label{eq: inequality with digits}
            \sum_{i \in N^+(\vec{r})} y_i(\sigma_n^*(\vec{m}),\vec{r}) &> -\sum_{i \in N^-(\vec{r})} y_i(\sigma_n^*(\vec{m}),\vec{r}).
        \end{align}
        The $p$-adic digits present in the left side are different from those present in the right side. The largest digit on the left side of \eqref{eq: inequality with digits} will be $d_{i_0}(\sigma_n^*(\vec{m})+\vec{r})$ for some $i_0 \in N^{+}(\vec{r})$. 
        Then $d_{i_0}(\sigma_n^*(\vec{m})+\vec{r})$ will be larger than any $p$-adic digit occuring in the right side of \eqref{eq: inequality with digits}. 
        This is because the $p$-adic expansion of the right side of $\eqref{eq: inequality with digits}$ cannot have any terms of the form $p^{i_0-1}q^k$. 
        In particular, we have $d_{i_0}(\sigma_n^*(\vec{m})+\vec{r})> d_{i}(\sigma_n^*(\vec{m}))$ for all $i \in N^-(\vec{r})$. As both $d_{i_0}(\sigma_n^*(\vec{m})+\vec{r})$ and $d_{i}(\sigma_n^*(\vec{m}))$ are powers of $p$, we must have $d_{i_0}(\sigma_n^*(\vec{m})+\vec{r})\geq pd_{i}(\sigma_n^*(\vec{m}))$.
    \end{proof}
    
			For any tuple $\vec{r}=(r_1,\dots,r_b)$ of non-negative integers, define
            \begin{align*}
                \alpha(\vec{r}) &=  \sum_{i \in N^+(\vec{r})} y_i(\sigma_n^*(\vec{m}),\vec{r}),  \\   \beta(\vec{r})  &=  \sum_{i \in N^-(\vec{r})} pd_i(\sigma_n^*(\vec{m}))+\sum_{i \in N^+(\vec{r})} pd_i(\sigma_n^*(\vec{m})-\vec{1}).
            \end{align*}
            To prove Claim \ref{claim: second}, we will prove the more general result that
			\begin{align}\label{eq: claim 3 more general statement}
				\alpha(\vec{r}) > \beta({\vec{r}}).
			\end{align}
			Assume that there exists $k'>k+1$ with $r_k=r_{k'}=0$. We define $\vec{r}_1=(r_{1,1},\dots, r_{1,b})$ by
			\begin{align*}
				r_{1,j} &= \begin{cases}
					r_j & k<j<k' \\
					0 & \text{otherwise} 
				\end{cases}
			\end{align*}
			and define $\vec{r}_2$ so that $\vec{r}=\vec{r}_1 + \vec{r}_2$. By definition we have 
			\begin{align*}
				\alpha(\vec{r})&=\alpha(\vec{r}_1)+\alpha(\vec{r}_2) \\
				\beta (\vec{r})&=\beta (\vec{r}_1)+\beta (\vec{r}_2).
			\end{align*}
			This allows us to reduce to the case where $r_j$ is positive for $k<j<k'$ and zero otherwise (i.e. $\vec{r}$
			has one contiguous sequence of positive numbers surrounded by all zeros). The proof will proceed by induction on $m=\max\{r_i\}$. We will only consider the case where $\vec{r}=(r_1,\dots,r_k,0,\dots,0)$ and each $r_i>0$ for $i=1,\dots,k$. The proof of the general case is identical but involves cumbersome notation.

			Our base case 
            is when $m=1$, so that \[\vec{r}=\vec{1}_k:=(r_1,\dots,r_k,0,\dots,0),\]
            and each $r_i=1$. 
            We prove this by induction on $k$. When $k=1$ we have
            $N^+(\vec{r})=\{1\}$ and $N^-(\vec{r})=\{2\}$. Then by
            Lemma \ref{lemma: inequalities with different digits} we have
            \begin{equation*}
                d_1(pc_1-c_0 + p) = d_1(\vec{c} + \vec{r}) \geq pd_2(\vec{c}).
            \end{equation*}
            By definition of $d_1$ we have 
            \begin{align*}
                d_1(pc_1-c_0 + p-1) & \geq qd_1(pc_1 - c_0) = qd_1(\vec{c}).
            \end{align*}
            This implies \eqref{eq: claim 3 more general statement}. 
            Now consider $k>1$. Then we have $N^{-}(\vec{r})=\{k+1\}$ and
            $N^+(\vec{r})=\{1,\dots,k\}$. By Lemma \ref{lemma: inequalities with different digits} there exists $i_0 \in \{1,\dots,k\}$ such that 
            $d_{i_0}(\vec{c}+\vec{r})\geq pd_{k+1}(\vec{c})$. If $i_0=1$, since
            $y_i(\vec{c},p-1)>qd_i(\vec{c})$ for $i=1,\dots,k$ we have
            \begin{equation*}
                \sum_{i=1}^{k} y_i(\vec{c},\vec{r}) =d_1(pc_1-c_0+p) + \sum_{i=1}^{k} y_i(\vec{c},p-1) > \sum_{i=1}^{k+1} pd_i(\vec{c}),
            \end{equation*}
            which implies \eqref{eq: claim 3 more general statement}.
            Finally, assume $1<i_0\leq k$. By our inductive hypothesis,
            we know that \eqref{eq: claim 3 more general statement} holds
            for $\vec{1}_{i_0-1}$, so that
            \begin{equation*}
                \sum_{i=1}^{i_0-1} y_{i}(\vec{c},\vec{r}) = \sum_{i=1}^{i_0-1} y_{i}(\vec{c},\vec{1}_{i_0}) > \sum_{i=1}^{i_0-1} pd_i(\vec{c}-\vec{1}) + pd_{i_0}(\vec{c})> \sum_{i=1}^{i_0} pd_i(\vec{c}-\vec{1}).
            \end{equation*} 
            For $i_0<i\leq k$ we have $y_{i}(\vec{c},\vec{r})=y_i(\vec{c},p-1)$,
            which gives
            \begin{equation*}
                \sum_{i=i_0+1}^{k} y_{i}(\vec{c},\vec{r}) = \sum_{i=i_0+1}^{k} y_{i}(\vec{c},p-1)>\sum_{i=i_0+1}^{k} pd_i(\vec{c}-\vec{1}).
            \end{equation*}
            Putting this all together we have
            \begin{equation*}
                \sum_{i=1}^{k} y_i(\vec{c},\vec{r}) = \sum_{i=1}^{i_0-1} y_i(\vec{c},\vec{r}) + y_{i_0}(\vec{c},\vec{r}) + \sum_{i=i_0+1}^{k} y_i(\vec{c},\vec{r}) >\sum_{i=1}^{k} pd_i(\vec{c}-\vec{1}) + pd_{k+1}(\vec{c}),  
            \end{equation*}
            which gives \eqref{eq: claim 3 more general statement}.

            Consider a general $\vec{r}=(r_1,\dots,r_k,0,\dots,0)$ with $r_i>0$. Set $m=\max\{r_i\}$. Assume the claim holds for any $b$-tuple with smaller maximum. Let $\vec{s}=\vec{r}-\vec{1}_k$.
            Note that $N^+(\vec{s}) \subset N^+(\vec{r})$. In particular,        for $i \in N^+(\vec{s})$ we have
            \begin{equation}
            \begin{split}\label{eq: final claim inductive eq3}
                y_i(\vec{c},\vec{r}) = y_i(\vec{c},\vec{s}) + y_i(\vec{c}+\vec{s},\vec{1}_k) \geq y_i(\vec{c},\vec{s}) + d_i(\vec{c}+\vec{r}).
            \end{split}
            \end{equation}
            If $1\leq i \leq k$ and $i \not\in N^{\pm}(\vec{s})$, we have $i \in N^{+}(\vec{r})$. More precisely, we have $y_i(\vec{c},\vec{r})=y_i(\vec{c},p-1)$ or $y_i(\vec{c},\vec{r})=y_i(\vec{c},p)$, the latter occurring when $i=1$. 
            Thus, we compute
            \begin{equation}
                \begin{split}
                    \label{eq: final claim inductive eq4}
                    y_i(\vec{c},\vec{r}) - d_i(\vec{c}+\vec{r}) \geq d_i(\vec{c}) \geq  qd_i(\vec{c}-\vec{1}). 
                \end{split}
            \end{equation}
            Again, by Lemma \ref{lemma: inequalities with different digits} we know that for some $i_0\in N^+(\vec{r})$ we have
            \begin{align}\label{eq: final claim inductive eq5}
                d_{i_0}(\vec{c}+\vec{r}) &\geq pd_{k+1}(\vec{c}).
            \end{align}
            From our inductive hypotheses, \eqref{eq: final claim inductive eq3}, \eqref{eq: final claim inductive eq4}, and \eqref{eq: final claim inductive eq5} we obtain
            \begin{align*}
                \sum_{i \in N^+(\vec{r})} y_i(\vec{c},\vec{r}) &= \sum_{i \in N^+(\vec{r})} d_i(\vec{c}+\vec{r}) + \sum_{i \in N^+(\vec{r})} \Big[ y_i(\vec{c},\vec{r}) - d_i(\vec{c}+\vec{r}) \Big]  \\
                &\geq \sum_{i \in N^+(\vec{r})} d_i(\vec{c}+\vec{r}) + \sum_{i \in N^+(\vec{s})}\Big[ y_i(\vec{c},\vec{r}) - d_i(\vec{c}+\vec{r}) \Big]    + \sum_{\stackrel{1\leq i \leq k}{i \not\in N^{\pm}(\vec{s})}}\Big[ y_i(\vec{c},\vec{r}) - d_i(\vec{c}+\vec{r}) \Big] \\
                &>\sum_{i \in N^+(\vec{r})} d_i(\vec{c}+\vec{r}) +\sum_{i \in N^+(\vec{s})} y_i(\vec{c},\vec{s}) + \sum_{\stackrel{1\leq i \leq k}{i \not\in N^{\pm}(\vec{s})}} pd_i(\vec{c}-\vec{1}) \\
                &> pd_{k+1}(\vec{c}) + \sum_{i \in N^-(\vec{s})} pd_i(\vec{c})+ \sum_{i \in N^+(\vec{s})} pd_i(\vec{c}-\vec{1})  + \sum_{\stackrel{1\leq i \leq k}{i \not\in N^{\pm}(\vec{s})}} pd_i(\vec{c}-\vec{1}) \\
                & \geq \sum_{i \in N^-(\vec{s}) \cup \{k+1\}} pd_i(\vec{c})+ \sum_{i \in N^+(\vec{s})} pd_i(\vec{c}-\vec{1})  + \sum_{\stackrel{1\leq i \leq k}{i \not\in N^{\pm}(\vec{s})}} pd_i(\vec{c}-\vec{1}) 
            \end{align*}
            The claim follows by observing that $N^-(\vec{r}) \subset N^{-}(\vec{s}) \cup \{k+1\}$ and because $d_i(\vec{c})>d_i(\vec{c}-\vec{1})$.\qed

		\subsection{Finishing the proof of Theorem \ref{theorem: unique minimal term}}\label{ss: finishing proof of affine line}
		We can now prove our main results about the affine line.
		\begin{proof}
			(Of Theorem \ref{theorem: unique minimal term}) Let $\Sigma_n$ and $\Sigma_n'$ be $R$-minimal
			permutations of $J_1$ of size $n$. By Proposition \ref{p: minimal permutations have all the nice properties}
			we know that $\Sigma_n$ and $\Sigma_n'$ are lexicographical. Let $\vec{m}$ be a $b$-tuple large enough so that
			$\Sigma_n,\Sigma_n' \leq \vec{m}$. Then $\Sigma_n,\Sigma_n' \in A_n(\vec{m})$ and by Proposition \ref{proposition: key technical proposition} we have $\Sigma_n=\Sigma_n'=\Sigma_n^*(\vec{m})$. Thus there is a unique $R$-minimal
			permutation of $J_1$ of size $n$. From Proposition \ref{p: minimal permutations have all the nice properties}
			we also know that $\Sigma_n$ is $p$-bounded. Then by Lemma \ref{l: compare R and valuation affine line} we see that
			$\Sigma_n$ is the unique $v$-minimal permutation of size $n$. 
			
			By the previous paragraph and Proposition \ref{proposition: key technical proposition} we know $\Sigma_n = \Sigma_{n-1}^*(\vec{c}-1)\vec{c}$, where $\vec{c}=\sigma_n^*(\vec{m})$. We know  $R(\Sigma_{n-1}^*(\vec{c}-1)) \geq R(\Sigma_{n-1})$, which gives
			\begin{align*}
				R(\Sigma_n) - R(\Sigma_{n-1}) &\geq R(\vec{c}).
			\end{align*}
			Let $\tau_k=\vec{c}-k\vec{1}$ and consider $\Sigma_{n-1}'=\tau_1\dots\tau_{n-1}$. By applying Lemma \ref{lemma: key growth bound 2} to $\displaystyle\sum_{i=1}^n y_i(\vec{c}-i\vec{1})$ we have
			\[
				R(\vec{c})  = \sum_{i=1}^b y_i(\vec{c}) > \sum_{i=1}^b pd_i(\vec{c}-\vec{1}) > R(\Sigma_{n-1}').\]
			We have $R(\Sigma_{n-1}')\geq R(\Sigma_{n-1})$, since $\Sigma_{n-1}$ is $R$-minimal. Thus, 
			\begin{align*}
				R(\Sigma_n) - R(\Sigma_{n-1}) &> R(\Sigma_{n-1}) - R(\Sigma_{n-2}).
			\end{align*}
			Since the $\Sigma_k$ are $p$-bounded, the theorem follows from Lemma \ref{l: compare R and valuation affine line}.
		\end{proof}
	
		\begin{corollary} \label{c: theorem for affine line}
			Fix $y \in \Z_p$. Then every slope in the Newton polygon of $\zeta_{\F_r[\theta],\pi}(x,y)$ occurs with multiplicity
			one and is a integer multiple of $r-1$.
		\end{corollary}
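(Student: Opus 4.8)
The plan is to feed Theorem \ref{theorem: unique minimal term} into the coefficient formula for $\det(1-x\Psi^\circ)$ and then unwind the substitution $x\mapsto x^b$ appearing in \eqref{eq: matrix calculation of NP single copy}. First I would write $\det(1-x\Psi^\circ)=\sum_n c_n x^n$. Since $\Psi^\circ$ is block cyclic, $c_n$ vanishes unless $b\mid n$, and by \eqref{eq: Fredholm determinant 2} together with Corollary \ref{lemma: the nonzero coefficients} the coefficient $c_{bn}$ is a sum of terms indexed by rotational, $p$-bounded enriched permutations of $\Psi^\circ$ of size $n$. Theorem \ref{theorem: unique minimal term} supplies a \emph{unique} such permutation $\Sigma_n$ of minimal valuation, so the minimal term cannot cancel and $v_\pi(c_{bn})=v(\Sigma_n,\Psi^\circ)=R(\Sigma_n)$, the last equality by Lemma \ref{l: compare R and valuation affine line} and the $p$-boundedness of $\Sigma_n$; here I set $\Sigma_0$ to be the empty permutation, so $R(\Sigma_0)=0$.

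Next I would read off the Newton polygon. The inequality $\nu_{n+1}>\nu_n$ from Theorem \ref{theorem: unique minimal term} says precisely that $n\mapsto R(\Sigma_n)$ is strictly convex, so every point $(bn,R(\Sigma_n))$ is a vertex of $\NP_\pi(\det(1-x\Psi^\circ))$ and the edge from $(b(n-1),R(\Sigma_{n-1}))$ to $(bn,R(\Sigma_n))$ has horizontal length $b$ and slope $\nu_n/b$. Writing $\zeta_{\F_r[\theta],\pi}(x,y)=\sum_{k\geq 0}a_k x^k$, the coefficient of $x^{bk}$ in $\zeta_{\F_r[\theta],\pi}(x^b,y)$ is $a_k$, so the identity \eqref{eq: matrix calculation of NP single copy} of Newton polygons forces the vertices of $\NP_\pi(\zeta_{\F_r[\theta],\pi}(x,y))$ to sit at the integers $0,1,2,\dots$, with $v_\pi(a_k)=R(\Sigma_k)$. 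Consequently $\NP_\pi(\zeta_{\F_r[\theta],\pi}(x,y))$ is the polygon with vertices $(k,R(\Sigma_k))$, its $n$-th slope is $\nu_n=R(\Sigma_n)-R(\Sigma_{n-1})$, and --- since the $\nu_n$ are strictly increasing --- every slope occurs with multiplicity exactly one.

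It remains to identify the slopes modulo $r-1$. The key point I would establish is that $R(\sigma)\equiv 0\pmod{r-1}$ for every $p$-bounded rotational $b$-cycle $\sigma$; granting this, $\nu_n\in(r-1)\Z$ follows because $\Sigma_n$ and $\Sigma_{n-1}$ are disjoint products of such $b$-cycles and $R$ is additive over disjoint cycles, while $\nu_n>0$ since $\nu_n\geq\nu_1=R(\Sigma_1)>0$ (a nonempty rotational $b$-cycle has positive $R$-value). To prove the congruence, recall that in this section $r=q=p^b$, so $d_i(m)=p^{i-1}q^w=p^{i-1+bw}\equiv p^{i-1}\pmod{p^b-1}$ for all $m$, whence $y_i(k)=\sum_{m=1}^k d_i(m)\equiv k\,p^{i-1}\pmod{p^b-1}$. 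If $\sigma$ has coordinates $(m_1,\dots,m_b)$ then $p$-boundedness gives $pm_i-m_{i-1}\geq 0$, and with the convention $m_0:=m_b$ the telescoping computation
\begin{equation*}
    R(\sigma)=\sum_{i=1}^b y_i(pm_i-m_{i-1})\equiv\sum_{i=1}^b (pm_i-m_{i-1})p^{i-1}=p^b m_b-m_b=(p^b-1)m_b\equiv 0\pmod{p^b-1}
\end{equation*}
completes the proof. The hard part has, in effect, already been done: all the combinatorics is packaged in Theorem \ref{theorem: unique minimal term}, and the only places that need care here are the passage between the Newton polygons of $\det(1-x\Psi^\circ)$ and of $\zeta_{\F_r[\theta],\pi}(x,y)$ through $x\mapsto x^b$ (together with the attendant multiplicity bookkeeping); the mod-$(r-1)$ statement is then an elementary digit-sum identity.
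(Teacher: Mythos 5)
Your argument is correct and matches the paper's own route for the case it actually covers, but it has a genuine gap: every step rests on Theorem \ref{theorem: unique minimal term}, which carries the standing hypothesis that $y$ is $q$-full, while the corollary is asserted for \emph{arbitrary} $y \in \Z_p$. The $q$-fullness hypothesis is not cosmetic: the quantities $d_i(n)$, $y_i(m)$, $R(\sigma)$ and the minimal permutations $\Sigma_n$ are only defined when the partial digit sums of each $y_i$ tend to infinity, i.e.\ when no $y_i$ is a positive integer. For a non-$q$-full $y$, $\zeta_{\F_r[\theta],\pi}(x,y)$ can degenerate to a polynomial and your identity $v_\pi(c_{bn}) = v(\Sigma_n,\Psi^\circ)$ has no meaning, so the multiplicity-one and divisibility claims are simply not established for such $y$.

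The paper closes this gap with an approximation argument you would need to supply: the $q$-full elements are dense in $\Z_p$, and choosing $q$-full approximants $y'$ with $y - y' \in p^N\Z_p$ gives $\rho_{\F_r[\theta],\pi}^{\otimes y} \equiv \rho_{\F_r[\theta],\pi}^{\otimes y'} \bmod \pi^{p^N}$, hence $\zeta_{\F_r[\theta],\pi}(x,y) \equiv \zeta_{\F_r[\theta],\pi}(x,y') \bmod \pi^{p^N}$; consequently every vertex of $\NP_\pi(\zeta_{\F_r[\theta],\pi}(x,y'))$ whose height is less than $p^N$ is also a vertex of $\NP_\pi(\zeta_{\F_r[\theta],\pi}(x,y))$, and letting $N \to \infty$ transfers both the multiplicity-one statement and the divisibility by $r-1$ to general $y$. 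Apart from this omission, your treatment of the $q$-full case --- the unique minimal term pinning down $v_\pi(c_{bn})$, the horizontal $b$-stretch relating $\NP_\pi(\det(1-x\Psi^\circ))$ to $\NP_\pi(\zeta_{\F_r[\theta],\pi}(x,y))$, and the telescoping congruence $\sum_{i=1}^{b}(pm_i - m_{i-1})p^{i-1} \equiv (p^b-1)m_b \equiv 0 \pmod{r-1}$ --- is exactly the paper's argument.
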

		
		\begin{proof}
			First, consider the case where $y$ is $q$-full. Write
            \begin{align*}
                \det(1-x\Psi^\circ) &= 1 + c_1x + c_2x^2 + \dots 
            \end{align*}
            Using the definition of the Fredholm determinant of a matrix, we know by Theorem \ref{theorem: unique minimal term} and Lemma \ref{lemma: the nonzero coefficients} that $c_i=0$ if $b \nmid i$ and that 
            $v(c_{bn}) = v(\Sigma_n)$. Also, if we set $\nu_n=v(\Sigma_n)-v(\Sigma_{n-1})$, we know $\nu_{n+1} > \nu_n$ by Theorem \ref{theorem: unique minimal term}. Thus, the slopes of the Newton polygon of $\det(1-x\Psi^\circ)$ are
			\[ \frac{\nu_1}{b}, \frac{\nu_2}{b}, \frac{\nu_3}{b}, \dots \]
			and each slope occurs with multiplicity $b$. By \eqref{eq: matrix calculation of Newton polygon} we know the slopes of the Newton polygon of $\zeta_{\F_r[\theta],\pi}(x,y)$ are 
			\[ \nu_1,\nu_2, \nu_3, \dots \]
			and that each slope occurs with multiplicity one. 
			
			For the general case, let $y \in \Z_p$. The subset of $q$-full numbers is dense in $\Z_p$. Let $y_1,y_2, \dots$
			be a sequence of $q$-full $p$-adic numbers that converge to $y$ such that 
			\begin{align*}
				y-y_i & \in p^{i}\Z_p
			\end{align*} 
			This implies $\zeta_{\F_r[\theta],\pi}(x,y) \equiv \zeta_{\F_r[\theta],\pi}(x,y_i) \mod \pi^{p^i}$. To see this, note that $\rho_{\F_r[\theta],\pi}^{\otimes y}$ and $\rho_{\F_r[\theta],\pi}^{\otimes y_i}$ are congruent modulo $\pi^{p^i}$. 
            In particular, every vertex of $\NP_\pi(\zeta_{\F_r[\theta],\pi}(x,y_i))$ whose $y$-coordinate is less than $p^i$ is also a vertex of $\NP_\pi(\zeta_{\F_r[\theta],\pi}(x,y))$. By taking $i$ to be large, we see that every slope of $\NP_\pi(\zeta_{\F_r[\theta],\pi}(x,y))$ occurs with multiplicity one.

            To prove divisibility by $r-1$, it suffices to show $r-1|\nu_n$. This will follow from the much stronger statement: For any rotational $b$-cycle $\sigma$ of $J_1$ we have $r-1|v(\sigma,\Psi^\circ)$. 
            Write $\sigma=(m_1,\dots,m_b)$, so that
            \begin{align*}
                v(\sigma,\Psi^\circ) &= \sum_{i=1}^{b} y_i(pm_{i} - m_{i-1}).
            \end{align*}
            By the definition of $y_i$ we know $y_i(k) \equiv p^{i-1}k \mod r-1$.
            Thus, we compute
            \begin{align*}
                v(\sigma,\Psi^\circ) & \equiv \sum_{i=1}^b p^{i-1}(pm_{i} - m_{i-1}) \mod r-1 \\
                &\equiv (p^b-1)m_b \equiv 0 \mod r-1.
            \end{align*}
		\end{proof}

    \section{General ordinary curves} \label{s: general ordinary curves}
    We now return to the general case where $X$ is a curve over $\F_q$ and $\infty \in X$ is a closed point of degree $d$. 
    We adopt the notation used in \S \ref{s:zeta} and \S \ref{s: tau crystals and the trace formula}.

    \subsection{The initial setup} \label{ss: initial setup}
    Let $\rho_{A,\pi}$ be the representation defined in \S \ref{ss:zeta} and let $\rho_{A_{\F_r}}$ denote
    the restriction of $\rho_{A,\pi}$ to $\pi_1(\Spec(A_{\F_r}))$. 
    Let $\mathscr{G}_{A_{\F_r}}$ denote the $\tau$-module associated to $\rho_{A,\pi}$ using
    Proposition \ref{p:Katz-correspondence}. Let
    $\mathscr{G}_{A_{\F_r},\infty}$ denote the localization of $\mathscr{G}_{A_{\F_r}}$ at $\infty$.
    Note that $\mathscr{G}_{A_{\F_r},\infty}$ is the $\tau$-module associated
    to the restriction of $\rho_{A,\pi}$ along the map $\Spec(K_{\F_r}) \to \Spec(A)$.
    
    \begin{proposition}\label{proposition: tau module at infinity comes from affine line case}
        The element $1 + \pi\theta$ is a Frobenius matrix of $\mathscr{G}_{A_{\F_r},\infty}$. 
    \end{proposition}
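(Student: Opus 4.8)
The plan is to reduce this statement to the affine line case via the localization-independence result (Corollary~\ref{c: localization does not depend on curve}), exactly as was done in the toy case (see Proposition~\ref{p: Frobenius element for general curve over F_p}). The point $\infty$ has residue field $\F_r$, and $K_{\infty}$ is identified with $\F_r(\!(\theta^{-1})\!)$ where $v_\infty(\theta) = -1$; in particular $\pi = \theta^{-1}$ after our fixed choice of uniformizer. The affine line $\A^1_{\F_r} = \Spec \F_r[\theta]$ also has a distinguished point of degree one with completed local field $\F_r(\!(\theta^{-1})\!)$. By Corollary~\ref{c: localization does not depend on curve}, applied with $X_1 = X$, $\infty_1 = \infty$, $A_1 = A$, and $X_2 = \PP^1_{\F_r}$, $\infty_2 = \infty$ the usual point at infinity, $A_2 = \F_r[\theta]$ (both infinite places having residue field $\F_r$ and the chosen uniformizer $\pi$), we get an isomorphism of localized characters
\begin{equation*}
    \eta_{A,\infty}^* \rho_{A,\pi} \cong \eta_{\F_r[\theta],\infty}^* \rho_{\F_r[\theta],\pi}.
\end{equation*}

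Next I would translate this isomorphism of characters through the Katz equivalence (Proposition~\ref{p:Katz-correspondence}). The $\tau$-module attached to $\eta_{\F_r[\theta],\infty}^* \rho_{\F_r[\theta],\pi}$ is precisely the localization $\mathscr{G}_{\F_r[\theta],\infty}$ of $\mathscr{G}_{\F_r[\theta]}$ at the point at infinity, while the $\tau$-module attached to $\eta_{A,\infty}^* \rho_{A,\pi}$ is $\mathscr{G}_{A_{\F_r},\infty}$ — here one uses that restriction of the fundamental-group representation to $\Spec(K_{\F_r})$ corresponds under the equivalence to localization at $\infty$ of the $\tau$-module over $A_{\F_r} \widehat{\otimes}_{\F_p} \bfV^\circ$, which is Definition~\ref{d: localization of F-module}. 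Therefore $\mathscr{G}_{A_{\F_r},\infty} \cong \mathscr{G}_{\F_r[\theta],\infty}$ as $\tau$-modules over $K_{\F_r} \widehat{\otimes}_{\F_p} \bfV^\circ$.

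Finally, I would invoke Proposition~\ref{p:A1-Frob} (Taguchi--Wan), which says that $1 - \pi\theta$ is a Frobenius matrix of $\mathscr{G}_{\F_r[\theta]}$; localizing, $1 - \pi\theta$ is a Frobenius matrix of $\mathscr{G}_{\F_r[\theta],\infty}$. Since Frobenius matrices are an isomorphism invariant of a $\tau$-module (up to $\tau$-equivalence over the base ring), it follows that $1 - \pi\theta$ is a Frobenius matrix of $\mathscr{G}_{A_{\F_r},\infty}$ as well. (I note a sign discrepancy between the statement's ``$1 + \pi\theta$'' and Taguchi--Wan's ``$1-\pi\theta$''; this is harmless — replacing $\theta$ by $-\theta$, or equivalently choosing the uniformizer $-\pi$, swaps the two, and in characteristic $2$ they coincide. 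In the write-up I would simply record ``$1-\pi\theta$'' to match Proposition~\ref{p:A1-Frob}, or remark on this normalization.) There is essentially no obstacle here: the content is entirely in Corollary~\ref{c: localization does not depend on curve} and the compatibility of the Katz equivalence with localization, both already established. The only mild care needed is bookkeeping the identification of $K_\infty$ with $\F_r(\!(\theta^{-1})\!)$ so that the element ``$\theta$'' appearing in $1-\pi\theta$ is the same on both sides.
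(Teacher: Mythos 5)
Your proposal is correct and follows essentially the same route as the paper's proof: localize via Corollary~\ref{c: localization does not depend on curve} (the paper phrases this with the diagram $\Spec(K_{\F_r})\to\Spec(K)\to\Spec(\F_r[\theta])$ and $\Spec(K)\to\Spec(A)$), transport through the Katz equivalence, and quote Proposition~\ref{p:A1-Frob}. Your observation about the sign is apt --- the paper itself writes $1-\pi\theta$ in Proposition~\ref{p:A1-Frob} but $1+\pi\theta$ here, so flagging the normalization is worthwhile.
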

    \begin{proof}
    Consider the diagram
    \begin{equation*}
    \begin{tikzcd}
\Spec(K_{\F_r}) \arrow[r, "h"] & \Spec(K) \arrow[r, "g"] \arrow[rd, "f"] & \Spec(\F_r[\theta]) \\ 
& & A
\end{tikzcd}.
    \end{equation*}
    By Corollary \ref{c: localization does not depend on curve} we know that
    $g^*\rho_{\F_r[\theta],\pi}$ and $f^*\rho_{A,\pi}$ are isomorphic representations
    of $\pi_1^{et}(\Spec(K))$.  In particular, we have $h^*g^*(\rho_{\F_r[\theta],\pi})\cong h^*f^*(\rho_{A,\pi})$.
    From Proposition \ref{p:A1-Frob} we know that $1 + \pi\theta$ is
    a Frobenius matrix for $\mathscr{F}_{\F_r[\theta]}$. It follows
    that $1 + \pi\theta$ is a Frobenius matrix for the $\tau$-module
    associated to $h^*g^*(\rho_{\F_r[\theta],\pi})$. As $h^*g^*(\rho_{\F_r[\theta],\pi})\cong h^*f^*(\rho_{A,\pi})$, we see that $1+\pi\theta$ is a Frobenius matrix of $\mathscr{G}_{A_{\F_r},\infty}$.

    \end{proof}
    
    \begin{proposition}\label{proposition: Goss crystal is free, OC and 1 mod pi}
    The underlying $A_{\F_r}\widehat{\otimes}_{\F_p} \bfV^\circ$-module of $\mathscr{G}_{\F_r}$ is free.
    Furthermore, there exists a Frobenius matrix $\alpha$ of $\mathscr{G}_{\F_r}$
    with $\alpha\equiv 1 \mod \pi$.
    \end{proposition}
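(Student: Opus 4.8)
The plan is to derive both assertions from the single fact that $\rho_{A,\pi}$ takes values in the $1$-units of $\bfV$, which forces the reduction of the associated $\tau$-module modulo the maximal ideal of $\bfV^\circ$ to be trivial.

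First I would set up the structure. By the Katz equivalence (Proposition \ref{p:Katz-correspondence}), applied with $B = A_{\F_r} = \F_r\otimes_{\F_q}A$, which is flat over $\F_r$, the $\tau$-module $\mathscr{G}_{A_{\F_r}}$ attached to the rank one character $\rho_{A_{\F_r}}$ has underlying module a finite projective module of rank one over $\calB := A_{\F_r}\widehat{\otimes}_{\F_p}\bfV^\circ$. Write $\frakm = (\pi^{1/p^h})\calB$ for the ideal generated by the uniformizer of $\bfV^\circ$, so that $\calB/\frakm \cong A_{\F_r}$. Since $\calB$ is complete and separated for the $\pi$-adic (equivalently, the $\pi^{1/p^h}$-adic) topology, the series $\sum_{n\ge 0}x^n$ converges for every $x\in\frakm$; hence $1+\frakm\subseteq\calB^\times$, i.e. $\frakm$ lies in the Jacobson radical of $\calB$. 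This is the point that allows a Nakayama argument even though $\calB$ is far from local.

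Next I would prove freeness by descent from the reduction. Base change along $\bfV^\circ\to\bfV^\circ/(\pi^{1/p^h})=\F_p$ is compatible with the Katz equivalence, so $\mathscr{G}_{A_{\F_r}}/\frakm\mathscr{G}_{A_{\F_r}}$ is the $\tau$-module over $A_{\F_r}$ attached to the reduction $\overline{\rho}_{A_{\F_r}}$; since $\rho_{A,\pi}$ is valued in $1+\frakm_\bfV$, this reduction is the trivial character, so $\mathscr{G}_{A_{\F_r}}/\frakm\mathscr{G}_{A_{\F_r}}$ is the trivial rank one $\tau$-module over $A_{\F_r}$, and in particular its underlying module is free. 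Choosing $e\in\mathscr{G}_{A_{\F_r}}$ lifting a generator of this reduction, Nakayama (using $\frakm\subseteq\operatorname{Jac}(\calB)$ and finite generation) shows that $e$ generates $\mathscr{G}_{A_{\F_r}}$; the resulting surjection $\calB\twoheadrightarrow\mathscr{G}_{A_{\F_r}}$ splits by projectivity, and its kernel, being a finitely generated $\calB$-module that vanishes modulo $\frakm$, vanishes by Nakayama. Thus $\mathscr{G}_{A_{\F_r}}\cong\calB$ is free of rank one.

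Finally, for the Frobenius matrix: because the reduction of $\mathscr{G}_{A_{\F_r}}$ modulo $\frakm$ is trivial \emph{as a $\tau$-module}, we may arrange the generator $e$ above so that its image in $\mathscr{G}_{A_{\F_r}}/\frakm\mathscr{G}_{A_{\F_r}}$ is fixed by $\varphi$. Writing $\varphi_{\mathscr{G}_{A_{\F_r}}}(e)=\alpha e$, unit-rootness gives $\alpha\in\calB^\times$, and by construction $\alpha\equiv 1$ modulo the maximal ideal of $\bfV^\circ$, which is the asserted congruence $\alpha\equiv 1\bmod\pi$. I expect the main obstacle to be bookkeeping rather than conceptual: checking that the Katz equivalence is compatible with the reduction $\bfV^\circ\to\F_p$ in exactly the form used, and being careful that ``free of rank one'' genuinely follows from triviality of the reduction together with $\pi$-adic completeness, and not from any (absent) local structure on the non-local ring $\calB$.
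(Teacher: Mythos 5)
Your proof is correct, and it reaches the freeness statement by a different route than the paper. Both arguments begin from the same key input --- $\rho_{A,\pi}$ is valued in $1$-units, so the reduction of $\mathscr{G}_{A_{\F_r}}$ modulo the maximal ideal of $\bfV^\circ$ is the trivial rank-one $\tau$-module over $A_{\F_r}$ --- but they lift this to $\calB = A_{\F_r}\widehat{\otimes}_{\F_p}\bfV^\circ$ differently. The paper argues deformation-theoretically: liftings of an invertible sheaf on the affine scheme $\Spec(A_{\F_r})$ through the square-zero thickenings $\Spec(A_{\F_r}\otimes\bfV^\circ/\pi^{n/p^h})$ are controlled by $H^1$ of a coherent sheaf, which vanishes by affineness, so the only lift is the structure sheaf. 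You instead observe that $\pi$-adic completeness puts $\frakm=(\pi^{1/p^h})$ in the Jacobson radical of $\calB$ and run Nakayama twice (once to see a lift of the residual generator generates, once to kill the kernel of the resulting split surjection $\calB\twoheadrightarrow\mathscr{G}_{A_{\F_r}}$); this is essentially the standard proof that $\Pic(\calB)\to\Pic(\calB/\frakm)$ is injective for such rings, and it is more elementary in that it avoids citing the deformation theory of line bundles. Your treatment of the Frobenius matrix is actually \emph{more} complete than the paper's, whose proof addresses only freeness; lifting a $\varphi$-fixed residual generator is exactly the right move. One small caution: what this argument literally yields is $\alpha\equiv 1$ modulo $\pi^{1/p^h}$ (the maximal ideal of $\bfV^\circ$), not modulo $\pi$, since the character is only known to be trivial modulo $\pi^{1/p^h}$. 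The proposition's statement ``$\alpha\equiv 1\bmod \pi$'' appears to be a slip in the paper itself --- the parallel Proposition \ref{p: Frobenius element for general curve over F_p}, whose proof defers to this one, asserts only the congruence modulo $\pi^{1/p^h}$, and that is the form used downstream --- so you should state the weaker congruence rather than assert that it coincides with ``$\bmod\ \pi$.''
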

    \begin{proof}
        First, note that $\mathscr{G}_{\F_r}\otimes_{\F_r} \bfV/\pi^{1/p^h}$ is
        free of rank one, since the residual representation of $\rho_{A_{\F_r}}$ is trivial. For any $n$, the deformation problem
        of lifting an invertible sheaf over $\Spec(A_{\F_r})$ to $\Spec(A_{\F_r}\otimes_{\F_r} \bfV/\pi^{n/p^h})$ is classified by
        $H^1$ of a coherent sheaf on $\Spec(A_{\F_r})$ (see \cite{Hartshorne-deformation_theory}*{Theorem 6.4}.) Since $\Spec(A_{\F_r})$ is affine, we know that $H^1$ of any coherent sheaf vanishes. Thus, the only deformation of $\mathcal{O}_{\Spec(A_{\F_r})}$ from $\Spec(A_{\F_r})$ to $\Spec(A_{\F_r}\otimes_{\F_r} \bfV/\pi^{n/p^h})$ is
        $\mathcal{O}_{\Spec(A_{\F_r}\otimes_{\F_r} \bfV/\pi^{n/p^h})}$.
        We then deduce that the only deformation of $\mathcal{O}_{\Spec(A_{\F_r})}$ from $\Spec(A_{\F_r})$ to $A_{\F_r}\widehat{\otimes}_{\F_r} \bfV$ is $\mathcal{O}_{A_{\F_r}\widehat{\otimes}_{\F_r} \bfV}$. 
    \end{proof}
    
    Fix $\alpha$ as in Proposition \ref{proposition: Goss crystal is free, OC and 1 mod pi}. Let $y \in \Z_p$ and
    write $y=\displaystyle\sum_{i=1}^{b}y_ip^{i-1}$ as in \eqref{eq:y-decomposition}. Recall from \S \ref{sss: F and tau basic defs} that $W$ is the $p$-th Frobenius map relative to $\bfV$. For $i=1,\dots,b$ we define: 
    \begin{align*}
        \alpha_i&= \alpha^{y_{i}W^{i-1}}.
    \end{align*}
    \begin{lemma}
        \label{l: compare affine line frob to general case}
        Let $\beta_i$ be as defined in \eqref{eq: product description of beta}. Then $\beta_i$ is $\tau$-equivalent to $\alpha_i$ over $K_{\F_r} \widehat{\otimes}_{\F_p} \bfV^\circ$. In particular,
        there exists $d_i \in K_{\F_r} \widehat{\otimes}_{\F_p} \bfV^\circ$ such that $\beta_i d_i^{1-\tau}= \alpha_i$. Furthermore, we may take $d_i$ to satisfy
            $d_i \equiv 1 \mod \pi^{1/p^h} K_{\F_r} \widehat{\otimes}_{\F_p} \bfV^\circ$.
    \end{lemma}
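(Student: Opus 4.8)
The plan is to transport the $\tau$-equivalence between the two known Frobenius matrices of the localized $\tau$-module $\mathscr{G}_{A_{\F_r},\infty}$ through the operations $x\mapsto x^{y_i}$ and $x\mapsto W^{i-1}(x)$, and to arrange that every conjugating unit appearing is congruent to $1$ modulo $\pi^{1/p^h}$. Throughout write $\calB=K_{\F_r}\widehat\otimes_{\F_p}\bfV^\circ$, with Jacobson radical $\mathfrak{m}=\pi^{1/p^h}\calB$; since $\calB$ has characteristic $p$, the group $1+\mathfrak{m}$ is pro-$p$, hence a $\Z_p$-module, so the $y_i$-th power operation makes sense on it.

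First I would record that $\alpha$ and $1-\pi\theta$ are $\tau$-equivalent over $\calB$. After localizing at $\infty$, $\alpha$ is a Frobenius matrix of the free rank-one $\tau$-module $\mathscr{G}_{A_{\F_r},\infty}$ by Proposition \ref{proposition: Goss crystal is free, OC and 1 mod pi}, and $1-\pi\theta$ is another such Frobenius matrix by Proposition \ref{proposition: tau module at infinity comes from affine line case}; since two Frobenius matrices of the same free rank-one $\tau$-module are $\tau$-equivalent, there is $u\in\calB^\times$ with $(1-\pi\theta)u^{\tau-1}=\alpha$ (using that $\calB$ is commutative). Reducing modulo $\mathfrak{m}$ and using $\alpha\equiv 1\equiv 1-\pi\theta\pmod{\pi}$, the image of $u$ in $\calB/\mathfrak{m}=K_{\F_r}$ is $\tau$-invariant, hence lies in the finite \'etale $\F_q$-subalgebra $\F_r\otimes_{\F_q}\F_r\subseteq\calB^\tau$; lifting it to a central $\tau$-invariant unit and dividing $u$ by this lift, I may assume $u\in 1+\mathfrak{m}$.

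Next I would twist. Raising $(1-\pi\theta)u^{\tau-1}=\alpha$ to the $y_i$-th power inside $1+\mathfrak{m}$ and applying the continuous endomorphism $W^{i-1}$ — which commutes with $\tau$, preserves $1+\mathfrak{m}$, and sends $1-\pi\theta$ to $1-\pi^{p^{i-1}}\theta$ — gives $(1-\pi^{p^{i-1}}\theta)^{y_i}\bigl(W^{i-1}(u^{y_i})\bigr)^{\tau-1}=W^{i-1}(\alpha^{y_i})=\alpha^{y_iW^{i-1}}=\alpha_i$. It then remains to compare $(1-\pi^{p^{i-1}}\theta)^{y_i}$ with $\beta_i$, which is precisely the factor-by-factor computation already carried out in \S\ref{s: affine line 1} to show $(1-\pi\theta)^y$ is $\tau$-equivalent to $\beta_1\cdots\beta_b^{F^{b-1}}$; it yields a conjugating unit congruent to $1$ modulo $\pi$, which I would normalize into $1+\mathfrak{m}$ exactly as in the previous step. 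Composing the two $\tau$-equivalences and passing to the inverse of the resulting conjugating element produces $d_i\in 1+\mathfrak{m}$ with $\beta_i d_i^{1-\tau}=\alpha_i$, which is the assertion.

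The real geometric content — that the localization at $\infty$ only sees the affine line (Corollary \ref{c: localization does not depend on curve}, via Proposition \ref{proposition: tau module at infinity comes from affine line case}) and that the global crystal is free with Frobenius $\equiv 1\pmod{\pi}$ (Proposition \ref{proposition: Goss crystal is free, OC and 1 mod pi}) — is already in hand, so the only genuine work here is bookkeeping: keeping the three Frobenii $F$, $W$, $\tau=F^b$ and the $\Z_p$-power operation straight, and checking that every conjugating unit can be taken in $1+\mathfrak{m}$. That last normalization is the point I would be most careful about, since it is what makes $d_i$ invisible modulo $\pi^{1/p^h}$ in the matrix comparisons of \S\ref{s: general ordinary curves}; it is nonetheless routine, the residual representation of $\rho_{A_{\F_r}}$ being trivial.
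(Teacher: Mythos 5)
Your argument is correct and follows essentially the same route as the paper: use the localization result (Proposition \ref{proposition: tau module at infinity comes from affine line case}) to identify $\alpha$ with the affine-line Frobenius matrix up to $\tau$-equivalence, raise to the $y_i$-th power and apply $W^{i-1}$ to get $\alpha_i\sim(1\pm\pi^{p^{i-1}}\theta)^{y_i}$, and then compare with $\beta_i$ via the product formula \eqref{eq: product description of beta}. The only difference is that you explicitly carry out the normalization of the conjugating unit into $1+\pi^{1/p^h}\calB$ (via $\tau$-invariance of its reduction), a point the paper's proof leaves implicit even though it is part of the statement; your treatment of it is sound.
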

    \begin{proof}
        From Proposition \ref{proposition: tau module at infinity comes from affine line case} we see that $\alpha^{y_i}$ is $\tau$-equivalent to 
        $(1+\pi\theta)^{y_{i}}$ over $K_{\F_r} \otimes_{\F_p} \bfV^\circ$. This means $\alpha_i$ is $\tau$-equivalent to $(1+\pi^{p^{i-1}}\theta)^{y_i}$ over $K_{\F_r} \widehat{\otimes}_{\F_p} \bfV^\circ$. From \eqref{eq: product description of beta} we see that $\beta_i$ is $\tau$-equivalent to $(1+\pi^{p^{i-1}}\theta)^{y_i}$
        over $K_{\F_r} \widehat{\otimes}_{\F_p} \bfV^\circ$.
        This gives the lemma.
    \end{proof}
    Set $\mathscr{F}_{A_{\F_r}}= \Res(\mathscr{G}_{A_{\F_r}}^{\otimes y})$. Since $\alpha^{y} = 
    \alpha_1 \alpha_2^F\dots \alpha_{b}^{F^{b-1}}$
    we know from Proposition \ref{prop: shape of restriction of scalars Frobenius structure} that there exists a basis $\{f_1,\dots,f_b\}$
    of $\mathscr{F}_{A_{\F_r}}$ whose corresponding Frobenius matrix is
    \begin{align*}
        E_{\mathscr{F}_{A_{\F_r}}} &= \begin{bmatrix} 0 & 0 & \dots & 0 &  \alpha_1 \\ 
	\alpha_2  & 0 & \dots & 0 & 0 \\
	0 & \ddots & \ddots & \vdots & \vdots \\
	\vdots & \ddots & \ddots & 0 & 0 \\
	0 & \dots & 0 &\alpha_b& 0
	\end{bmatrix}.
    \end{align*}
    Let $d_i$ be as in Lemma \ref{l: compare affine line frob to general case}. Define the following elements of $K_{\F_r} \otimes_{\F_p} \bfV^\circ $:
    \begin{align*}
        \delta_i &:= \prod_{j=0}^{b-1}d_{i+j}^{F^{j}}.
    \end{align*}
    We then define the matrix:
    \begin{align*}
        D&:=\begin{bmatrix} \delta_1 & 0 & \dots & 0  \\ 
	0 & \delta_2 & \dots & 0  \\
    \vdots & \ddots & \ddots & \vdots  \\
	0 & \dots & \dots & \delta_b 
	\end{bmatrix}.
    \end{align*}
    A quick calculation shows
    \begin{align} \label{eq: local twisting at infinity}
        E_{\mathscr{F}_{\F_r[\theta]}} &= D E_{\mathscr{F}_{A_{\F_r}}}D^{-F},
    \end{align}
    where $E_{\mathscr{F}_{\F_r[\theta]}}$ is the matrix defined in \S \ref{ss: the initial setup for the affine line}.
    \begin{proposition}\label{proposition: computing L function via A0}
        Let $M_{\mathscr{F}_{A_{\F_r}}}$ and $V_{\mathscr{F}_{A_{\F_r}}}$ be defined as in \S \ref{ss: the trace formula}. Then
        \begin{align*}
            \NP_\pi(L(\rho_{A_{\F_r}}^{\otimes y}, x^b)^b) &= \NP_\pi(\det_{\bfV^\circ}(1-xU_p\circ E_{\mathscr{F}_{\F_r[\theta]}}|D^FM_{\mathscr{F}_{A_{\F_r}}}).
        \end{align*}
    \end{proposition}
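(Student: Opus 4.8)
The plan is to combine the ``$b$-th root trick'' of Corollary \ref{cor: a-th root trick} with the local twisting identity \eqref{eq: local twisting at infinity} and the conjugation-invariance of characteristic series.

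First I would apply Corollary \ref{cor: a-th root trick} to the $\tau$-module $\mathscr{G}_{A_{\F_r}}^{\otimes y}$, which corresponds to the representation $\rho_{A_{\F_r}}^{\otimes y}$ over $\bfV$. To do so one checks the hypotheses: by Proposition \ref{proposition: Goss crystal is free, OC and 1 mod pi} the underlying module of $\mathscr{G}_{A_{\F_r}}$ is free of rank one with a Frobenius matrix $\alpha \equiv 1 \bmod \pi$, so $\mathscr{G}_{A_{\F_r}}^{\otimes y}$ is free of rank one with Frobenius matrix $\alpha^y = \alpha_1\alpha_2^F\cdots\alpha_b^{F^{b-1}}$; and by Proposition \ref{proposition: tau module at infinity comes from affine line case} the localization $\mathscr{G}_{A_{\F_r},\infty}^{\otimes y}$ has Frobenius matrix $(1+\pi\theta)^y$, which is overconvergent by the generalized binomial theorem, whence $\mathscr{G}_{A_{\F_r}}^{\otimes y}$ is overconvergent, and therefore so is $\mathscr{F}_{A_{\F_r}} = \Res(\mathscr{G}_{A_{\F_r}}^{\otimes y})$ (its restriction-of-scalars Frobenius matrix \eqref{eq: matrix for restriction} has entries among the entries of $E_{\mathscr{G}_{A_{\F_r}}^{\otimes y}}$ and $0,1$). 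By Proposition \ref{prop: shape of restriction of scalars Frobenius structure}, $\mathscr{F}_{A_{\F_r}}$ admits $E_{\mathscr{F}_{A_{\F_r}}}$ as a Frobenius matrix, so $\Theta_{\mathscr{F}_{A_{\F_r}}} = U_p\circ E_{\mathscr{F}_{A_{\F_r}}}$ (independence of the basis is the Remark following the definition of $\Theta_\mathscr{F}$), and Corollary \ref{cor: a-th root trick} gives
\begin{equation*}
    \NP_\pi(L(\rho_{A_{\F_r}}^{\otimes y},x^b)^b) = \NP_\pi\!\big(\det\nolimits_{\bfV^\circ}(1-xU_p\circ E_{\mathscr{F}_{A_{\F_r}}}\mid M_{\mathscr{F}_{A_{\F_r}}})\big).
\end{equation*}

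It then remains to prove the equality of characteristic series $\det_{\bfV^\circ}(1-xU_p\circ E_{\mathscr{F}_{A_{\F_r}}}\mid M_{\mathscr{F}_{A_{\F_r}}}) = \det_{\bfV^\circ}(1-xU_p\circ E_{\mathscr{F}_{\F_r[\theta]}}\mid D^F M_{\mathscr{F}_{A_{\F_r}}})$. For this I would let $\mathcal D$ denote the operator on $V_{\mathscr{F}_{A_{\F_r}}}$ given by multiplication by the matrix $D^F$. Since each $\delta_i \equiv 1 \bmod \pi^{1/p^h}$ by Lemma \ref{l: compare affine line frob to general case} (and $F$ preserves $1$-units), $D^F \in GL_b(K_{\F_r}\widehat{\otimes}_{\F_p}\bfV^\circ)$, so $\mathcal D$ is invertible and carries $M_{\mathscr{F}_{A_{\F_r}}}$ isomorphically onto $D^F M_{\mathscr{F}_{A_{\F_r}}}$; because $D^F$ is congruent to the identity modulo $\pi^{1/p^h}$, the latter is again a full subspace, and using the overconvergence of $E_{\mathscr{F}_{\F_r[\theta]}}$ (its entries $\beta_i$ lie in $K_{\F_r}\otimes^\dagger_{\F_p}\bfV^\circ$ by \eqref{eq: product description of beta} and the binomial theorem) the operator $U_p\circ E_{\mathscr{F}_{\F_r[\theta]}}$ is a $p$-Dwork operator on it, so the right-hand characteristic series is defined. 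The heart of the matter is the intertwining relation
\begin{equation*}
    \mathcal D\circ(U_p\circ E_{\mathscr{F}_{A_{\F_r}}})\circ\mathcal D^{-1} = U_p\circ E_{\mathscr{F}_{\F_r[\theta]}},
\end{equation*}
obtained by commuting the left factor $D^F$ through $U_p$ via the compatibility of the Dwork dual $U_p$ with the $p$-power Frobenius and then invoking \eqref{eq: local twisting at infinity}, $E_{\mathscr{F}_{\F_r[\theta]}} = D E_{\mathscr{F}_{A_{\F_r}}}D^{-F}$. Granting this, $\mathcal D$ sends an orthonormal basis of $M_{\mathscr{F}_{A_{\F_r}}}$ to one of $D^F M_{\mathscr{F}_{A_{\F_r}}}$ carrying the matrix of $U_p\circ E_{\mathscr{F}_{A_{\F_r}}}$ to that of $U_p\circ E_{\mathscr{F}_{\F_r[\theta]}}$, so the two Fredholm determinants coincide by Proposition \ref{prop: Fredholm exists for completely continuous}; together with the display above this gives the proposition.

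I expect the main obstacle to be the verification of the intertwining relation $\mathcal D\,\Theta_{\mathscr{F}_{A_{\F_r}}}\,\mathcal D^{-1} = U_p\circ E_{\mathscr{F}_{\F_r[\theta]}}$: the twisting matrices $\delta_i = \prod_{j=0}^{b-1}d_{i+j}^{F^j}$ are engineered precisely so that the $F$-shift picked up when $D^F$ is commuted past $U_p$, together with the cyclic shift built into the restriction-of-scalars matrix $E_{\mathscr{F}_{A_{\F_r}}}$, collapses to the single conjugation $D(-)D^{-F}$ of \eqref{eq: local twisting at infinity}. Tracking these shifts carefully, and checking along the way that $D^F M_{\mathscr{F}_{A_{\F_r}}}$ remains full (which is where $\delta_i \equiv 1 \bmod \pi^{1/p^h}$ is used), is the delicate bookkeeping step; everything else is a direct application of results already established.
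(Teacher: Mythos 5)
Your proposal is correct and follows exactly the route the paper intends: the paper's own proof of this proposition is the single sentence ``This follows from Corollary \ref{cor: a-th root trick} and \eqref{eq: local twisting at infinity},'' and your argument is precisely an expansion of that — apply the $b$-th root trick to $\mathscr{G}_{A_{\F_r}}^{\otimes y}$ (after checking freeness and overconvergence via Propositions \ref{proposition: Goss crystal is free, OC and 1 mod pi} and \ref{proposition: tau module at infinity comes from affine line case}), then transport the Dwork operator across the twist $D$ using the adjunction $a\,U_p(x)=U_p(F(a)x)$ and the invariance of the characteristic series under a change of orthonormal basis. Your identification of the intertwining bookkeeping (the exact power of $F$ appearing on $D$, and the fullness of $D^F M_{\mathscr{F}_{A_{\F_r}}}$ via $\delta_i\equiv 1 \bmod \pi^{1/p^h}$) as the only delicate point is accurate; the paper leaves that verification implicit as well.
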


    \begin{proof}
        This follows from  Corollary \ref{cor: a-th root trick} and \eqref{eq: local twisting at infinity}. 
    \end{proof}

    \begin{remark}
        By Proposition \ref{proposition: computing L function via A0},
        the operator that we use to compute $\NP_\pi(L(\rho_{A_{\F_r}}^{\otimes y}, x))$ is the same operator we use to compute $\NP_\pi(L(\rho_{\F_r[\theta],\pi}^{\otimes y}, x))$. The difference is that the same operator is acting on different spaces: $D^F M_{\mathscr{F}_{A_{\F_r}}}$ in the former case and $M_{\mathscr{F}_{\F_r[\theta]}}$ in the latter case. If we make the further assumption that $d=1$ (i.e. so $A_{\F_r}=A$),
        then we can identify $V_{\mathscr{F}_{A_{\F_r}}}$ with $V_{\mathscr{F}_{\F_r[\theta]}}$. This follows from the observation that the construction $\mathscr{F} \to V_{\mathscr{F}}$ only depends on the the localization of the $F$-module at $\infty$. In this special scenario, we can view both $D^F M_{\mathscr{F}_{A_{\F_r}}}$ and $M_{\mathscr{F}_{\F_r[\theta]}}$ as subsets of the same space: $V_{\mathscr{F}_{A_{\F_r}}}$. The two $L$-functions
        are computed by the Fredholm determinants of $\Theta_{\F_r[\theta]}=U_p\circ E_{\mathscr{F}_{\F_r[\theta]}}$ acting on different subspaces of $V_{\mathscr{F}_{A_{\F_r}}}$. The key idea is that the matrices used to compute the two Fredholm determinants are very similar. 
    \end{remark}

    \subsection{A matrix representation\texorpdfstring{ of $U_p \circ E_{\mathscr{F}_{\F_r[\theta]}}$ on $ D^F M_{\mathscr{F}_{A_{\F_r}}}$}{}} \label{ss: comparing affine line case}
    In this subsection we choose a basis for $ D^F M_{\mathscr{F}_{A_{\F_r}}}$
    and describe the matrix of $U_p \circ E_{\mathscr{F}_{\F_r[\theta]}}$
    in terms of this basis.

    \subsubsection{Choice of basis} \label{ss: choice of basis}
    In
    \S \ref{ss: initial setup} we introduced a basis $\{f_1,\dots,f_b\}$ of $\mathscr{F}_{A_{\F_r}}$ whose corresponding Frobenius matrix is $E_{\mathscr{F}_{A_{\F_r}}}$. Then $V_{\mathscr{F}_{A_{\F_r}}}$
    is a free $K_{\F_r} \widehat{\otimes}_{\F_p} \bfV^\circ$-module with basis $\{e_1,\dots,e_b\}$, where $e_i=f_i \otimes \frac{d\theta}{\theta}$. The maps $\pr$ and
    $U_p$ are defined with respect to this basis as in \S \ref{ss: the trace formula}. Using this basis we have
    \begin{align*}
        D^F M_{\mathscr{F}_{A_{\F_r}}} = \bigoplus_{i=1}^b \delta_i^F\Omega_{A_{\F_r} \widehat{\otimes} \bfV^\circ} f_i.
    \end{align*}
    We let $\pr_i$ denote the restriction of $\pr$ to the $i$-th summand.
    \begin{lemma} \label{l: the projection is surjective}
    	The projection map \[\pr_i: \delta_i^F\Omega_{A_{\F_r} \widehat{\otimes} \bfV^\circ} f_i \to \theta \bfV_{\F_r}\langle \theta \rangle e_i \] is surjective
    	and the kernel is free over $\bfV$ of rank $g+d-1$. 
    \end{lemma}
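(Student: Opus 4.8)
The plan is to follow the strategy of Lemma~\ref{p: proj is surjective}, reducing everything modulo the maximal ideal of $\bfV$ to a statement about differentials on the base-changed curve $X_{\F_r} := X \times_{\F_q} \F_r$. First I would record that $M := \delta_i^F \Omega_{A_{\F_r} \widehat{\otimes} \bfV^\circ} f_i$ and the target $\theta \bfV_{\F_r}\langle \theta\rangle e_i$ are both $\pi$-torsion-free and $\pi$-adically complete and separated. By Lemma~\ref{l: compare affine line frob to general case} we have $d_i \equiv 1 \bmod \pi^{1/p^h}$, and since $F$ fixes $\pi^{1/p^h} \in \bfV^\circ$ this gives $\delta_i^F \equiv 1 \bmod \pi^{1/p^h}$; hence reducing modulo $\pi^{1/p^h}$ turns $\pr_i$ into the truncation map
\[
\overline{\pr}_i : \Omega_{A_{\F_r}} \longrightarrow \theta\,(\F_r \otimes_{\F_q} \F_r)[\theta]\, e_i,
\]
where $\Omega_{A_{\F_r}} = \Omega_{A_{\F_r}/\F_r}$ is the module of differentials of the affine curve $X_{\F_r} - \infty_{\F_r}$. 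By successive approximation, surjectivity of $\overline{\pr}_i$ lifts to surjectivity of $\pr_i$; the snake lemma then identifies $\ker(\pr_i) \otimes_\bfV \F_r$ with $\ker(\overline{\pr}_i)$, and since $\bfV$ is a discrete valuation ring, a $\pi$-torsion-free $\pi$-adically complete module with finite-dimensional reduction is free of that rank. Thus it suffices to show $\overline{\pr}_i$ is surjective with $\dim_{\F_r}\ker(\overline{\pr}_i) = g+d-1$.

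The key geometric point is that, because $\infty$ has residue field $\F_r$ with $\F_r/\F_q$ separable of degree $d$, on $X_{\F_r}$ the divisor $\infty_{\F_r}$ splits as a sum $\infty^{(1)} + \cdots + \infty^{(d)}$ of $d$ distinct $\F_r$-rational points, and $K_{\F_r} = \F_r \otimes_{\F_q} K \cong \prod_{l=1}^d K_{\infty^{(l)}}$. Unwinding the definition of $\pr$ (it retains, at each of the $d$ places, the coefficients of $\theta^k$ for $k \geq 1$), one checks exactly as in the rational case that $\omega \in \Omega_{A_{\F_r}}$ lies in $\ker(\overline{\pr}_i)$ if and only if $\omega$ extends to a global differential on $X_{\F_r}$ with at worst a simple pole at each $\infty^{(l)}$; that is, $\ker(\overline{\pr}_i) \cong H^0(X_{\F_r}, \Omega_{X_{\F_r}}(\infty_{\F_r}))$. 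By Serre duality this has dimension $h^1(X_{\F_r}, \calO(-\infty_{\F_r}))$, and Riemann--Roch for $\calO(-\infty_{\F_r})$, together with $h^0(\calO(-\infty_{\F_r})) = 0$ (as $\deg(-\infty_{\F_r}) = -d < 0$), gives $h^1 = g - 1 + d$.

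Surjectivity I would handle by the same pole-order filtration. For each $N \geq 1$, restricting to differentials with pole of order $\leq N$ along $\infty_{\F_r}$ yields a map $H^0(X_{\F_r}, \Omega_{X_{\F_r}}(N\infty_{\F_r})) \to \bigoplus_{k=1}^{N-1} (\F_r \otimes_{\F_q} \F_r)$ recording the tuples of $\theta^k$-coefficients, whose kernel is exactly $H^0(X_{\F_r}, \Omega_{X_{\F_r}}(\infty_{\F_r}))$. Riemann--Roch gives $h^0(\Omega_{X_{\F_r}}(N\infty_{\F_r})) = g - 1 + Nd$, so the image has dimension $(g-1+Nd) - (g+d-1) = (N-1)d$, which equals $\dim_{\F_r}\bigoplus_{k=1}^{N-1}(\F_r\otimes_{\F_q}\F_r)$; hence the map is onto, and letting $N \to \infty$ gives surjectivity of $\overline{\pr}_i$. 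I expect the only real subtlety to be the bookkeeping in the reduction step — tracking the twist $\delta_i^F$ and the coefficient ring $\bfV_{\F_r}$ — together with the observation that $\infty$ splits completely over $\F_r$, which is precisely what makes the kernel have rank $g + d - 1$ rather than the rank $g$ of the rational case.
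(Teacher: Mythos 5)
Your proposal is correct and follows essentially the same route as the paper, whose proof of this lemma is simply the remark that it is ``almost identical'' to Lemma \ref{p: proj is surjective}: reduce modulo the maximal ideal using $\delta_i^F \equiv 1$, identify the kernel with differentials having at worst simple poles along $\infty_{\F_r}$ (which splits into $d$ points over $\F_r$), and apply Riemann--Roch. You have merely written out the details that the paper leaves implicit, including the correct accounting for why the rank is $g+d-1$ rather than $g$.
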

	\begin{proof}
		The proof is almost identical to that of Lemma \ref{p: proj is surjective}.
	\end{proof}

    Let $\xi_1,\dots,\xi_{bd}$ be a basis of $\F_r \otimes_{\F_q} \F_r$ over
    $\F_p$.
    Recall that $J_{bd}=\Z/b\Z \times \{1,\dots,bd\} \times \Z_{>0}$. For $k=(i,j,m) \in J_{bd}$ we choose $g_k$ in $\delta_i^F\Omega_{A_{\F_r} \widehat{\otimes} \bfV^\circ} f_i$ such that
	\begin{align*}
		\pr(g_k) &= \xi_j^{p^{-i}}\theta^{m}e_i.
	\end{align*}
	The existence of $g_k$ is guaranteed by Lemma \ref{l: the projection is surjective}. We define the set
	\begin{align*}
		B_i^{mero} &= \{g_k e_i\}_{k \in \{i\} \times \{1,\dots,d\}\times \Z_{>0}}.
	\end{align*}
    Let $I = \Z/b\Z \times \{1,\dots, b(g+d-1)\}$. Let $B_i^{hol}$ be a basis of $\ker(\pr_i)$ over $\bfV^\circ$. Then by Lemma \ref{l: the projection is surjective} the set $B_i^{hol}$ has $b(g+d-1)$ elements. Thus, we can regard $B_i^{hol}$ as a set indexed by $\{i\} \times \{1,\dots,b(g+d-1)\}\subset I$. From Lemma \ref{l: the projection is surjective}
    we see that
    \begin{align*}
	    B := \bigcup_{i=1}^b (B_i^{mero} \cup B_i^{hol})
    \end{align*} 
    is a pole order basis of $D^F M_{\mathscr{F}_{A_{\F_r}}}$ over $\bfV^\circ$
    indexed by $I\sqcup J_{bd}$. 

    \begin{proposition}
        \label{p: we can compute using matrix}
        Let $\Delta$ be the matrix of $U_p \circ E_{\mathscr{F}_{\F_r[\theta]}}$ in terms of the basis $B$. Then
        \begin{align*}
            \NP_\pi(L(\rho_{A_{\F_r}}^{\otimes y}, x^b)^b)&= \NP_\pi(\det(1-x\Delta)).
        \end{align*}
    \end{proposition}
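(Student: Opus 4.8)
The plan is short: combine Proposition~\ref{proposition: computing L function via A0} with Proposition~\ref{prop: Fredholm can be computed using pole-ordered bases}. Proposition~\ref{proposition: computing L function via A0} already identifies $\NP_\pi(L(\rho_{A_{\F_r}}^{\otimes y},x^b)^b)$ with $\NP_\pi$ of the characteristic series $\det_{\bfV^\circ}(1-xU_p\circ E_{\mathscr{F}_{\F_r[\theta]}}\mid D^F M_{\mathscr{F}_{A_{\F_r}}})$; in particular, via Corollary~\ref{cor: a-th root trick} and \eqref{eq: local twisting at infinity} it is already understood that $\Theta := U_p\circ E_{\mathscr{F}_{\F_r[\theta]}}$ is a $p$-Dwork operator on the full subspace $D^F M_{\mathscr{F}_{A_{\F_r}}}\subset V_{\mathscr{F}_{A_{\F_r}}}$. (Concretely: the $F$-equivalence $E_{\mathscr{F}_{\F_r[\theta]}} = D E_{\mathscr{F}_{A_{\F_r}}}D^{-F}$ of \eqref{eq: local twisting at infinity} identifies $\mathscr{F}_{\F_r[\theta]}$ and $\mathscr{F}_{A_{\F_r}}$ over $K_{\F_r}\widehat{\otimes}_{\F_p}\bfV^\circ$ after localizing at $\infty$, so that $D^F M_{\mathscr{F}_{A_{\F_r}}}$ corresponds to $M_{\mathscr{F}_{A_{\F_r}}}$ and $\Theta$ to $\Theta_{\mathscr{F}_{A_{\F_r}}}$, which is a $p$-Dwork operator by Lemma~\ref{lemma: M is overconvergent-newname}.) So the only remaining task is to compute this characteristic series against a concrete basis.

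This is exactly what Proposition~\ref{prop: Fredholm can be computed using pole-ordered bases} does, and its hypotheses were arranged in \S\ref{ss: choice of basis}. There, Lemma~\ref{l: the projection is surjective} shows that $\pr$ restricts on $D^F M_{\mathscr{F}_{A_{\F_r}}}$ to a surjection onto $\bigoplus_{i=1}^{b}\theta\bfV_{\F_r}\langle\theta\rangle e_i$ whose kernel $\bigoplus_{i=1}^{b}\ker(\pr_i)$ is free over $\bfV$ of rank $b(g+d-1)$, so $D^F M_{\mathscr{F}_{A_{\F_r}}}$ is full; and the set $B$, assembled from the lifts $g_k$ of the monomials $\xi_j^{p^{-i}}\theta^m e_i$ together with $\bfV^\circ$-bases $B_i^{hol}$ of the $\ker(\pr_i)$, is by construction a pole order basis of $D^F M_{\mathscr{F}_{A_{\F_r}}}$ over $\bfV^\circ$. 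Hence Proposition~\ref{prop: Fredholm can be computed using pole-ordered bases} yields
\[
	\det(1-x\Delta) = \det\nolimits_{\bfV^\circ}\bigl(1-xU_p\circ E_{\mathscr{F}_{\F_r[\theta]}}\mid D^F M_{\mathscr{F}_{A_{\F_r}}}\bigr),
\]
and substituting into Proposition~\ref{proposition: computing L function via A0} gives $\NP_\pi(L(\rho_{A_{\F_r}}^{\otimes y},x^b)^b) = \NP_\pi(\det(1-x\Delta))$, which is the claim.

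The delicate point is not this assembly but the fact --- used implicitly via Proposition~\ref{proposition: computing L function via A0} --- that $U_p\circ E_{\mathscr{F}_{\F_r[\theta]}}$ genuinely stabilizes the twisted subspace $D^F M_{\mathscr{F}_{A_{\F_r}}}$ and has there the same characteristic series as $\Theta_{\mathscr{F}_{A_{\F_r}}}$ on $M_{\mathscr{F}_{A_{\F_r}}}$. Unwinding, for $m\in M_{\mathscr{F}_{A_{\F_r}}}$ one computes $\Theta(D^F m) = U_p(E_{\mathscr{F}_{\F_r[\theta]}}D^F m) = U_p(D\,E_{\mathscr{F}_{A_{\F_r}}}m)$, and then uses that $E_{\mathscr{F}_{A_{\F_r}}}$ preserves $M_{\mathscr{F}_{A_{\F_r}}}$ (proof of Lemma~\ref{lemma: M is overconvergent-newname}), that $U_p$ acts on each $f_i$-component through the Cartier operator, and the Dwork projection relation $U_p(g^F h) = g\,U_p(h)$, to see that the result again lies in $D^F M_{\mathscr{F}_{A_{\F_r}}}$. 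I expect this invariance --- along with being careful that the $U_p$ defining $\Delta$ (the one built from the $e_i$-basis of $V_{\mathscr{F}_{A_{\F_r}}}$) is the same $U_p$ occurring in Proposition~\ref{proposition: computing L function via A0} --- to be the only real obstacle; the rest is the bookkeeping already carried out in \S\ref{ss: choice of basis}.
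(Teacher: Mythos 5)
Your proposal is correct and is essentially the paper's own proof: the paper likewise deduces the statement by combining Proposition~\ref{proposition: computing L function via A0} with Proposition~\ref{prop: Fredholm can be computed using pole-ordered bases} and the fact that $B$ is a pole order basis of $D^F M_{\mathscr{F}_{A_{\F_r}}}$ over $\bfV^\circ$. Your closing discussion of why $U_p\circ E_{\mathscr{F}_{\F_r[\theta]}}$ stabilizes the twisted subspace is a reasonable sanity check but is already subsumed in Proposition~\ref{proposition: computing L function via A0}, so no further argument is needed.
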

    \begin{proof}
        This follows from Proposition \ref{proposition: computing L function via A0}, Proposition \ref{prop: Fredholm can be computed using pole-ordered bases}, and the fact that $B$ is a pole ordered basis of $D^F M_{\mathscr{F}_{A_{\F_r}}}$ over $\bfV^\circ$.
    \end{proof}
    \subsubsection{Estimates for the matrix \texorpdfstring{$\Delta$}{Delta}}
    The matrix $\Delta$ is a square matrix indexed by $I \sqcup J_{bd}$.
    The following result is a generalization of Proposition \ref{p: matrix entries}.
    \begin{lemma} \label{l: esitames of higher genus matrix}
    	Fix $j$ with $1 \leq j \leq bd$. Let $k_1 \in J_{bd}$ and
    	let $k_2 \in I \sqcup J_{bd}^{(j)}$. 
    	\begin{enumerate}
    		\item If $\bfi{k_1} -1 \neq \bfi{k_2}$, then $\Delta_{k_1,k_2}=0$.
    		\item If $\bfi{k_1} -1 = \bfi{k_2} $, then 
    		\begin{align*}
	    		\Delta_{k_1,k_2} &\equiv \begin{cases}
	    			a_{\bfi{k_1},p|k_1|-|k_2|} \mod \pi^{y_{\bfi{k_1}}(p|k_1|)} & \text{ if }k_1,k_2 \in J_{bd}^{(j)} \\
	    			0 \mod \pi^{y_{\bfi{k_1}}(p|k_1|)} & \text{ otherwise}
	    		\end{cases}.
    		\end{align*}
    	\end{enumerate}
    \end{lemma}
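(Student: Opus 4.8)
The plan is to compute $\Delta_{k_1,k_2}$ by hand, imitating the proofs of Proposition \ref{p: matrix entries} and Proposition \ref{p: F_p congruence for higher genus matrix} and carrying the extra $\xi$-indices along. Write $k_1=(i_1,j_1,m_1)$, and write $k_2=(i_2,j_2,m_2)$ when $k_2\in J_{bd}$ (otherwise put $i_2=\bfi{k_2}$). Since $B$ is a pole-order basis, $\Delta_{k_1,k_2}$ is the coefficient of $\xi_{j_1}^{p^{-i_1}}\theta^{m_1}e_{i_1}$ in $\pr\!\big((U_p\circ E_{\mathscr{F}_{\F_r[\theta]}})(g_{k_2})\big)$. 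I will use two facts about $\beta_{i}=\sum_{n\geq0}a_{i,n}\theta^n$ established earlier: from \eqref{eq: product description of beta} the coefficients $a_{i,n}$ lie in $\F_p\llbracket\pi\rrbracket\subseteq\bfV^\circ$, so $U_p$ fixes them; and $v(a_{i,n})=y_i(n)$ for $n\geq1$ by Lemma \ref{lemma: valuations of coefficients of Yi}, with $a_{i,0}=1$, $a_{i,n}=0$ for $n<0$, and $y_i$ non-decreasing.

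Part (1) is immediate from the block-cyclic shape of $E_{\mathscr{F}_{\F_r[\theta]}}$. Each $g_{k_2}$ lies in the single summand $\delta_{i_2}^F\Omega_{A_{\F_r}\widehat\otimes_{\F_p}\bfV^\circ}f_{i_2}$ of $D^F M_{\mathscr{F}_{A_{\F_r}}}$, i.e.\ is a multiple of $e_{i_2}$, and $U_p\circ E_{\mathscr{F}_{\F_r[\theta]}}$ sends $v\,e_{i_2}\mapsto U_p(\beta_{i_1}v)\,e_{i_1}$ (this is exactly the computation in the proof of Proposition \ref{p: matrix entries}, reading off that the $e_{i-1}$-component is carried to the $e_i$-component by multiplication by $\beta_i$). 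Hence $(U_p\circ E_{\mathscr{F}_{\F_r[\theta]}})(g_{k_2})$ is a multiple of $e_{i_1}$ only if $i_1\equiv i_2+1\pmod b$, which is part (1).

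For part (2), assuming $\bfi{k_1}-1=\bfi{k_2}$, I would first decompose $g_{k_2}=\xi_{j_2}^{p^{-i_2}}\theta^{m_2}e_{i_2}+h$ (the first term absent if $k_2\in I$), where $h=\sum_{l\leq0}c_l\theta^l e_{i_2}$: the $\theta$-degree of $h$ is $\leq0$ since $\pr$ retains precisely the terms of $\theta$-degree $\geq1$, and $v(c_l)\geq0$ because $\delta_{i_2}$ is a unit congruent to $1$ modulo $\pi^{1/p^h}$ (Lemma \ref{l: compare affine line frob to general case}) and $\Omega_{A_{\F_r}\widehat\otimes_{\F_p}\bfV^\circ}$ has $\pi$-integral coefficients. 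Applying $U_p\circ E_{\mathscr{F}_{\F_r[\theta]}}$ and extracting the $\xi_{j_1}^{p^{-i_1}}\theta^{m_1}e_{i_1}$-coefficient: the $h$-part contributes a convergent sum of terms $a_{i_1,pm_1-l}\cdot(\text{$\pi$-integral})$ over $l\leq0$, and since $pm_1-l\geq pm_1$ and $y_{i_1}$ is non-decreasing, each has $\pi$-valuation at least $y_{i_1}(pm_1)$; this already gives the ``otherwise'' congruence (i.e.\ $k_2\in I$, or $k_2\in J_{bd}^{(j)}$ with $j_1\neq j$). When instead $k_1,k_2\in J_{bd}^{(j)}$ with $j_1=j_2=j$, the leading term contributes $\sum_{n\geq0}U_p(a_{i_1,n}\xi_j^{p^{-i_2}}\theta^{n+m_2})e_{i_1}$; as $U_p$ fixes $a_{i_1,n}$ and sends $\xi_j^{p^{-i_2}}\theta^{n+m_2}$ to $\xi_j^{p^{-i_1}}\theta^{(n+m_2)/p}$ when $p\mid n+m_2$, only $n=pm_1-m_2$ feeds $\theta^{m_1}$, giving exactly $a_{i_1,pm_1-m_2}$; and when $j_1\neq j_2$ the leading term produces only $\xi_{j_2}$-terms, hence nothing. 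Adding the two contributions yields part (2).

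I do not expect a genuinely hard step: this is a bookkeeping lemma. The one point needing care is the decomposition of $g_{k_2}$ used in part (2) — namely that its lower-order part $h$ has $\theta$-degree $\leq0$ and $\pi$-integral coefficients — since the whole valuation estimate hinges on that integrality, which in turn relies on the local twisting factor $\delta_{i_2}$ from \S \ref{ss: initial setup} being a unit congruent to $1$. The other mildly delicate point is tracking the $\xi$-index through $U_p$: it is the $p$-th-root behaviour of $U_p$ on the $\xi$'s, together with $a_{i,n}\in\bfV^\circ$, that forces the off-diagonal-in-$j$ entries of $\Delta$ into the high-valuation regime modulo $\pi^{y_{i_1}(pm_1)}$.
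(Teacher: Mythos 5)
Your proof is correct and takes essentially the same route as the paper's: decompose $g_{k_2}$ into its leading term $\xi_{j_2}^{p^{-i_2}}\theta^{m_2}e_{i_2}$ plus a tail supported in $\theta$-degrees $\leq 0$ with integral coefficients, apply $U_p \circ E_{\mathscr{F}_{\F_r[\theta]}}$, and bound the tail's contribution by $\pi^{y_{\bfi{k_1}}(p|k_1|)}$ via Lemma \ref{lemma: valuations of coefficients of Yi}. You are merely more explicit than the paper about the $\xi$-index bookkeeping through $U_p$ and about why the tail is $\pi$-integral (the unit $\delta_{i_2}\equiv 1$), which the paper leaves implicit.
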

	\begin{proof}
		The first statement follows from the cyclic shape of $E_{\mathscr{F}_{\F_r[\theta]}}$. For the second statement,
		we first assume $k_2 \in J_{bd}^{(j)}$ and write $k_2=(i-1,j,m_2)$. Since $k_1 \in J_{bd}$ we may write
		$k_1 =(i,j',m_1)$. By the definition of $g_{k_2}$ we know 
		\begin{align*}
			g_{k_2} &= \Big[ \xi_{j}^{p^{-(i-1)}} \theta^{m_2} + \sum_{n=0}^\infty c_n \theta^{-n}\Big]  e_{i-1},
		\end{align*}
		where $c_n \in \bfV_{\F_r}$. By the definition of $U_p \circ E_{\mathscr{F}_{\F_r[\theta]}}$ we have 
		\begin{align}\label{eq: computation of operator higher genus}
			U_p \circ E_{\mathscr{F}_{\F_r[\theta]}}(g_{k_2}) &= U_p\Bigg ( \beta_i \Big[ \xi_{j}^{p^{-(i-1)}} \theta^{m_2}  + \sum_{n=0}^\infty c_n \theta^{-n}\Big]\Bigg )e_{i}.
		\end{align}
		By expanding $\beta_i=\sum a_{i,n}$ and applying $U_p$, we see that the coefficient of $\theta^{m_1}e_{i}$ in the right hand side of \eqref{eq: computation of operator higher genus} is 
		\begin{align*}
			a_{i,pm_1- m_2} \xi_{j}^{p^{-i}} + \sum_{n=1}^\infty  a_{i,pm_1 + n}c_{n} & \equiv a_{i,pm_1 - m_2} \xi_{j}^{p^{-i}} \mod \pi^{y_{\bfi{k_1}}(p|k_1|)}.
		\end{align*}
		This proves the second statement for $k_2 \in J_{bd}^{(j)}$. The proof of the second statement for $k_2 \in I$ is almost identical.
	\end{proof}

	\begin{corollary} \label{c: valuations of matrix for higher genus}
		Let $k_1 \in J_{bd}$ and
    	let $k_2 \in I \sqcup J_{bd}^{(j)}$. Assume that $\bfi{k_2} = \bfi{k_1}-1$.
		Then
		\begin{align*}
			v(\Delta_{k_1,k_2}) &= y_{\bfi{k_1}}(p|k_1|-|k_2|)~~ \text{ if }k_1,k_2 \in J_{bd}^{(j)} \text{ and }p|k_1|\geq |k_2|, \\
			v(\Delta_{k_1,k_2}) &\geq  y_{\bfi{k_1}}(p|k_1|) ~~\text{otherwise}.
		\end{align*}
	\end{corollary}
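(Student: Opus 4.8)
The plan is to deduce this immediately from Lemma \ref{l: esitames of higher genus matrix}, using Lemma \ref{lemma: valuations of coefficients of Yi} to read off the valuations of the coefficients $a_{i,n}$ of $\beta_i$ and the strict monotonicity of the partial digit sums $y_i$ to ensure the leading $\pi$-adic term survives the congruences. Write $i_1 = \bfi{k_1}$, so that $i_1 - 1 = \bfi{k_2}$ by hypothesis. First I would record the two auxiliary facts I will use repeatedly: (i) since $d_i(n) > 0$ for every $n \geq 1$, one has $y_i(m) < y_i(m')$ whenever $0 \leq m < m'$; and (ii) since $\beta_i$ has constant term $a_{i,0} = 1$ and $y_i(0) = 0$, the identity $v(a_{i,n}) = y_i(n)$ of Lemma \ref{lemma: valuations of coefficients of Yi} in fact holds for all $n \geq 0$.

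For the equality clause, I would assume $k_1, k_2 \in J_{bd}^{(j)}$ and $p|k_1| \geq |k_2|$. Since $|k_1|, |k_2| \geq 1$, the index $p|k_1| - |k_2|$ is a nonnegative integer strictly smaller than $p|k_1|$, so by (i) and (ii),
\[ v\big(a_{i_1,\, p|k_1| - |k_2|}\big) = y_{i_1}(p|k_1| - |k_2|) < y_{i_1}(p|k_1|). \]
The second part of Lemma \ref{l: esitames of higher genus matrix} gives $\Delta_{k_1,k_2} \equiv a_{i_1,\, p|k_1| - |k_2|} \bmod \pi^{y_{i_1}(p|k_1|)}$; because the right-hand side has valuation below the modulus, this congruence determines $v(\Delta_{k_1,k_2}) = y_{i_1}(p|k_1| - |k_2|)$.

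For the inequality clause, I would split into the configurations covered by the ``otherwise'' of Lemma \ref{l: esitames of higher genus matrix}: $k_2 \in I$; $k_1 \notin J_{bd}^{(j)}$; and $k_1, k_2 \in J_{bd}^{(j)}$ with $p|k_1| < |k_2|$. In the first two cases the lemma directly yields $\Delta_{k_1,k_2} \equiv 0 \bmod \pi^{y_{i_1}(p|k_1|)}$; in the third, $p|k_1| - |k_2| < 0$ forces $a_{i_1,\, p|k_1|-|k_2|} = 0$, so the same congruence holds. In every case $v(\Delta_{k_1,k_2}) \geq y_{i_1}(p|k_1|)$, as claimed.

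I do not expect a genuine obstacle here: all of the substance is already in Lemma \ref{l: esitames of higher genus matrix}, which itself rests on the explicit cyclic shape of $E_{\mathscr{F}_{\F_r[\theta]}}$ and the pole-order basis of \S \ref{ss: choice of basis}. The one point needing a moment's attention is the strict inequality $y_{i_1}(p|k_1| - |k_2|) < y_{i_1}(p|k_1|)$: this is exactly what guarantees the leading coefficient of $\Delta_{k_1,k_2}$ is not swallowed by the error term in the congruence, and it is immediate from $|k_2| \geq 1$ and the positivity of the $d_i(n)$. The argument is the direct analogue of the deduction of Corollary \ref{c: F_p valuation for higher genus matrix} in the toy case.
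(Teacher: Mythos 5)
Your proposal is correct and follows exactly the paper's route: the paper's own proof is the one-line observation that the corollary follows from Lemma \ref{l: esitames of higher genus matrix} by taking $\pi$-adic valuations, remembering that $a_{i,n}=0$ for $n<0$. Your elaboration of why the leading term survives the congruence (the strict inequality $y_{\bfi{k_1}}(p|k_1|-|k_2|) < y_{\bfi{k_1}}(p|k_1|)$, including the edge case $p|k_1|=|k_2|$) is exactly the implicit content of that sentence.
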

	\begin{proof}
		This follows from Lemma \ref{l: esitames of higher genus matrix} by taking $\pi$-adic valuations. Keep in mind
		that $a_{i,n}$ is defined to be zero for $n<0$.
	\end{proof}
    For this next corollary, we recall the matrix $\Psi^\circ$ defined in \eqref{eq: matrix entries one copy}. We also recall from \S \ref{ss: standard indexing sets} that $\iota:J_{bd} \to J_1$ is the map sending
    $(i,j,m)$ to $(i,m)$. 
    \begin{corollary}
        \label{c: valuations of matrix higher genus permutation}
        Let $(S,\sigma)$ be an enriched permutation of $J_{bd}^{(j)}$. Then we have $
            v(\sigma, \Delta) = v(\iota_*\sigma, \Psi^{\circ})$.
            More precisely,
        \begin{align*}
            \prod_{k \in \iota(S)} \Psi^{\circ}_{k,\iota_*\sigma(k)} \equiv \prod_{k \in S} \Delta_{k,\sigma(k)} \mod \pi^{v(\iota_*\sigma, \Psi^{\circ}) + 1}.
        \end{align*}
    \end{corollary}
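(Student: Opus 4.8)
The plan is to reduce everything to the entrywise congruences of Lemma \ref{l: esitames of higher genus matrix} and then multiply them, tracking the error terms. Since $\sigma$ permutes $J_{bd}^{(j)}$, for each $k\in S$ both $k$ and $\sigma(k)$ lie in $J_{bd}^{(j)}$. If $\bfi{\sigma(k)}\not\equiv\bfi{k}-1\pmod b$ for some $k$, then $\Delta_{k,\sigma(k)}=0$ by Lemma \ref{l: esitames of higher genus matrix}(1) while $\Psi^\circ_{\iota(k),\iota_*\sigma(k)}=0$ by \eqref{eq: matrix entries one copy}, so both products vanish and there is nothing to prove. So I would assume $\sigma$ is rotational and, moreover, that each factor $\Psi^\circ_{\iota(k),\iota_*\sigma(k)}$ is nonzero, i.e. $p|k|\geq|\sigma(k)|$ for every $k\in S$ (equivalently $v(\iota_*\sigma,\Psi^\circ)<\infty$), the case relevant for the applications. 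Using that $\iota$ preserves $\bfi{\cdot}$ and $|\cdot|$, the first branch of Lemma \ref{l: esitames of higher genus matrix}(2) then gives, for every $k\in S$,
\[
\Delta_{k,\sigma(k)}\equiv a_{\bfi{k},\,p|k|-|\sigma(k)|}=\Psi^\circ_{\iota(k),\iota_*\sigma(k)}\pmod{\pi^{\,y_{\bfi{k}}(p|k|)}},
\]
that is, $\Delta_{k,\sigma(k)}=\Psi^\circ_{\iota(k),\iota_*\sigma(k)}+\pi^{\,y_{\bfi{k}}(p|k|)}r_k$ for some $r_k$ in the coefficient ring.

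The second step is to check that each error term $\pi^{\,y_{\bfi{k}}(p|k|)}r_k$ has strictly larger valuation than the main term $\Psi^\circ_{\iota(k),\iota_*\sigma(k)}$. By Lemma \ref{proposition: valuations of matrix affine line} the main term has valuation $y_{\bfi{k}}(p|k|-|\sigma(k)|)$, and since $|k|\geq1$, $|\sigma(k)|\geq1$,
\[
y_{\bfi{k}}(p|k|)-y_{\bfi{k}}(p|k|-|\sigma(k)|)=\sum_{n=p|k|-|\sigma(k)|+1}^{p|k|}d_{\bfi{k}}(n)\geq d_{\bfi{k}}(p|k|)\geq 1,
\]
so $v\!\left(\pi^{\,y_{\bfi{k}}(p|k|)}r_k\right)\geq y_{\bfi{k}}(p|k|)\geq v\!\left(\Psi^\circ_{\iota(k),\iota_*\sigma(k)}\right)+1$. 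This is precisely Corollary \ref{c: valuations of matrix for higher genus}, which I would simply cite; its only input is that, $y$ being $q$-full, $d_{\bfi{k}}(n)\geq1$ for all $n\geq1$, so $y_{\bfi{k}}$ is strictly increasing on the positive integers.

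Finally I would multiply. Expanding $\prod_{k\in S}\bigl(\Psi^\circ_{\iota(k),\iota_*\sigma(k)}+\pi^{\,y_{\bfi{k}}(p|k|)}r_k\bigr)$, the leading term is $\prod_{k\in S}\Psi^\circ_{\iota(k),\iota_*\sigma(k)}=\prod_{k\in\iota(S)}\Psi^\circ_{k,\iota_*\sigma(k)}$, of valuation $v(\iota_*\sigma,\Psi^\circ)$ by definition, and any other term replaces the factors indexed by some nonempty $T\subseteq S$ by their error terms, hence has valuation at least
\[
\sum_{k\in T}y_{\bfi{k}}(p|k|)+\sum_{k\in S\setminus T}v\!\left(\Psi^\circ_{\iota(k),\iota_*\sigma(k)}\right)\geq v(\iota_*\sigma,\Psi^\circ)+|T|\geq v(\iota_*\sigma,\Psi^\circ)+1
\]
by the second step. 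Hence $\prod_{k\in S}\Delta_{k,\sigma(k)}\equiv\prod_{k\in\iota(S)}\Psi^\circ_{k,\iota_*\sigma(k)}\pmod{\pi^{\,v(\iota_*\sigma,\Psi^\circ)+1}}$, which is the stated congruence, and comparing valuations gives $v(\sigma,\Delta)=v(\iota_*\sigma,\Psi^\circ)$.

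The argument is essentially bookkeeping, and the one step that genuinely carries content — the step I would expect to demand the most care — is the strict gap $y_{\bfi{k}}(p|k|)>y_{\bfi{k}}(p|k|-|\sigma(k)|)$ and, more to the point, the verification that this gap is by at least $1$, so that after multiplying finitely many congruences with \emph{different} $\pi$-power moduli the total error still lies in $\pi^{\,v(\iota_*\sigma,\Psi^\circ)+1}$. Since that inequality is already packaged as Corollary \ref{c: valuations of matrix for higher genus} (equivalently, Lemma \ref{proposition: valuations of matrix affine line} together with strict monotonicity of the $y_{\bfi{k}}$), in practice the whole proof reduces to combining Lemma \ref{l: esitames of higher genus matrix}, Corollary \ref{c: valuations of matrix for higher genus}, and the elementary bound on cross terms above.
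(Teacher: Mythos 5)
Your proof is correct and is precisely the intended derivation: the paper states this as an immediate corollary of Lemma \ref{l: esitames of higher genus matrix} and Corollary \ref{c: valuations of matrix for higher genus} with no written proof, and your entrywise congruence plus the cross-term bookkeeping (using that $y_{\bfi{k}}(p|k|)\geq y_{\bfi{k}}(p|k|-|\sigma(k)|)+1$ because each $d_{\bfi{k}}(n)\geq 1$) is exactly the argument being left implicit. Your restriction to the case $v(\iota_*\sigma,\Psi^\circ)<\infty$ is also the right reading of the statement, since for rotational but non-$p$-bounded $\sigma$ both sides of the asserted equality degenerate and every application in the paper (e.g.\ to the $p$-bounded permutations $\Sigma_c$) falls in the case you treat.
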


    \begin{corollary}\label{c: ordinary determinant claim}
        Assume that $X$ is ordinary. 
        Then $\det(\Delta_{I \times I}) \in (\bfV^{\circ})^\times$
        and $\det(1-s\Delta)$ reduces modulo $\pi$ to a polynomial of 
        degree $b^2(g+d-1)$. 
    \end{corollary}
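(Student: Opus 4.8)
The plan is to imitate the proof of Corollary \ref{c: F_p det of hol part is a unit} from the toy case, with some extra bookkeeping forced on us by the fact that now $\bfV^\circ$ need not equal $\F_p$ and that the restriction-of-scalars construction introduces a factor of $b$. First I would extract from Lemma \ref{l: esitames of higher genus matrix} and Corollary \ref{c: valuations of matrix for higher genus} the block structure of $\Delta$ modulo $\pi$: for $k_1 \in J_{bd}$ one has $\Delta_{k_1,k_2} \equiv 0 \bmod \pi^{y_{\bfi{k_1}}(p|k_1|)}$ whenever $k_2 \in I$ or $k_2$ lies in a block $J_{bd}^{(j)}$ different from that of $k_1$, while $\Delta_{k_1,k_2} \equiv a_{\bfi{k_1},\,p|k_1|-|k_2|} \bmod \pi^{y_{\bfi{k_1}}(p|k_1|)}$ inside a single block. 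Since $y_i(p|k_1|) \geq 1$ always, reducing mod $\pi$ shows $\Delta$ is block upper-triangular, with $\Delta_{I\times I}$ and $bd$ copies of $\Psi^\circ \bmod \pi$ along the diagonal. Running the product formula \eqref{eq: Fredholm determinant 2} exactly as in the toy case then gives
\begin{equation*}
    \det(1-s\Delta) \equiv \det(1-s\Delta_{I\times I})\cdot\det(1-s\Psi^\circ)^{bd} \bmod \pi .
\end{equation*}
By Corollary \ref{c: theorem for affine line} every slope of $\det(1-s\Psi^\circ)$ is a positive integer, so $\det(1-s\Psi^\circ)\equiv 1 \bmod \pi$, and hence $\det(1-s\Delta)\equiv\det(1-s\Delta_{I\times I})\bmod\pi$, a polynomial of degree at most $|I| = b^2(g+d-1)$.

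Next I would compute $\det(1-s\Delta)$ modulo the maximal ideal $\frakm=(\pi^{1/p^h})$ of $\bfV^\circ$ and use the ordinarity hypothesis to pin down its degree. Modulo $\frakm$ the $1$-unit character $\rho_{A_{\F_r}}^{\otimes y}$ becomes trivial, so $D \equiv I$ and $U_p\circ E_{\mathscr{F}_{\F_r[\theta]}}$ reduces to $U_p$ composed with the cyclic permutation matrix (all $\beta_i$ become $1$), acting on $\overline{M_{\mathscr{F}_{A_{\F_r}}}} = \bigoplus_{i=1}^{b}\Omega_{A_{\F_r}/\F_r}\bar f_i$. As $U_p$ is semilinear of order $b$ over $\F_r$, the same $b$-th-root computation as in Corollary \ref{cor: a-th root trick} yields
\begin{equation*}
    \det(1-s\Delta) \bmod \frakm \;=\; \det{}_{\F_r}\!\bigl(1-s^{b}U_q \mid \Omega_{A_{\F_r}/\F_r}\bigr)^{b},
\end{equation*}
where $U_q = U_p^{b}$ is the $q$-power Cartier operator. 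By the trace formula applied to the trivial $\tau$-module (compare Proposition \ref{p: we can compute using matrix} and Proposition \ref{proposition: computing L function via A0}), $\det_{\F_r}(1-tU_q\mid\Omega_{A_{\F_r}/\F_r})$ is the reduction modulo $p$ of the Weil zeta function of $\Spec(A_{\F_r}) = X_{\F_r}-\{d\text{ rational points}\}$; writing this zeta function as $P_{\F_r}(t)(1-t)^{d-1}/(1-rt)$ and using that $X$ (hence $X_{\F_r}$) is ordinary of $p$-rank $g$ while $1-rt\equiv 1\bmod p$, its reduction is a polynomial of degree exactly $g+d-1$. Thus $\det(1-s\Delta)\bmod\frakm$ is a polynomial of degree exactly $b\cdot b\cdot(g+d-1)=b^2(g+d-1)$.

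Finally I would combine the two computations: the congruence of the first step holds a fortiori modulo $\frakm$, so $\det(1-s\Delta_{I\times I})\bmod\frakm$ has degree exactly $|I|=b^2(g+d-1)$; since $\Delta_{I\times I}$ is an $|I|\times|I|$ matrix, its leading coefficient $\det(\Delta_{I\times I})$ must be a unit in $\bfV^\circ$. The degree statement for $\det(1-s\Delta)\bmod\pi$ is then immediate, since that reduction equals $\det(1-s\Delta_{I\times I})\bmod\pi$, a polynomial of degree at most $|I|$ whose leading coefficient is now known to be a unit, hence nonzero modulo $\pi$. The step I expect to be the main obstacle is the modulo-$\frakm$ identification: one must check that the identities relating $\det(1-s\Delta)$ to the relevant $L$-function — which are recorded in the body only at the level of Newton polygons — are honest equalities of power series after reduction modulo $\frakm$, and one must invoke cleanly that ordinarity forces the mod-$p$ reduction of the characteristic series of the Cartier operator to have degree equal to the $p$-rank $g$ plus the puncture correction $d-1$. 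Everything else is a faithful transcription of the toy-case argument.
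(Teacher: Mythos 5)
Your argument is correct and is essentially the paper's: both proofs first use Lemma \ref{l: esitames of higher genus matrix} and Corollary \ref{c: valuations of matrix for higher genus} to block-triangularize $\Delta$ modulo $\pi$ with $\Delta_{I\times I}$ and $bd$ copies of $\Psi^\circ$ on the diagonal, observe that $\det(1-x\Psi^\circ)\equiv 1 \bmod \pi$, and then pin down the degree of $\det(1-x\Delta_{I\times I})$ by identifying the reduction of $\det(1-x\Delta)$ with (a $b$-th power of a twist by $x\mapsto x^b$ of) the mod-$p$ Weil zeta function of the affine curve, whose numerator has degree exactly $g+d-1$ by ordinarity. The one point where you diverge is cosmetic: you reduce the \emph{operator} modulo the maximal ideal and recognize the Cartier operator, then invoke the trace formula for the trivial module, whereas the paper reduces the \emph{$L$-function} using the triviality of $\rho_{A_{\F_r}}^{\otimes y}$ modulo $\pi$ and the identity $L(\rho_{A_{\F_r}}^{\otimes y},x^b)^b=\det(1-x\Delta)$; these are the same computation viewed from the two ends of Theorem \ref{theorem: trace formula}. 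The subtlety you flag — that Proposition \ref{p: we can compute using matrix} and Corollary \ref{cor: a-th root trick} are stated only as equalities of Newton polygons — is real, and the paper silently upgrades them to power-series identities in its own proof; the upgrade is harmless here because modulo $\pi$ the norm $N_{\bfV/\bfV^\circ}$ becomes the $b$-th power and $\prod_{\zeta^b=1}\det(1-\zeta x\Theta)=\det(1-x^b\Theta^b)$, exactly as your sketch requires.
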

    \begin{proof}
        By Corollary \ref{c: valuations of matrix for higher genus} we know that for $k_1 \in J_{bd}$ 
        and $k_2 \in I$ we have $v(\Delta_{k_1,k_2})>0$. In particular, 
       \[\Delta\equiv  \begin{bmatrix}
        	\Delta_{I \times I} & * \\ 0 & \Delta_{J_{bd} \times J_{bd}} 
        \end{bmatrix}\mod \pi.\]
        Similarly, using Lemma \ref{l: esitames of higher genus matrix} we see that
        \[\Delta_{J_{bd} \times J_{bd}} \equiv \begin{bmatrix}
        	\Psi^\circ  & \dots & 0 \\ \vdots & \ddots & \vdots \\ 0 & \dots & \Psi^\circ
        \end{bmatrix} \mod \pi.\]
        As $\det(1-x\Psi^\circ) \equiv 1 \mod \pi$ we see that
        \begin{align}\label{eq: reducing matrix mod p 1}
	        \det(1-x\Delta) &\equiv \det(1-\Delta_{I \times I}) \mod \pi.
        \end{align}
        From Proposition \ref{proposition: computing L function via A0} we know
        $L(\rho_{A_{\F_r}}^{\otimes y},x^{b})^b = \det(1 - x\Delta)$. The reduction of $\rho_{A_{\F_r}}$ modulo $\pi$ is the trivial representation. In particular, we have
        \begin{align}\label{eq: reducing matrix mod p 2}
            L(\rho_{A_{\F_r}}^{\otimes y},x) \mod \pi &=  \zeta(\Spec(A),x) \mod p,
        \end{align}
        where $\zeta(\Spec(A),x)$ is the local Weil zeta function of $\Spec(A)$. 
        As $\Spec(A)=X-\infty$ and $X$ has genus $g$, we know $\zeta(\Spec(A),x) = \frac{P(x)}{1-px}$, where $P(x)$ is a polynomial with integer coefficients and degree $2g + d-1$. The ordinary assumption on $X$ implies that
        exactly $g + d-1$ of the roots of $P$ are $p$-adic units. In particular, the zeta function $\zeta(\Spec(A),x)$ reduces to a polynomial of degree $g+d-1$ modulo $p$. 
        Combining \eqref{eq: reducing matrix mod p 1} and \eqref{eq: reducing matrix mod p 2} gives 
        \begin{align*}
            \det(1 - x\Delta_{I \times I}) \mod \pi &= \zeta(\Spec(A),x^b)^b \mod p.
        \end{align*}
        We know the coefficient of $x^{b^2(g+d-1)}$ on the right side
        is a unit, by the previous remarks. Thus the coefficient of $x^{b^2(g+d-1)}$ in $\det(1 - x\Delta_{I \times I})$ is also a unit. The cardinality of $I$ is $b^2(g+d-1)$, so the coefficient of $x^{b^2(g+d-1)}$ in $\det(1 - x\Delta_{I \times I})$ is $\det(\Delta_{I \times I})$. This proves the corollary. 
        
    \end{proof}

    \subsection{\texorpdfstring{$v$}{v}-minimal enriched permutations of \texorpdfstring{$\Delta$}{Delta}} \label{ss: estimating the fredholm determinant}

    In this section we use Theorem \ref{theorem: unique minimal term} together with Lemma \ref{l: esitames of higher genus matrix}
    and its corollaries to
    determine the $v$-minimal enriched permutations of $\Delta$. Recall from Theorem \ref{theorem: unique minimal term} that $\Sigma_n$ is the unique enriched permutation
    of $J_1$ with $\bfs{\Sigma_n}=n$ that is $v$-minimal (it is also the unique $R$-minimal element) for $\Psi^\circ$. Let $\Sigma_n^{(j)}$ be the unique enriched permutation of $J_{bd}^{(j)}$ such that $\iota_*\Sigma_n^{(j)} = \Sigma_n$. 
    
    \begin{definition}
        Let $n_0\geq 0$, let $n=n_0+b^2(g+d-1)$, and write $n_0=c\cdot bd+r$ with $r<bd$. Let $\sigma$
        be an enriched permutation of $I\sqcup J_{bd}$ of size $n$.
        We say $\sigma$ is \emph{distinguished} if there exists
        $H \subset \{1,\dots,bd\}$ with $|H|=r$ such that
        \begin{align*}
            \sigma &= \tau \cdot \prod_{j \in H} \Sigma_{c+1}^{(j)} \cdot \prod_{j \not \in H} \Sigma_c^{(j)},
        \end{align*}
        where $\tau$ is a permutation of $I$ satisfying $v(\tau,\Delta)=0$.
    \end{definition}
    Our main result
     is the following:
    \begin{proposition}\label{p: minimal permutations of N}
        Let $n= n_0 + b^2(g + d -1)$ with $n_0\geq 0$. An enriched permutation
        of $I\sqcup J_{bd}$ with size $n$ is $v$-minimal for $\Delta$ if and only if it is distinguished.
    \end{proposition}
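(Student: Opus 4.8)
The plan is to reduce the problem, via the block-triangular structure of $\Delta$ modulo powers of $\pi$, to the combinatorial facts already established for $\Psi^\circ$. Recall from Corollary \ref{c: valuations of matrix for higher genus} that the $I\times I$ block of $\Delta$ has unit determinant (under the ordinary hypothesis, Corollary \ref{c: ordinary determinant claim}), the $J_{bd}\times I$ block is divisible by $\pi^{y_i(p|k_1|)}$, and each $J_{bd}^{(j)}$-block of the $J_{bd}\times J_{bd}$ part agrees with $\Psi^\circ$ after applying $\iota_*$ (Corollary \ref{c: valuations of matrix higher genus permutation}). First I would extend the $R$-value formalism to $I\sqcup J_{bd}$ exactly as in \S\ref{ss: higher genus over F_p}: set $R(k_1,k_2)=0$ if $k_1\in I$, $R(k_1,k_2)=y_{\mathbf{i}(k_1)}(p|k_1|)$ if $k_1\in J_{bd}$ and $k_2\in I$, and $R(k_1,k_2)=y_{\mathbf{i}(k_1)}(p|k_1|-|k_2|)$ if both lie in $J_{bd}$ (with the cyclic constraint $\mathbf{i}(k_2)=\mathbf{i}(k_1)-1$, and only when $j_1=j_2$ in the last case, else $+\infty$). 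By Corollary \ref{c: valuations of matrix for higher genus} we get $v(\sigma,\Delta)\geq R(\sigma)$, with equality when $\sigma$ factors as (a permutation of $I$ of $v$-value zero) times (a $p$-bounded decomposable permutation of $J_{bd}$ respecting the $J_{bd}^{(j)}$-blocks). So it suffices to identify the $R$-minimal permutations of size $n$ and to check they are all realized by genuinely $v$-minimal (i.e. $v$-value $=R$-value) permutations, which the $I$-block unit-determinant hypothesis guarantees.

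Next I would run the three-step reduction that mirrors \S\ref{ss: higher genus over F_p}. Step one: an $R$-minimal $\sigma$ must decompose into a permutation of $I$ times a permutation of $J_{bd}$ — this is the analogue of Lemma \ref{l: F_p decomposing permutation higher genus}, proved by the same swap argument: if $k_1\in I$ maps into $J_{bd}$ and $k_2\in J_{bd}$ maps into $I$, swapping the images strictly decreases $R$ because $R(k_1,-)\equiv 0$ while $R(k_2,\sigma(k_2))-R(k_2,\sigma(k_1))=y_{\mathbf{i}(k_2)}(p|k_2|)-y_{\mathbf{i}(k_2)}(p|k_2|-\sigma(k_1))>0$. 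Step two: the $J_{bd}$-part must itself be decomposable with respect to the blocks $J_{bd}^{(j)}$; this is exactly Proposition \ref{p: minimal permutations have all the nice properties} applied with $h=bd$, which already tells us an $R$-minimal rotational permutation of $J_{bd}$ is lexicographical, decomposable, and $p$-bounded. Step three: for the decomposed pieces, combine $R(\tau)=0$ on the $I$-part with the key fact that within the $J_{bd}$-part, since $\iota_*$ preserves $R$-values and $\Sigma_n$ is the \emph{unique} $R$-minimal size-$n$ permutation of $J_1$ (Theorem \ref{theorem: unique minimal term}), each block $J_{bd}^{(j)}$ is forced to carry a copy $\Sigma_{n_j}^{(j)}$ of some $\Sigma_{n_j}$. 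Then the size constraint $\sum_j n_j + b^2(g+d-1) = n$ together with strict convexity $\nu_{n+1}>\nu_n$ (Theorem \ref{theorem: unique minimal term}, which forces the $R$-values $R(\Sigma_{n_j})$ to be a strictly convex sequence in $n_j$) pins down the multiset $\{n_j\}$ to be as balanced as possible: writing $n_0=c\cdot bd+r$, exactly $r$ of the blocks get $\Sigma_{c+1}^{(j)}$ and the remaining $bd-r$ get $\Sigma_c^{(j)}$. This is precisely the definition of distinguished. The reverse implication — every distinguished permutation is $v$-minimal — is immediate: all distinguished permutations have the same $R$-value, which equals their common $v$-value since the $I$-block piece can be chosen with $v(\tau,\Delta)=0$ and the $J_{bd}$-pieces are $p$-bounded.

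I would organize the write-up as: (i) the definition of $R$ on $I\sqcup J_{bd}$ and the comparison lemma $v(\sigma,\Delta)\geq R(\sigma)$ with the equality criterion (citing Corollary \ref{c: valuations of matrix for higher genus}); (ii) the decomposition lemma ($I$ from $J_{bd}$), proved as above; (iii) the reduction of the $J_{bd}$-part to block-decomposable lexicographical $p$-bounded form via Proposition \ref{p: minimal permutations have all the nice properties}; (iv) the convexity argument that distributes the $n_j$ optimally, using Theorem \ref{theorem: unique minimal term}'s strict convexity; (v) the verification that the $I$-block contributes a unit-determinant factor (Corollary \ref{c: ordinary determinant claim}) so that the distinguished count is achieved by honest $v$-minimal permutations. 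The main obstacle I anticipate is step (iv): one must show that distributing a total "height budget" $n_0$ among $bd$ blocks so as to minimize $\sum_j R(\Sigma_{n_j})$ forces near-equal distribution. This is a discrete convexity/exchange argument — if two block sizes differ by $\geq 2$, moving one unit from the larger to the smaller strictly decreases the sum because $R(\Sigma_{n+1})-R(\Sigma_n)=\nu_{n+1}$ is strictly increasing — so the minimizer has all $n_j\in\{c,c+1\}$, and the number of $(c+1)$'s is determined by $\sum n_j=n_0$. I would also need to be slightly careful that the "finite vs. infinite $R$-value" bookkeeping across distinct $j$-blocks doesn't interact (it doesn't: $R(k_1,k_2)=\infty$ whenever $j_1\neq j_2$, so $R$-minimal permutations automatically respect the $j$-decomposition), and that the $\tau$ on the $I$-block can always be chosen with vanishing $v$-value, which is exactly what $\det(\Delta_{I\times I})\in(\mathbf{V}^\circ)^\times$ provides.
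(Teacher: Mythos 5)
Your overall strategy is the same as the paper's: extend $R$ to $I\sqcup J_{bd}$, prove $v(\sigma,\Delta)\geq R(\sigma)$ with an equality criterion, show $R$-minimal permutations separate into an $I$-part and a block-decomposable $p$-bounded $J_{bd}$-part, and then use strict convexity of $n\mapsto R(\Sigma_n)$ plus the unit determinant of $\Delta_{I\times I}$ to identify the minimizers with the distinguished permutations. Steps (i), (iii), (iv), (v) and the final sandwich logic all match the paper. But there is a genuine gap at step (ii). You assert that the separation of the $I$-part from the $J_{bd}$-part follows "by the same swap argument" as in the genus-positive toy case over $\F_p$. That argument only works when $b=1$. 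For $b>1$ every permutation in play is \emph{rotational} ($\bfi{\sigma(k)}=\bfi{k}-1$), so the swap you describe is only legal when the element $k_1\in I$ with $\sigma(k_1)\in J_{bd}$ and the element $k_2\in J_{bd}$ with $\sigma(k_2)\in I$ lie in the \emph{same} graded piece $\bfi{k_1}=\bfi{k_2}$; otherwise the swapped permutation is no longer rotational and its $R$-value is irrelevant. The crossing counts only balance globally over $\Z/b\Z$, not grade by grade, so such a matched pair need not exist. The paper's Lemma \ref{l: R-minimal is split} handles this with a three-step argument: it first uses unused elements of $I_{i-1}$ (the case $m_{i-1}(\sigma)>0$) to modify the permutation without increasing $R$, iterates until the relevant grade of $I$ is saturated, and only then does the counting argument guarantee a swap partner at the same grade. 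Your proposal also never justifies that an $R$-minimal permutation must use \emph{all} of $I$ (which is what makes $\sum_j n_j=n_0$ in step (iv)); this requires the separate observation that shrinking the $J_{bd}$-part in favor of unused ($R$-value zero) elements of $I$ strictly decreases $R$.

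A second, smaller problem is your convention $R(k_1,k_2)=+\infty$ when $j_1\neq j_2$. With that convention the fundamental inequality $v(\sigma,\Delta)\geq R(\sigma)$ fails for block-crossing permutations: by Lemma \ref{l: esitames of higher genus matrix} the cross-block entries of $\Delta$ are merely divisible by $\pi^{y_{\bfi{k_1}}(p|k_1|)}$, not zero, so $v(\sigma,\Delta)$ can be finite while your $R(\sigma)$ is infinite. You then cannot run the sandwich $v(\sigma,\Delta)\geq R(\sigma)\geq R(\sigma_0)=v(\sigma_0,\Delta)$ to show that a $v$-minimal $\sigma$ is $R$-minimal, which is exactly the step needed to rule out block-crossing $v$-minimal permutations. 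The paper instead keeps $R(k_1,k_2)=y_{\bfi{k_1}}(p|k_1|-|k_2|)$ finite across blocks and derives block-decomposability of $R$-minimal permutations from Proposition \ref{p: minimal permutations have all the nice properties}, with the strict inequality $v>R$ for non-decomposable permutations coming from Lemma \ref{l: valuation equals R if sigma is decomposable}. Both issues are repairable, but as written the proposal does not prove the "only if" direction.
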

    We break up the proof of Proposition \ref{p: minimal permutations of N} into several smaller lemmas. 
    The first step is to extend the definition of $R(-,-)$ to $I\sqcup J_{bd}$.
    \begin{definition}
        Let $k_1,k_2 \in I\sqcup J_{bd}$. We define
        \begin{align*}
        R(k_1,k_2) &:= \begin{cases}
                    0 & k_1 \in I\\
                    y_{\bfi{k_1}}(p|k_1|) & k_1 \in J_{bd} \text{ and } k_2 \in I \\
                    y_{\bfi{k_1}}(p|k_1|-|k_2|) & k_1,k_2 \in J_{bd} 
                    \end{cases}    
        \end{align*}
        For an enriched permutation $(S, \sigma)$ of $I \sqcup J_{bd}$ we define
        \begin{align*}
            R(\sigma) &= \sum_{k \in S} R(k,\sigma(k)).
        \end{align*}
    \end{definition}
    The following definitions are useful for relating $R$-values and $v$-values. 

    \begin{definition}
        Let $\sigma$ be a rotational enriched permutation of 
        $I\sqcup J_{bd}$. We say that $\sigma$ is \emph{split} if we have 
        $\sigma=\tau\omega$ where $\tau$ is a permutation of $I$ (all of $I$, not just a subset) and
        $\omega$ is an enriched permutation of $J_{bd}$. We say that $\sigma$ is \emph{strongly split} if in addition we have
        $v(\tau,\Delta)=0$, $\omega$ is decomposible, and $\omega$ is $p$-bounded (recall these definitions in \S \ref{ss: some preliminaries on permutations}).
    \end{definition}
     \begin{lemma} \label{l: valuation equals R if sigma is decomposable}
        Let $(S, \sigma)$ be an enriched rotational permutation of $I\sqcup J_{bd}$. Then 
        \begin{align}
            \label{eq: compare R and valuation}
            v(\sigma, \Delta) &\geq R(\sigma).
        \end{align}
        If $\sigma$ is a permutation of $J_{bd}$, then $v(\sigma,\Delta) = R(\sigma)$ if and only if
        $\sigma$ is decomposible and $p$-bounded. 
    \end{lemma}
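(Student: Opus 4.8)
The plan is to prove the inequality \eqref{eq: compare R and valuation} term by term and then read off the equality case. Fix a rotational enriched permutation $(S,\sigma)$ of $I \sqcup J_{bd}$. For each $k \in S$ I will compare $v(\Delta_{k,\sigma(k)})$ with $R(k,\sigma(k))$. If $k \in I$, then $R(k,\sigma(k)) = 0$ while $v(\Delta_{k,\sigma(k)}) \geq 0$ because $\Delta$ has entries in $\bfV^\circ$, so there is nothing to do. If $k \in J_{bd}$, rotationality gives $\bfi{\sigma(k)} = \bfi{k} - 1$, so Corollary~\ref{c: valuations of matrix for higher genus} applies: when $k$ and $\sigma(k)$ lie in the same $J_{bd}^{(j)}$ and $p|k| \geq |\sigma(k)|$ we get $v(\Delta_{k,\sigma(k)}) = y_{\bfi{k}}(p|k|-|\sigma(k)|) = R(k,\sigma(k))$; in every other case $v(\Delta_{k,\sigma(k)}) \geq y_{\bfi{k}}(p|k|)$, which — since the partial digit sums $y_i$ satisfy $y_i(m) - y_i(m-1) = d_i(m) \geq 1$ for $m \geq 1$ and $|\sigma(k)| \geq 1$ — is strictly larger than $y_{\bfi{k}}(p|k|-|\sigma(k)|) = R(k,\sigma(k))$, whether or not $p|k| - |\sigma(k)|$ is positive. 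Summing over $k \in S$ gives $v(\sigma,\Delta) \geq R(\sigma)$.

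For the equivalence, assume now $S \subseteq J_{bd}$. If $\sigma$ is not decomposable, there is some $k \in S$ with $k \in J_{bd}^{(j)}$ and $\sigma(k) \in J_{bd}^{(j')}$ for $j \neq j'$; by the previous paragraph this particular term contributes strictly more than $R(k,\sigma(k))$ while every other term contributes at least its $R$-value, so $v(\sigma,\Delta) > R(\sigma)$. If instead $\sigma$ is decomposable, write $\sigma = \sigma^{(1)} \cdots \sigma^{(bd)}$ with $\sigma^{(j)}$ a rotational enriched permutation of $J_{bd}^{(j)}$, so that $v(\sigma,\Delta) = \sum_j v(\sigma^{(j)},\Delta)$ and $R(\sigma) = \sum_j R(\sigma^{(j)})$. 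By Corollary~\ref{c: valuations of matrix higher genus permutation} we have $v(\sigma^{(j)},\Delta) = v(\iota_*\sigma^{(j)}, \Psi^\circ)$, and since $\iota$ preserves both $\bfi{\cdot}$ and $|\cdot|$, the quantity $R(\sigma^{(j)})$ is exactly the $R$-value of $\iota_*\sigma^{(j)}$ viewed as an enriched permutation of $J_1$. Lemma~\ref{l: compare R and valuation affine line} then yields $v(\iota_*\sigma^{(j)},\Psi^\circ) \geq R(\sigma^{(j)})$ with equality if and only if $\sigma^{(j)}$ is $p$-bounded. Because $p$-boundedness is a pointwise condition on the support, $\sigma$ is $p$-bounded precisely when every $\sigma^{(j)}$ is; hence $v(\sigma,\Delta) = R(\sigma)$ if and only if $\sigma$ is $p$-bounded. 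Combining with the non-decomposable case, equality holds if and only if $\sigma$ is decomposable and $p$-bounded.

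This is a bookkeeping lemma: the substantive inputs — the valuation estimates for $\Delta$ and the matching affine-line statement — are already in place via Corollary~\ref{c: valuations of matrix for higher genus}, Corollary~\ref{c: valuations of matrix higher genus permutation}, and Lemma~\ref{l: compare R and valuation affine line}. The only point that needs a little care is the ``only if'' direction, where one must observe that a single off-diagonal-block pair, or a single pair with $p|k| < |\sigma(k)|$, already forces strict inequality in the whole sum; this is where the strict monotonicity of the $y_i$ on positive integers is used.
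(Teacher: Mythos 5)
Your proof is correct and takes essentially the same route as the paper's: a termwise comparison of $v(\Delta_{k,\sigma(k)})$ with $R(k,\sigma(k))$ via Corollary~\ref{c: valuations of matrix for higher genus}, followed by the same case analysis for equality (your detour through $\Psi^\circ$ via Corollary~\ref{c: valuations of matrix higher genus permutation} and Lemma~\ref{l: compare R and valuation affine line} in the decomposable case is a harmless repackaging of the paper's direct argument). One cosmetic slip: when $\sigma(k)\in I$ the definition gives $R(k,\sigma(k))=y_{\bfi{k}}(p|k|)$, not $y_{\bfi{k}}(p|k|-|\sigma(k)|)$ (and $|\cdot|$ is not even defined on $I$), so your claimed strict inequality fails in that sub-case --- but only the non-strict bound is needed there, and your equality analysis is confined to permutations of $J_{bd}$, so nothing breaks.
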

    \begin{proof}
        We obtain $v(\Delta_{k,\sigma(k))}) \geq R(k,\sigma(k))$ from Corollary
        \ref{c: valuations of matrix for higher genus} and the definition of $R$. This implies \eqref{eq: compare R and valuation}. Next, assume $\sigma$ is a permutation of $J_{bd}$ that is decomposible and $p$-bounded.
        From Corollary \ref{c: valuations of matrix for higher genus} and the definition of $R$ we see 
        that $v(\sigma,\Delta)$ equals $R(\sigma)$. For the converse direction, first assume $\sigma$ is $p$-bounded but not decomposible. Then
        there exists $k \in S$ such that $k \in J_{bd}^{(j)}$ and $\sigma(k) \in J_{bd}^{(j')}$ where $j\neq j'$. From
        Corollary \ref{c: valuations of matrix for higher genus} we know $v(\Delta_{k,\sigma(k)})\geq y_{\bfi{k}}(p|k|)$.
        On the other hand $R(k,\sigma(k)) = y_{\bfi{k}}(p|k|-|\sigma(k)|)$. This implies $v(\Delta_{k,\sigma(k)})>R(k,\sigma(k))$, so that $v(\sigma,\Delta)>R(\sigma)$. Next, assume
        $\sigma$ is not $p$-bounded. Then there exists $k \in S$ such that $|\sigma(k)|>p|k|$. Then $R(k,\sigma(k)) = y_{\bfi{k}}(p|k|-|\sigma(k)|)$ is equal to zero. From Corollary \ref{c: valuations of matrix for higher genus}
        we have $v(\Delta_{k,\sigma(k)})\geq y_{\bfi{k}}(p|k|)$, which is larger than zero. This proves the lemma.
        
    \end{proof}

    \begin{corollary}
        \label{c: split when v is R}
        Assume $\sigma$ is a split enriched permutation of $I\sqcup J_{bd}$. Then $v(\sigma, \Delta)=R(\sigma)$ if and only if
        $\sigma$ is strongly split. In this case, if we write
        $\sigma=\tau\omega^{(1)}\dots\omega^{(bd)}$ where $\tau$ is
        a permutation of $I$ and $\omega^{(j)}$ is an enriched permutation of $J_{bd}^{(j)}$, then \[R(\sigma)=\sum_{i=1}^{bd} v(\iota_*\omega^{(j)}, \Psi^\circ).\]
    \end{corollary}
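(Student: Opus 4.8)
The plan is to reduce the whole statement to the additivity of $v(-,\Delta)$ and $R(-)$ over disjoint supports, combined with Lemma \ref{l: valuation equals R if sigma is decomposable} and Lemma \ref{l: compare R and valuation affine line}. First I would use that $\sigma$ is split to write $\sigma = \tau\omega$, where $\tau$ is a permutation of all of $I$ and $\omega$ is an enriched permutation of $J_{bd}$, with disjoint supports. Directly from the defining formulas $v(\sigma,\Delta) = \sum_{k} v_\pi(\Delta_{k,\sigma(k)})$ and $R(\sigma) = \sum_k R(k,\sigma(k))$ we then get $v(\sigma,\Delta) = v(\tau,\Delta) + v(\omega,\Delta)$ and $R(\sigma) = R(\tau) + R(\omega)$. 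Since $R(k,\tau(k)) = 0$ for every $k \in I$ by the definition of $R$ on $I \sqcup J_{bd}$, we have $R(\tau) = 0$ and hence $R(\sigma) = R(\omega)$.

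Next I would apply Lemma \ref{l: valuation equals R if sigma is decomposable} twice. Applied to $\tau$ it gives $v(\tau,\Delta) \geq R(\tau) = 0$; applied to $\omega$, which is a rotational permutation of $J_{bd}$, it gives $v(\omega,\Delta) \geq R(\omega)$ with equality if and only if $\omega$ is decomposable and $p$-bounded. Adding these two estimates yields $v(\sigma,\Delta) \geq R(\sigma)$, with equality precisely when $v(\tau,\Delta) = 0$ \emph{and} $\omega$ is decomposable and $p$-bounded. That is exactly the condition that $\sigma$ be strongly split, which establishes the equivalence.

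For the closing formula, in the strongly split case I would write the decomposable permutation $\omega$ as $\omega = \omega^{(1)}\cdots\omega^{(bd)}$, where each $\omega^{(j)}$ is the induced enriched permutation on $J_{bd}^{(j)}$, and observe that each $\omega^{(j)}$ is again rotational and $p$-bounded (both conditions are checked pointwise on the support, so they descend from $\omega$). Then $R(\sigma) = R(\tau) + \sum_{j} R(\omega^{(j)}) = \sum_j R(\omega^{(j)})$, and Lemma \ref{l: compare R and valuation affine line} applied to each $p$-bounded rotational $\omega^{(j)}$ converts $R(\omega^{(j)})$ into $v(\iota_*\omega^{(j)},\Psi^\circ)$, giving $R(\sigma) = \sum_{j=1}^{bd} v(\iota_*\omega^{(j)},\Psi^\circ)$ as claimed; alternatively one can route this through Corollary \ref{c: valuations of matrix higher genus permutation}.

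I do not anticipate a genuine obstacle: once Lemma \ref{l: valuation equals R if sigma is decomposable} is available the argument is essentially bookkeeping. The only points requiring a sentence of care are the verification that $R(\tau) = 0$ and $v(\tau,\Delta) \geq 0$ for a permutation of $I$, and that decomposability together with $p$-boundedness of $\omega$ genuinely pass to its components $\omega^{(j)}$ — both of which are immediate from the pointwise nature of the relevant definitions.
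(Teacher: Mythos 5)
Your proposal is correct and follows essentially the same route as the paper: decompose $\sigma = \tau\omega$, use additivity of $v$ and $R$ together with $R(\tau)=0$, invoke Lemma \ref{l: valuation equals R if sigma is decomposable} for the equality criterion, and derive the final formula via Corollary \ref{c: valuations of matrix higher genus permutation}. The paper's proof is just a terser version of the same argument.
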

    \begin{proof}
        Write $\sigma=\tau\omega$ where $\tau$ is a permutation of $I$
        and $\omega$ is an enriched permutation of $J_{bd}$. We have $R(\sigma)=v(\sigma,\Delta)$ if and only if $v(\tau,\Delta)=R(\tau)=0$ and $R(\omega)=v(\omega,\Delta)$. The first statement then follows from Lemma \ref{l: valuation equals R if sigma is decomposable}. The equation for $R(\sigma)$ follows from Corollary \ref{c: valuations of matrix higher genus permutation}.
    \end{proof}

        \begin{lemma}
            \label{l: R-minimal is split}
            Let $(S,\sigma)$ be an $R$-minimal rotational enriched permutation of $I\sqcup J_{bd}$. Then $\sigma$ is split.
        \end{lemma}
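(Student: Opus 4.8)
The plan is to imitate the two-step argument of the toy case — Lemma \ref{l: F_p decomposing permutation higher genus} (no mixing of $I$ with $J_{bd}$) followed by Lemma \ref{l: F_p R minimal have a specific shape. } (the $I$-part is all of $I$) — but carried out at the level of rotational $b$-cycles so as to respect the $\Z/b\Z$-layering. The structural fact I would lean on is that $R(k_1,k_2)$ has the uniform expression $y_{\bfi{k_1}}\!\big(p\,w(k_1)-w(k_2)\big)$, where $w$ is the weight with $w|_I\equiv 0$ and $w(i,j,m)=m$ on $J_{bd}$. Consequently every edge out of an $I$-vertex has $R$-cost $0$, whereas any rotational cycle supported in $J_{bd}$ has strictly positive $R$-value: if all of its terms $y_{\bfi{k_j}}(p\,w(k_j)-w(k_{j+1}))$ vanished we would get $w(k_{j+1})\ge p\,w(k_j)$ for every $j$, hence $w(k_1)\ge p^{|S|}w(k_1)$, which is impossible. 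Since $X$ is ordinary, Corollary \ref{c: ordinary determinant claim} supplies a rotational permutation of all of $I$ of $\Delta$-valuation $0$, so split permutations do exist in the size range of Proposition \ref{p: minimal permutations of N}.

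Now suppose $\sigma$ is $R$-minimal. Applying the reductions of \S\ref{ss: minimal permutations are lexicographical} — the proofs of Lemma \ref{l: reduce to lexicographical} and Lemma \ref{l: reduce from lexicographical to decomposbile and lexicographical} use only the switching inequality (Lemma \ref{lemma: inequality about switching coordinates}), which holds verbatim on $I\sqcup J_{bd}$ — one may assume $\sigma$ is decomposable and lexicographical. I would then apply $\bfp$ to each of its $b$-cycles (Proposition \ref{l: universal p-bounded less than a b-cycle}, whose statement extends to $b$-cycles with some zero coordinates: a cycle whose minimal coordinate is $0$ is $\bfp$-collapsed to the all-zero $b$-cycle), obtaining a decomposable, lexicographical, $p$-bounded permutation with no larger $R$-value. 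The point is that a \emph{mixed} $b$-cycle — one meeting both $I$ and $J_{bd}$ — is never $p$-bounded, because traversing it one passes from a weight-$0$ vertex to a positive-weight vertex, violating $w(k)\le p\,w(\sigma(k))$; hence $R(\bfp(\sigma_t))<R(\sigma_t)$ for such a cycle, and $\bfp(\sigma_t)$ is an (all-zero) $b$-cycle supported in $I$. Realizing it on $I$-vertices not used elsewhere (available since a lexicographical decomposition lists the all-zero cycles first, so only finitely many $I$-vertices lie ``below'' $\sigma_t$) strictly lowers $R(\sigma)$ at fixed size, contradicting minimality. Therefore $\sigma=\tau\omega$ with $\tau$ a permutation of $H=S\cap I$ and $\omega$ a permutation of $K=S\cap J_{bd}$.

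It remains to force $H=I$. As $\tau$ is rotational, $|H\cap I_i|$ is the same for every layer $i$; if $H\neq I$ this common value is $<|I_i|$, so $I\setminus H$ contains a rotational $b$-cycle $c_0$ with $R(c_0)=0$. In the size range of Proposition \ref{p: minimal permutations of N} we have $|S|\ge|I|$, so $K\neq\emptyset$ and $\omega$ contains a rotational cycle, which is $J_{bd}$-supported and hence of positive $R$-value; peeling a rotational $b$-cycle off it — again via the $b$-cycle surgery of Lemma \ref{l: reduce to lexicographical}/Proposition \ref{l: universal p-bounded less than a b-cycle}, which does not raise $R$ — and swapping in $c_0$ strictly decreases $R(\sigma)$, a contradiction; so $H=I$ and $\sigma$ is split. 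The main obstacle is exactly this layer bookkeeping: unlike the $b=1$ toy case one cannot transpose two arbitrary edges or delete a contiguous arc of a cycle without destroying rotationality, so every modification must be phrased through rotational $b$-cycles; the delicate points are checking that $\bfp$ sends a mixed $b$-cycle to an $I$-cycle and that (in the degenerate situation where $\sigma$ already exhausts $I$) one can still absorb a mixed cycle's $I$-part into a pure $I$-cycle by merging and re-splitting neighbouring $b$-cycles.
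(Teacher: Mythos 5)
Your proposal has two genuine gaps, both located exactly where the paper's proof does its real work.

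First, the ``WLOG'' step is not valid for the statement being proved. The lemma asserts that \emph{every} $R$-minimal permutation is split (and this universal form is what is used later, in the converse direction of Proposition \ref{p: minimal permutations of N}, where an arbitrary $v$-minimal $\sigma$ is shown to be $R$-minimal and one must conclude that \emph{that} $\sigma$ is split). Your reductions via Lemma \ref{l: reduce to lexicographical} and its companions replace $\sigma$ by some $\sigma'$ with $R(\sigma')\leq R(\sigma)$, with strictness guaranteed only under hypotheses (decomposable but not lexicographical, or not $p$-bounded) that a non-split $\sigma$ need not satisfy. A non-split $R$-minimal $\sigma$ could be carried by these reductions to a split $\sigma'$ of equal $R$-value; you then derive no contradiction, and you have proved nothing about the original $\sigma$. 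At best your argument yields the \emph{existence} of a split $R$-minimal permutation. The paper instead performs local surgeries directly on the given $\sigma$ (replacing the image $\sigma(k_i)\in J_{bd}$ of an $I$-vertex by an unused $I$-vertex in the same layer, or swapping the images of two vertices in the same layer), each of which either strictly decreases $R$ --- contradicting minimality --- or strictly decreases a counting invariant $m(\sigma)$ while preserving $R$-minimality.

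Second, and more seriously, the case you flag as ``delicate'' --- a mixed $b$-cycle whose $J_{bd}$-portion lies in layers where $\sigma$ already uses all of $I$ --- is precisely the crux, and it genuinely occurs: for $n$ in the relevant range the minimal permutations exhaust $I$, so a hypothetical non-split competitor may as well. Your replacement of $\bfp(\sigma_t)$ (the all-zero cycle) by actual $I$-vertices then has nowhere to land, and ``merging and re-splitting neighbouring $b$-cycles'' is not an argument. The paper's Step 3 resolves this with a pigeonhole count: if $K_{i-1}(\sigma)=I_{i-1}$ and some $k_i\in I_i$ has $\sigma(k_i)=k_{i-1}\in J_{bd}$, then $\sigma$ cannot map $K_i(\sigma)$ onto $K_{i-1}(\sigma)$, so some $k_i'\in J_{bd}$ in layer $i$ has $\sigma(k_i')\in I_{i-1}$; swapping the images of $k_i$ and $k_i'$ replaces the cost $y_{i}(p|k_i'|)$ by the strictly smaller $y_{i}(p|k_i'|-|k_{i-1}|)$, contradicting minimality. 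Nothing in your write-up substitutes for this. (A minor point besides: a mixed cycle fails $p$-boundedness on the edge returning from $J_{bd}$ to $I$, not on the edge leaving $I$; the conclusion that mixed cycles are never $p$-bounded is still correct since a cycle entering $J_{bd}$ must return.)
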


    \begin{proof}
        For any $(S,\sigma)$ we define
        \begin{align*}
        K(\sigma)&:=S \cap I, \\
            H(\sigma)&:=S\cap J_{bd}, \\
            L(\sigma) &:=  \{ k \in K(\sigma) ~|~ \sigma(k) \in H(\sigma)\}.
        \end{align*}
        For $i \in \Z/b\Z$ we define
        \begin{align*}
            A_i(\sigma) &:= \{k \in A(\sigma) ~|~\bfi{k}=i \} \text{ where $A$ can be $K$, $H$, or $L$,} \\
            I_i&:= \{k \in I ~|~ \bfi{k}=i\}. 
        \end{align*}
        Finally, taking $*$ to be $i \in \Z/b\Z$ or to be empty, we define
    	\begin{align*}
    		n_*(\sigma) &:= \# \{ k \in K_*(\sigma) ~|~ \sigma(k) \in H(\sigma)\}, \\
    		m_*(\sigma) &:= \# (I_* \backslash K_*(\sigma)).
    	\end{align*}
    	We note that $\sigma$ being split is equivalent to $m(\sigma)+n(\sigma)=0$.

        First, assume $\sigma$ is an $R$-minmal enriched permutation satisfying $n(\sigma)=0$ and $m(\sigma)>0$. Then $\sigma=\tau\omega$, where $\tau$ is an enriched permutation of $I$ and $\omega$ is an enriched permutation of $J_{bd}$. Since $m(\sigma)>0$ we know that $\bfs{\omega}>n_0$. We can find an enriched permutation $\omega_0$ of $J_{bd}$ with size $n_0$ such that $R(\omega_0)<R(\omega)$. Let $\tau_0$ be any permutation of $I$. Then we have \[R(\tau_0\omega_0)=R(\omega_0)< R(\omega)=R(\sigma),\]
        which contradicts the $R$-minimality of $\sigma$. 
     
        Next, assume there exists an enriched permutation $\sigma$ that is $R$-minimal and satisfies $n(\sigma)>0$. 
        Since $n(\sigma)=\sum n_i(\sigma)$, we can always find $i$ such that $n_{i}(\sigma)>0$. In the following three steps we will come to a contradiction.
        
		\begin{step}
			Assume $m_{i-1}(\sigma)>0$ and $n_{i}(\sigma)>0$. In this step we construct $\sigma'$ that is $R$-minimal such that $m(\sigma')=m(\sigma)-1$ and $n(\sigma')=n(\sigma)$. There exists $k_i \in K_i(\sigma)$ such that $k_{i-1}=\sigma(k_i)$ is contained in $H_{i-1}(\sigma)$. Since $m_{i-1}(\sigma)>0$, there exists Also, there exists $k'_{i-1}\in I_i \backslash K_{i-1}(\sigma)$. Define the permutation $(\sigma',S')$  be replacing $k_{i-1}$ with $k'_{i-1}$. More precisely we have $S' = (S\backslash \{k_{i-1}\})\cup \{k'_{i-1}\}$ and
			\begin{align*}
				\sigma'(k) &= \begin{cases}
					k'_{i-1} & k=k_i \\
					\sigma(k_{i-1}) &k=k'_{i-1} \\
					\sigma(k) & k \in S \backslash \{k_{i-1},k_i\}
				\end{cases}.
			\end{align*}
			For $k \in  S \backslash \{k_{i-1},k_i\}$ we have $R(k,\sigma(k))=R(k,\sigma'(k))$. Also,
			$R(k_i,k_{i-1})=0$ and $R(k_i,k'_{i-1})=0$, since $k_i \in I$. 
            As $k'_{i-1} \in I$ we have $R(k'_{i-1},\sigma(k_{i-1}))=0\leq R(k_{i-1},\sigma(k_{i-1}))$. Putting this together, we obtain $R(\sigma')\leq R(\sigma)$, so $\sigma'$ is $R$-minimal. 
            
            By construction we have $m_{i-1}(\sigma')=m_{i-1}(\sigma)-1$ and $m(\sigma')=m(\sigma)-1$. We claim that 
            $n(\sigma')=n(\sigma)$. First, note that $L_i(\sigma')=L_i(\sigma)\backslash \{k_i\}$, so $n_{i}(\sigma')=n_{i}(\sigma)-1$. Also $L_{i'}(\sigma)=L_{i'}(\sigma')$ for all $i'\neq i,i-1$ from the definition of $\sigma'$, so that
            $n_{i'}(\sigma)=n_{i'}(\sigma')$. 
            Thus, we are reduced to showing $n_{i-1}(\sigma') = n_{i-1}(\sigma)+1$. If
            $\sigma(k_{i-1}) \in K_{i-2}(\sigma)$, then $R(k_{i-1},\sigma(k_{i-1}))=y_i(p|k_{i-1}|) > 0$. Since $R(k_{i-1}',\sigma(k_{i-1}))=0$, we must have
            $R(\sigma')<R(\sigma)$, which contradicts the $R$-minimality of $\sigma$. Thus, we have $\sigma(k_{i-1}) \in H_{i-2}(\sigma)$. 
            In particular, we see that $L_{i-1}(\sigma')=L_{i-1}(\sigma) \cup \{k_{i-1}'\}$
            and so $n_{i-1}(\sigma') = n_{i-1}(\sigma)+1$.

		\end{step}
        \begin{step}
            By repeating the first step we may replace $\sigma$ with an $R$-minimal enriched permutation where for some $i \in \Z/b\Z$ with $m_{i-1}(\sigma)=0$ and $n_{i}(\sigma)>0$.
        \end{step}
        
		\begin{step}
			We are now reduced to the case where $m_{i-1}(\sigma)=0$ and $n_{i}(\sigma)>0$. There exists $k_i \in K_i(\sigma)$ with $k_{i-1}=\sigma(k_i) \in H_{i-1}(\sigma)$, as $n_i(\sigma)>0$. Also $K_{i-1}(\sigma)=\{i-1\}\times\{1,\dots,g+d-1\}$, since $m_{i-1}(\sigma)=0$. This means there exists $k_i' \in H_i(\sigma)$
			such that $k_{i-1}' = \sigma(k_i') \in K_{i-1}(\sigma)$. If this were not true, then $\sigma$ would map $K_i(\sigma)$ bijectively to $K_{i-1}(\sigma)$, which we know is not the case. We define $(\sigma',S)$ by swapping $k_{i-1}$ and $k_{i-1}'$. More precisely, 
			\begin{align*}
				\sigma'(k) &= \begin{cases}
					k_{i-1}' & k=k_i \\
					k_{i-1} & k=k_i' \\
					\sigma(k) &k \neq k_i,k_i'
				\end{cases}.
			\end{align*}
            We now compare $R(\sigma)$ and $R(\sigma')$. First note that $R(k_i,\sigma(k_i))=0$ and $R(k_i,\sigma'(k_i))=0$, since $k_i \in I$. Next, note that
            $R(k_i',\sigma(k_i'))=y_{i}(p|k_i'|)$ and $R(k_i',\sigma'(k_i'))=y_{i}(p|k_i'|-|k_{i-1}|)$. In particular, we have $R(k_i',\sigma(k_i'))>R(k_i',\sigma'(k_i'))$. As $R(k,\sigma(k))=R(k,\sigma'(k))$ for $k \neq k_i,k_i'$, we see that $R(\sigma')<R(\sigma)$, which contradicts the fact that $\sigma$ is $R$-minimal.  
		\end{step}
        
    \end{proof}

    \begin{corollary}
        \label{c: there are strongly split R-minimal permutaitons}
        There exists an $R$-minimal permutation of $I\sqcup J_{bd}$
        that is strongly split.
    \end{corollary}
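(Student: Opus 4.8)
Fix a size $n$. The plan is to take any $R$-minimal rotational enriched permutation $\sigma$ of $I \sqcup J_{bd}$ of size $n$ and then repair its two halves separately, without changing the size or increasing the $R$-value, until it becomes strongly split. Such an $R$-minimal permutation exists because the $R$-values of rotational permutations of size $n$ form a nonempty subset of $\Z_{\geq 0}$, hence attain a minimum.

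By Lemma \ref{l: R-minimal is split} the permutation $\sigma$ is split, say $\sigma = \tau\omega$ with $\tau$ a permutation of all of $I$ and $\omega$ an enriched permutation of $J_{bd}$, the supports being disjoint. Since $\sigma$ is rotational and $\tau,\omega$ act on disjoint index sets, $\tau$ is a rotational permutation of $I$ and $\omega$ is a rotational enriched permutation of $J_{bd}$; moreover $\bfs{\sigma} = \bfs{\tau} + \bfs{\omega}$, and, because $R(k,\cdot) = 0$ for every $k \in I$, we have $R(\sigma) = R(\omega)$. Now I replace each half. For $\tau$: by Corollary \ref{c: ordinary determinant claim} the minor $\det(\Delta_{I \times I})$ is a unit in $\bfV^\circ$, so some term of its Leibniz expansion has valuation zero; this term is nonzero, hence the corresponding permutation $\tau_0$ of $I$ is rotational by Lemma \ref{l: esitames of higher genus matrix}(1) and satisfies $v(\tau_0,\Delta) = 0$. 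Replacing $\tau$ by $\tau_0$ alters neither $\bfs{\tau}$ nor $R(\tau)$, both being $0$. For $\omega$: applying Proposition \ref{p: minimal permutations have all the nice properties} to the rotational enriched permutation $\omega$ of $J_{bd}$ yields $\omega'$ with $\bfs{\omega'} = \bfs{\omega}$ and $R(\omega') \leq R(\omega)$ which is lexicographical, decomposable, and $p$-bounded.

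Setting $\sigma' := \tau_0\,\omega'$, one gets $\bfs{\sigma'} = \bfs{\tau_0} + \bfs{\omega'} = \bfs{\sigma} = n$ and $R(\sigma') = R(\omega') \leq R(\omega) = R(\sigma)$, so $\sigma'$ is again $R$-minimal of size $n$. It is split with $v(\tau_0,\Delta) = 0$ and with $\omega'$ decomposable and $p$-bounded, i.e., strongly split, which proves the corollary. The only real work is bookkeeping: verifying that each of the two repairs preserves the total size and does not increase $R$, so that $R$-minimality survives; the three defining clauses of ``strongly split'' then hold by construction, and there is no serious obstacle.
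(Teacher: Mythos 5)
Your proposal is correct and follows essentially the same route as the paper: start from an $R$-minimal $\sigma$, invoke Lemma \ref{l: R-minimal is split} to split it as $\tau\omega$, repair $\omega$ via Proposition \ref{p: minimal permutations have all the nice properties}, and replace $\tau$ by a valuation-zero permutation of $I$ supplied by Corollary \ref{c: ordinary determinant claim}. The extra bookkeeping you supply (existence of the minimum, $R(\sigma)=R(\omega)$, and extracting $\tau_0$ from the Leibniz expansion of $\det(\Delta_{I\times I})$) just makes explicit what the paper leaves implicit.
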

    \begin{proof}
        Let $\sigma$ be an $R$-minimal permutation. By Lemma \ref{l: R-minimal is split} we know that $\sigma$ is split. Write
        $\sigma=\tau\omega$ where $\tau$ is a permutation of $I$ and $\omega$ is an enriched permutation of $J_{bd}$. By Proposition \ref{p: minimal permutations have all the nice properties} there
        exists $\omega'$ that is decomposible and $p$-bounded with
        the same size as $\omega$ satisfying $R(\omega')\leq R(\omega)$.
        We also know from Corollary \ref{c: ordinary determinant claim} that there exists a permutation $\tau'$ of $I$ with $v(\tau',\Delta)=0$. Then $R(\tau'\omega')\leq R(\tau\omega)$, so that $\tau'\omega'$ is $R$-minimal. We also see that $\tau'\omega'$ is strongly split by construction.
    \end{proof}

	\begin{lemma}\label{l: R-minimal strongly split is distinguished}
		Let $n=n_0+b^2(g+d-1)$ with $n_0\geq 0$ and let $\sigma$ be an enriched permutation of $I\sqcup J_{bd}$. Then $\sigma$ is distinguished if and only if it is $R$-minimal and strongly split.
	\end{lemma}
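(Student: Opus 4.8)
The plan is to prove Lemma \ref{l: R-minimal strongly split is distinguished} by relating $R$-values of strongly split permutations of $I \sqcup J_{bd}$ directly to the $v$-minimal permutations $\Sigma_n$ of $\Psi^\circ$ established in Theorem \ref{theorem: unique minimal term}. The key reduction is Corollary \ref{c: split when v is R}: if $\sigma = \tau \omega^{(1)} \cdots \omega^{(bd)}$ is strongly split, then $R(\sigma) = \sum_{j=1}^{bd} v(\iota_* \omega^{(j)}, \Psi^\circ)$, so minimizing $R(\sigma)$ among strongly split permutations of fixed size $n = n_0 + b^2(g+d-1)$ amounts to choosing sizes $n_j = \bfs{\omega^{(j)}} \geq 0$ with $\sum_j n_j = n_0$ that minimize $\sum_j v(\Sigma_{n_j}, \Psi^\circ)$, since by Theorem \ref{theorem: unique minimal term} each $\omega^{(j)}$ of size $n_j$ must equal $\Sigma_{n_j}^{(j)}$ to achieve the minimum $v(\Sigma_{n_j}, \Psi^\circ)$ in its block.

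The heart of the argument is then a discrete optimization lemma: the function $n \mapsto v(\Sigma_n, \Psi^\circ)$ is convex, because $\nu_{n+1} > \nu_n$ by Theorem \ref{theorem: unique minimal term} (where $\nu_n = v(\Sigma_n) - v(\Sigma_{n-1})$). For a convex sequence, the sum $\sum_{j=1}^{bd} v(\Sigma_{n_j})$ over nonnegative integers with prescribed total $n_0$ is minimized precisely by the "balanced" partition: writing $n_0 = c \cdot bd + r$ with $0 \leq r < bd$, we take $n_j = c+1$ for $r$ of the indices and $n_j = c$ for the rest. Moreover strict convexity gives that this balanced partition is the \emph{only} minimizer (up to the choice of which $r$ indices get the extra $1$ — which is exactly the freedom encoded by the set $H$ of size $r$ in the definition of distinguished). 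Combined with the fact that $R(\tau) = 0$ for every permutation $\tau$ of $I$ while $v(\tau,\Delta) = R(\tau)$ forces $v(\tau,\Delta) = 0$ (and such $\tau$ exists by Corollary \ref{c: ordinary determinant claim}), this shows $\sigma$ is $R$-minimal and strongly split if and only if it is distinguished.

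Concretely I would organize the proof as follows. First, suppose $\sigma$ is distinguished; then it is split with $v(\tau,\Delta)=0$, $\omega = \prod_j \Sigma_{n_j}^{(j)}$ is decomposable and $p$-bounded (each $\Sigma_{n_j}$ is $p$-bounded by Theorem \ref{theorem: unique minimal term} and Proposition \ref{p: minimal permutations have all the nice properties}), hence strongly split, and $R(\sigma) = \sum_j v(\Sigma_{n_j}, \Psi^\circ)$ equals the minimal possible value by the convexity computation; comparing with an arbitrary rotational enriched permutation $\sigma'$ of size $n$, we use Corollary \ref{c: there are strongly split R-minimal permutations} to find a strongly split $R$-minimal $\sigma''$, and then $R(\sigma'') \geq \sum_j v(\Sigma_{n_j},\Psi^\circ) = R(\sigma)$ by the convexity bound applied to the block sizes of $\sigma''$, so $R(\sigma) \leq R(\sigma'') \leq R(\sigma')$ and $\sigma$ is $R$-minimal. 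Conversely, suppose $\sigma$ is $R$-minimal and strongly split; write $\sigma = \tau \omega^{(1)} \cdots \omega^{(bd)}$ with block sizes $n_j$. Since each $\omega^{(j)}$ is decomposable and $p$-bounded, Lemma \ref{l: valuation equals R if sigma is decomposable} and Theorem \ref{theorem: unique minimal term} give $R(\omega^{(j)}) = v(\iota_*\omega^{(j)}, \Psi^\circ) \geq v(\Sigma_{n_j}, \Psi^\circ)$, with equality iff $\omega^{(j)} = \Sigma_{n_j}^{(j)}$; $R$-minimality forces equality in each block (else replace $\omega^{(j)}$ by $\Sigma_{n_j}^{(j)}$), and also forces $v(\tau,\Delta) = R(\tau) = 0$, and finally forces $(n_1,\dots,n_{bd})$ to be the balanced partition by strict convexity — which is exactly the assertion that $\sigma$ is distinguished.

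The main obstacle is the convex-partition optimization argument: one must verify carefully that strict convexity of $n \mapsto v(\Sigma_n,\Psi^\circ)$ — i.e. $\nu_{n+1} > \nu_n$ — implies both that the balanced partition minimizes $\sum_j v(\Sigma_{n_j})$ and that it is the unique minimizing \emph{multiset} of block sizes. The standard exchange argument works: if some $n_a \geq n_b + 2$, then moving one unit from block $a$ to block $b$ changes the sum by $\nu_{n_b+1} - \nu_{n_a} < 0$, strictly decreasing it; so any minimizer has all $n_j$ within $1$ of each other, which pins down the multiset $\{n_j\}$ uniquely as $\{c+1 \ (\times r),\ c\ (\times (bd-r))\}$. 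The only remaining freedom is which indices $j$ receive the value $c+1$, giving the subset $H$ with $|H| = r$, and the freedom in the underlying permutation of $\tau$ among those with $v(\tau,\Delta)=0$ — both of which are precisely what the definition of distinguished allows. One should also take care to phrase the comparison against arbitrary (not necessarily split or decomposable) rotational enriched permutations $\sigma'$ by routing through Corollary \ref{c: there are strongly split R-minimal permutations}, so that the convexity bound only needs to be applied to strongly split permutations.
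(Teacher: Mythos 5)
Your proposal is correct and follows essentially the same route as the paper: reduce via Corollary \ref{c: split when v is R} to minimizing $\sum_j v(\Sigma_{n_j},\Psi^\circ)$ over block sizes, use the strict increase $\nu_{n+1}>\nu_n$ from Theorem \ref{theorem: unique minimal term} in an exchange argument to pin down the balanced partition, and handle the converse direction by routing through Corollary \ref{c: there are strongly split R-minimal permutaitons} together with the fact that all distinguished permutations share the same $R$-value. The paper's write-up of the exchange step is terser but identical in substance.
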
 
	\begin{proof}
		We first do the `if' direction. Assume $\sigma=\tau\omega$ is $R$-minimal and strongly split. We have
		$\omega = \omega^{(1)} \dots \omega^{(bd)}$,
		were each $\omega^{(j)}$ is a $p$-bounded permutation of $J_{bd}^{(j)}$. From Theorem \ref{theorem: unique minimal term} and Corollary \ref{c: split when v is R} we must have $\sigma^{(j)}=\Sigma^{(j)}_{n_j}$, where $\sum n_j = \bfs{\omega}$. If $n_{j_1} > n_{j_2}+1$, then the inequality in Theorem \ref{theorem: unique minimal term} 
        tells us that \[ v(\Sigma_{n_{j_1}-1}, \Psi^\circ )+  v(\Sigma_{n_{j_2}+1}, \Psi^\circ) < v(\Sigma_{n_{j_1}}, \Psi^\circ)+  v(\Sigma_{n_{j_2}}, \Psi^\circ).\]
        This combined with Corollary \ref{c: split when v is R} gives
        \[ R(\Sigma^{(j_1)}_{n_{j_1}-1})+  R(\Sigma^{(j_2)}_{n_{j_2}+1}) < R(\Sigma^{(j_1)}_{n_{j_1}})+  R(\Sigma^{(j_2)}_{n_{j_2}}). \]
        This contradicts the $R$-minimality of $\sigma$. It
        follows that $\sigma$ is distinguished. For the `only if' direction, note that all distinguished permutations are strongly split by definition. From Corollary \ref{c: there are strongly split R-minimal permutaitons} and the `if' direction we know that there exists a distinguished permutation that is $R$-minimal. However, all distinguished permutations have the same $R$-value. Thus, they all are $R$-minimal.
	\end{proof}

    \begin{proof}
        (Of Proposition \ref{p: minimal permutations of N})
        Let $\sigma_0$ be distinguished, and thus $R$-minimal and strongly split by Lemma \ref{l: R-minimal strongly split is distinguished}. In particular, we have $R(\sigma_0)=v(\sigma_0, \Delta)$ by Corollary \ref{c: split when v is R}. From Lemma \ref{l: valuation equals R if sigma is decomposable} we see that $\sigma_0$ is $v$-minimal as well. Thus, distinguished implies $v$-minimal. Conversely, let $\sigma$ be $v$-minimal. Using the $R$-minimality of $\sigma_0$ we have
        \[ v(\sigma, \Delta) \geq  R(\sigma)\geq R(\sigma_0) = v(\sigma_0, \Delta).\]
        However we know $v(\sigma,\Delta)=v(\sigma_0,\Delta)$, since both are $v$-minimal. This implies $\sigma$ is $R$-minimal and $v(\sigma, \Delta) =  R(\sigma)$. By Lemma \ref{l: R-minimal is split} we know $\sigma$ is split. Then Corollary \ref{c: split when v is R} tells us that $\sigma$ is strongly split. We see that $\sigma$ is distinguished by Lemma \ref{l: R-minimal strongly split is distinguished}.
    \end{proof}

	\subsection{Finishing the proof of Theorem \ref{t:Riemann-hypothesis}}
	
	We can now finish the proof of Theorem \ref{t:Riemann-hypothesis}. Write
    \begin{align*}
        \det(1-x\Delta) &= 1 + a_1x + a_2x^2 + \dots.
    \end{align*}
    By the cyclic shape of $\Delta$ we know $a_i=0$ for $i$ with $b\nmid i$.
    Also, by Corollary \ref{c: ordinary determinant claim} we know
    $v(a_{b^2(g+d-1)})=0$. 
	Let $n=n_0 + b^2(g + d -1) $ with $n_0\geq 1$ and write $n_0= c\cdot bd + r$ where $r<bd$. From Proposition \ref{p: minimal permutations of N}
	we know
	\begin{align*}
		v(c_{bn}) & \geq (bd-r)v\Big(\Sigma_c, \Psi^\circ \Big ) + rv\Big(\Sigma_{c+1}, \Psi^\circ\Big).
	\end{align*}
    Now assume $bd|n_0$ and let $S$ be the set underlying the enriched permutation $\Sigma_c$ of $J_1$. Then from Corollary \ref{c: valuations of matrix higher genus permutation} and Proposition \ref{p: minimal permutations of N} we have 
	\begin{align*}
		c_{bn} &\equiv \det(\Delta_{I \times I}) \Big( \prod_{k \in S}\Psi^\circ_{k,\Sigma_{c}(k)} \Big)^{bd} \mod \pi^{bd\cdot v(\Sigma_{c}, \Psi^\circ )+1}.
	\end{align*}
	As $\det(\Delta_{I \times I})$ is in $\bfV^\times $ by Corollary \ref{c: ordinary determinant claim} we see that
	\begin{align*}
		v(c_{bn}) &= bd \cdot v\Big(\Sigma_c, \Psi^\circ \Big ).
	\end{align*}
	Recall from Theorem \ref{theorem: unique minimal term} that $\nu_n=v(\Sigma_n, \Psi^\circ) - v(\Sigma_{n-1}, \Psi^\circ)$ satisfies
	$\nu_{n+1}> \nu_n$ for all $n\geq 1$. 
	Thus, we determine the Newton polygon of $\det(1-x\Delta)$ to have slopes
	\[
	        \underbrace{\{0, \dots, 0\}}_{b^2(g - 1 + d)} \sqcup \bigsqcup_{n\geq 1} \underbrace{\Big \{ \frac{\nu_n}{b},\dots,\frac{\nu_n}{b} \Big \}}_{b^2d \text{ times}}.\]
    By Proposition \ref{p: we can compute using matrix} we know that
    $L(\rho^{\otimes y}_{A_{\F_q}}, x^b)^b$ and $\det(1-x\Delta)$ have
    the same Newton polygon. Therefore, the Newton polygon of
    $L(\rho^{\otimes y}_{A_{\F_q}}, x)$ has slopes
    \[
	        \underbrace{\{0, \dots, 0\}}_{g - 1 + d} \sqcup \bigsqcup_{n\geq 1} \underbrace{\Big \{ \nu_n,\dots,\nu_n \Big \}}_{d \text{ times}}.\]
	The zeros of $L(\rho^{\otimes y}_{A}, x)$ are $d$-th roots of the zeros of $L(\rho^{\otimes y}_{A_{\F_q}}, x)$.
	Thus, the Newton polygon of $L(\rho^{\otimes y}_{A}, x)$ has slopes
	\[
	\underbrace{\{0, \dots, 0\}}_{g - 1 + d} \sqcup \bigsqcup_{n\geq 1} \underbrace{\Big \{ \frac{\nu_n}{d}, \dots, \frac{\nu_n}{d} \Big \}}_{d}.
	\]
    From Corollary \ref{c: theorem for affine line} we may write $\nu_n=\alpha_n(r-1)$ where $\alpha_n$ is a positive integer. This completes the proof of Theorem \ref{t:Riemann-hypothesis}.

    \bibliographystyle{refs}
	\bibliography{Goss}

	\Addresses
\end{document}